\setlist{nosep}
\newcommand{\redirXOne}{$\C_{X1}$}
\newcommand{\redirXTwo}{$\C_{X2}$}
\newcommand{\redirXThree}{$\C_{X3}$}
\newcommand{\redirXFour}{$\C_{X4}$}
\newcommand{\CV}{$\C_V$}
\newcommand{\CN}{$\C_N$}
\newcommand{\CDA}{$\C_{Da}$}
\newcommand{\CDB}{$\C_{Db}$}
\newcommand{\CVp}{$\C_{V}'$}
\newcommand{\CU}{$\C_{U}$}
\newcommand{\CTTA}{$\C_{T2A}$}
\newcommand{\CTTNA}{$\C_{T2NA}$}
\newcommand{\CTOne}{$\C_{T1}$}
\newcommand{\CDOne}{$\C_{D1}$}
\newcommand{\CDTwo}{$\C_{D2}$}
\newcommand{\nameDOne}{$D_1$}
\newcommand{\nameDTwo}{$D_2$}
\newcommand{\nameDThree}{$D_3$}
\newcommand{\nameDFour}{$D_4$}
\newcommand{\nameJOne}{$J_1$}
\newcommand{\nameJTwo}{$J_2$}
\newcommand{\nameJThree}{$J_3$}
\newcommand{\nameJFour}{$J_4$}
\newcommand{\nameJFive}{$J_5$}
\newcommand{\nameJSix}{$J_6$}
\newcommand{\nameRuOne}{$R_1$}
\newcommand{\nameRuTwo}{$R_2$}
\newcommand{\nameRuThree}{$R_3$}
\newcommand{\nameRuFour}{$R_4$}
\newcommand{\nameRuFive}{$R_5$}
\newcommand{\nameRuSix}{$R_6$}
\newcommand{\nameRuSeven}{$R_7$}
\newcommand{\nameRuEight}{$R_8$}
\newcommand{\nameRuNine}{$R_9$}
\newcommand{\patcol}{\textbf{Color requirements:}\ }
\newcommand{\patred}{\textbf{Reduction:}\ }
\newcommand{\patrec}{\textbf{Recoloring:}\ }
\newcommand{\nameXXCOa}{$(a)$}
\newcommand{\nameXXa}{$(b)$}
\newcommand{\nameXXb}{$(33o)$}
\newcommand{\nameXXbb}{$(d)$} 
\newcommand{\nameXXc}{$(e)$} 
\newcommand{\nameXXd}{$(c)$}
\newcommand{\nameXXe}{$(f)$}
\newcommand{\nameXXf}{$(g)$}
\newcommand{\nameXXg}{$(h)$}
\newcommand{\nameXXgOne}{$(h_1)$}
\newcommand{\nameXXgTwo}{$(h_2)$}
\newcommand{\nameXXgThree}{$(h_3)$}
\newcommand{\nameXXgFour}{$(h_4)$}
\newcommand{\nameXXgFive}{$(h_5)$}
\newcommand{\nameXXgSix}{$(h_6)$}
\newcommand{\nameXXh}{$(i)$}
\newcommand{\nameXXi}{$(j)$}
\newcommand{\nameXXj}{$(k)$}
\newcommand{\nameXXk}{$(l)$}
\newcommand{\nameXXl}{$(m_1)$}
\newcommand{\nameXXlp}{$(m_2)$}
\newcommand{\nameXXm}{$(n)$}
\newcommand{\nameXXn}{$(o)$}
\newcommand{\nameXXo}{$(p)$}
\newcommand{\nameXXp}{$(q)$}
\newcommand{\nameXXq}{$(r)$}
\newcommand{\nameXXr}{$(s)$}
\newcommand{\nameXXs}{$(t)$}
\newcommand{\nameXXt}{$(u)$}
\newcommand\myEdge[2]{\draw[thick] (#1) -- (#2);} 
\newcommand\myDEdge[2]{\draw[dashed,thick] (#1) -- (#2);} 
\newcommand\myCEdge[4]{%
	\draw[#3,thick] (#1) -- (#2);%
	\ifthenelse{\equal{#4}{}}{}{\draw ($(#1)!0.5!(#2)$) node[#3,nodelabel] {#4};}%
}%
\newcommand\myCurvEdge[5]{\draw[#3,thick] (#1) edge[#5] (#2);
}%
\newcommand\myHalfEdge[4]{%
	\draw[#3,thick,endpath,#3] ($(#1) + (#2:0.75)$) -- (#1);%
	\ifthenelse{\equal{#4}{}}{}{\draw ($(#1) + (#2:0.4)$) node[#3,fill=white,draw=white, font=\scriptsize,inner sep=0pt,minimum size=0pt] {#4};}%
	}%
\newcommand\myHalfLongEdge[4]{\draw[#3,thick,endpath] ($(#1) + (#2:1.5)$) -- (#1);\draw ($(#1) + (#2:0.75)$) node[#3,fill=white,draw=white, font=\scriptsize,inner sep=0pt,minimum size=0pt] {#4};}%
\newcommand\mySubEdge[4]{
	\draw[sedge,sedgemixed]  ($(#1) + (#2:0.85)$) 
	 to node[above=-2pt,\myPurple,nodelabel,midway] {$S$} (#1);%
\ifthenelse{\equal{#4}{}}{}{\draw ($(#1) + (#2:0.4)$) node[#3,nodelabel] {#4};}%
\draw[sedgemixed,thick] ($(#1) + (#2+30:0.85)$)  to node[above=-2pt,\myPurple,nodelabel,near start] {$S$} (#1);%
\draw[sedgemixed,thick] ($(#1) + (#2-30:0.85)$)  to node[above=-2pt,\myPurple,nodelabel,near start] {$S$} (#1);%
}%
\newcommand\mySubRedEdge[4]{
\draw[blue,thick] ($(#1) + (#2+30:0.85)$)  to node[above=-2pt,blue,nodelabel,near start] {$P_2$} (#1);%
\draw[blue,thick] ($(#1) + (#2-30:0.85)$)   to node[above=-2pt,blue,nodelabel,near start] {$P_2$} (#1);%
\draw[sedge,red]  ($(#1) + (#2:0.85)$) 
to node[above=-2pt,red,nodelabel,midway] {$P_1$} (#1);%

}%
\newcommand\mySubBlueEdge[4]{
	\draw[red,thick] ($(#1) + (#2+30:0.85)$)  to node[above=-2pt,red,nodelabel,near start] {$P_1$} (#1);%
	\draw[red,thick] ($(#1) + (#2-30:0.85)$)   to node[above=-2pt,red,nodelabel,near start] {$P_1$} (#1);%
\draw[sedge,blue]  ($(#1) + (#2:0.85)$) 
to node[above=-2pt,blue,nodelabel,midway] {$P_2$} (#1);%
	
}%
\newcommand\Mynodes{} 
\newcommand\Myspecialnodes{} 
\newcommand\MyEdgesBefore{} 
\newcommand\CfN{}%
\newcommand\CfL{}%
\newcommand\MyEdgesRemoved{} 
\newcommand\MyEdgesAfter{} 
\newcommand\MyEdgesRemovedBis{} 
\newcommand\MyEdgesAfterBis{} 
\newcommand\MyEdgesRemovedTer{} 
\newcommand\MyEdgesAfterTer{} 
\newcommand\MyEdgesRemovedQua{}
\newcommand\MyEdgesAfterQua{}
\newcommand\MyEdgesRemovedQui{}
\newcommand\MyEdgesAfterQui{}
\newcommand\MyEdgesRemovedSix{}
\newcommand\MyEdgesAfterSix{}
\newcommand\myPurple{red!50!blue}%
\tikzset{%
bicolor/.style 2 args={draw=#1!50!#2},
}%
\tikzset{hatch distance/.store in=\hatchdistance,hatch distance=5pt,hatch thickness/.store in=\hatchthickness,hatch thickness=5pt}
\pgfqpoint{\hatchdistance}{\hatchdistance}}
\pgfqpoint{\hatchdistance}{\hatchdistance}}
\pgfqpoint{\hatchdistance}{\hatchdistance}}
\tikzset{snake it/.style={decorate, decoration=snake}}%
\tikzstyle{mynode}=[draw=black,minimum size=8pt,inner sep=0pt,circle]
\tikzstyle{whitenode}=[mynode,circle,fill=white]
\tikzstyle{rednode}=[mynode, fill=red, pattern=horizontal hatch,hatch distance=3pt,hatch thickness=1pt,pattern color=red]
\tikzstyle{bluenode}=[mynode, pattern=vertical hatch,hatch distance=3pt,hatch thickness=1pt,pattern color=blue]
\tikzstyle{purplenode}=[mynode, fill=\myPurple, pattern=grid hatch,hatch distance=3pt,hatch thickness=1pt,pattern color=\myPurple]
\tikzstyle{blacknode}=[mynode,circle,fill=black]
\tikzstyle{nodelabel}=[fill=white,draw=white, font=\scriptsize,inner sep=0pt,minimum size=0pt]
\tikzstyle{contactnode}=[whitenode]
\tikzstyle{graynode}=[mynode,circle,gray,fill=gray]
\tikzstyle{blacksquarenode}=[mynode,rectangle,fill=black]
\tikzstyle{oddnode}=[mynode,diamond]
\tikzstyle{evennode}=[mynode,rectangle]
\tikzstyle{ghost}=[mynode,draw=white,circle]
\tikzstyle{path}=[decorate,decoration=snake, thick]
\tikzstyle{sedgemixed}=[thick,\myPurple,-]
\tikzstyle{spathmixed}=[thick,\myPurple,path]
\tikzstyle{spath}=[->>,path,red,thick]  
\tikzstyle{sedge}=[->>,thick,red] 
\tikzstyle{endpath}=[->] 
\tikzstyle{optedge}=[dashdotted]  
\newcommand\MyScale{0.8}%
\newcommand\tkSymbol[1]{%
    \begin{tikzpicture}[scale=\MyScale, every node/.style={scale=\MyScale}, auto]%
        \ifthenelse{\isundefined{\NoFigures}}{%
                #1%
        }{}%
    \end{tikzpicture}%
}
\newcommand\tkcf[1]{
    #1%
    \begin{tikzpicture}[scale=\MyScale, every node/.style={scale=\MyScale}, auto]%
        \ifthenelse{\isundefined{\NoFigures}}{%
                \Mynodes
                \Myspecialnodes
                \MyEdgesBefore
        }{}%
                \node (u000) at (0,-0.6) {\CfN};%
    \end{tikzpicture}%
}%
\newcommand\ncf[1]{#1{\CfN}}%
\newcommand\Ncf[1]{\ncf{#1}}%
\newcommand\NcfP[2]{\ncf{#1} \ensuremath{\oplus} \ncf{#2}}%
\newcommand\TikzRedefOdd{
    \tikzstyle{rednode}=[fill=white,draw=black,mynode]
    \tikzstyle{bluenode}=[fill=white,draw=black,mynode]
    \tikzstyle{purplenode}=[fill=white,draw=black,mynode]
    \tikzstyle{evennode}=[mynode,diamond]
}%
\newcommand\tikzPartRule[1]{%
\begin{tikzpicture}[scale=\MyScale, every node/.style={scale=\MyScale},auto]%
    #1%
	\node (u000) at (0,-0.78) {}; 
	\node (u001) at (1.2,0) {};
\end{tikzpicture}%
}%
\newcommand\OrdonneeFleche{1}
\newcommand\drawRule[1]{%
        \tkcf{} %
        \petiteFlecheACentrer{\OrdonneeFleche}%
        \tikzstyle{endpath}=[->] %
        \tikzPartRule{\TikzRedefOdd \Mynodes \MyEdgesRemoved} %
        \petiteFlecheACentrer{\OrdonneeFleche}%
        \tikzstyle{endpath}=[-] 
        \tikzPartRule{\Myspecialnodes \Mynodes \MyEdgesAfter}\\
}%
\newcommand\drawDoubleRule[2]{ 
        \tkcf{}%
        \begin{minipage}{0.60\linewidth}%
            \petiteFlecheACentrer{\OrdonneeFleche}%
            \tikzstyle{endpath}=[->] %
            \tikzPartRule{\TikzRedefOdd \Mynodes \MyEdgesRemoved}%
            \petiteFlecheACentrer{\OrdonneeFleche}%
            \tikzstyle{endpath}=[] %
            \tikzPartRule{\Myspecialnodes \Mynodes \MyEdgesAfter}%

            \petiteFlecheACentrer{\OrdonneeFleche}%
            \tikzstyle{endpath}=[->] %
            \tikzPartRule{\Mynodes \MyEdgesRemovedBis}%
            \petiteFlecheACentrer{\OrdonneeFleche}%
            \tikzstyle{endpath}=[] %
            \tikzPartRule{\Myspecialnodes \Mynodes \MyEdgesAfterBis}%
        \end{minipage}\\
}%
\newcommand\drawTripleRule[2]{ 
        \tkcf{}%
        \begin{minipage}{0.60\linewidth}%
            $\leadsto$ \tikzstyle{endpath}=[->]
            \tikzPartRule{\TikzRedefOdd \Mynodes \MyEdgesRemoved}%
            $\leadsto$ \tikzstyle{endpath}=[-]
            \tikzPartRule{\Myspecialnodes \Mynodes \MyEdgesAfter}%

            $\leadsto$ \tikzstyle{endpath}=[->]
            \tikzPartRule{\Mynodes \MyEdgesRemovedBis}%
            $\leadsto$ \tikzstyle{endpath}=[-]
            \tikzPartRule{\Myspecialnodes \Mynodes \MyEdgesAfterBis}%

            $\leadsto$ \tikzstyle{endpath}=[->]
            \tikzPartRule{\Mynodes \MyEdgesRemovedTer}%
            $\leadsto$ \tikzstyle{endpath}=[-]
            \tikzPartRule{\Myspecialnodes \Mynodes \MyEdgesAfterTer}%
        \end{minipage}\\
}%
\newcommand\drawSixtRule[2]{ 
        \tkcf{}%
        \begin{minipage}{0.60\linewidth}%
            \petiteFlecheACentrer{\OrdonneeFleche}%
            \tikzstyle{endpath}=[->]%
            \tikzPartRule{\TikzRedefOdd \Mynodes \MyEdgesRemoved}%
            \petiteFlecheACentrer{\OrdonneeFleche}%
            \tikzstyle{endpath}=[-]%
            \tikzPartRule{\Myspecialnodes \Mynodes \MyEdgesAfter}%

            \petiteFlecheACentrer{\OrdonneeFleche}%
            \tikzstyle{endpath}=[->]%
            \tikzPartRule{\Mynodes \MyEdgesRemovedBis}%
            \petiteFlecheACentrer{\OrdonneeFleche}%
            \tikzstyle{endpath}=[-]%
            \tikzPartRule{\Myspecialnodes \Mynodes \MyEdgesAfterBis}%

            \petiteFlecheACentrer{\OrdonneeFleche}%
            \tikzstyle{endpath}=[->]%
            \tikzPartRule{\Mynodes \MyEdgesRemovedTer}%
            \petiteFlecheACentrer{\OrdonneeFleche}%
            \tikzstyle{endpath}=[-]%
            \tikzPartRule{\Myspecialnodes \Mynodes \MyEdgesAfterTer}%

            \petiteFlecheACentrer{\OrdonneeFleche}%
            \tikzstyle{endpath}=[->]%
            \tikzPartRule{\Mynodes \MyEdgesRemovedQua}%
            \petiteFlecheACentrer{\OrdonneeFleche}%
            \tikzstyle{endpath}=[-]%
            \tikzPartRule{\Myspecialnodes \Mynodes \MyEdgesAfterQua}%

            \petiteFlecheACentrer{\OrdonneeFleche}%
            \tikzstyle{endpath}=[->]%
            \tikzPartRule{\Mynodes \MyEdgesRemovedQui}%
            \petiteFlecheACentrer{\OrdonneeFleche}%
            \tikzstyle{endpath}=[-]%
            \tikzPartRule{\Myspecialnodes \Mynodes \MyEdgesAfterQui}%

            \petiteFlecheACentrer{\OrdonneeFleche}%
            \tikzstyle{endpath}=[->]%
            \tikzPartRule{\Mynodes \MyEdgesRemovedSix}%
            \petiteFlecheACentrer{\OrdonneeFleche}%
            \tikzstyle{endpath}=[-]%
            \tikzPartRule{\Myspecialnodes \Mynodes \MyEdgesAfterSix}%
        \end{minipage}\\
}%
\newcommand\drawRuleSansConfig{%
		\begin{tikzpicture}[scale=\MyScale, every node/.style={scale=\MyScale}, auto]
		\node (name) at (0,1) {\CfN};
		\node (q) at (0,0) {};
		\end{tikzpicture}%
        \petiteFlecheACentrer{\OrdonneeFleche}%
        \tikzstyle{endpath}=[->] %
        \tikzPartRule{\TikzRedefOdd \Mynodes \MyEdgesRemoved}%
        \petiteFlecheACentrer{\OrdonneeFleche}%
        \tikzstyle{endpath}=[-] 
        \tikzPartRule{\Myspecialnodes \Mynodes \MyEdgesAfter}\\
}%
\newcommand{\labelUOne}{$u_1$}
\newcommand{\labelUTwo}{$u_2$}
\newcommand{\labelV}{$v$}
\newcommand{\labelVP}{$v'$}
\newcommand{\labelVPP}{$v''$}
\newcommand{\labelVOne}{$v_1$}
\newcommand{\labelVTwo}{$v_2$}
\newcommand{\labelVThree}{$v_3$}
\newcommand{\labelVFour}{$v_4$}
\newcommand{\resetLabels}{
	\renewcommand{\labelUOne}{$u_1$}
	\renewcommand{\labelUTwo}{$u_2$}
	\renewcommand{\labelV}{$v$}
	\renewcommand{\labelVP}{$v'$}
	\renewcommand{\labelVPP}{$v''$}
	\renewcommand{\labelVOne}{$v_1$}
	\renewcommand{\labelVTwo}{$v_2$}
	\renewcommand{\labelVThree}{$v_3$}
	\renewcommand{\labelVFour}{$v_4$}
}
\newcommand{\cfCFmFm}{%
    \renewcommand\Mynodes{}%
    \renewcommand\Myspecialnodes{%
        \node[blacknode,label=below:$u_1$] (u1) at (0,1) {};
        \node[blacknode,label=below:$u_2$] (u2) at (1.2,1) {};
    }%
    \renewcommand\MyEdgesBefore{%
        \myHalfEdge{u2}{0}{optedge, -}{}%
        \myHalfEdge{u2}{45}{optedge, -}{}%
        \myHalfEdge{u2}{-45}{optedge, -}{}%
        \myCEdge{u1}{u2}{path}{}
        \myHalfEdge{u1}{180}{optedge, -}{}%
        \myHalfEdge{u1}{135}{optedge, -}{}%
        \myHalfEdge{u1}{225}{optedge, -}{}%
    }%
    \renewcommand{\CfN}{$4^- \oplus 4^-$}
    \renewcommand{\CfL}{fig:C4m4m}%
}%
\newcommand{\cfCZC}{%
    \renewcommand\Mynodes{%
        \node[whitenode,label=above:$v_1$] (v1) at (-1,2) {};%
        \node[whitenode,label=above:$v_2$] (v2) at (-1,1) {};%
        \node[whitenode,label=above:$v_3$] (v3) at (-1,0) {};%
        \node[whitenode,label=above:$v_4$] (v4) at (2,2) {};%
        \node[whitenode,label=above:$v_5$] (v5) at (2,1) {};%
        \node[whitenode,label=above:$v_6$] (v6) at (2,0) {};%
    }%
    \renewcommand\Myspecialnodes{%
        \node[blacknode,label=below:$u_1$] (u1) at (0,1) {};%
        \node[blacknode,label=below:$u_2$] (u2) at (1,1) {};%
    }%
    \renewcommand\MyEdgesBefore{%
        \myCEdge{u1}{v1}{optedge}{}%
        \myCEdge{u1}{v2}{optedge}{}%
        \myCEdge{u1}{v3}{optedge}{}%
        \myCEdge{u1}{u2}{path}{}%
        \myCEdge{u2}{v4}{optedge}{}%
        \myCEdge{u2}{v5}{optedge}{}%
        \myCEdge{u2}{v6}{optedge}{}%
    }%
    \renewcommand{\CfN}{$(0C)$}%
    \renewcommand{\CfL}{fig:CZC}%
}%
\newcommand{\cfCOC}{%
    \renewcommand\Mynodes{%
        \node[whitenode,label=above:$v_2$] (v2) at (-1,0) {};%
        \node[whitenode,label=above:$v_1$] (v1) at (-1,1) {};%
        \node[whitenode,label=above:$v$] (v) at (0.5,2) {};%
        \node[whitenode,label=above:$v_3$] (v3) at (2,0) {};%
        \node[whitenode,label=above:$v_4$] (v4) at (2,1) {};%
        \node[graynode] (uvm) at (0.5,1) {};%
    }%
    \renewcommand\Myspecialnodes{%
        \node[blacknode,label=below:$u_1$] (u1) at (0,1) {};%
        \node[blacknode,label=below:$u_2$] (u2) at (1,1) {};%
    }%
    \renewcommand\MyEdgesBefore{%
        \myCEdge{u1}{v}{}{}%
        \myCEdge{u2}{v}{}{}%
        \myCEdge{u1}{v1}{optedge}{}%
        \myCEdge{u1}{v2}{optedge}{}%
        \myCEdge{u1}{u2}{}{}%
        \myCEdge{u2}{v3}{optedge}{}%
        \myCEdge{u2}{v4}{optedge}{}%
    }%
    \renewcommand{\CfN}{$(1C)$}%
    \renewcommand{\CfL}{fig:COC}%
}%
\newcommand{\cfCOCa}{%
    \renewcommand\Mynodes{
        \node[whitenode,label=above:$v_2$] (v2) at (-1,0) {};
        \node[whitenode,label=above:$v_1$] (v1) at (-1,1) {};
        \node[whitenode,label=above:$v$] (v) at (0.5,2) {};
        \node[whitenode,label=above:$v_4$] (v4) at (2,0) {};
        \node[whitenode,label=above:$v_5$] (v5) at (2,1) {};
        \node[graynode] (uvm) at (0.5,1) {};
    }%
    \renewcommand\Myspecialnodes{%
        \node[blacknode,label=below:$u_1$] (u1) at (0,1) {};
        \node[blacknode,label=below:$u_2$] (u2) at (1,1) {};
    }%
    \renewcommand\MyEdgesBefore{%
        \myCEdge{u1}{v}{}{}%
        \myCEdge{u2}{v}{}{}%
        \myCEdge{v1}{v}{dashed}{}
        \myCEdge{u1}{v1}{}{}
        \myCEdge{u1}{v2}{optedge}{}
        \myCEdge{u1}{u2}{}{}
        \myCEdge{u2}{v4}{optedge}{}
        \myCEdge{u2}{v5}{optedge}{}
    }%
    \renewcommand{\CfN}{$(1Ca)$}%
    \renewcommand{\CfL}{fig:COCa}%
}%
\newcommand{\cfCTC}{%
    \renewcommand\Mynodes{%
        \node[whitenode,label=above:$v_1$] (v1) at (-1,0) {};%
        \node[whitenode,label=above:$v_2$] (v2) at (3,0) {};%
        \node[whitenode,label=above:$v$] (v) at (1,1) {};%
        \node[whitenode,label=above:$v'$] (vp) at (1,2) {};%
        \node[graynode] (uvm) at (1,0) {};%
    }%
    \renewcommand\Myspecialnodes{%
        \node[blacknode,label=above left:$u_1$] (u1) at (0,0) {};%
        \node[blacknode,label=above right:$u_2$] (u2) at (2,0) {};%
    }%
    \renewcommand\MyEdgesBefore{%
        \myCEdge{u1}{v}{}{}%
        \myCEdge{u1}{vp}{}{}%
        \myCEdge{u2}{vp}{}{}%
        \myCEdge{u2}{v}{}{}%
        \myCEdge{u1}{v1}{optedge}{}%
        \myCEdge{u1}{u2}{}{}%
        \myCEdge{u2}{vp}{}{}%
        \myCEdge{u2}{v2}{optedge}{}%
    }%
    \renewcommand{\CfN}{$(2C)$}%
    \renewcommand{\CfL}{fig:CTCj}%
}%
\newcommand{\cfCFC}{%
    \renewcommand\Mynodes{%
        \node[whitenode,label=below:$v$] (v) at (1,1) {};%
        \node[whitenode,label=below:$v'$] (vp) at (1,2) {};%
        \node[whitenode,label=above:$v''$] (vs) at (1,3) {};%
        \node[graynode] (uvm) at (1,0) {};%
    }%
    \renewcommand\Myspecialnodes{%
        \node[blacknode,label=left:$u_1$] (u1) at (0,0) {};%
        \node[blacknode,label=right:$u_2$] (u2) at (2,0) {};%
    }%
    \renewcommand\MyEdgesBefore{%
        \myCEdge{u1}{v}{}{}%
        \myCEdge{u1}{vp}{}{}%
        \myCEdge{u2}{vp}{}{}%
        \myCEdge{u2}{v}{}{}%
        \myCEdge{u1}{vs}{}{}%
        \myCEdge{u1}{uvm}{}{}%
        \myCEdge{u2}{uvm}{}{}%
        \myCEdge{u2}{vp}{}{}%
        \myCEdge{u2}{vs}{}{}%
    }%
    \renewcommand{\CfN}{$(3C)$}%
    \renewcommand{\CfL}{fig:CFC}%
}%
\newcommand{\cfXXn}{%
    \renewcommand\Mynodes{
        \node[whitenode,label=below:$v$] (v) at (1,1) {};
        \node[whitenode,label=left:$v'$] (vp) at (1,2) {};
        \node[whitenode,label=above:$v''$] (vs) at (1,3) {};
        \node[graynode] (uvm) at (1,0) {};
    }%
    \renewcommand\Myspecialnodes{%
        \node[blacknode,label=left:$u_1$] (u1) at (0,0) {};
        \node[blacknode,label=right:$u_2$] (u2) at (2,0) {};

    }%
    \renewcommand\MyEdgesBefore{%
    	\myCEdge{v}{vp}{dashed}{} %
        \myCEdge{vp}{vs}{dashed}{} %
        \myCEdge{u1}{v}{}{}%
        \myCEdge{u1}{vp}{}{}%
        \myCEdge{u2}{vp}{}{}%
        \myCEdge{u2}{v}{}{}%
		\draw (u1) edge [thick, bend left = 20] (vs);
        \myCEdge{u1}{uvm}{}{}
        \myCEdge{uvm}{u2}{}{}
        \myCEdge{u2}{vp}{}{}
		\draw (u2) edge [thick, bend right = 20] (vs);
    }%
    \renewcommand{\CfN}{\nameXXn}%
    \renewcommand{\CfL}{fig:Cn}%
}%
\newcommand{\RuleXXn}{%
    \cfXXn{}%
    \renewcommand\MyEdgesRemoved{
    \myCEdge{v}{vp}{blue}{$Q$}
    \myCEdge{vp}{vs}{green}{$R$}
    }%
    \renewcommand\MyEdgesAfter{%
    	\myCEdge{v}{vp}{dashed}{} %
        \myCEdge{vp}{vs}{dashed}{} %
		\draw (u1) edge [red, thick, bend left = 20] (vs);
		\draw ($(u1)!0.5!(vs)+(-0.38,0)$) node[red,nodelabel] {$\ P\ $};
        \myCEdge{u1}{uvm}{red}{$P$}
        \myCEdge{uvm}{u2}{red}{$P$}
        \myCEdge{v}{u2}{red}{$P$}
        \myCEdge{v}{u1}{blue}{$Q$}
        \myCEdge{u1}{vp}{blue}{$Q$}
        \myCEdge{vp}{u2}{green}{$R$}
		\draw (u2) edge [green, thick, bend right = 20] (vs);
		\draw ($(u2)!0.5!(vs)+(0.38,0)$) node[green,nodelabel] {$\ R\ $};
    }%
    \drawRule{fig:Rn}%
}%
\newcommand{\cfXXo}{%
    \renewcommand\Mynodes{
        \node[whitenode,label=below:$v$] (v) at (1,1) {};
        \node[whitenode,label=left:$v'$] (vp) at (1,2) {};
        \node[whitenode,label=above:$v''$] (vs) at (1,3) {};
        \node[graynode] (uvm) at (1,0) {};
    }%
    \renewcommand\Myspecialnodes{%
        \node[blacknode,label=left:$u_1$] (u1) at (0,0) {};
        \node[blacknode,label=right:$u_2$] (u2) at (2,0) {};

    }%
    \renewcommand\MyEdgesBefore{%
    	\myCurvEdge{vp}{vs}{dashed}{}{} %
        \myCEdge{u1}{v}{}{}%
        \myCEdge{u1}{vp}{}{}%
        \myCEdge{u2}{vp}{}{}%
        \myCEdge{u2}{v}{}{}%
        \myCEdge{vp}{v}{}{}%
		\draw (u1) edge [thick, bend left = 20] (vs);
        \myCEdge{u1}{uvm}{}{}
        \myCEdge{uvm}{u2}{}{}
        \myCEdge{u2}{vp}{}{}
		\draw (u2) edge [thick, bend right = 20] (vs);
    }%
    \renewcommand{\CfN}{\nameXXo}%
    \renewcommand{\CfL}{fig:Co}%
}%
\newcommand{\RuleXXo}{%
    \cfXXo{}%
    \renewcommand\MyEdgesRemoved{
    \myCEdge{v}{vp}{blue}{$Q$}
    \myCEdge{vp}{vs}{green}{$R$}
    }%
    \renewcommand\MyEdgesAfter{%
    	\myCurvEdge{vp}{vs}{dashed}{}{} %
		\draw (u1) edge [red, thick, bend left = 20] (vs);
		\draw ($(u1)!0.5!(vs)+(-0.38,0)$) node[red,nodelabel] {$\ P\ $};
        \myCEdge{u1}{uvm}{red}{$P$}
        \myCEdge{uvm}{u2}{red}{$P$}
        \myCEdge{v}{u2}{red}{$P$}
        \myCEdge{v}{vp}{red}{$P$}
        \myCEdge{v}{u1}{blue}{$Q$}
        \myCEdge{u2}{uvm}{red}{$P$}
        \myCEdge{uvm}{u1}{red}{$P$}
        \myCEdge{u1}{vp}{blue}{$Q$}
        \myCEdge{vp}{u2}{green}{$R$}
		\draw (u2) edge [green, thick, bend right = 20] (vs);
		\draw ($(u2)!0.5!(vs)+(0.38,0)$) node[green,nodelabel] {$\ R\ $};
    }%
    \drawRule{fig:Ro}%
}%
\newcommand{\cfCrr}{%
    \renewcommand\Mynodes{
        \node[whitenode,label=above:$v_1$] (v1) at (-1,0+0.2) {};
        \node[whitenode,label=above:$v_2$] (v2) at (1.5,1+0.2) {};
        \node[whitenode,label=left:$v$] (v) at (1,0.5+0.2) {};
        \node[whitenode,label=above:$v'$] (vp) at (1,2.5+0.2) {};
        \node[graynode] (uvm) at (1,0) {};
    }%
    \renewcommand\Myspecialnodes{%
        \node[blacknode,label=below:$u_1$] (u1) at (0,0+0.2) {};
        \node[blacknode,label=below:$u_2$] (u2) at (2.5,0+0.2) {};

    }%
    \renewcommand\MyEdgesBefore{%
    	\myCEdge{vp}{v}{dashed}{} %
        \myCEdge{u1}{v}{}{}%
        \myCEdge{u1}{vp}{}{}%
        \myCEdge{u2}{vp}{}{}%
        \myCEdge{u2}{v}{}{}%
        \myCEdge{u1}{v1}{optedge}{}
        \myCEdge{u1}{u2}{}{}
        \myCEdge{u2}{vp}{}{}
        \myCEdge{u2}{v2}{}{}
    }%
    \renewcommand{\CfN}{$(C_r)$}%
    \renewcommand{\CfL}{fig:Crr}%
}%
\newcommand{\cfXXh}{%
    \renewcommand\Mynodes{
        \node[whitenode,label=above:$v_1$] (v1) at (-1,0+0.2) {};
        \node[whitenode,label=below:$v_2$] (v2) at (1.5,1+0.2) {};
        \node[whitenode,label=left:$v$] (v) at (1,0.5+0.2) {};
        \node[whitenode,label=above:$v'$] (vp) at (1,2.5+0.2) {};
        \node[graynode] (uvm) at (1,0+0.2) {};
    }%
    \renewcommand\Myspecialnodes{%
        \node[blacknode,label=below:$u_1$] (u1) at (0,0+0.2) {};
        \node[blacknode,label=below:$u_2$] (u2) at (2.5,0+0.2) {};
    }%
    \renewcommand\MyEdgesBefore{%
    	\myCEdge{v2}{vp}{dashed}{}
        \myCEdge{vp}{v}{dashed}{}
        \myCEdge{u1}{v}{}{}%
        \myCEdge{u1}{vp}{}{}%
        \myCEdge{u2}{vp}{}{}%
        \myCEdge{u2}{v}{}{}%
        \myCEdge{u1}{v1}{optedge}{}
        \myCEdge{u1}{u2}{}{}
        \myCEdge{u2}{vp}{}{}
        \myCEdge{u2}{v2}{}{}
    }%
    \renewcommand{\CfN}{\nameXXh}%
    \renewcommand{\CfL}{fig:Ch}%
}%
\newcommand{\RuleXXh}{%
    \cfXXh{}%
    \renewcommand\MyEdgesRemoved{
        \myCEdge{v}{vp}{blue}{$Q$}
        \myCEdge{vp}{v2}{green}{$R$}
    }%
    \renewcommand\MyEdgesAfter{%
    	\myCEdge{v2}{vp}{dashed}{}
        \myCEdge{vp}{v}{dashed}{}
        \myCEdge{u1}{v1}{red,optedge}{$P$}
        \myCEdge{u1}{uvm}{red}{$P$}
        \myCEdge{uvm}{u2}{red}{$P$}
        \myCEdge{v}{u2}{red}{$P$}
        \myCEdge{v}{u1}{blue}{$Q$}
        \myCEdge{u1}{vp}{blue}{$Q$}
        \myCEdge{vp}{u2}{green}{$R$}
        \myCEdge{u2}{v2}{green}{$R$}
    }%
    \drawRule{fig:Rh}%
}%
\newcommand{\cfXXi}{%
    \renewcommand\Mynodes{
        \node[whitenode,label=above:$v_1$] (v1) at (-1,0+0.2) {};
        \node[whitenode,label=below:$v_2$] (v2) at (1.5,1+0.2) {};
        \node[whitenode,label=left:$v$] (v) at (1,0.5+0.2) {};
        \node[whitenode,label=above:$v'$] (vp) at (1,2.5+0.2) {};
        \node[graynode] (uvm) at (1,0+0.2) {};
    }%
    \renewcommand\Myspecialnodes{%
        \node[blacknode,label=below:$u_1$] (u1) at (0,0+0.2) {};
        \node[blacknode,label=below:$u_2$] (u2) at (2.5,0+0.2) {};

    }%
    \renewcommand\MyEdgesBefore{%
    	\myDEdge{vp}{v}{}%
        \myCEdge{v2}{vp}{}{}
        \myCEdge{v2}{v}{}{}
        \myCEdge{u1}{v}{}{}%
        \myCEdge{u1}{vp}{}{}%
        \myCEdge{u2}{vp}{}{}%
        \myCEdge{u2}{v}{}{}%
        \myCEdge{u1}{v1}{optedge}{}
        \myCEdge{u1}{u2}{}{}
        \myCEdge{u2}{vp}{}{}
        \myCEdge{u2}{v2}{}{}
    }%
    \renewcommand{\CfN}{\nameXXi}%
    \renewcommand{\CfL}{fig:Ci}%
}%
\newcommand{\RuleXXi}{%
    \cfXXi{}%
    \renewcommand\MyEdgesRemoved{
        \myCEdge{v}{vp}{blue}{$Q$}
        \myCEdge{vp}{v2}{green}{$R$}
        \myDEdge{v2}{v}{}%
}%
    \renewcommand\MyEdgesAfter{%

    	\myDEdge{vp}{v}{}%
        \myCEdge{u1}{v1}{red,optedge}{$P$}
        \myCEdge{u1}{uvm}{red}{$P$}
        \myCEdge{uvm}{u2}{red}{$P$}
        \myCEdge{v}{u2}{red}{$P$}
        \myCEdge{v}{v2}{red}{$P$}
        \myCEdge{vp}{v2}{red}{$P$}
        \myCEdge{v}{u1}{blue}{$Q$}
        \myCEdge{u1}{vp}{blue}{$Q$}
        \myCEdge{vp}{u2}{green}{$R$}
        \myCEdge{u2}{v2}{green}{$R$}
    }%
    \drawRule{fig:Ri}%
}%
\newcommand{\cfXXj}{%
    \renewcommand\Mynodes{
        \node[whitenode,label=below:$v$] (v) at (1,1+0.2) {};
        \node[evennode,label=left:$v'$] (vp) at (1,2+0.2) {};
        \node[graynode] (uvm) at (1,0+0.2) {};
    }%
    \renewcommand\Myspecialnodes{%
        \node[blacknode,label=below:$u_1$] (u1) at (0,0+0.2) {};
        \node[blacknode,label=below:$u_2$] (u2) at (2,0+0.2) {};

    }%
    \renewcommand\MyEdgesBefore{%
    	\myCEdge{vp}{v}{dashed}{}
        \myCEdge{u1}{v}{}{}%
        \myCEdge{u1}{vp}{}{}%
        \myCEdge{u2}{v}{}{}%
        \myCEdge{u2}{vp}{}{}
        \myCEdge{u2}{uvm}{}{}
        \myCEdge{u1}{uvm}{}{}
    }%
    \renewcommand{\CfN}{\nameXXj}%
    \renewcommand{\CfL}{fig:Cj}%
}%
\newcommand{\RuleXXj}{%
    \cfXXj{}%
    \renewcommand\MyEdgesRemoved{
        \myCEdge{v}{vp}{blue}{$Q$}
        \myHalfEdge{vp}{90}{green}{$R$}%
    }%
    \renewcommand\MyEdgesAfter{%
    	\myCEdge{vp}{v}{dashed}{}
        \myCEdge{u1}{uvm}{red}{$P$}
        \myCEdge{uvm}{u2}{red}{$P$}
        \myCEdge{v}{u2}{red}{$P$}
        \myCEdge{v}{u1}{blue}{$Q$}
        \myCEdge{u1}{vp}{blue}{$Q$}
        \myHalfEdge{vp}{90}{green}{$R$}%
        \myCEdge{vp}{u2}{green}{$R$}
    }%
    \drawRule{fig:Rj}%
}%
\newcommand{\cfXXk}{%
    \renewcommand\Mynodes{
        \node[oddnode,label=left:$v$] (v) at (1,1.2+0.2) {};
        \node[oddnode,label=left:$v'$] (vp) at (1,2.5+0.2) {};
        \node[graynode] (uvm) at (1,0+0.2) {};
    }%
    \renewcommand\Myspecialnodes{%
        \node[blacknode,label=below:$u_1$] (u1) at (0,0+0.2) {};
        \node[blacknode,label=below:$u_2$] (u2) at (2,0+0.2) {};

    }%
    \renewcommand\MyEdgesBefore{%
    	\myDEdge{vp}{v}{}%
        \myCEdge{u1}{v}{}{}%
		\path (u1) edge [thick, bend left = 10] (vp);
		\path (u2) edge [thick, bend right = 10] (vp);
        \myCEdge{u2}{v}{}{}%
        \myCEdge{u2}{uvm}{}{}
        \myCEdge{u1}{uvm}{}{}
    }%
    \renewcommand{\CfN}{\nameXXk}%
    \renewcommand{\CfL}{fig:Ck}%
}%
\newcommand{\RuleXXk}{%
    \cfXXk{}%
    \renewcommand\MyEdgesRemoved{
        \myHalfEdge{v}{-90}{blue}{$Q$}%
        \myHalfEdge{vp}{90}{green}{$R$}%
        \myDEdge{vp}{v}{}%
    }%
    \renewcommand\MyEdgesAfter{%
        \myDEdge{vp}{v}{}%
		\draw (u1) edge [red, thick, bend left = 10] (vp);
		\draw ($(u1)!0.5!(vp)+(-0.15,0)$) node[red,nodelabel] {$\ P\ $};
        \myCEdge{u1}{uvm}{red}{$P$}
        \myCEdge{uvm}{u2}{red}{$P$}
        \myCEdge{v}{u2}{red}{$P$}
        \myHalfEdge{v}{-90}{blue}{$Q$}%
        \myCEdge{u1}{v}{blue}{$Q$}
        \myHalfEdge{vp}{90}{green}{$R$}%
		\path (u2) edge [green, thick, bend right = 10] (vp);
		\draw ($(u2)!0.5!(vp)+(0.15,0)$) node[green,nodelabel] {$\ R\ $};
    }%
    \drawRule{fig:Rk}%
}%
\newcommand{\cfXXl}{%
    \renewcommand\Mynodes{%
		\node[whitenode,label={[label distance=-0.1cm]180:$v'$}] (v) at (1,1+0.2) {};%
        \node[whitenode,label=left:$v$] (vp) at (1,2+0.2) {};%
        \node[whitenode,label=below left:$v''$] (vpp) at (1,0+0.2) {};%
    }%
    \renewcommand\Myspecialnodes{%
        \node[blacknode,label=below:$u_1$] (u1) at (0,0+0.2) {};%
        \node[blacknode,label=below:$u_2$] (u2) at (2,0+0.2) {};%
    }%
    \renewcommand\MyEdgesBefore{%
        \myCEdge{u1}{v}{}{}%
        \myCEdge{u1}{vp}{}{}%
        \myCEdge{v}{vp}{}{}%
        \myCEdge{vpp}{v}{}{}%
        \myCEdge{u2}{vp}{}{}%
        \myCEdge{u2}{v}{}{}%
        \myCEdge{u2}{vp}{}{}%
        \myCEdge{u2}{vpp}{}{}%
        \myCEdge{u1}{vpp}{}{}%
    }%
    \renewcommand{\CfN}{\nameXXl}%
    \renewcommand{\CfL}{fig:Cl}%
}%
\newcommand{\RuleXXl}{%
    \cfXXl{}%
    \renewcommand\MyEdgesRemoved{%
        \myCEdge{v}{vp}{blue}{$Q$}%
        \myCEdge{v}{vpp}{green}{$R$}%
    }%
    \renewcommand\MyEdgesAfter{%
        \myCEdge{vp}{u1}{red}{}%
        \myCEdge{vp}{v}{red}{$P$}%
        \myCEdge{v}{vpp}{red}{$P$}%
        \myCEdge{vpp}{u2}{red}{$P$}%
        \myCEdge{v}{u1}{green}{$R$}%
        \myCEdge{u1}{vpp}{green}{$R$}%
        \myCEdge{v}{u2}{blue}{$Q$}%
        \myCEdge{u2}{vp}{blue}{$Q$}%
    }%
    \drawRule{fig:Rl}%
}%
\newcommand{\cfXXlp}{%
    \renewcommand\Mynodes{
        \node[whitenode,label=left:$v$] (v) at (1,2+0.2) {};
		\node[whitenode,label={[label distance=-0.1cm]180:$v'$}] (vp) at (1,1+0.2) {};%
        \node[whitenode,label=below:$v_1$] (v1) at (-1,0+0.2) {};
        \node[whitenode,label=below:$v_2$] (v2) at (3,0+0.2) {};
        \node[whitenode,label=below left:$v''$] (vpp) at (1,0+0.2) {};
    }%
    \renewcommand\Myspecialnodes{%
        \node[blacknode,label=below:$u_1$] (u1) at (0,0+0.2) {};
        \node[blacknode,label=below:$u_2$] (u2) at (2,0+0.2) {};
    }%
    \renewcommand\MyEdgesBefore{%
        \myCEdge{u1}{v}{}{}%
        \myCEdge{u1}{vp}{}{}%
        \myCEdge{v}{vp}{}{}%
        \myCEdge{vpp}{vp}{}{}%
        \myCEdge{u2}{v}{}{}%
        \myCEdge{u1}{v1}{}{}%
        \myCEdge{u2}{v2}{optedge}{}%
        \myCEdge{u2}{vpp}{}{}%
        \myCEdge{u1}{vpp}{}{}%
        \myCEdge{u2}{vp}{}{}%
        \myBentEdge{v1}{v}{optedge}{left}{0}%
    }%
    \renewcommand{\CfN}{\nameXXlp}%
    \renewcommand{\CfL}{fig:Clp}%
}%
\newcommand{\RuleXXlp}{%
    \cfXXlp{}%
    \renewcommand\MyEdgesRemoved{
        \myCEdge{v1}{v}{blue}{$Q$}
        \myCEdge{v}{vp}{green}{$R$}
        \myEdge{vp}{vpp}
    }%
    \renewcommand\MyEdgesAfter{%
        \myCEdge{u1}{v}{blue}{}%
        \myCEdge{u1}{vp}{red}{$P$}%
        \myCEdge{v}{vp}{red}{}%
        \myCEdge{vpp}{vp}{black}{}%
        \myCEdge{u1}{v1}{blue}{$Q$}%
        \myCEdge{u2}{vpp}{red}{$P$}%
        \myCEdge{u1}{vpp}{red}{$P$}%
        \myCEdge{u2}{vp}{green}{$R$}%
        \myCEdge{u2}{v}{green}{$R$}%
        \myCEdge{u2}{v2}{red,optedge}{}%
        \myBentEdge{v1}{v}{red,optedge}{left}{0}%
    }%
    \drawRule{fig:Rlp}%
}%
\newcommand{\cfXXm}{%
    \renewcommand\Mynodes{
		\node[whitenode,label={[label distance=-0.1cm]180:$v$}] (v) at (1,1+0.2) {};%
        \node[whitenode,label=left:$v'$] (vp) at (1,2+0.2) {};
        \node[whitenode,label=below:$v_1$] (v1) at (-1,0+0.2) {};
        \node[whitenode,label=below:$v_2$] (v2) at (3,0+0.2) {};
    }%
    \renewcommand\Myspecialnodes{%
        \node[blacknode,label=below:$u_1$] (u1) at (0,0+0.2) {};
        \node[blacknode,label=below:$u_2$] (u2) at (2,0+0.2) {};
    }%
    \renewcommand\MyEdgesBefore{%
        \myCEdge{u1}{v}{}{}%
        \myCEdge{u1}{vp}{}{}%
        \myCEdge{v}{vp}{}{}%
        \myCEdge{u2}{vp}{}{}%
        \myCEdge{u2}{v}{}{}%
        \myCEdge{u2}{vp}{}{}
        \myCEdge{u2}{u1}{}{}
        \myCEdge{u2}{v2}{optedge}{}%
        \myCEdge{u1}{v1}{optedge}{}%
    }%
    \renewcommand{\CfN}{\nameXXm}%
    \renewcommand{\CfL}{fig:Cm}%
}%
\newcommand{\RuleXXm}{%
    \cfXXm{}%
    \renewcommand\MyEdgesRemoved{
        \myCEdge{v}{vp}{blue}{$Q$}
    }%
    \renewcommand\MyEdgesAfter{%
        \myCEdge{v1}{u1}{red,optedge}{$P$}
        \myCEdge{vp}{u1}{red}{$P$}
        \myCEdge{vp}{v}{red}{$P$}
        \myCEdge{v}{u2}{red}{$P$}
        \myCEdge{v2}{u2}{red,optedge}{$P$}
        \myCEdge{v}{u1}{blue}{$Q$}
        \myCEdge{u2}{u1}{blue}{$Q$}
        \myCEdge{u2}{vp}{blue}{$Q$}
    }%
    \drawRule{fig:Rm}%
}%
\newcommand{\cfTTb}{%
    \renewcommand\Mynodes{%
        \node[whitenode,label=below:$v_1$] (v1) at (-1,0+1) {};%
        \node[whitenode,label=below:$v_2$] (v2) at (2,0+1) {};%
        \node[whitenode,label=above:$v$] (v) at ($(0,0+1) + (60:1)$) {};%
    }%
    \renewcommand\Myspecialnodes{%
        \node[blacknode,label=below:$u_1$] (u1) at (0,0+1) {};%
        \node[blacknode,label=below:$u_2$] (u2) at (1,0+1) {};%
    }%
    \renewcommand\MyEdgesBefore{%
        \myEdge{u1}{v1}%
        \myEdge{u1}{u2}%
        \myEdge{u2}{v2}%
        \myEdge{u1}{v}%
        \myEdge{u2}{v}%
        \myCEdge{v1}{v}{}{}%
        \myCEdge{v2}{v}{}{}%
    }%
    \renewcommand{\CfN}{$(33b)$}%
    \renewcommand{\CfL}{fig:CTTb}%
}%
\newcommand{\cfFFb}{%
    \renewcommand\Mynodes{%
        \node[whitenode,label=below:$v_1$] (v1) at (-0.5,1) {};%
        \node[whitenode,label=below:$v_2$] (v2) at (1.5,1) {};%
        \node[whitenode,label=below:$v_3$] (v3) at (1.8,0) {};%
        \node[whitenode,label=below:$v_4$] (v4) at (-0.8,0) {};%
        \node[whitenode,label=above:$v$] (v) at ($(0,1) + (60:1)$) {};%
    }%
    \renewcommand\Myspecialnodes{%
        \node[blacknode,label=below:$u_1$] (u1) at (0,1) {};%
        \node[blacknode,label=below:$u_2$] (u2) at (1,1) {};%
    }%
    \renewcommand\MyEdgesBefore{%
        \myEdge{u1}{v1}%
        \myEdge{u1}{u2}%
        \myEdge{u2}{v2}%
        \myEdge{u1}{v}%
        \myEdge{u2}{v}%
        \myEdge{v3}{u2}%
        \myCEdge{v4}{u1}{optedge}{}%
        \myCurvEdge{v3}{v}{}{}{in=0, out=45}%
        \myCurvEdge{v4}{v}{optedge}{}{in=180, out=135}%
        \myCEdge{v1}{v}{}{}%
        \myCEdge{v2}{v}{}{}%
    }%
    \renewcommand{\CfN}{$(44b)$}%
    \renewcommand{\CfL}{fig:CFFb}%
}%
\newcommand{\cfTwoAny}{%
    \renewcommand\Mynodes{%
        \node[whitenode,label=below:$v_1$] (v1) at (2,1) {};%
        \node[whitenode,label=below:$v_2$] (v2) at (2.3,0) {};%
        \node[whitenode,label=above:$v$] (v) at ($ (0,1) + (60:1) $) {};%
        \node[graynode] (uvm) at (0.5,1) {};%
    }%
    \renewcommand\Myspecialnodes{%
        \node[blacknode,label=below:$u_1$] (u1) at (0,1) {};%
        \node[blacknode,label=below:$u_2$] (u2) at (1,1) {};%
    }%
    \renewcommand\MyEdgesBefore{%
        \myEdge{u1}{u2}%
        \myCEdge{u2}{v1}{optedge}{}%
        \myEdge{u1}{v}%
        \myEdge{u2}{v}%
        \myCEdge{v2}{u2}{optedge}{}%
        \myCurvEdge{v2}{v}{optedge}{}{in=0, out=45}%
        \myCEdge{v1}{v}{optedge}{}%
    }%
    \renewcommand{\CfN}{$(2x)$}%
    \renewcommand{\CfL}{fig:CTwoAny}%
}%
\newcommand{\cfXXp}{%
    \renewcommand\Mynodes{
        \node[whitenode,label=right:\labelVOne] (v1) at (2,1) {};%
        \node[whitenode,label=below:\labelVTwo] (v2) at (2,0.5) {};%
        \node[whitenode,label=above:\labelV] (v) at ($ (0,1) + (60:1) $) {};%
    }%
    \renewcommand\Myspecialnodes{%
        \node[blacknode,label=below:\labelUOne] (u1) at (0,1) {};%
        \node[blacknode,label=below:\labelUTwo] (u2) at (1,1) {};%
    }%
    \renewcommand\MyEdgesBefore{%
        \myEdge{u1}{u2}%
        \myEdge{u2}{v1}%
        \myEdge{u1}{v}%
        \myEdge{u2}{v}%
        \myCEdge{v2}{u2}{optedge}{}%
        \myCEdge{v1}{v}{}{}%
    }%
    \renewcommand{\CfN}{\nameXXp}%
    \renewcommand{\CfL}{fig:Cp}%
}%
\newcommand{\relabeledXXp}[5]{
	\renewcommand{\labelUOne}{#1}
	\renewcommand{\labelUTwo}{#2}
	\renewcommand{\labelV}{#3}
	\renewcommand{\labelVOne}{#4}
	\renewcommand{\labelVTwo}{#5}
	\tkcf{\cfXXp}
	\resetLabels
}
\newcommand{\RuleXXp}{%
    \cfXXp{}%
    \renewcommand\MyEdgesRemoved{
        \myCEdge{v}{v1}{blue}{$Q$}
    }%
    \renewcommand\MyEdgesAfter{%
        \myCEdge{v}{u1}{blue}{$Q$}
        \myCEdge{u1}{u2}{blue}{$Q$}
        \myCEdge{v1}{u2}{blue}{$Q$}

        \myCEdge{v}{v1}{red}{$P$}
        \myCEdge{v}{u2}{red}{$P$}
        \myCEdge{v2}{u2}{optedge,red}{$P$}
    }%
    \drawRule{fig:Rp}%
}%
\newcommand{\cfCN}{%
    \renewcommand\Mynodes{
        \node[purplenode,label=below:$v_1$] (v1) at ($(0,1) + (210:1)$) {};%
        \node[purplenode,label=above:$v_2$] (v2) at ($(0,1) + (150:1)$) {};%
    }%
    \renewcommand\Myspecialnodes{%
        \node[blacknode,label=above:$u_1$] (u1) at (0,1) {};%
    }%
    \renewcommand\MyEdgesBefore{%
    	\mySubEdge{u1}{0}{}{}%
        \myEdge{u1}{v1}%
        \myEdge{u1}{v2}%
    }%
    \renewcommand{\CfN}{$(\C_N)$}%
    \renewcommand{\CfL}{fig:CCN}%
}%
\newcommand{\cfCNP}{%
    \renewcommand\Mynodes{%
        \node[whitenode,label=above:\labelVOne] (v1) at (1,2) {};%
        \node[whitenode,label=left:\labelVThree] (v3) at (1,0) {};%
        \node[whitenode,label=above:\labelVTwo] (v2) at (0,1) {};%
    }%
    \renewcommand\Myspecialnodes{%
		\node[blacknode,label={[label distance=-0.08cm]45:\labelUOne}] (u1) at (1,1) {};
    }%
    \renewcommand\MyEdgesBefore{%
		\draw[sedge]  ($(u1) + (0:0.75)$) 
		-- (u1);
        \myCEdge{u1}{v1}{optedge}{}%
        \myCEdge{u1}{v3}{optedge}{}%
        \myCEdge{u1}{v2}{optedge}{}%
    }%
    \renewcommand{\CfN}{$(\C_N^+)$}%
    \renewcommand{\CfL}{fig:CCNP}%
}%
\newcommand{\relabeledCNP}[5]{
	\renewcommand{\labelUOne}{#1}
	\renewcommand{\labelVOne}{#2}
	\renewcommand{\labelVTwo}{#3}
	\renewcommand{\labelVThree}{#4}
	\renewcommand{\labelVFour}{#5}
	\tkcf{\cfCNP}
	\resetLabels
}
\newcommand{\cfCZa}{%
    \renewcommand\Mynodes{
        \node[whitenode,label=below:$v_1$] (v1) at (0,1) {};
        \node[whitenode,label=below:$v_2$] (v2) at (2,1) {};
    }%
    \renewcommand\Myspecialnodes{%
        \node[blacknode,label=below:$u_1$] (u1) at (1,0) {};
    }%
    \renewcommand\MyEdgesBefore{%
        \myEdge{u1}{v1}
        \myEdge{u1}{v2}
        \myDEdge{v1}{v2}
    }%
    \renewcommand{\CfN}{\nameXXCOa}%
    \renewcommand{\CfL}{fig:C0a}%
}%
\newcommand{\RuleZeroa}{%
    \cfCZa
    \renewcommand\MyEdgesRemoved{
        \myCEdge{v1}{v2}{blue}{$Q$}
    }%
    \renewcommand\MyEdgesAfter{%
        \myCEdge{v1}{u1}{blue}{$Q$}
        \myCEdge{u1}{v2}{blue}{$Q$}
        \myDEdge{v1}{v2}

    }%
    \drawRule{fig:R0a}%
}%
\newcommand{\cfXXr}{%
    \renewcommand\Mynodes{
        \node[whitenode,label=above:\labelV] (v) at (60:1) {};
        \node[whitenode,label=above:\labelVOne] (v1) at ($ (v) + (-1,0) $) {};
        \node[whitenode,label=above:\labelVTwo] (v2) at ($ (v) + (1,0) $) {};
    }%
    \renewcommand\Myspecialnodes{%
        \node[blacknode,label=left:\labelUOne] (u1) at (0,0) {};
        \node[blacknode,label=right:\labelUTwo] (u2) at (1,0) {};
    }%
    \renewcommand\MyEdgesBefore{%
        \myEdge{u1}{u2}
        \myEdge{u1}{v}
        \myEdge{v1}{v}
        \myCEdge{v2}{v}{optedge}{}
        \myEdge{u2}{v}
    }%
    \renewcommand{\CfN}{\nameXXq}%
    \renewcommand{\CfL}{fig:Cr}%
}%
\newcommand{\relabeledXXr}[5]{
	\renewcommand{\labelUOne}{#1}
	\renewcommand{\labelUTwo}{#2}
	\renewcommand{\labelV}{#3}
	\renewcommand{\labelVOne}{#4}
	\renewcommand{\labelVTwo}{#5}
	\tkcf{\cfXXr}
	\resetLabels
}
\newcommand{\RuleXXr}{%
    \cfXXr
    \renewcommand\MyEdgesRemovedBis{
        \myCEdge{v}{v1}{blue}{$Q$}
        \myCEdge{v}{v2}{blue}{$Q$}
    }%
    \renewcommand\MyEdgesAfterBis{%
        \myCEdge{v}{v1}{blue}{$Q'$}
        \myCEdge{v}{u1}{blue}{$Q'$}
        \myCEdge{u1}{u2}{red}{$P$}
        \myCEdge{v}{u2}{red}{$P$}
        \myCEdge{v}{v2}{red}{$P$}
    }%
    \renewcommand\MyEdgesRemoved{
        \myCEdge{v2}{v}{optedge}{}
        \myCEdge{v1}{v}{blue,endpath}{$Q$}
    }%
    \renewcommand\MyEdgesAfter{%
        \myCEdge{v}{v1}{blue}{$Q$}
        \myCEdge{v}{u1}{blue}{$Q$}
        \myCEdge{u1}{u2}{red}{$P$}
        \myCEdge{v}{u2}{red}{$P$}
        \myCEdge{v2}{v}{optedge}{}
    }%
		\begin{minipage}{0.25\linewidth}
		\tkcf{}%
		\end{minipage}
		\hspace{-0.4cm}
        \begin{minipage}{0.65\linewidth}%
            \begin{tikzpicture}[scale=\MyScale, every node/.style={scale=\MyScale}, auto]
		\node[label=above:\rotatebox{30}{\Large{$\leadsto$}}] (arrow) at (0,\OrdonneeFleche) {};
		\node (q) at (0,0.4) {};
		\end{tikzpicture}%
            \tikzstyle{endpath}=[->] %
            \tikzPartRule{\TikzRedefOdd \Mynodes \MyEdgesRemoved}%
            \petiteFlecheACentrer{\OrdonneeFleche}%
            \tikzstyle{endpath}=[] %
            \tikzPartRule{\Myspecialnodes \Mynodes \MyEdgesAfter}%

            \begin{tikzpicture}[scale=\MyScale, every node/.style={scale=\MyScale}, auto]
		\node[label=above:\rotatebox{-20}{\Large{$\leadsto$}}] (arrow) at (0,\OrdonneeFleche) {};
		\node (q) at (0,-0.4) {};
		\end{tikzpicture}%
            \tikzstyle{endpath}=[->] %
            \tikzPartRule{\Mynodes \MyEdgesRemovedBis}%
            \petiteFlecheACentrer{\OrdonneeFleche}%
            \tikzstyle{endpath}=[] %
            \tikzPartRule{\Myspecialnodes \Mynodes \MyEdgesAfterBis}%
        \end{minipage}\\
}%
\newcommand{\cfRx}{%
    \renewcommand\Mynodes{
        \node[whitenode,label=below:$v_1$] (v1) at (0,0+0.2) {};
        \node[whitenode,label=above:$v_2$] (v2) at (0,3+0.2) {};
        \node[whitenode,label=below:$v_3$] (v3) at (-3,1.5+0.2) {};
        \node[whitenode,label=below:$v_4$] (v4) at (3,1.5+0.2) {};
    }%
    \renewcommand\Myspecialnodes{%
        \node[blacknode,label=below:$u_1$] (u1) at (-1.5,1.5+0.2) {};
        \node[blacknode,label=below:$u_2$] (u2) at (1.5,1.5+0.2) {};
    }%
    \renewcommand\MyEdgesBefore{%
    	\myHalfEdge{u1}{0}{optedge}{}%
        \myHalfEdge{u2}{180}{optedge}{}%
        \myEdge{u1}{v1}
        \myEdge{u1}{v2}
        \myEdge{u1}{v3}
        \myEdge{u2}{v1}
        \myEdge{u2}{v2}
        \myEdge{u2}{v4}
        \myEdge{v3}{v1}
        \myEdge{v3}{v2}
        \myEdge{v4}{v1}
        \myEdge{v4}{v2}
    }%
    \renewcommand{\CfN}{$(Rx)$}%
    \renewcommand{\CfL}{fig:CRx}%
}%
\newcommand{\cfXXe}{%
    \renewcommand\Mynodes{
        \node[whitenode,label=below:$v_1$] (v1) at (0,0+0.2) {};
        \node[whitenode,label=above:$v_2$] (v2) at (0,3+0.2) {};
        \node[whitenode,label=above:$v_5$] (v5) at (-0.5,1.5+0.2) {};
        \node[whitenode,label=above:$v_6$] (v6) at (0.5,1.5+0.2) {};
        \node[whitenode,label=below:$v_3$] (v3) at (-3,1.5+0.2) {};
        \node[whitenode,label=below:$v_4$] (v4) at (3,1.5+0.2) {};
    }%
    \renewcommand\Myspecialnodes{%
        \node[blacknode,label=below:$u_1$] (u1) at (-1.5,1.5+0.2) {};
        \node[blacknode,label=below:$u_2$] (u2) at (1.5,1.5+0.2) {};
    }%
    \renewcommand\MyEdgesBefore{%
    	\myCEdge{u1}{v5}{optedge}{}%
        \myCEdge{u2}{v6}{optedge}{}%
        \myEdge{v3}{v2}
        \myEdge{u1}{v1}
        \myEdge{u1}{v2}
        \myEdge{u1}{v3}
        \myEdge{u2}{v1}
        \myEdge{u2}{v2}
        \myEdge{u2}{v4}
        \myEdge{v3}{v1}
        \myEdge{v4}{v1}
        \myEdge{v4}{v2}
        \myDEdge{v2}{v1}%
    }%
    \renewcommand{\CfN}{\nameXXe}%
    \renewcommand{\CfL}{fig:Ce}%
}%
\newcommand{\RuleXXe}{%
    \cfXXe
    \renewcommand\MyEdgesRemoved{%
        \myCEdge{v3}{v2}{}{}
        \myCEdge{v1}{v2}{blue}{$Q$}
        \myCEdge{v1}{v4}{green}{$R$}
        \myDEdge{v3}{v1}%
        \myDEdge{v2}{v4}%

    }%
    \renewcommand\MyEdgesAfter{%
    	\myCEdge{v1}{v2}{dashed}{}%

        \myCEdge{v3}{v2}{}{}

        \myCEdge{u2}{v2}{red}{$P$}
        \myCEdge{v3}{u1}{red}{$P$}
        \myCEdge{v3}{u1}{red}{$P$}
        \myCEdge{v1}{v3}{red}{$P$}
        \myCEdge{v1}{v4}{red}{$P$}
        \myCEdge{v2}{v4}{red}{$P$}
        \myCEdge{v2}{u2}{red}{$P$}

        \myCEdge{u1}{v1}{blue}{$Q$}
        \myCEdge{u1}{v2}{blue}{$Q$}
        \myHalfEdge{u2}{180}{red,optedge}{$P$}%
        \myHalfEdge{u1}{0}{red,optedge}{$P$}%
        \myCEdge{v1}{u2}{green}{$R$}
        \myCEdge{u2}{v4}{green}{$R$}
    }%
    \drawRule{fig:Re}%
}%
\newcommand{\cfXXf}{%
    \renewcommand\Mynodes{
        \node[whitenode,label=below:$v_1$] (v1) at (0,0+0.2) {};
        \node[whitenode,label=left:$v_2$] (v2) at (0,3+0.2) {};
        \node[whitenode,label=above:$v_5$] (v5) at (-0.5,1.5+0.2) {};
        \node[whitenode,label=above:$v_6$] (v6) at (0.5,1.5+0.2) {};

        \node[whitenode,label=below:$v_3$] (v3) at (-3,1.5+0.2) {};
        \node[whitenode,label=below:$v_4$] (v4) at (3,1.5+0.2) {};

    }%
    \renewcommand\Myspecialnodes{%

        \node[blacknode,label=below:$u_1$] (u1) at (-1.5,1.5+0.2) {};
        \node[blacknode,label=below:$u_2$] (u2) at (1.5,1.5+0.2) {};

    }%
    \renewcommand\MyEdgesBefore{%
    	\myEdge{v3}{v1}
        \myEdge{v3}{v2}
        \myEdge{v4}{v2}%
        \myCEdge{u1}{v5}{optedge}{}%
        \myCEdge{u2}{v6}{optedge}{}%

        \myCurvEdge{v3}{v4}{}{$P$}{in=90, out=90,path}%

        \myEdge{v1}{v2}
        \myEdge{u1}{v1}
        \myEdge{u1}{v2}
        \myEdge{u1}{v3}
        \myEdge{u2}{v1}
        \myEdge{u2}{v2}
        \myEdge{v4}{v1}
        \myEdge{v4}{u2}
    }%
    \renewcommand{\CfN}{\nameXXf}%
    \renewcommand{\CfL}{fig:Cf}%
}%
\newcommand{\RuleXXf}{%
    \cfXXf
    \renewcommand\MyEdgesRemoved{%
        \myCurvEdge{v3}{v4}{}{}{in=90, out=90,path,dashed}%
        \myCEdge{v1}{v2}{blue}{$Q$}
        \myCEdge{v1}{v4}{green}{$R$}
        \myEdge{v1}{v3}%
        \myEdge{v2}{v4}%
        \myEdge{v2}{v3}%
    }%
    \renewcommand\MyEdgesAfter{%
    	\myEdge{v3}{v1}
        \myEdge{v3}{v2}
        \myEdge{v4}{v2}%
        \myCurvEdge{v3}{v4}{red}{$P$}{in=90, out=90,path}%
        \myCEdge{v1}{v2}{red}{$P$}
        \myCEdge{u2}{v2}{red}{$P$}
        \myCEdge{v3}{u1}{red}{$P$}
        \myCEdge{v1}{u1}{red}{$P$}
        \myCEdge{v1}{v4}{red}{$P$}
        \myCEdge{u1}{v1}{blue}{$Q$}
        \myCEdge{u1}{v2}{blue}{$Q$}
        \myHalfEdge{u2}{180}{red,optedge}{$P$}%
        \myHalfEdge{u1}{0}{red,optedge}{$P$}%
        \myCEdge{v1}{u2}{green}{$R$}
        \myCEdge{u2}{v4}{green}{$R$}
    }%
    \drawRule{fig:Rf}%
}%
\newcommand{\cfXXg}{%
    \renewcommand\Mynodes{
        \node[whitenode,label=below:$v_1$] (v1) at (0,0+0.2) {};
        \node[whitenode,label=above:$v_2$] (v2) at (0,2.5+0.2) {};
        \node[whitenode,label=below:$v_5$] (v5) at (-0.5,1.7+0.2) {};
        \node[whitenode,label=above:$v_6$] (v6) at (0.5,1+0.2) {};

        \node[whitenode,label=below:$v_3$] (v3) at (-2.5,1.5+0.2) {};
        \node[whitenode,label=below:$v_4$] (v4) at (2.5,1.5+0.2) {};
    }%
    \renewcommand\Myspecialnodes{%
        \node[blacknode,label=below:$u_1$] (u1) at (-1.5,1.5+0.2) {};
        \node[blacknode,label=below:$u_2$] (u2) at (1.5,1.5+0.2) {};
    }%
    \renewcommand\MyEdgesBefore{%
        \myCurvEdge{v3}{v4}{}{$P$}{in=90, out=90,path,dashed}%
    	\myEdge{v3}{v1}
        \myEdge{v3}{v2}
        \myEdge{v4}{v2}
        \myEdge{v4}{v1}

        \myEdge{u1}{v1}
        \myEdge{u1}{v2}
        \myEdge{u1}{v3}
        \myEdge{u2}{v1}
        \myEdge{u2}{v2}
        \myEdge{u2}{v4}
        \myCEdge{u1}{v5}{optedge}{}%
        \myCEdge{u2}{v6}{optedge}{}%

    }%
    \renewcommand{\CfN}{\nameXXg}%
    \renewcommand{\CfL}{fig:Cg}%
}%
\newcommand{\RuleXXgOne}{
	\cfXXg%
	\renewcommand\MyEdgesRemoved{
     \tikzstyle{endpath}=[-]
        \myCEdge{v3}{v4}{blue}{$Q$}
        \myEdge{v2}{v3}%
        \myEdge{v1}{v3}%
        \myEdge{v1}{v4}
        \myEdge{v2}{v4}
        \myHalfLongEdge{v4}{105}{blue,optedge,path}{}
        \draw ($(v3)+(0.45,0.6)$) node[blue,nodelabel] {$\ Q_1$};
        \myHalfLongEdge{v3}{75}{blue,optedge,path}{}
        \draw ($(v4)+(-0.5,0.6)$) node[blue,nodelabel] {$Q_2$};

        \path ($(v2) + (-0.5,0)$) edge [->, dashed, bend right = 0] ($(v3) + (0.5,1)$);
        \path ($(v2) + (0.5,0)$) edge [->, dashed, bend right = 0] ($(v4) + (-0.6,1)$);
    }%
    \renewcommand\MyEdgesAfter{%
        \myHalfLongEdge{v4}{100}{blue,optedge,path}{}
        \myHalfLongEdge{v3}{80}{blue,optedge,path}{}
        \draw ($(v3)+(0.45,0.6)$) node[blue,nodelabel] {$\ Q_1$};
        \draw ($(v4)+(-0.5,0.6)$) node[blue,nodelabel] {$Q_2$};
		\myEdge{v3}{v1}
        \myEdge{v4}{v2}
        \myEdge{v4}{v1}

        \myCEdge{v3}{u1}{blue}{$Q'$}
        \myCEdge{u1}{v2}{blue}{$Q'$}
        \myCEdge{v2}{u2}{blue}{$Q'$}
        \myCEdge{u2}{v4}{blue}{$Q'$}
        \myCEdge{u1}{v1}{red}{$P$}
        \myCEdge{v1}{u2}{red}{$P$}
        \myCEdge{u2}{v6}{red,optedge}{$P$}%
        \myCEdge{u1}{v5}{red,optedge}{$P$}%
        \myEdge{v2}{v3}
    }%
    \renewcommand{\CfN}{\nameXXgOne}%
	\drawRuleSansConfig%
}
\newcommand{\RuleXXgTwo}{
	\cfXXg%
	\renewcommand\MyEdgesRemoved{
     \tikzstyle{endpath}=[-]
        \myCEdge{v3}{v4}{blue}{$Q$}%
        \myEdge{v2}{v3}%
        \myEdge{v1}{v3}
        \myEdge{v1}{v4}
        \myEdge{v2}{v4}

        \myCurvEdge{v3}{v2}{blue}{}{in=180,out=90,path}
        \draw ($(v3)+(0.6,0.6)$) node[blue,nodelabel] {$Q_1$};
        \myCurvEdge{v4}{v1}{blue}{}{in=0,out=-90,path}
        \draw ($(v4)+(-0.6,-0.8)$) node[blue,nodelabel] {$Q_2$};
        \myHalfLongEdge{v2}{0}{blue,optedge,path}{}%
        \draw ($(v2)+(0.8,0.3)$) node[blue,nodelabel] {$Q_1$};
        \myHalfLongEdge{v1}{180}{blue,optedge,path}{}
        \draw ($(v1)+(-0.8,-0.3)$) node[blue,nodelabel] {$Q_2$};

		\path ($(v5) + (-0.2,-0.4)$) edge [->, dashed, bend right = 0] ($(v1) + (-1.2,0.2)$);
		\path ($(v6) + (0.2,0.4)$) edge [->, dashed, bend right = 0] ($(v2) + (1.2,-0.2)$);
    }%
    \renewcommand\MyEdgesAfter{%
        \myEdge{v4}{v2}
        \myEdge{v4}{v1}
        \myEdge{v3}{v1}

        \myHalfLongEdge{v1}{180}{red,optedge,path}{}%
        \myCurvEdge{v4}{v1}{red}{$P$}{in=0,out=-90,path}
        \myCEdge{u2}{v4}{red}{$P$}
        \myCEdge{u2}{v2}{red}{$P$}
        \myCEdge{u1}{v2}{red}{$P$}
        \myCEdge{u1}{v5}{red,optedge}{$P$}%

        \myHalfLongEdge{v2}{0}{blue,optedge,path}{}%
        \myCurvEdge{v3}{v2}{blue}{$Q'$}{in=180,out=90,path}
        \myCEdge{u1}{v3}{blue}{$Q'$}
        \myCEdge{u1}{v1}{blue}{$Q'$}
		\myCEdge{u2}{v6}{blue,optedge}{$Q'$}%
        \myCEdge{u2}{v1}{blue}{$Q'$}

        \draw ($(v3)+(0.6,0.6)$) node[blue,nodelabel] {$Q'$};
        \draw ($(v4)+(-0.5,-0.8)$) node[red,nodelabel] {$P$};
        \myEdge{v3}{v2}

        \draw ($(v2)+(0.8,0.3)$) node[blue,nodelabel] {$Q'$};
        \draw ($(v1)+(-0.8,-0.3)$) node[red,nodelabel] {$P$};
    }%
    \renewcommand{\CfN}{\nameXXgTwo}%
	\drawRuleSansConfig%
}
\newcommand{\RuleXXgThree}{
	\cfXXg%
	\renewcommand\MyEdgesRemoved{
     \tikzstyle{endpath}=[-]
        \myCEdge{v3}{v4}{blue}{$Q$}%
        \myEdge{v2}{v3}%
        \myEdge{v1}{v4}
        \myEdge{v2}{v4}

        \myCurvEdge{v3}{v2}{blue}{}{in=180,out=90,path}
        \myCurvEdge{v4}{v1}{blue}{}{in=0,out=-90,path}
        \myHalfLongEdge{v2}{0}{blue,optedge,path}{}
        \myHalfLongEdge{v1}{180}{blue,optedge,path}{}

        \myCEdge{v1}{v3}{green}{$\overline{Q}$}

        \path ($(v5) + (-0.2,0.1)$) edge [->, dashed, bend right = 0] ($0.5*(v2) + 0.5*(v3) + (-0.3,0.3)$);

        \draw ($(v3)+(0.6,0.6)$) node[blue,nodelabel] {$R_1$};
        \draw ($(v4)+(-0.6,-0.8)$) node[blue,nodelabel] {$Q_2$};
        \draw ($(v2)+(0.8,0.3)$) node[blue,nodelabel] {$R_1'$};
        \draw ($(v1)+(-0.8,-0.3)$) node[blue,nodelabel] {$Q_2$};
    }%
    \renewcommand\MyEdgesAfter{%
        \myCEdge{v1}{v3}{red}{$P$}

        \myCEdge{v3}{u1}{green}{$\overline{Q}$}
        \myCEdge{u1}{v2}{red}{$P$}
        \myCEdge{v2}{u2}{blue}{$Q'$}
        \myCEdge{u2}{v4}{blue}{$Q'$}
        \myCEdge{v1}{u1}{green}{$\overline{Q}$}
        \myCEdge{v1}{u2}{red}{$P$}

        \myCEdge{u2}{v6}{red,optedge}{$P$}%
        \myCEdge{u1}{v5}{red,optedge}{$P$}%

        \myHalfLongEdge{v2}{0}{blue,optedge,path}{}%
        \myHalfLongEdge{v1}{180}{blue,optedge,path}{}%
        \myCurvEdge{v4}{v1}{blue}{}{in=0,out=-90,path}
        \myCurvEdge{v3}{v2}{red}{}{in=180,out=90,path}

        \draw ($(v3)+(0.5,0.6)$) node[red,nodelabel] {$P$};
        \draw ($(v4)+(-0.6,-0.8)$) node[blue,nodelabel] {$Q'$};
        \draw ($(v2)+(0.8,0.3)$) node[blue,nodelabel] {$Q'$};
        \draw ($(v1)+(-0.8,-0.3)$) node[blue,nodelabel] {$Q'$};

        \myEdge{v3}{v2}
        \myEdge{v4}{v2}
        \myEdge{v4}{v1}
    }%
    \renewcommand{\CfN}{\nameXXgThree}%
	\drawRuleSansConfig%
}
\newcommand{\RuleXXgFour}{
	\cfXXg%
	\renewcommand\MyEdgesRemoved{
     \tikzstyle{endpath}=[-]
        \myCEdge{v1}{v2}{blue,path}{}
        \myCEdge{v3}{v4}{blue}{}
        \myEdge{v2}{v3}%
        \myCEdge{v1}{v3}{green}{$\overline{Q}$}
        \myEdge{v1}{v4}
        \myEdge{v2}{v4}

        \myCurvEdge{v3}{v2}{blue}{}{in=180,out=90,path}
        \myHalfLongEdge{v1}{180}{blue,optedge,path}{}%
        \myHalfLongEdge{v4}{90}{blue,optedge,path}{}

        \path ($(v5) + (-0.2,0.1)$) edge [->, dashed, bend right = 0] ($0.5*(v2) + 0.5*(v3) + (-0.3,0.3)$);

        \draw ($(v3)+(0.6,0.6)$) node[blue,nodelabel] {$R_1''$};
        \draw ($(v4)+(-0.4,0.8)$) node[blue,nodelabel] {$Q_2$};
        \draw ($(v2)+(0,-0.6)$) node[blue,nodelabel] {$R_1'$};
        \draw ($(v1)+(-0.8,-0.3)$) node[blue,nodelabel] {$R_1$};
    }%
    \renewcommand\MyEdgesAfter{%
        \myEdge{v4}{v2}
        \myEdge{v4}{v1}
        \myCEdge{v1}{u1}{green}{$\overline{Q}$}
        \myCEdge{v3}{u1}{green}{$\overline{Q}$}

        \myHalfLongEdge{v1}{180}{red,optedge,path}{}%
        \myCEdge{v1}{v3}{blue}{$Q'$}
        \myCEdge{v2}{u2}{red}{$P$}
        \myCEdge{u2}{v4}{red}{$P$}
        \myHalfLongEdge{v4}{90}{red,optedge,path}{}
        \myCurvEdge{v3}{v2}{blue}{}{in=180,out=90,path}
        \myCEdge{u1}{v2}{blue}{$Q'$}
        \myCEdge{v1}{v2}{red,path}{}

        \myCEdge{u2}{v6}{blue,optedge}{$Q'$}%
        \myCEdge{v1}{u2}{blue}{$Q'$}
        \myCEdge{u1}{v5}{blue,optedge}{$Q'$}%
        \myCEdge{u1}{v2}{blue}{$Q'$}

        \draw ($(v3)+(0.6,0.6)$) node[blue,nodelabel] {$Q'$};
        \draw ($(v4)+(-0.3,0.8)$) node[red,nodelabel] {$P$};
        \draw ($(v2)+(0,-0.6)$) node[red,nodelabel] {$P\ $};
        \draw ($(v1)+(-0.8,-0.3)$) node[red,nodelabel] {$P$};

        \myEdge{v2}{v3}%
    }%
    \renewcommand{\CfN}{\nameXXgFour}%
	\drawRuleSansConfig%
}
\newcommand{\RuleXXgFive}{
	\cfXXg%
	\renewcommand\MyEdgesRemoved{
     \tikzstyle{endpath}=[-]
        \myCEdge{v1}{v2}{blue,path}{}
        \myCEdge{v3}{v4}{blue}{}
        \myEdge{v2}{v3}
        \myCEdge{v1}{v3}{green}{$\overline{Q}$}
        \myEdge{v1}{v4}
        \myEdge{v2}{v4}

        \myCurvEdge{v3}{v2}{blue}{}{in=180,out=90,path}
        \myHalfLongEdge{v1}{180}{blue,optedge,path}{}
        \myHalfLongEdge{v4}{90}{blue,optedge,path}{}

		\path ($(v5) + (-0.2,-0.2)$) edge [->, dashed, bend right = 80] ($0.35*(v2) + 0.65*(v1) + (-0.2,0)$);
        \path ($(v6) + (-0.2,-0.2)$) edge [->, dashed, bend left = 0] ($(v1) + (-0.6,0.2)$);

        \draw ($(v3)+(0.6,0.6)$) node[blue,nodelabel] {$R_1''$};
        \draw ($(v4)+(-0.4,0.8)$) node[blue,nodelabel] {$Q_2$};
        \draw ($(v2)+(0,-0.6)$) node[blue,nodelabel] {$R_1'$};
        \draw ($(v1)+(-0.8,-0.3)$) node[blue,nodelabel] {$R_1$};
    }%
    \renewcommand\MyEdgesAfter{%
        \myCEdge{v1}{u1}{green}{$\overline{Q}$}
        \myCEdge{v3}{u1}{green}{$\overline{Q}$}

        \myHalfLongEdge{v1}{180}{red,optedge,path}{}%
        \myCEdge{v2}{u2}{red}{$P$}
        \myCEdge{u2}{v4}{blue}{$Q'$}
        \myHalfLongEdge{v4}{90}{blue,optedge,path}{}
        \myCurvEdge{v3}{v2}{red}{$P$}{in=180,out=90,path}
        \myCEdge{v1}{v2}{blue,path}{}

        \myCEdge{v1}{v3}{red}{$P$}

        \myCEdge{v1}{u2}{blue}{$Q'$}
        \myCEdge{u2}{v6}{red,optedge}{$P$}%
        \myCEdge{u1}{v5}{blue,optedge}{$Q'$}%
        \myCEdge{u1}{v2}{blue}{$Q'$}

        \draw ($(v3)+(0.5,0.6)$) node[red,nodelabel] {$P$};
        \draw ($(v4)+(-0.3,0.8)$) node[blue,nodelabel] {$Q'$};
        \draw ($(v2)+(0,-0.6)$) node[blue,nodelabel] {$Q'\ $};
        \draw ($(v1)+(-0.8,-0.3)$) node[red,nodelabel] {$P$};

        \myEdge{v3}{v2}
        \myEdge{v4}{v2}
        \myEdge{v4}{v1}
    }%
    \renewcommand{\CfN}{\nameXXgFive}%
	\drawRuleSansConfig%
}
\newcommand{\RuleXXgSix}{
	\cfXXg%
	\renewcommand\MyEdgesRemoved{
     \tikzstyle{endpath}=[-]
        \myCEdge{v1}{v2}{blue,path}{}
        \myCEdge{v3}{v4}{blue}{}
        \myEdge{v2}{v3}%
        \myEdge{v1}{v3}%

        \myEdge{v1}{v4}
        \myEdge{v2}{v4}

        \myCurvEdge{v3}{v2}{blue}{}{in=180,out=90,path}
        \myHalfLongEdge{v1}{180}{blue,optedge,path}{}%
        \myHalfLongEdge{v4}{90}{blue,optedge,path}{}

		\path ($(v5) + (-0.2,-0.1)$) edge [->, dashed, bend right = 80] ($0.25*(v2) + 0.75*(v1) + (-0.2,0)$);
        \path ($(v6) + (0.2,0.1)$) edge [->, dashed, bend right = 80] ($0.25*(v1) + 0.75*(v2) + (0.2,0)$);

        \draw ($(v3)+(0.6,0.6)$) node[blue,nodelabel] {$R_1''$};
        \draw ($(v4)+(-0.4,0.8)$) node[blue,nodelabel] {$Q_2$};
        \draw ($(v2)+(0,-0.6)$) node[blue,nodelabel] {$R_1'$};
        \draw ($(v1)+(-0.8,-0.3)$) node[blue,nodelabel] {$R_1$};
    }%
    \renewcommand\MyEdgesAfter{%
        \myEdge{v3}{v1}
        \myEdge{v3}{v2}
        \myEdge{v4}{v2}
        \myEdge{v4}{v1}
        \myEdge{v1}{v3}%

        \myCEdge{v1}{u1}{blue}{$Q'$}
        \myCEdge{v3}{u1}{blue}{$Q'$}

        \myHalfLongEdge{v1}{180}{blue,optedge,path}{}%
        \myCEdge{v2}{u2}{blue}{$Q'$}
        \myCEdge{u2}{v4}{blue}{$Q'$}
        \myHalfLongEdge{v4}{90}{blue,optedge,path}{}
        \myCurvEdge{v3}{v2}{blue}{}{in=180,out=90,path}
        \myCEdge{u1}{v2}{red}{$P$}
        \myCEdge{v1}{v2}{red,path}{}

        \myCEdge{v1}{u2}{red}{$P$}
        \myCEdge{u2}{v6}{red,optedge}{$P$}%
        \myCEdge{u1}{v5}{red,optedge}{$P$}%

        \draw ($(v3)+(0.6,0.6)$) node[blue,nodelabel] {$Q'$};
        \draw ($(v4)+(-0.3,0.8)$) node[blue,nodelabel] {$Q'$};
        \draw ($(v2)+(0,-0.6)$) node[red,nodelabel] {$P\ $};
        \draw ($(v1)+(-0.8,-0.3)$) node[blue,nodelabel] {$Q'$};
    }%
    \renewcommand{\CfN}{\nameXXgSix}%
	\drawRuleSansConfig%
}
\newcommand{\cfXXa}{%
    \renewcommand\Mynodes{%
        \node[whitenode,label=below:\labelVOne] (v1) at (0,0.2) {};%
        \node[whitenode,label=above:\labelV] (v) at ($ (0,1) + (60:1) $) {};%
        \node[evennode,label=below:\labelVTwo] (v2) at (1,0.2) {};%
        \node[whitenode,label=below:\labelVThree] (v3) at (2,1) {};%
    }%
    \renewcommand\Myspecialnodes{%
        \node[blacknode,label=left:\labelUOne] (u1) at (0,1) {};%
		\node[blacknode,label={[label distance=-0.15cm]20:\labelUTwo}] (u2) at (1,1) {};%
    }%
    \renewcommand\MyEdgesBefore{%
        \myEdge{u1}{v1}%
        \myEdge{u1}{u2}%
        \myEdge{u2}{v2}%
        \myEdge{u1}{v}%
        \myEdge{u2}{v}%
        \myCEdge{u2}{v3}{optedge}{}%

    }%
    \renewcommand{\CfN}{\nameXXa}%
    \renewcommand{\CfL}{fig:Ca}%
}%
\newcommand{\relabeledXXa}[6]{
	\renewcommand{\labelUOne}{#1}
	\renewcommand{\labelUTwo}{#2}
	\renewcommand{\labelV}{#3}
	\renewcommand{\labelVOne}{#4}
	\renewcommand{\labelVTwo}{#5}
	\renewcommand{\labelVThree}{#6}
	\tkcf{\cfXXa}
	\resetLabels
}
\newcommand{\RuleXXa}{%
    \cfXXa
    \renewcommand\MyEdgesRemoved{
        \myHalfEdge{v2}{0}{blue}{$Q$}%
    }%
    \renewcommand\MyEdgesAfter{%
        \myCEdge{v1}{u1}{red}{$P$}
        \myCEdge{u1}{v}{red}{$P$}
        \myCEdge{v}{u2}{red}{$P$}
        \myHalfEdge{u2}{0}{optedge,red}{$P$}%
        \myCEdge{u1}{u2}{blue}{$Q$}
        \myCEdge{u2}{v2}{blue}{$Q$}
        \myHalfEdge{v2}{0}{blue}{$Q$}%
    }%
    \drawRule{fig:Ra}%
}%
\newcommand{\cfXXd}{%
    \renewcommand\Mynodes{
        \node[whitenode,label=below:\labelVOne] (v1) at (-0.5,0) {};
        \node[whitenode,label=below:\labelVTwo] (v2) at (1.5,0) {};
        \node[whitenode,label=below:\labelVThree] (v3) at (-1,1) {};
        \node[whitenode,label=below:\labelVFour] (v4) at (2,1) {};
        \node[whitenode,label=above:\labelV] (v) at ($ (0,1) + (60:1) $) {};
    }%
    \renewcommand\Myspecialnodes{%
        \node[blacknode,label={[label distance=-0.15cm]135:\labelUOne}] (u1) at (0,1) {};%
        \node[blacknode,label={[label distance=-0.15cm]20:\labelUTwo}] (u2) at (1,1) {};%
    }%
    \renewcommand\MyEdgesBefore{%
    	\myDEdge{v1}{v2}%
        \myEdge{u1}{v1}
        \myEdge{u1}{u2}
        \myEdge{u2}{v2}
        \myEdge{u1}{v}
        \myEdge{u2}{v}%
        \myCEdge{u1}{v3}{optedge}{}
        \myCEdge{u2}{v4}{optedge}{}
    }%
    \renewcommand{\CfN}{\nameXXd}%
    \renewcommand{\CfL}{fig:Cd}%
}%
\newcommand{\relabeledXXd}[7]{
	\renewcommand{\labelUOne}{#1}
	\renewcommand{\labelUTwo}{#2}
	\renewcommand{\labelV}{#3}
	\renewcommand{\labelVOne}{#4}
	\renewcommand{\labelVTwo}{#5}
	\renewcommand{\labelVThree}{#6}
	\renewcommand{\labelVFour}{#7}
	\tkcf{\cfXXd}
	\resetLabels
}
\newcommand{\RuleXXd}{%
    \cfXXd
    \renewcommand\MyEdgesRemoved{%
        \myCEdge{v1}{v2}{blue}{$Q$}%
    }%
    \renewcommand\MyEdgesAfter{%
    	\myDEdge{v1}{v2}%
        \myHalfEdge{u1}{180}{optedge,red}{$P$}%
        \myCEdge{u1}{v}{red}{$P$}
        \myCEdge{v}{u2}{red}{$P$}
        \myHalfEdge{u2}{0}{optedge,red}{$P$}%
        \myCEdge{v1}{u1}{blue}{$Q$}
        \myCEdge{u1}{u2}{blue}{$Q$}
        \myCEdge{u2}{v2}{blue}{$Q$}%
    }%
    \drawRule{fig:Rd}%
}%
\newcommand{\cfXXb}{%
    \renewcommand\Mynodes{
        \node[oddnode,label=above:$v_1$] (v1) at (-1,1) {};
        \node[oddnode,label=above:$v_2$] (v2) at (2,1) {};
        \node[oddnode,label=left:$v$] (v) at ($ (0,1) + (60:1) $) {};
    }%
    \renewcommand\Myspecialnodes{%
        \node[blacknode,label=below:$u_1$] (u1) at (0,1) {};
        \node[blacknode,label=below:$u_2$] (u2) at (1,1) {};
    }%
    \renewcommand\MyEdgesBefore{%
        \myEdge{u1}{v1}
        \myEdge{u1}{u2}
        \myEdge{u2}{v2}
        \myEdge{u1}{v}
        \myEdge{u2}{v}%
        \myEdge{v1}{v}
        \myEdge{v2}{v}%
    }%
    \renewcommand{\CfN}{\nameXXb}%
    \renewcommand{\CfL}{fig:Cb}%
}%
\newcommand{\cfXXbb}{%
    \renewcommand\Mynodes{
        \node[whitenode,label=above:$v_1$] (v1) at (-1,1) {};
        \node[whitenode,label=above:$v_2$] (v2) at (2,1) {};
        \node[oddnode,label=left:$v$] (v) at ($ (0,1) + (60:1) $) {};
    }%
    \renewcommand\Myspecialnodes{%
        \node[blacknode,label=below:$u_1$] (u1) at (0,1) {};
        \node[blacknode,label=below:$u_2$] (u2) at (1,1) {};
    }%
    \renewcommand\MyEdgesBefore{%
        \myEdge{u1}{v1}
        \myEdge{u1}{u2}
        \myEdge{u2}{v2}
        \myEdge{u1}{v}
        \myEdge{u2}{v}%
		\draw (v1) edge [thick, bend right = 70] (v2);
    }%
    \renewcommand{\CfN}{\nameXXbb}%
    \renewcommand{\CfL}{fig:Cbb}%
}%
\newcommand{\RuleXXbb}{%
    \cfXXbb
    \renewcommand\MyEdgesRemoved{%
        \myHalfEdge{v}{90}{green}{$Q$}%
		\draw (v1) edge [dashed, thick, bend right = 70] (v2);
    }%
    \renewcommand\MyEdgesAfter{%
        \myHalfEdge{v}{90}{green}{$Q$}%

        \myCEdge{u1}{v}{green}{$Q$}

        \myCEdge{v}{u2}{red}{$P$}
		\myCEdge{u2}{u1}{green}{$Q$}
        \myCEdge{u1}{v1}{red}{$P$}

		\myCEdge{u2}{v2}{red}{$P$}

		\draw (v1) edge [red, thick, bend right = 70] (v2);
		\draw ($(v1)!0.5!(v2)+(0,-0.87)$) node[red,nodelabel] {$\ P\ $};

    }%
    \drawRule{fig:Rbb}%
}%
\newcommand{\cfXXc}{%
    \renewcommand\Mynodes{
        \node[oddnode,label=above:$v_1$] (v1) at (-1,0.5) {};
        \node[whitenode,label=above:$v_2$] (v2) at (2,0.5) {};
        \node[evennode,label=left:$v$] (v) at ($ (0,0.5) + (60:1) $) {};
    }%
    \renewcommand\Myspecialnodes{%
        \node[blacknode,label=below:$u_1$] (u1) at (0,0.5) {};
        \node[blacknode,label=below:$u_2$] (u2) at (1,0.5) {};
    }%
    \renewcommand\MyEdgesBefore{%
        \myEdge{u1}{v1}
        \myEdge{u1}{u2}
        \myEdge{u2}{v2}
        \myEdge{u1}{v}
        \myEdge{u2}{v}%
        \myCEdge{v1}{v}{}{}%
    }%
    \renewcommand{\CfN}{\nameXXc}%
    \renewcommand{\CfL}{fig:Cc}%
}%
\newcommand{\RuleXXc}{%
    \cfXXc
    \renewcommand\MyEdgesRemoved{%
        \myCEdge{v1}{v}{dashed}{}%
        \myHalfEdge{v}{90}{green}{$Q$}%
    }%
    \renewcommand\MyEdgesAfter{%
        \myCEdge{u1}{u2}{green}{$Q$}
        \myCEdge{v2}{u2}{red}{$P$}
        \myCEdge{u2}{v}{red}{$P$}
        \myCEdge{v}{v1}{red}{$P$}
		\myCEdge{u1}{v1}{red}{$P$} 
        \myHalfEdge{v}{90}{green}{$Q$}%
        \myCEdge{v}{u1}{green}{$Q$}
    }%
    \drawRule{fig:Rc}%
}%
\newcommand{\cfTwoTwoN}{%
    \renewcommand\Mynodes{
        \node[whitenode,label=below:$v_1$] (v1) at (0,0.2) {};
        \node[whitenode,label=above:$v_2$] (v2) at (0,2.8) {};
    }%
    \renewcommand\Myspecialnodes{%
        \node[blacknode,label=below:$u_1$] (u1) at (-1.5,1.5) {};
        \node[blacknode,label=below:$u_2$] (u2) at (1.5,1.5) {};

    }%
    \renewcommand\MyEdgesBefore{%
    	\myCEdge{v1}{v2}{dashed}{}%
        \myEdge{u1}{v1}
        \myEdge{u1}{v2}
        \myEdge{u2}{v1}
        \myEdge{u2}{v2}
    }%
    \renewcommand{\CfN}{$(22n)$}%
    \renewcommand{\CfL}{fig:CtwoTwoN}%
}%
\newcommand{\cfXXs}{%
    \renewcommand\Mynodes{
        \node[whitenode,label=left:\labelVOne] (v1) at (0+1,0.2) {};
        \node[oddnode,label=left:\labelVTwo] (v2) at (0+1,2.8) {};
    }%
    \renewcommand\Myspecialnodes{%
        \node[blacknode,label=below:\labelUOne] (u1) at (-1.5+1,1.5) {};
        \node[blacknode,label=below:\labelUTwo] (u2) at (1.5+1,1.5) {};
    }%
    \renewcommand\MyEdgesBefore{%
        \myEdge{v2}{v1}
        \myEdge{u1}{v1}
        \myEdge{u1}{v2}
        \myEdge{u2}{v1}
        \myEdge{u2}{v2}
    }%
    \renewcommand{\CfN}{\nameXXr}%
    \renewcommand{\CfL}{fig:Cs}%
}%
\newcommand{\relabeledXXs}[4]{
	\renewcommand{\labelUOne}{#1}
	\renewcommand{\labelUTwo}{#2}
	\renewcommand{\labelVOne}{#3}
	\renewcommand{\labelVTwo}{#4}
	\tkcf{\cfXXs}
	\resetLabels
}
\newcommand{\RuleXXs}{%
    \cfXXs
    \renewcommand\MyEdgesRemoved{%
        \myHalfEdge{v2}{90}{blue}{$Q$}%
        \myCEdge{v1}{v2}{green}{$R$}
    }%
    \renewcommand\MyEdgesAfter{%
        \myCEdge{u2}{v1}{red}{$P$}
        \myCEdge{v1}{v2}{red}{$P$}

        \myHalfEdge{v2}{90}{blue}{$Q$}%
        \myCEdge{u2}{v2}{blue}{$Q$}

        \myCEdge{v1}{u1}{green}{$R$}
        \myCEdge{v2}{u1}{green}{$R$}
    }%
    \drawRule{fig:Rs}%
}%
\newcommand{\cfXXt}{%
    \renewcommand\Mynodes{
        \node[evennode,label=left:\labelVOne] (v1) at (0.5,0.2) {};
        \node[evennode,label=left:\labelVTwo] (v2) at (0.5,2.8) {};
    }%
    \renewcommand\Myspecialnodes{%
        \node[blacknode,label=below:\labelUOne] (u1) at (-1.5+0.5,1.5) {};
        \node[blacknode,label=below:\labelUTwo] (u2) at (1.5+0.5,1.5) {};
    }%
    \renewcommand\MyEdgesBefore{%
        \myHalfEdge{v2}{90}{}{}%
        \myHalfEdge{v1}{-90}{}{}%
        \myEdge{v2}{v1}
        \myEdge{u1}{v1}
        \myEdge{u1}{v2}
        \myEdge{u2}{v1}
        \myEdge{u2}{v2}
    }%
    \renewcommand{\CfN}{\nameXXs}%
    \renewcommand{\CfL}{fig:Ct}%
}%
\newcommand{\relabeledXXt}[4]{
	\renewcommand{\labelUOne}{#1}
	\renewcommand{\labelUTwo}{#2}
	\renewcommand{\labelVOne}{#3}
	\renewcommand{\labelVTwo}{#4}
	\tkcf{\cfXXt}
	\resetLabels
}
\newcommand{\RuleXXt}{%
    \cfXXt
    \renewcommand\MyEdgesRemoved{%
        \myHalfEdge{v2}{90}{blue}{$Q$}%
        \myHalfEdge{v1}{-90}{green}{$R$}%
        \myCEdge{v1}{v2}{dashed}{}
    }%
    \renewcommand\MyEdgesAfter{%
        \myHalfEdge{v2}{90}{blue}{$Q$}%
        \myHalfEdge{v1}{-90}{green}{$R$}%
        \myCEdge{u2}{v1}{red}{$P$}
        \myCEdge{v1}{v2}{red}{$P$}
        \myCEdge{u1}{v2}{red}{$P$}

        \myHalfEdge{v2}{90}{blue}{$Q$}%
        \myCEdge{u2}{v2}{blue}{$Q$}

        \myCEdge{v1}{u1}{green}{$R$}
    }%
    \drawRule{fig:Rt}%
}%
\newcommand{\cfXXu}{%
    \renewcommand\Mynodes{
        \node[whitenode,label=below:$v_1$] (v1) at (0,0+0.2) {};
        \node[whitenode,label=above:$v_2$] (v2) at (0,3+0.2) {};
        \node[whitenode,label=below:$v_3$] (v3) at (3,1.5+0.2) {};
        \node[whitenode,label=below:$v_4$] (v4) at (0.5,1.5+0.2) {};
    }%
    \renewcommand\Myspecialnodes{%
        \node[blacknode,label=below:$u_1$] (u1) at (-1.5,1.5+0.2) {};
        \node[blacknode,label=below:$u_2$] (u2) at (1.5,1.5+0.2) {};
    }%
    \renewcommand\MyEdgesBefore{%
    	\myCEdge{u2}{v4}{optedge}{}%
        \myCEdge{v1}{v2}{dashed}{}%
        \myEdge{u1}{v1}
        \myEdge{u1}{v2}
        \myEdge{u2}{v1}
        \myEdge{u2}{v2}
        \myEdge{u2}{v3}
        \myEdge{v3}{v2}
    }%
    \renewcommand{\CfN}{$(23n)$}%
    \renewcommand{\CfL}{fig:Cu}%
}%
\newcommand{\cfXXup}{%
    \renewcommand\Mynodes{
        \node[whitenode,label=below:\labelVOne] (v1) at (0,0+0.2) {};
        \node[whitenode,label=above:\labelVTwo] (v2) at (0,3+0.2) {};
        \node[whitenode,label=below:\labelVThree] (v3) at (3,1.5+0.2) {};
        \node[whitenode,label=below:\labelVFour] (v4) at (0.5,1.5+0.2) {};
    }%
    \renewcommand\Myspecialnodes{%
        \node[blacknode,label=below:\labelUOne] (u1) at (-1.5,1.5+0.2) {};
        \node[blacknode,label=below:\labelUTwo] (u2) at (1.5,1.5+0.2) {};
    }%
    \renewcommand\MyEdgesBefore{%
    	\myCEdge{u2}{v4}{optedge}{}%
        \myEdge{v1}{v2}%
        \myEdge{u1}{v1}%
        \myEdge{u1}{v2}%
        \myEdge{u2}{v1}%
        \myEdge{u2}{v2}%
        \myEdge{u2}{v3}%
        \myEdge{v3}{v2}%
    }%
    \renewcommand{\CfN}{\nameXXt}%
    \renewcommand{\CfL}{fig:Cup}%
}%
\newcommand{\relabeledXXup}[6]{
	\renewcommand{\labelUOne}{#1}
	\renewcommand{\labelUTwo}{#2}
	\renewcommand{\labelVOne}{#3}
	\renewcommand{\labelVTwo}{#4}
	\renewcommand{\labelVThree}{#5}
	\renewcommand{\labelVFour}{#6}
	\tkcf{\cfXXup}
	\resetLabels
}
\newcommand{\RuleXXup}{%
    \cfXXup%
    \renewcommand\MyEdgesRemoved{%
        \myCEdge{v1}{v2}{blue}{$Q$}%
        \myCEdge{v3}{v2}{green}{$R$}
    }%
    \renewcommand\MyEdgesAfter{%
        \myCEdge{v1}{v2}{red}{$P$}%
        \myCEdge{u2}{v2}{green}{$R$}%
        \myCEdge{v2}{v3}{red}{$P$}%
        \myCEdge{u1}{v1}{blue}{$Q$}%
        \myCEdge{u1}{v2}{blue}{$Q$}%
        \myHalfEdge{u2}{180}{red,optedge}{$P$}%
        \myCEdge{v1}{u2}{red}{$P$}%
        \myCEdge{u2}{v3}{green}{$R$}%
    }%
    \drawRule{fig:Rup}%
}%
\newcommand{\cfCExt}{%
    \renewcommand\Mynodes{%
        \node[bluenode,label=above:$v_1$] (v1) at ($(0,1) + (180:1)$) {};%
    }%
    \renewcommand\Myspecialnodes{%
        \node[blacknode,label=above:$u_1$] (u1) at (0,1) {};%
    }%
    \renewcommand\MyEdgesBefore{%
    	\mySubEdge{u1}{0}{}{}%
        \myEdge{u1}{v1}%
    }%
    \renewcommand{\CfN}{$(\C_{EXT})$}%
    \renewcommand{\CfL}{fig:CCext}%
}%
\newcommand{\RuleCExt}{%
    \cfCExt%
    \renewcommand\MyEdgesRemoved{}%
    \renewcommand\MyEdgesAfter{%
    	\mySubEdge{u1}{0}{}{}%
        \myCEdge{v1}{u1}{red}{$P$}%
    }%
    \drawRule{fig:RCext}%
}%
\newcommand{\cfCV}{%
    \renewcommand\Mynodes{%
        \node[purplenode,label=below:$v_1$] (v1) at ($(0,1) + (210:1)$) {};%
        \node[purplenode,label=above:$v_2$] (v2) at ($(0,1) + (150:1)$) {};%
    }%
    \renewcommand\Myspecialnodes{%
        \node[blacknode,label=above:$u_1$] (u1) at (0,1) {};%
    }%
    \renewcommand\MyEdgesBefore{%
		\mySubEdge{u1}{0}{}{}%
        \myEdge{u1}{v1}%
        \myEdge{u1}{v2}%
        \myCEdge{v1}{v2}{dashed}{}%
    }%
    \renewcommand{\CfN}{$(\C_V)$}%
    \renewcommand{\CfL}{fig:CCv}%
}%
\newcommand{\RuleCV}{%
    \cfCV%
    \renewcommand\MyEdgesRemoved{%
        \myCEdge{v1}{v2}{green}{$Q$}%
    }%
    \renewcommand\MyEdgesAfter{%
		\mySubEdge{u1}{0}{}{}%
        \myCEdge{v1}{u1}{green}{$Q$}%
        \myCEdge{u1}{v2}{green}{$Q$}%
        \myCEdge{v1}{v2}{dashed}{}%
    }%
    \drawRule{fig:RCV}%
}%
\newcommand{\cfCvTwo}{%
    \renewcommand\Mynodes{
        \node[bluenode,label=above:$v_2$] (v2) at ($ (0,1) + (150:1) $) {};
        \node[purplenode,evennode,label=below:$v_1$] (v1) at ($ (0,1) + (210:1) $) {};
    }%
    \renewcommand\Myspecialnodes{%
        \node[blacknode,label=above:$u_1$] (u1) at (0,1) {};
    }%
    \renewcommand\MyEdgesBefore{%
    	\mySubRedEdge{u1}{0}{}{}%

        \myEdge{u1}{v1}
        \myEdge{u1}{v2}
        \myCEdge{v1}{v2}{}{}%
    }%
    \renewcommand{\CfN}{$(\C_{Ne})$}%
    \renewcommand{\CfL}{fig:CCv2}%
}%
\newcommand{\RuleCvTwo}{%
    \cfCvTwo
    \renewcommand\MyEdgesRemoved{%
        \myCEdge{v1}{v2}{black}{}%
        \myHalfEdge{v1}{180}{orange}{$Q$}%
    }%
    \renewcommand\MyEdgesAfter{%
		\mySubRedEdge{u1}{0}{}{}

        \myCEdge{v1}{v2}{black}{}%
        \myHalfEdge{v1}{180}{orange}{$Q$}%
        \myCEdge{v1}{u1}{orange}{$Q$}%
        \myCEdge{u1}{v2}{red}{$P_1$}%
    }%
    \drawRule{fig:RCv2}%
}%
\newcommand{\cfCvThree}{%
    \renewcommand\Mynodes{
        \node[bluenode,label=above:$v_2$] (v2) at ($ (0,1) + (150:1) $) {};
        \node[bluenode,oddnode,label=below:$v_1$] (v1) at ($ (0,1) + (210:1) $) {};
    }%
    \renewcommand\Myspecialnodes{%
        \node[blacknode,label=above:$u_1$] (u1) at (0,1) {};
    }%
    \renewcommand\MyEdgesBefore{%
    	\mySubRedEdge{u1}{0}{}{}%
        \myEdge{u1}{v1}
        \myEdge{u1}{v2}
        \myCEdge{v1}{v2}{}{}%
    }%
    \renewcommand{\CfN}{$(\C_{No})$}%
    \renewcommand{\CfL}{fig:CCv3}%
}%
\newcommand{\RuleCvThree}{%
    \cfCvThree
    \renewcommand\MyEdgesRemoved{%
        \myHalfEdge{v1}{180}{orange}{$Q$}%
        \myDEdge{v1}{v2}

    }%
    \renewcommand\MyEdgesAfter{%
    	\mySubRedEdge{u1}{0}{}{}%
        \myCEdge{v1}{v2}{red}{$P_1$}%
        \myHalfEdge{v1}{180}{orange}{$Q$}%
        \myCEdge{v1}{u1}{orange}{$Q$}%
        \myCEdge{u1}{v2}{red}{$P_1$}%
    }%
    \drawRule{fig:RCv3}%
}%
\newcommand{\cfCvp}{%
    \renewcommand\Mynodes{
        \node[purplenode,label=below:$v_1$] (v1) at ($ (0,1) + (210:1) $) {};
        \node[purplenode,label=above:$v_2$] (v2) at ($ (0,1) + (150:1) $) {};

    }%
    \renewcommand\Myspecialnodes{%
        \node[blacknode,label=above:$u_1$] (u1) at (0,1) {};
    }%
    \renewcommand\MyEdgesBefore{%
    	\mySubEdge{u1}{0}{}{}%

        \myEdge{u1}{v1}
        \myEdge{u1}{v2}
        \myCEdge{v1}{v2}{sedgemixed}{$P$}%
    }%
    \renewcommand{\CfN}{$(\C_{V}')$}%
    \renewcommand{\CfL}{fig:CCvp}%
}%
\newcommand{\RuleCvp}{%
    \cfCvp
    \renewcommand\MyEdgesRemoved{%
        \myCEdge{v1}{v2}{green}{$Q$}%
    }%
    \renewcommand\MyEdgesAfter{%
		\mySubEdge{u1}{0}{}{}

        \myCEdge{v1}{u1}{green}{$Q$}
        \myCEdge{u1}{v2}{green}{$Q$}
        \myCEdge{v1}{v2}{sedgemixed}{$P$}%
    }%
    \drawRule{fig:RCvp}%
}%
\newcommand{\cfCVPlus}{%
    \renewcommand\Mynodes{%
        \node[whitenode,label=above:\labelVOne] (v1) at (1,2) {};
        \node (v2) at (2,1) {};%
        \node[whitenode,label=left:\labelVThree] (v3) at (1,0) {};%
        \node[whitenode,label=above:\labelVTwo] (v4) at (0,1) {};%
    }%
    \renewcommand\Myspecialnodes{%
		\node[blacknode,label={[label distance=-0.08cm]45:\labelUOne}] (u1) at (1,1) {};
    }%
    \renewcommand\MyEdgesBefore{%
        \myCEdge{u1}{v1}{optedge}{}%
        \myCEdge{u1}{v3}{optedge}{}%
        \myCEdge{u1}{v4}{optedge}{}%
        \myCEdge{v2}{u1}{sedge}{}%
        \myDEdge{v1}{v4}%
    }%
    \renewcommand{\CfN}{$(\C_{V}^+)$}%
    \renewcommand{\CfL}{fig:CVP}%
}%
\newcommand{\relabeledCVPlus}[5]{
	\renewcommand{\labelUOne}{#1}
	\renewcommand{\labelVOne}{#2}
	\renewcommand{\labelVTwo}{#3}
	\renewcommand{\labelVThree}{#4}
	\renewcommand{\labelVFour}{#5}
	\tkcf{\cfCVPlus}
	\resetLabels
}
\newcommand{\cfCFoura}{%
    \renewcommand\Mynodes{
        \node[purplenode,label=above:$v_1$] (v1) at (1,2) {};
        \node (v2) at (2,1) {};
        \node[whitenode,label=below:$v_3$] (v3) at (1,0) {};
        \node[purplenode,label=above:$v_2$] (v4) at (0,1) {};
    }%
    \renewcommand\Myspecialnodes{%
        \node[blacknode,label={[label distance=-0.08cm]45:$u_1$}] (u1) at (1,1) {};
    }%
    \renewcommand\MyEdgesBefore{%
        \myEdge{u1}{v1}
        \myEdge{u1}{v3}
        \myEdge{u1}{v4}
        \myCEdge{v1}{v4}{dashed}{}%
        \myCEdge{v2}{u1}{sedge}{}%
    }%
    \renewcommand{\CfN}{$(\C_{4a})$}%
    \renewcommand{\CfL}{fig:C4a}%
}%
\newcommand{\RuleCFoura}{%
    \cfCFoura

    \renewcommand\MyEdgesRemoved{%
        \myCEdge{v1}{v4}{blue}{$Q$}%
    }%
    \renewcommand\MyEdgesAfter{%
        \myCEdge{v1}{u1}{blue}{$Q$}%
        \myCEdge{v4}{u1}{blue}{$Q$}%
        \myCEdge{v2}{u1}{red}{$P$}%
        \myCEdge{v3}{u1}{red}{$P$}%
        \myCEdge{v1}{v4}{dashed}{}%
    }%
    \drawRule{fig:R4a}%
}%
\newcommand{\cfCFourb}{%
    \renewcommand\Mynodes{
        \node[whitenode,label=above:$v_1$] (v1) at (1,2) {};
        \node (v2) at (2,1) {};
        \node[whitenode,label=below:$v_3$] (v3) at (1,0) {};
        \node[whitenode,label=above:$v_2$] (v4) at (0,1) {};
    }%
    \renewcommand\Myspecialnodes{%
		\node[blacknode,label={[label distance=-0.08cm]45:$u_1$}] (u1) at (1,1) {};
    }%
    \renewcommand\MyEdgesBefore{%
        \myEdge{u1}{v1}
        \myEdge{u1}{v3}
        \myEdge{u1}{v4}
        \myCEdge{v1}{v4}{}{}%
        \myCEdge{v3}{v4}{}{}%
        \myCEdge{v2}{u1}{sedge}{}%
    }%
    \renewcommand{\CfN}{$(\C_{4b})$}%
    \renewcommand{\CfL}{fig:C4b}%
}%
\newcommand{\RuleCFourb}{%
    \cfCFourb

    \renewcommand\MyEdgesRemoved{%
        \myCEdge{v1}{v4}{blue}{$Q$}%
        \myDEdge{v3}{v4}%
    }%
    \renewcommand\MyEdgesAfter{%
        \myCEdge{v1}{u1}{blue}{$Q$}%
        \myCEdge{v4}{u1}{blue}{$Q$}%
        \myCEdge{v2}{u1}{sedge}{}%
        \myCEdge{v3}{u1}{red}{$P$}%
        \myCEdge{v3}{v4}{red}{$P$}%
        \myCEdge{v4}{v1}{red}{$P$}%
    }%
    \drawRule{fig:R4b}%
}%
\newcommand{\cfCOnea}{%
    \renewcommand\Mynodes{
    }%
    \renewcommand\Myspecialnodes{%
        \node[blacknode,label=above:$u_1$] (u1) at (0,1) {};
    }%
    \renewcommand\MyEdgesBefore{
    \myHalfEdge{u1}{0}{sedge}{}}%
    \renewcommand{\CfN}{$(\C_{1a})$}%
    \renewcommand{\CfL}{fig:C1a}%
}%
\newcommand{\cfCU}{%
    \renewcommand\Mynodes{
        \node[purplenode,label=above:$v_2$] (v2) at ($ (1,1) + (60:1) $) {};
        \node[purplenode,label=above:$v_1$] (v1) at ($ (0,1) + (120:1) $) {};
    }%
    \renewcommand\Myspecialnodes{%
        \node[blacknode,label=left:$u_1$] (u1) at (0,1) {};
        \node[blacknode,label=right:$u_2$] (u2) at (1,1) {};
    }%
    \renewcommand\MyEdgesBefore{%
		\mySubEdge{u1}{-90}{}{}%
        \mySubEdge{u2}{-90}{}{}

        \myEdge{u1}{v1}
        \myEdge{u2}{v2}
        \myEdge{u1}{u2}
        \myCEdge{v1}{v2}{dashed}{}%
    }%
    \renewcommand{\CfN}{$(\C_U)$}%
    \renewcommand{\CfL}{fig:CU}%
}%
\newcommand{\RuleCU}{%
    \cfCU
    \renewcommand\MyEdgesRemoved{%
        \myCEdge{v1}{v2}{green}{$Q$}%
    }%
    \renewcommand\MyEdgesAfter{%
		\mySubEdge{u1}{-90}{}{}%
        \mySubEdge{u2}{-90}{}{}

        \myCEdge{v1}{u1}{green}{$Q$}%
        \myCEdge{u1}{u2}{green}{$Q$}%
        \myCEdge{u2}{v2}{green}{$Q$}%
        \myCEdge{v1}{v2}{dashed}{}%
    }%
    \drawRule{fig:RCU}%
}%
\newcommand{\cfCDOnenea}{%
    \renewcommand\Mynodes{
        \node[whitenode,label=right:$v_1$] (v1) at (1.5,1.5) {};
        \node[whitenode,label=right:$v_2$] (v2) at (1.5,0.5) {};
        \node[whitenode,label=above:$v$] (v) at (0.75, 1) {};
    }%
    \renewcommand\Myspecialnodes{%
        \node[blacknode,label=left:$u_1$] (u1) at (0,2) {};
        \node[blacknode,label=left:$u_2$] (u2) at (0,0) {};
    }%
    \renewcommand\MyEdgesBefore{%
		\myHalfEdge{u1}{135}{sedgemixed}{$S$}%
        \myHalfEdge{u1}{20}{sedgemixed}{$S$}%
        \myHalfEdge{u2}{-135}{sedgemixed}{$S$}%
        \myHalfEdge{u2}{-20}{sedgemixed}{$S$}%
        \myCEdge{u1}{u2}{sedgemixed}{$S$}

        \myEdge{u1}{v}
        \myEdge{v}{v2}
        \myEdge{v}{v1}
        \myEdge{v1}{v2}
        \myEdge{u1}{v1}
        \myEdge{u2}{v}
        \myEdge{u2}{v2}
    }%
    \renewcommand{\CfN}{$(\C_{Da})$}%
    \renewcommand{\CfL}{fig:CCD1a}%
}%
\newcommand{\RuleCDOnenea}{%
    \cfCDOnenea
    \renewcommand\MyEdgesRemoved{%
        \myCEdge{v}{v1}{green}{$Q$}%
        \myEdge{v}{v2}%
        \myDEdge{v1}{v2}%
    }%
    \renewcommand\MyEdgesAfter{%
		\myHalfEdge{u1}{135}{sedgemixed}{$S$}%
        \myHalfEdge{u1}{20}{sedgemixed}{$S$}%
        \myHalfEdge{u2}{-135}{sedgemixed}{$S$}%
        \myHalfEdge{u2}{-20}{sedgemixed}{$S$}%

        \myCEdge{v}{u2}{green}{$Q$}%
        \myCEdge{u2}{u1}{green}{$Q$}%
        \myCEdge{u1}{v1}{green}{$Q$}%
        \myEdge{v}{v2}%
        \myCEdge{u1}{v}{sedgemixed}{$S$}%
        \myCEdge{v}{v1}{sedgemixed}{$S$}%
        \myCEdge{v1}{v2}{sedgemixed}{$S$}%
        \myCEdge{v2}{u2}{sedgemixed}{$S$}%
    }%
    \drawRuleSansFigure{1.25}
}%
\newcommand{\cfCDOneneb}{%
    \renewcommand\Mynodes{
        \node[purplenode,label=right:$v_1$] (v1) at (1.5,1.5) {};
        \node[purplenode,label=right:$v_2$] (v2) at (1.5,0.5) {};
        \node[whitenode,label=above:$v$] (v) at (0.75, 1) {};
    }%
    \renewcommand\Myspecialnodes{%
        \node[blacknode,label=left:$u_1$] (u1) at (0,2) {};
        \node[blacknode,label=left:$u_2$] (u2) at (0,0) {};
    }%
    \renewcommand\MyEdgesBefore{%
		\myHalfEdge{u1}{135}{sedgemixed}{$S$}%
        \myHalfEdge{u1}{20}{sedgemixed}{$S$}%
        \myHalfEdge{u2}{-135}{sedgemixed}{$S$}%
        \myHalfEdge{u2}{-20}{sedgemixed}{$S$}%
        \myCEdge{u1}{u2}{sedgemixed}{$S$}

        \myEdge{u1}{v}
        \myEdge{v}{v2}
        \myEdge{v}{v1}
        \myDEdge{v1}{v2}
        \myEdge{u1}{v1}
        \myEdge{u2}{v}
        \myEdge{u2}{v2}
    }%
    \renewcommand{\CfN}{$(\C_{Db})$}%
    \renewcommand{\CfL}{fig:CCD1b}%
}%
\newcommand{\RuleCDOneneb}{%
    \cfCDOneneb
    \renewcommand\MyEdgesRemoved{%
        \myCEdge{v1}{v2}{green}{$Q$}%
        \myEdge{v}{v1}%
        \myEdge{v}{v2}%
    }%
    \renewcommand\MyEdgesAfter{%
		\myHalfEdge{u1}{135}{sedgemixed}{$S$}%
        \myHalfEdge{u1}{20}{sedgemixed}{$S$}%
        \myHalfEdge{u2}{-135}{sedgemixed}{$S$}%
        \myHalfEdge{u2}{-20}{sedgemixed}{$S$}%

        \myCEdge{v}{u2}{sedgemixed}{$S$}%
        \myCEdge{u2}{u1}{green}{$Q$}%
        \myCEdge{u1}{v1}{green}{$Q$}%
        \myEdge{v}{v2}%
        \myCEdge{u1}{v}{sedgemixed}{$S$}%
        \myEdge{v}{v1}%
        \myDEdge{v1}{v2}%
        \myCEdge{v2}{u2}{green}{$Q$}%
    }%
    \drawRuleSansFigure{1.25}
}%
\newcommand{\cfCDOnenec}{%
    \renewcommand\Mynodes{
		\node[whitenode,label={[label distance=-0.06cm]-90:$v_1$}] (v1) at (0.75,1.5) {};
        \node[whitenode,label={[label distance=-0.06cm]90:$v_2$}] (v2) at (0.75,0.5) {};
        \node[whitenode,label=above:$v$] (v) at (1.5,1) {};
        \node[whitenode,label=left:$w_1/w_2$] (v') at (0,1) {};
    }%
    \renewcommand\Myspecialnodes{%
        \node[blacknode,label=left:$u_1$] (u1) at (0,2) {};
        \node[blacknode,label=left:$u_2$] (u2) at (0,0) {};
    }%
    \renewcommand\MyEdgesBefore{%
		\myHalfEdge{u1}{120}{sedgemixed}{$S$}%
        \myHalfEdge{u1}{20}{sedgemixed}{$S$}%
        \myHalfEdge{u2}{-120}{sedgemixed}{$S$}%
        \myHalfEdge{u2}{-20}{sedgemixed}{$S$}%
        \myCEdge{u1}{v'}{sedgemixed}{$S$}
        \myCEdge{v'}{u2}{sedgemixed}{$S$}

        \myBentEdge{u1}{v}{black}{left}{30}
        \myEdge{v}{v2}
        \myEdge{v}{v1}
        \myEdge{u1}{v1}
        \myBentEdge{u2}{v}{black}{right}{30}
        \myEdge{u2}{v2}
        \myDEdge{v'}{v1}
    }%
    \renewcommand{\CfN}{$(\C_{D1c})$}%
    \renewcommand{\CfL}{fig:CCD1c}%
}%
\newcommand{\RuleCDOnenec}{%
    \cfCDOnenec
    \renewcommand\MyEdgesAfter{%
		\myHalfEdge{u1}{120}{sedgemixed}{$S$}%
        \myHalfEdge{u1}{20}{sedgemixed}{$S$}%
        \myHalfEdge{u2}{-120}{sedgemixed}{$S$}%
        \myHalfEdge{u2}{-20}{sedgemixed}{$S$}%

		\patternCV{u1}{v1}{v'}{green}%

        \myBentEdge{u1}{v}{sedgemixed}{left}{30}
        \draw ($(v1)+(0.4,0.1)$) node[\myPurple,nodelabel] {$S$};
        \myEdge{v}{v2}
        \myEdge{v}{v1}
        \myBentEdge{u2}{v}{sedgemixed}{right}{30}
        \draw ($(v2)+(0.4,-0.1)$) node[\myPurple,nodelabel] {$S$};
        \myEdge{u2}{v2}
        \patternCN{u2}{v'}{v2}{red}
    }%
	\drawSimpleRuleSansFigure{1.1}
}%
\newcommand{\cfCDOnened}{%
    \renewcommand\Mynodes{
        \node[whitenode,label={[label distance=-0.06cm]-90:$v_1$}] (v1) at (0.75,1.5) {};
        \node[whitenode,label={[label distance=-0.06cm]90:$v_2$}] (v2) at (0.75,0.5) {};
        \node[whitenode,label=above:$v$] (v) at (1.5,1) {};
        \node[whitenode,label=left:$w_1/w_2$] (v') at (0,1) {};
    }%
    \renewcommand\Myspecialnodes{%
        \node[blacknode,label=left:$u_1$] (u1) at (0,2) {};
        \node[blacknode,label=left:$u_2$] (u2) at (0,0) {};
    }%
    \renewcommand\MyEdgesBefore{%
		\myHalfEdge{u1}{120}{sedgemixed}{$S$}%
        \myHalfEdge{u1}{20}{sedgemixed}{$S$}%
        \myHalfEdge{u2}{-120}{sedgemixed}{$S$}%
        \myHalfEdge{u2}{-20}{sedgemixed}{$S$}%
        \myCEdge{u1}{v'}{sedgemixed}{$S$}
        \myCEdge{v'}{u2}{sedgemixed}{$S$}

        \myBentEdge{u1}{v}{black}{left}{30}
        \myEdge{v}{v2}
        \myEdge{v}{v1}
        \myEdge{u1}{v1}
        \myBentEdge{u2}{v}{black}{right}{30}
        \myEdge{u2}{v2}
        \myEdge{v'}{v1}
        \myEdge{v'}{v2}
        \myDEdge{v}{v'}
    }%
    \renewcommand{\CfN}{$(\C_{D1d})$}%
    \renewcommand{\CfL}{fig:CCD1d}%
}%
\newcommand{\RuleCDOnened}{%
    \cfCDOnened
    \renewcommand\MyEdgesAfter{%
		\myHalfEdge{u1}{120}{sedgemixed}{$S$}%
        \myHalfEdge{u1}{20}{sedgemixed}{$S$}%
        \myHalfEdge{u2}{-120}{sedgemixed}{$S$}%
        \myHalfEdge{u2}{-20}{sedgemixed}{$S$}%

		\myCEdge{u1}{v'}{green}{}%
		\myBentEdge{u1}{v}{green}{left}{30}%
		\node at ($(u1)+(1.2,0)$) {\textcolor{green}{\CV}};

        \myCEdge{u1}{v1}{sedgemixed}{$S$}
        \myEdge{v}{v2}
        \myCEdge{v}{v1}{sedgemixed}{$S$}
        \myBentEdge{u2}{v}{sedgemixed}{right}{30}
        \draw ($(v2)+(0.4,-0.1)$) node[\myPurple,nodelabel] {$S$};
        \myEdge{u2}{v2}
        \myEdge{v'}{v1}
        \patternCN{u2}{v'}{v2}{red}
        \myDEdge{v}{v'}
    }%
	\drawSimpleRuleSansFigure{1.1}
}%
\newcommand{\cfCDOnenee}{%
    \renewcommand\Mynodes{
        \node[whitenode,label=below:$v_1$] (v1) at (1.5,1.5) {};
        \node[whitenode,label=below:$v_2$] (v2) at (1.5,0.5) {};
        \node[whitenode,label=left:$v$] (v) at (0.75, 1) {};
        \node[whitenode,label=above:$w_1$] (v1'') at (0.75,2.2) {};
    }%
    \renewcommand\Myspecialnodes{%
        \node[blacknode,label=left:$u_1$] (u1) at (0,2) {};
        \node[blacknode,label=left:$u_2$] (u2) at (0,0) {};
    }%
    \renewcommand\MyEdgesBefore{%
		\myHalfEdge{u1}{120}{sedgemixed}{$S$}%
        \myCEdge{u1}{v1''}{sedgemixed}{$S$}%
        \myHalfEdge{u2}{-120}{sedgemixed}{$S$}%
        \myHalfEdge{u2}{-20}{sedgemixed}{$S$}%
        \myPath{u1}{u2}{\subColor}{left}{0}

        \myPath{v1''}{v2}{sedgemixed}{left}{70}
        \draw ($(v1)+(0,0.4)$) node[\myPurple,nodelabel] {$S$};
        \myHalfEdge{v2}{20}{sedgemixed}{$S$}%

        \myEdge{u1}{v}
        \myEdge{v}{v2}
        \myEdge{v}{v1}
        \myEdge{u1}{v1}
        \myEdge{u2}{v}
        \myEdge{u2}{v2}
        \myDEdge{v1''}{v}
    }%
    \renewcommand{\CfN}{$(\C_{D1+})$}%
    \renewcommand{\CfL}{fig:CCD1e}%
}%
\newcommand{\RuleCDOnenee}{%
    \cfCDOnenee
    \renewcommand\MyEdgesAfter{%
		\myHalfEdge{u1}{120}{sedgemixed}{$S$}%

        \myHalfEdge{u2}{-120}{sedgemixed}{$S$}%
        \myHalfEdge{u2}{-20}{sedgemixed}{$S$}%
		\patternCV{u1}{v}{v1''}{orange}%

    	\myPath{v1''}{v2}{black}{left}{70}
        \myHalfEdge{v2}{20}{sedgemixed}{$S$}%

        \myCEdge{v}{u2}{green}{}
		\myPath{u1}{u2}{\subColor}{left}{0}

        \myCEdge{u1}{v1}{sedgemixed}{}
        \draw ($(v)!0.5!(v1)+(0,0.4)$) node[\myPurple,nodelabel] {$S$};
        \myCEdge{v}{v2}{sedgemixed}{$S$}%
        \myCEdge{v}{v1}{sedgemixed}{$S$}%
        \myCEdge{v2}{u2}{green}{}

        \node at ($(v)+(0.05,-0.45)$) {\textcolor{green}{\CVp}};
    }%
	\drawSimpleRuleSansFigure{1.1}
}%
\newcommand{\cfCDOneneeBis}{%
    \renewcommand\Mynodes{
        \node[whitenode,label=below:$v_1$] (v1) at (1.2,1.5) {};
        \node[whitenode,label=below:$v_2$] (v2) at (1.5,0.5) {};
        \node[whitenode,label=left:$v$] (v) at (0.75, 1) {};
        \node[whitenode,label=above:$w_1$] (v1'') at (0.9,2.4) {};
    }%
    \renewcommand\Myspecialnodes{%
        \node[blacknode,label=left:$u_1$] (u1) at (0,2) {};
        \node[blacknode,label=left:$u_2$] (u2) at (0,0) {};
    }%
    \renewcommand\MyEdgesBefore{%
		\myHalfEdge{u1}{120}{sedgemixed}{$P$}%
        \myCEdge{u1}{v1''}{sedgemixed}{$P$}%
        \myHalfEdge{u2}{-120}{sedgemixed}{$P$}%
        \myHalfEdge{u2}{-20}{sedgemixed}{$P$}%
        \myPath{u1}{u2}{\subColor}{left}{0}

        \myPath{v1''}{v2}{\subColor}{left}{70}
        \draw ($(v1)+(0.6,0.4)$) node[\myPurple,nodelabel] {};
        \myHalfEdge{v2}{10}{sedgemixed}{$S$}%

        \myEdge{u1}{v}
        \myEdge{v}{v2}
        \myEdge{v}{v1}
        \myEdge{u1}{v1}
        \myEdge{u2}{v}
        \myEdge{u2}{v2}

        \myDEdge{v1}{v1''}
    }%
    \renewcommand{\CfN}{$(\C_{D1+})$}%
    \renewcommand{\CfL}{fig:CCD1e}%
}%
\newcommand{\RuleCDOneneeBis}{%
    \cfCDOneneeBis
    \renewcommand\MyEdgesAfter{%
		\myHalfEdge{u1}{120}{sedgemixed}{$S$}%

        \myHalfEdge{u2}{-120}{sedgemixed}{$S$}%
        \myHalfEdge{u2}{-20}{sedgemixed}{$S$}%
		\patternCV{u1}{v1''}{v1}{orange}

    	\myPath{v1''}{v2}{black}{left}{70}
        \myHalfEdge{v2}{10}{sedgemixed}{$S$}%

        \myCEdge{v}{u2}{green}{}
		\myPath{u1}{u2}{\subColor}{left}{0}

        \myCEdge{u1}{v}{sedgemixed}{$S$}%
        \myCEdge{v}{v2}{sedgemixed}{$S$}%
		\myEdge{v}{v1}
        \myCEdge{v2}{u2}{green}{}

        \node at ($(v)+(0.05,-0.45)$) {\textcolor{green}{\CVp}};
    }%
	\drawSimpleRuleSansFigure{1.1}
}%
\newcommand{\cfCDTwowod}{%
    \renewcommand\Mynodes{
        \node[whitenode,label=above right:$v$] (v) at (0.75,1) {};
        \node[whitenode,label=above right:$v'$] (v') at (1.5,1) {};
        \node[whitenode,label=left:$w_1$] (v1) at (-0.5,1.4) {};
        \node[whitenode,label=left:$w_2$] (v2) at (-0.5,0.6) {};
    }%
    \renewcommand\Myspecialnodes{%
        \node[blacknode,label=left:$u_1$] (u1) at (0,2) {};
        \node[blacknode,label=left:$u_2$] (u2) at (0,0) {};
    }%
    \renewcommand\MyEdgesBefore{%
		\myHalfEdge{u1}{120}{sedgemixed}{$S$}%
        \myHalfEdge{u1}{20}{sedgemixed}{$S$}%
        \myHalfEdge{u2}{-120}{sedgemixed}{$S$}%
        \myHalfEdge{u2}{-20}{sedgemixed}{$S$}%

        \myCEdge{u1}{v1}{sedgemixed}{$S$}
        \myCEdge{u2}{v2}{sedgemixed}{$S$}
        \myPath{v1}{v2}{\subColor}{left}{0}

        \myEdge{u1}{v}
        \myBentEdge{u1}{v'}{black}{left}{40}
        \myEdge{u2}{v}
        \myBentEdge{u2}{v'}{black}{right}{40}
        \myDEdge{v1}{v}

        \myEdge{v}{v'}
    }%
    \renewcommand{\CfN}{$(\C_{D2d})$}%
    \renewcommand{\CfL}{fig:CCD2d}%
}%
\newcommand{\RuleCDTwowod}{%
    \cfCDTwowod
    \renewcommand\MyEdgesRemoved{
        \myCEdge{v1}{v}{green}{$Q$}
        \myPath{v1}{v2}{black}{left}{0}
        \myEdge{v}{v'}
    }%
    \renewcommand\MyEdgesAfter{%
		\myHalfEdge{u1}{120}{sedgemixed}{$S$}%
        \myHalfEdge{u1}{20}{sedgemixed}{$S$}%
        \myHalfEdge{u2}{-120}{sedgemixed}{$S$}%
        \myHalfEdge{u2}{-20}{sedgemixed}{$S$}%

        \myEdge{v}{v'}

		\patternCV{u1}{v}{v1}{green}
        \patternCN{u2}{v2}{v}{red}
        \myPath{v1}{v2}{black}{left}{0}

        \myBentEdge{u1}{v'}{sedgemixed}{left}{40}
        \myBentEdge{u2}{v'}{sedgemixed}{right}{40}
        \draw ($(v)!0.5!(v')+(0,0.7)$) node[\myPurple,nodelabel] {$S$};
        \draw ($(v)!0.5!(v')+(0,-0.7)$) node[\myPurple,nodelabel] {$S$};
    }%
    \drawSimpleRuleSansFigure{1.1}
}%
\newcommand{\cfCH}{%
    \renewcommand\Myspecialnodes{
        \node[blacknode,label=left:$u_1$] (u1) at (-0.5,3) {};
        \node[blacknode,label=below:$u_2$] (u2) at (-0.5,0) {};
        \node[blacknode,label=right:$u_3$] (u3) at (4,3) {};
        \node[blacknode,label=right:$u_4$] (u4) at (4,0) {};
    }
    \renewcommand\Mynodes{

    	\node[whitenode,label=below:$v_1$] (v1') at (1,1) {};
    	\node[whitenode,label=left:$v_2$] (v2') at (-2,2) {};
    }
    \renewcommand\MyEdgesBefore{
        \myPath{u1}{v2'}{\subColor}{left}{0}%
        \myPath{v2'}{u3}{\subColor}{left}{80}%
        \myPath{u1}{u3}{\subColor}{left}{0}%
        \myPath{v1'}{u2}{\subColor}{left}{0}%
        \myPath{u4}{v1'}{\subColor}{left}{20}%
        \myPath{u2}{u4}{\subColor}{right}{50}%
        \myPath{u1}{u2}{\subColor}{left}{0}%
        \myPath{u3}{u4}{\subColor}{left}{0}%

        \myEdge{u1}{v1'}
        \myEdge{u2}{v2'}

		\draw ($(v1')+(1.25,-0.9)$) node[\myPurple,nodelabel] {$P_2$};
		\draw ($(v1')+(-0.6,-0.6)$) node[\myPurple,nodelabel] {$P_2$};
		\draw ($(v2')+(0.9,0.4)$) node[\myPurple,nodelabel] {$P_1$};
		\draw ($(v2')+(2.5,2.45)$) node[\myPurple,nodelabel] {$P_1$};
    }%
    \renewcommand{\CfN}{$(\C_H)$}%
    \renewcommand{\CfL}{fig:CCH}%
}%
\newcommand{\RuleCH}{%
    \cfCH
    \renewcommand\MyEdgesAfter{%
        \myPath{u1}{v2'}{black}{left}{0}%
        \myPath{v2'}{u3}{\subColor}{left}{80}%
        \myPath{u1}{u3}{\subColor}{left}{0}%
        \myPath{v1'}{u2}{black}{left}{0}%
        \myPath{u4}{v1'}{\subColor}{left}{20}%
        \myPath{u2}{u4}{\subColor}{right}{50}%
        \myPath{u1}{u2}{\subColor}{left}{0}%
        \myPath{u3}{u4}{\subColor}{left}{0}%

        \myBentEdge{u1}{v1'}{\subColor}{left}{0}
        \myBentEdge{u2}{v2'}{\subColor}{left}{0}

		\draw ($(v1')!0.5!(u1) + (0.1,0.1)$) node[\myPurple,nodelabel] {$P_1'$};
		\draw ($(v2')!0.5!(u2) + (-0.1,-0.1)$) node[\myPurple,nodelabel] {$P_2'$};
		\draw ($(v1')+(1.25,-0.9)$) node[\myPurple,nodelabel] {$P_1'$};
		\draw ($(v2')+(2.5,2.45)$) node[\myPurple,nodelabel] {$P_2'$};
    }%
    \drawSimpleRuleSansFigure{2.8}
}%
\newcommand{\cfCTOne}{%
    \renewcommand\Mynodes{
        \node[whitenode,bluenode,label=above:$v_1$] (v1) at (-1,1+0.5) {};
        \node[whitenode,label=above:$v_2$] (v2) at (1,1+0.5) {};
        \node[whitenode,label=above left:$v$] (v) at (0, 1+0.5) {};
    }%
    \renewcommand\Myspecialnodes{%
        \node[blacknode,label=left:$u_1$] (u1) at (0,0+0.5) {};
        \node[blacknode,label=left:$u_2$] (u2) at (0,2+0.5) {};
    }%
    \renewcommand\MyEdgesBefore{%
		\mySubRedEdge{u1}{-90}{}{}%
        \mySubBlueEdge{u2}{90}{}{}

        \myEdge{u1}{v}
        \myEdge{u2}{v}
        \myEdge{u2}{v2}
        \myEdge{u1}{v1}
        \myEdge{v}{v1}
        \myEdge{v}{v2}
    }%
    \renewcommand{\CfN}{$(\C_{T1})$}%
    \renewcommand{\CfL}{fig:CTOne}%
}%
\newcommand{\cfCTOneb}{%
    \renewcommand\Mynodes{
        \node[bluenode,oddnode,label=above:$v_1$] (v1) at (-1,1+0.5) {};
        \node[rednode,oddnode,label=above:$v_2$] (v2) at (1,1+0.5) {};
        \node[whitenode,label=above left:$v$] (v) at (0, 1+0.5) {};
    }%
    \renewcommand\Myspecialnodes{%
        \node[blacknode,label=left:$u_1$] (u1) at (0,0+0.5) {};
        \node[blacknode,label=left:$u_2$] (u2) at (0,2+0.5) {};
    }%
    \renewcommand\MyEdgesBefore{%
		\mySubRedEdge{u1}{-90}{}{}%
        \mySubBlueEdge{u2}{90}{}{}

        \myEdge{u1}{v}
        \myEdge{u2}{v}
        \myEdge{u2}{v2}
        \myEdge{u1}{v1}
        \myEdge{v}{v1}
        \myEdge{v}{v2}
    }%
    \renewcommand{\CfN}{$(\C_{T1b})$}%
    \renewcommand{\CfL}{fig:CTOneb}%
}%
\newcommand{\RuleCTOneb}{%
    \cfCTOneb
    \renewcommand\MyEdgesRemoved{%
        \myDEdge{v}{v1}
        \myDEdge{v}{v2}
        \myHalfEdge{v1}{180}{green}{$Q$}%
        \myHalfEdge{v2}{0}{orange}{$R$}%
    }%
    \renewcommand\MyEdgesAfter{%
		\mySubRedEdge{u1}{-90}{}{}%
        \mySubBlueEdge{u2}{90}{}{}

        \myHalfEdge{v2}{0}{orange}{$R$}%
        \myCEdge{u2}{v2}{orange}{$R$}%

        \myHalfEdge{v1}{180}{green}{$Q$}%
        \myCEdge{u1}{v1}{green}{$Q$}%

        \myCEdge{u1}{v}{red}{$P_1$}%
        \myCEdge{v}{v1}{red}{$P_1$}%
        \myCEdge{v}{u2}{blue}{$P_2$}%
        \myCEdge{v}{v2}{blue}{$P_2$}%
    }%
    \drawRule{fig:RCTOneb}%
}%
\newcommand{\cfCTOnea}{%
    \renewcommand\Mynodes{%
        \node[purplenode,evennode,label=above:$v_1$] (v1) at (-1,1+0.5) {};
        \node[bluenode,label=above:$v_2$] (v2) at (1,1+0.5) {};
        \node[bluenode,label=above left:$v$] (v) at (0, 1+0.5) {};
    }%
    \renewcommand\Myspecialnodes{%
        \node[blacknode,label=left:$u_1$] (u1) at (0,0+0.5) {};
        \node[blacknode,label=left:$u_2$] (u2) at (0,2+0.5) {};
    }%
    \renewcommand\MyEdgesBefore{%
		\mySubRedEdge{u1}{-90}{}{}%
        \mySubEdge{u2}{90}{}{}
%
%
        \myEdge{u1}{v}
        \myEdge{u2}{v}
        \myEdge{u2}{v2}
        \myEdge{u1}{v1}
        \myEdge{v}{v1}
        \myEdge{v}{v2}
    }%
    \renewcommand{\CfN}{$(\C_{T1a})$}%
    \renewcommand{\CfL}{fig:CTOnea}%
}%
\newcommand{\RuleCTOnea}{%
    \cfCTOnea%
    \renewcommand\MyEdgesRemoved{%
        \myHalfEdge{v1}{180}{orange}{$T$}%
        \myCEdge{v}{v2}{green}{$R$}%
        \myCEdge{v}{v1}{black}{}%
    }%
    \renewcommand\MyEdgesAfter{%
    	\mySubRedEdge{u1}{-90}{}{}%
        \mySubEdge{u2}{90}{}{}%
        \myHalfEdge{v1}{-180}{orange}{$Q$}%
        \myCEdge{u1}{v1}{orange}{$Q$}%
        \myCEdge{v}{v1}{black}{}%
        \myCEdge{v}{u2}{green}{$R$}%
        \myCEdge{u2}{v2}{green}{$R$}%
        \myCEdge{u1}{v}{red}{$P_1$}%
        \myCEdge{v}{v2}{red}{$P_1$}%
    }%
	\drawRule{}%
}%
\newcommand{\cfCTTNA}{%
    \renewcommand\Mynodes{
        \node[whitenode,label=above:$v$] (v) at (-1,1+0.5) {};
        \node[bluenode,label=above:$v'$] (vp) at (1,1+0.5) {};
    }%
    \renewcommand\Myspecialnodes{%
        \node[blacknode,label=left:$u_1$] (u1) at (0,0+0.5) {};
        \node[blacknode,label=left:$u_2$] (u2) at (0,2+0.5) {};
    }%
    \renewcommand\MyEdgesBefore{%
		\mySubRedEdge{u1}{-90}{}{}%
        \mySubBlueEdge{u2}{90}{}{}

        \myEdge{u1}{v}
        \myEdge{u2}{v}
        \myEdge{u2}{vp}
        \myEdge{u1}{vp}
        \myDEdge{v}{vp}
    }%
    \renewcommand{\CfN}{$(\C_{T2NA})$}%
    \renewcommand{\CfL}{fig:CTTNA}%
}%
\newcommand{\cfCTTNAa}{%
    \renewcommand\Mynodes{
        \node[purplenode,evennode,label=above:$v$] (v) at (-1,1+0.5) {};
        \node[bluenode,label=above:$v'$] (vp) at (1,1+0.5) {};
    }%
    \renewcommand\Myspecialnodes{%
        \node[blacknode,label=left:$u_1$] (u1) at (0,0+0.5) {};
        \node[blacknode,label=left:$u_2$] (u2) at (0,2+0.5) {};
    }%
    \renewcommand\MyEdgesBefore{%
		\mySubRedEdge{u1}{-90}{}{}%
        \mySubEdge{u2}{90}{}{}

        \myEdge{u1}{v}
        \myEdge{u2}{v}
        \myEdge{u2}{vp}
        \myEdge{u1}{vp}
        \myDEdge{v}{vp}
    }%
    \renewcommand{\CfN}{$(\C_{T2NAa})$}%
    \renewcommand{\CfL}{fig:CTTNAa}%
}%
\newcommand{\RuleCTTNAa}{%
    \cfCTTNAa
    \renewcommand\MyEdgesRemoved{%
        \myHalfEdge{v}{-180}{orange}{$Q$}%
        \myCEdge{v}{vp}{green}{$R$}%
    }%
    \renewcommand\MyEdgesAfter{%
    	\mySubRedEdge{u1}{-90}{}{}%
        \mySubEdge{u2}{90}{}{}%

        \myHalfEdge{v}{-180}{orange}{$Q$}%
        \myCEdge{u1}{v}{orange}{$Q$}%

        \myCEdge{v}{u2}{green}{$R$}%
        \myCEdge{u2}{vp}{green}{$R$}%

        \myCEdge{u1}{vp}{red}{$P_1$}%
        \myDEdge{v}{vp}
    }%
    \drawRule{fig:RCTTNAa}%
}%
\newcommand{\cfCTTNAb}{%
    \renewcommand\Mynodes{
        \node[rednode,oddnode,label=above:$v$] (v) at (-1,1+0.5) {};
        \node[bluenode,oddnode,label=above:$v'$] (vp) at (1,1+0.5) {};
    }%
    \renewcommand\Myspecialnodes{%
        \node[blacknode,label=left:$u_1$] (u1) at (0,0+0.5) {};
        \node[blacknode,label=left:$u_2$] (u2) at (0,2+0.5) {};
        \mySubRedEdge{u1}{-90}{}{}%
        \mySubBlueEdge{u2}{90}{}{}%
    }%
    \renewcommand\MyEdgesBefore{%
        \myEdge{u1}{v}
        \myEdge{u2}{v}
        \myEdge{u2}{vp}
        \myEdge{u1}{vp}
        \myDEdge{v}{vp}
    }%
    \renewcommand{\CfN}{$(\C_{T2NAb})$}%
    \renewcommand{\CfL}{fig:CTTNAb}%
}%
\newcommand{\RuleCTTNAb}{%
    \cfCTTNAb
    \renewcommand\MyEdgesRemoved{%
        \myDEdge{v}{vp}
        \myHalfEdge{v}{-180}{orange}{$Q$}%
        \myHalfEdge{vp}{0}{green}{$R$}%
    }%
    \renewcommand\MyEdgesAfter{%
        \myHalfEdge{v}{-180}{orange}{$Q$}%
        \myCEdge{u1}{v}{orange}{$Q$}%

        \myCEdge{v}{u2}{blue}{$P_2$}%
        \myCEdge{u2}{vp}{green}{$R$}%
        \myHalfEdge{vp}{0}{green}{$R$}%

        \myCEdge{u1}{vp}{red}{$P_1$}%
        \myDEdge{v}{vp}
    }%
    \drawRule{fig:RCTTNAb}%
}%
\newcommand{\cfCTTA}{%
    \renewcommand\Mynodes{
        \node[whitenode,label=above:$v$] (v) at (-1,1+0.5) {};
        \node[bluenode,label=above:$v'$] (vp) at (1,1+0.5) {};
    }%
    \renewcommand\Myspecialnodes{%
        \node[blacknode,label=left:$u_1$] (u1) at (0,0+0.5) {};
        \node[blacknode,label=left:$u_2$] (u2) at (0,2+0.5) {};
        \mySubRedEdge{u1}{-90}{}{}%
        \mySubBlueEdge{u2}{90}{}{}%
    }%
    \renewcommand\MyEdgesBefore{%
        \myEdge{u1}{v}
        \myEdge{u2}{v}
        \myEdge{u2}{vp}
        \myEdge{u1}{vp}
        \myEdge{v}{vp}
    }%
    \renewcommand{\CfN}{$(\C_{T2A})$}%
    \renewcommand{\CfL}{fig:CTTA}%
}%
\newcommand{\cfCTTAa}{%
    \renewcommand\Mynodes{
        \node[bluenode,oddnode,label=above:$v$] (v) at (-1,1+0.5) {};
        \node[bluenode,label=above:$v'$] (vp) at (1,1+0.5) {};
    }%
    \renewcommand\Myspecialnodes{%
        \node[blacknode,label=left:$u_1$] (u1) at (0,0+0.5) {};
        \node[blacknode,label=left:$u_2$] (u2) at (0,2+0.5) {};
        \mySubRedEdge{u1}{-90}{}{}%
        \mySubEdge{u2}{90}{}{}%
    }%
    \renewcommand\MyEdgesBefore{%
        \myEdge{u1}{v}
        \myEdge{u2}{v}
        \myEdge{u2}{vp}
        \myEdge{u1}{vp}
        \myEdge{v}{vp}
    }%
    \renewcommand{\CfN}{$(\C_{T2Aa})$}%
    \renewcommand{\CfL}{fig:CTTAa}%
}%
\newcommand{\RuleCTTAa}{%
    \cfCTTAa
    \renewcommand\MyEdgesRemoved{%
        \myHalfEdge{v}{-180}{orange}{$Q$}%
        \myCEdge{v}{vp}{green}{$R$}%
    }%
    \renewcommand\MyEdgesAfter{%
        \myHalfEdge{v}{-180}{orange}{$Q$}%
        \myCEdge{u1}{v}{orange}{$Q$}%

        \myCEdge{v}{u2}{green}{$R$}%
        \myCEdge{u2}{vp}{green}{$R$}%

        \myCEdge{u1}{vp}{red}{$P_1$}%
        \myCEdge{v}{vp}{red}{$P_1$}%
    }%
    \drawRule{fig:RCTTAa}%
}%
\newcommand{\cfCTTAb}{%
    \renewcommand\Mynodes{
        \node[evennode,label=above:$v$] (v) at (-1,1+0.5) {};
        \node[bluenode,evennode,label=above:$v'$] (vp) at (1,1+0.5) {};
    }%
    \renewcommand\Myspecialnodes{%
        \node[blacknode,label=left:$u_1$] (u1) at (0,0+0.5) {};
        \node[blacknode,label=left:$u_2$] (u2) at (0,2+0.5) {};
        \mySubRedEdge{u1}{-90}{}{}%
        \mySubBlueEdge{u2}{90}{}{}%
    }%
    \renewcommand\MyEdgesBefore{%
        \myEdge{u1}{v}
        \myEdge{u2}{v}
        \myEdge{u2}{vp}
        \myEdge{u1}{vp}
        \myEdge{v}{vp}
    }%
    \renewcommand{\CfN}{$(\C_{T2Ab})$}%
    \renewcommand{\CfL}{fig:CTTNAb}%
}%
\newcommand{\RuleCTTAb}{%
    \cfCTTAb
    \renewcommand\MyEdgesRemoved{%
        \myDEdge{v}{vp}
        \myHalfEdge{v}{-180}{orange}{$Q$}%
        \myHalfEdge{vp}{0}{green}{$R$}%
    }%
    \renewcommand\MyEdgesAfter{%
        \myHalfEdge{v}{-180}{orange}{$Q$}%
        \myCEdge{u1}{v}{orange}{$Q$}%

        \myCEdge{v}{u2}{blue}{$P_2$}%
        \myCEdge{u2}{vp}{green}{$R$}%
        \myHalfEdge{vp}{0}{green}{$R$}%

        \myCEdge{u1}{vp}{red}{$P_1$}%
        \myCEdge{v}{vp}{red}{$P_1$}%
    }%
    \drawRule{fig:RCTTAb}%
}%
\newcommand{\AllRulesDesc}{%

\renewcommand{\MyScale}{0.84}
\renewcommand\OrdonneeFleche{1.4}

\ \\

{\center
\RuleZeroa

}

\begin{itemize}
\item \ncf{\cfCZa}: The special vertex $u_1$ has degree $2$: it has two non-adjacent neighbors $v_1,v_2$.

\patred
In the reduced graph, we add the edge $v_1v_2$.

\patrec
In $G$, we deviate the color of $v_1v_2$ on $u_1$.


\renewcommand{\MyScale}{0.9}

{\center
\RuleXXa

}

\item \ncf{\cfXXa}: The two special vertices $u_1,u_2$ are adjacent and they have precisely one common neighbor $v$. The special vertex $u_1$ has degree $3$: it has another neighbor $v_1$, and $u_2$ has degree $3$ or $4$, with a neighbor $v_2$ and maybe another $v_3$. The vertex $v_2$ has an even degree in $G$.

\patred
In the reduced graph, $v_2$ has an odd degree: let $Q$ be a path of the coloring of $G'$ that ends on $v_2$.

\patrec
In $G$, we extend $Q$ to the edges $v_2u_2$, $u_2u_1$. We use the extra color on the path $P = (v_1,u_1,v,u_2)$ and maybe the edge $u_2v_3$ if it is in $G$.


%
%
%
%

\renewcommand\OrdonneeFleche{1.6}

{\center
\RuleXXd 

}

\item \ncf{\cfXXd}: Each of the two special vertices $u_1,u_2$ has degree $3$ or $4$. They are adjacent, they have precisely one common neighbor $v$ and each of $u_1,u_2$ has another neighbor, $v_1,v_2$ respectively. Each of $u_1,u_2$ may have a third neighbor $v_3,v_4$ respectively. The vertices $v_1,v_2$ are non-adjacent.

\patred
In the reduced graph, we add the edge $v_1v_2$.

\patrec
In $G$, we deviate the color of $v_1v_2$ on $u_1,u_2$. We use the extra color on the path $P = (u_1,v,u_2)$, and maybe on the edges $v_3u_1$ and $u_2v_4$ if they belong to $G$.

\ifthenelse{\equal{\isThesis}{true}}
{\vfill
\pagebreak}
{}

{\center
\RuleXXbb

}

\item \ncf{\cfXXbb}: The two special vertices $u_1,u_2$ have degree $3$. They are adjacent, they have precisely one common neighbor $v$ and each of $u_1,u_2$ has another neighbor, $v_1,v_2$ respectively, which is adjacent to $v$. The vertices $v_1,v_2$ are adjacent, and $v$ has an odd degree in $G$.

\patred
In the reduced graph, we remove the edge $v_1v_2$.
The vertex $v$ keeps an odd degree: let $Q$ be a path of the coloring of $G'$ that ends on it.

\patrec
In $G$, we extend $Q$ on the edges 
$vu_1$ and $u_1u_2$.
We use the extra color on the path $(v,u_2,v_2,v_1,u_1)$.


\renewcommand\OrdonneeFleche{1.4}

{\center
\RuleXXc

}

\item \ncf{\cfXXc}: The two special vertices $u_1,u_2$ have degree $3$. They are adjacent, they have precisely one common neighbor $v$ and each of $u_1,u_2$ has another neighbor, $v_1,v_2$ respectively. The vertex $v_1$ is adjacent to $v$. The vertex $v$ has an even degree in $G$. 

\patred
In the reduced graph, we remove the edge $vv_1$. 
The vertex $v$ has an odd degree in $G'$, so let $Q$ be a path of the coloring of $G'$ that ends on $v$.

\patrec
In $G$, we extend $Q$ on the edges 
$vu_1$ and $u_1u_2$.
We use the extra color on the path $P = (u_1,v_1,v,u_2,v_2)$.


\renewcommand{\MyScale}{0.9}
\renewcommand\OrdonneeFleche{2.0}

{\center
\RuleXXe

}

\item \ncf{\cfXXe}: Each of the two special vertices $u_1,u_2$ has degree $3$ or $4$. They are non-adjacent, they have precisely two common neighbors $v_1,v_2$ and each of $u_1,u_2$ has another neighbor $v_3,v_4$ respectively. Both $v_3,v_4$ are adjacent to both $v_1,v_2$. The vertices $v_1,v_2$ are non-adjacent. Each of $u_1,u_2$ may have another neighbor, $v_5,v_6$ respectively.

\patred
In the reduced graph, we add the edge $v_1v_2$ and remove the edges $v_1v_3$ and $v_2v_4$.

\patrec
In $G$, we deviate the color of $v_1v_2$ on $u_1$, and the color of $v_1v_4$ on $u_2$. We use the extra color on the path $P = (u_1,v_3,v_1,v_4,v_2,u_2)$, and maybe on the edges $v_5u_1$ and $u_2v_6$ if they belong to $G$.

{\center
\RuleXXf

}

\item \ncf{\cfXXf}: Each of the two special vertices $u_1,u_2$ has degree $3$ or $4$. They are non-adjacent, they have precisely two common neighbors $v_1,v_2$ and each of $u_1,u_2$ has another neighbor $v_3,v_4$ respectively. Both $v_3,v_4$ are adjacent to both $v_1,v_2$. The vertices $v_1,v_2$ are adjacent. Each of $u_1,u_2$ may have another neighbor, $v_5,v_6$ respectively. There is a path $P_{34}$ between $v_3$ and $v_4$ in $G$ that is vertex-disjoint from the other $6$ vertices.

\patred
In the reduced graph, we remove the edges of the path $P_{34}$.

\patrec
In $G$, we deviate the color of $v_1v_2$ on $u_1$, and the color of $v_1v_4$ on $u_2$. We use the extra color on the path $P = (u_1,v_3,P_{34},v_4,v_1,v_2,u_2)$, and maybe on the edges $v_5u_1$ and $u_2v_6$ if they belong to $G$.



%

\vfill
\pagebreak

\renewcommand{\MyScale}{1.1}
\renewcommand\OrdonneeFleche{2.0}

{\center
\tkcf{\cfXXg}

}

\item \ncf{\cfXXg}: Each of the two special vertices $u_1,u_2$ has degree $3$ or $4$. They are non-adjacent and have precisely two common neighbors $v_1,v_2$. Each of $u_1,u_2$ has another neighbor $v_3,v_4$ respectively, both adjacent to both $v_1,v_2$. The set $\{v_1,v_2\}$ is a $2$-cut that separates $u_1,u_2$. Each of $u_1,u_2$ may have another neighbor $v_5,v_6$ respectively.

\patred
In the reduced graph, we add the edge $v_3v_4$. Let $Q$ be the path of the coloring of $G'$ induced by the color of $v_3v_4$. We denote it $Q = (Q_1,v_3,v_4,Q_2)$.

\patrec
We distinguish between six cases, depending on the properties of the path $Q$.
Rule {\nameXXgOne} covers the case where the path $Q$ avoids the vertex $v_2$.
In rules {\nameXXgTwo} and {\nameXXgThree}, the vertices $v_2, v_3, v_4$ and $v_1$ appear in that order on the path $Q$.
Rule {\nameXXgTwo} covers the case where $v_5$ is on the subpath between $v_2$ and $v_3$, and $v_6$ is on the subpath between $v_4$ and $v_1$ (the rules are a bit more general and only require $v_5$ and $v_6$ to avoid some subpaths of $Q$).
Rule {\nameXXgThree} covers the case where the vertex $v_5$ (if it exists) avoids the subpath of $Q$ between $v_2$ and $v_3$.
By symmetry, this also covers the case where $v_6$ (if it exists) avoids the subpath of $Q$ between $v_4$ and $v_1$.
In rules {\nameXXgFour}, {\nameXXgFive} and {\nameXXgSix}, the vertices $v_1$, $v_2$, $v_3$ and $v_4$ appear in this order on the path $Q$.
Rule {\nameXXgFour} covers the case where the vertex $v_5$ avoids the subpath of $Q$ between $v_2$ and $v_3$, while in rules {\nameXXgFive} and {\nameXXgSix} the vertex $v_5$ avoids the subpath of $Q$ between $v_1$ and $v_2$.
In rule {\nameXXgFive}, the vertex $v_6$ avoids the subpath of $Q$ after $v_1$, and in rule {\nameXXgSix} it avoids the subpath between $v_1$ and $v_2$.
Since each of $v_5,v_6$ avoids at least one subpath of $Q$, this covers all possible cases.

\renewcommand{\MyScale}{0.95}

{\center
\hspace{8mm}
\RuleXXgOne

}

\begin{enumerate}
\item $Q$ does not touch $v_2$ in $G'$.

	In $G$, we replace the path $Q$ with the path $Q' = (Q_1,v_3,u_1,v_2,u_2,v_4,Q_2)$. We use the extra color on the path $P = (u_1,v_1,u_2)$ and maybe on the edges $v_5u_1$ and $u_2v_6$ if they belong to $G$.

{\center
\RuleXXgTwo

}

\item $Q_1$ touches $v_2$ and $Q_2$ touches $v_1$. The vertex $v_5$ does not touch $Q_2$, and $v_6$ does not touch $Q_1$.

	In $G$, we replace $Q$ with a path $Q' = (Q_1,v_3,u_1,v_1,u_2)$ and maybe extend $Q'$ on $u_2v_6$. We use the extra color on the path $P = (Q_2,v_4,u_2,v_2,u_1)$ and maybe on the edge $u_1v_5$.

{\center
\RuleXXgThree

}

\item $Q_1$ touches $v_2$ and $Q_2$ touches $v_1$. We denote $Q_1 = (v_3,R_1,v_2,R_1')$. The vertex $v_5$ does not touch $R_1$. Note that by planarity, $v_6$ cannot touch $R_1$.

	In $G$, we replace $Q$ with a path $Q' = (R_1',v_2,u_2,v_4,Q_2)$ and we deviate the color of $v_3v_1$ in $G'$ on $u_1$. We use the extra color on the path $P = (v_5,u_1,v_2,R_1,v_3,v_1,u_2)$ and maybe on the edge $u_2v_6$.

{\center
\RuleXXgFour

}

\item $Q_1$ touches both $v_1,v_2$: we denote it $Q_1 = (R_1,v_1,R_1',v_2,R_1'',v_3)$. The vertex $v_5$ does not touch $R_1''$. Again by planarity, $v_6$ cannot touch $R_1''$.

	In $G$, we replace $Q$ with a path $Q' = (u_1,v_2,R_1'',v_3,v_1,u_2)$ and maybe extend $Q'$ on the edges $v_5u_1$ and $u_2v_6$. We deviate the color of $v_3v_1$ on $u_1$ and we use the extra color on the path $P = (Q_2,v_4,u_2,v_2,R_1',v_1,R_1)$.

\ifthenelse{\equal{\isThesis}{true}}
{\vfill
\pagebreak}
{}

{\center
\RuleXXgFive

}

\item $Q_1$ touches both $v_1,v_2$: we denote it $Q_1 = (R_1,v_1,R_1',v_2,R_1'',v_3)$. The vertex $v_5$ does not touch $R_1'$, and $v_6$ does not touch $R_1$. Again, note that by planarity $v_5$ cannot touch $Q_2$ and $v_6$ cannot touch $R_1''$.

	In $G$, we replace $Q$ with a path $Q' = (u_1,v_2,R_1',v_1,u_2,v_4,Q_2)$ and maybe extend $Q'$ on the edge $v_5u_1$. We deviate the color of $v_3v_1$ on $u_1$ and we use the extra color on the path $P = (R_1,v_1,v_3,R_1'',v_2,u_2)$ and maybe on the edge $u_2v_6$.

{\center
\RuleXXgSix

}

\item $Q_1$ touches both $v_1,v_2$: we denote it $Q_1 = (R_1,v_1,R_1',v_2,R_1'',v_3)$. None of $v_5, v_6$ touch $R_1'$.

	In $G$, we replace $Q$ with a path $Q' = (R_1,v_1,u_1,v_3,R_1'',v_2,u_2,v_4,Q_2)$. We use the extra color on the path $P = (u_1,v_2,R_1',v_1,u_2)$ and maybe on the edges $v_5u_1$ and $u_2v_6$.
\end{enumerate}


\renewcommand{\MyScale}{1}
\renewcommand\OrdonneeFleche{1.5}

{\center
\RuleXXh

}

\item \ncf{\cfXXh}: The special vertex $u_1$ has degree $3$ or $4$, and $u_2$ has degree $4$.
The special vertices $u_1,u_2$ have (at least) two common neighbors $v,v'$ that are non-adjacent. The special vertex $u_2$ has another neighbor $v_2$, non-adjacent to $v'$ nor $u_1$, and $u_1$ may have another neighbor $v_1$, non-adjacent to $u_2$.
If $u_1,u_2$ are non-adjacent, they have another common neighbor $w$; let us denote $P_{12}$ the path $(u_1,u_2)$ if $u_1,u_2$ are adjacent, and $(u_1,w,u_2)$ otherwise.

\patred
In the reduced graph, we add the edges $vv'$ and $v_2v'$.

\patrec
In $G$, we deviate the color of $vv'$ on $u_1$ and the color of $v_2v'$ on $u_2$. We use the extra color on the path $P = (v,u_2,P_{12},u_1)$ and maybe on the edge $u_1v_1$ if it belongs to $G$.


{\center
\RuleXXi

}

\item \ncf{\cfXXi}: The special vertex $u_1$ has degree $3$ or $4$, and $u_2$ has degree $4$.
The special vertices $u_1,u_2$ have (at least) two common neighbors $v,v'$ that are non-adjacent. The special vertex $u_2$ has another neighbor $v_2$, adjacent to $v,v'$ but not $u_1$, and $u_1$ may have another neighbor $v_1$, non-adjacent to $u_2$.
If $u_1,u_2$ are non-adjacent, they have another common neighbor $w$; let us denote $P_{12}$ the path $(u_1,u_2)$ if $u_1,u_2$ are adjacent, and $(u_1,w,u_2)$ otherwise.

\patred
In the reduced graph, we add the edge $vv'$ and remove the edge $vv_2$.

\patrec
In $G$, we deviate the color of $vv'$ on $u_1$ and the color of $v_2v'$ on $u_2$. We use the extra color on the path $P = (v',v_2,v,u_2,P_{12},u_1)$ and maybe on the edge $u_1v_1$.


{\center
\RuleXXj

}

\item \ncf{\cfXXj}: The special vertices $u_1,u_2$ have degree $3$, with at least two common neighbors $v,v'$ that are non-adjacent.
If $u_1,u_2$ are non-adjacent, they have another common neighbor $w$; let us denote $P_{12}$ the path $(u_1,u_2)$ if $u_1,u_2$ are adjacent, and $(u_1,w,u_2)$ otherwise.
The vertex $v'$ has an even degree in $G$.

\patred
In the reduced graph, we add the edge $vv'$. The vertex $v'$ now has an odd degree, so let $R$ be a path of the coloring of $G'$ that ends on $v'$.

\patrec
In $G$, we deviate the color of $vv'$ on $u_1$, and extend $R$ on the edge $v'u_2$. We use the extra color on the path $P = (u_1,P_{12},u_2,v)$.


{\center
\RuleXXk

}

\item \ncf{\cfXXk}: The special vertices $u_1,u_2$ have degree $3$, with at least two common neighbors $v,v'$ that are non-adjacent and both have an odd degree in $G$.
If $u_1,u_2$ are non-adjacent, they have another common neighbor $w$; let us denote $P_{12}$ the path $(u_1,u_2)$ if $u_1,u_2$ are adjacent, and $(u_1,w,u_2)$ otherwise.

\patred
In the reduced graph, $v,v'$ now have an odd degree, so let $Q,R$ be paths of the coloring of $G'$ that end on $v,v'$ respectively.

\patrec
In $G$, we extend $Q$ on the edge $vu_1$ and $R$ on $v'u_2$. We use the extra color on the path $P = (v',u_1,P_{12},u_2,v)$.


\renewcommand\MyScale{1.1}
{\center
\RuleXXl

}

\item \ncf{\cfXXl}: The two special vertices $u_1,u_2$ have degree $3$, they are non-adjacent and have three common neighbors $v,v',v''$, such that $v'$ is adjacent to $v$ and $v''$.

\patrec
In $G$, we deviate the color of $vv'$ in $G'$ on $u_2$ and the color of $v'v''$ on $u_1$. We use the extra color on the path $P = (u_1,v,v',v'',u_2)$.

\renewcommand\MyScale{0.92}

{\center
\RuleXXlp

}

\item \ncf{\cfXXlp}: The special vertex $u_1$ has degree $4$, and $u_2$ has degree $3$ or $4$: they are non-adjacent and have precisely three common neighbors $v,v',v''$, such that $v'$ is adjacent to $v$ and $v''$. The special vertex $u_1$ has another neighbor $v_1$, and $u_2$ may have another neighbor $v_2$, such that $v_1\neq v_2$.

\patred
In the reduced graph, we add the edge $vv_1$ if it does not already belong to $G$.

\patrec
In $G$, we deviate the color of $vv_1$ in $G'$ on $u_1$, and the color of $vv'$ on $u_2$. We use the extra color on the path $P = (v,v',u_1,v'',u_2)$ and maybe on the edges $vv_1$ and $u_2v_2$ if they belong to $G$.

{\center
\RuleXXm

}

\item \ncf{\cfXXm}: Each of the two special vertices $u_1,u_2$ has degree $3$ or $4$. They are adjacent and have precisely two common neighbors $v,v'$ that are adjacent. Each of $u_1,u_2$ may have another neighbor, $v_1,v_2$ respectively.

\patrec
In $G$, we deviate the color of $vv'$ in $G'$ on $u_1,u_2$. We use the extra color on the path $P = (u_1,v',v,u_2)$ and maybe on the edges $v_1u_1$ and $u_2v_2$ if they belong to $G$.


\renewcommand{\MyScale}{0.95}

{\center
\RuleXXn

}

\item \ncf{\cfXXn}: The special vertices $u_1,u_2$ have degree $4$, with at least three common neighbors $v,v',v''$ such that $v'$ is non-adjacent to $v$ and $v''$.
If $u_1,u_2$ are non-adjacent, they have another common neighbor $w$; let us denote $P_{12}$ the path $(u_1,u_2)$ if $u_1,u_2$ are adjacent, and $(u_1,w,u_2)$ otherwise.

\patred
In the reduced graph, we add the edges $vv'$ and $v'v''$.

\patrec
In $G$, we deviate the color of $vv'$ on $u_1$ and the color of $v'v''$ on $u_2$. We use the extra color on the path $P = (v'',u_1,P_{12},u_2,v)$.


{\center
\RuleXXo

}

\item \ncf{\cfXXo}: The special vertices $u_1,u_2$ have degree $4$, with at least three common neighbors $v,v',v''$ such that $v'$ is adjacent to $v$ and non-adjacent to $v''$.
If $u_1,u_2$ are non-adjacent, they have another common neighbor $w$; let us denote $P_{12}$ the path $(u_1,u_2)$ if $u_1,u_2$ are adjacent, and $(u_1,w,u_2)$ otherwise.

\patred
In the reduced graph, we add the edge $v'v''$.

\patrec
In $G$, we deviate the color of $vv'$ on $u_1$ and the color of $v'v''$ on $u_2$. We use the extra color on the path $P = (v'',u_1,P_{12},u_2,v,v')$.


\renewcommand{\MyScale}{1.1}

{\center
\RuleXXp

}

\item \ncf{\cfXXp}: The special vertex $u_1$ has degree $2$, and $u_2$ has degree $3$ or $4$. The special vertices $u_1,u_2$ are adjacent and have precisely one common neighbor $v$. The special vertex $u_2$ has another neighbor $v_1$ adjacent to $v$, and maybe another neighbor $v_2$.

\patrec
In $G$, we deviate the color of the edge $vv_1$ in $G'$ on $u_1,u_2$. We use the extra color on the path $P = (u_2,v,v_1)$, and maybe on the edge $v_2u_2$ if it belongs to $G$.

\renewcommand\OrdonneeFleche{1.1}

{\center
\RuleXXr

}

\item \ncf{\cfXXr}: The two special vertices $u_1,u_2$ have degree $2$: they are adjacent and have one common neighbor $v$. 
The vertex $v$ has at least one other neighbor $v_1$.

In the reduced graph, we examine two cases.
\begin{itemize}
\item 
In the first case, a path $Q$ of the coloring of $G'$ ends on $v$ (through the edge $v_1v$).

\patrec
In $G$, we extend $Q$ on the edge $vu_1$. We use the extra color on the path $P = (u_1,u_2,v)$.

\item In the second case, no path of the coloring of $G'$ ends on $v$. Let $Q$ be a path of the coloring such that $Q = (Q_1,v_1,v,v_2,Q_2)$, with $v_2$ another neighbor of $v$.

\patrec
In $G$, we replace $Q$ with a path $Q' = (Q_1,v_1,v,u_1)$ and we use the extra color on the path $P = (u_1,u_2,v,v_2,Q_2)$.
\end{itemize}


\renewcommand\OrdonneeFleche{1.7}

{\center
\RuleXXs

}

\item \ncf{\cfXXs}: The two special vertices $u_1,u_2$ have degree $2$: they are non-adjacent and have two common neighbors $v_1,v_2$ that are adjacent. The vertex $v_2$ has an odd degree in $G$.

\patred
In the reduced graph, $v_2$ keeps an odd degree: let $Q$ be a path of the coloring that ends on $v_2$.

\patrec
In $G$, we deviate the color of $v_1v_2$ in $G'$ on $u_1$, and we extend the path $Q$ on the edge $v_2u_2$.



\ifthenelse{\equal{\isThesis}{true}}
{}
{\vfill
\pagebreak}

{\center
\RuleXXt

}

\item \ncf{\cfXXt}: The two special vertices $u_1,u_2$ have degree $2$: they are non-adjacent and have two common neighbors $v_1,v_2$ that are adjacent and have an even degree in $G$.

\patred
In the reduced graph, we remove the edge $v_1v_2$. The vertices $v_1,v_2$ have odd degrees in $G'$: let $Q,R$ be two paths of the coloring that end on $v_2,v_1$ respectively.

\patrec
In $G$, we extend the path $Q$ on the edge $v_2u_2$ and the path $R$ on $v_1u_1$. We use the extra color on the path $P = (u_1,v_2,v_1,u_2)$.


\renewcommand{\MyScale}{0.9}
\renewcommand\OrdonneeFleche{1.9}

{\center
\RuleXXup

}

\item \ncf{\cfXXup}: The special vertex $u_1$ has degree $2$, and $u_2$ has degree $3$ or $4$. The special vertices $u_1,u_2$ are non-adjacent and have two common neighbors $v_1,v_2$ that are adjacent. The vertex $u_2$ has another neighbor $v_3$ adjacent to $v_2$. The vertex $u_2$ may have another neighbor $v_4$.

\patrec
In $G$, we deviate the color of the edge $v_1v_2$ in $G'$ on $u_1$ and the color of $v_2v_3$ on $u_2$. We use the extra color on the path $P = (v_3,v_2,v_1,u_2)$ and maybe on the edge $u_2v_4$ if it belongs to $G$.

\end{itemize}

}%
\newcommand{\AllSemiRulesTwoDesc}{%

	{ 
	\tikzstyle{bluenode}=[mynode,fill=white]
	\tikzstyle{purplenode}=[mynode,fill=white]
	\renewcommand\mySubEdge[4]{\draw[sedge]  ($(##1) + (##2:0.75)$) 
	-- (##1);}
	\renewcommand\mySubRedEdge[4]{\draw[sedge]  ($(##1) + (##2:0.75)$) 
	-- (##1);}

	\renewcommand{\MyScale}{1.0}
	\renewcommand\OrdonneeFleche{1.4}

{\center
\hspace{8mm}
\RuleCExt

}

\begin{itemize}
\item \ncf{\cfCExt}: The special vertex $u_1$ has exactly one remaining neighbor $v_1$.

\patrec
In $G$, we extend the extra color on the edge $u_1v_1$.

{\center
\RuleCV

}
\item \ncf{\cfCV}: The special vertex $u_1$ has exactly two remaining neighbors $v_1,v_2$ that are non-adjacent.

\patred
In the reduced graph, we add the edge $v_1v_2$.

\patrec
In $G$, we deviate the color of $v_1v_2$ on $u_1$.

{\center
\RuleCvTwo

}
\item \ncf{\cfCvTwo}: The special vertex $u_1$ has exactly two remaining neighbors $v_1,v_2$ that are adjacent. The vertex $v_1$ has an even degree in $G$.

\patred
In the reduced graph, $v_1$ has an odd degree: let $R$ be a path of the coloring of $G'$ that ends on $v_1$.

\patrec
In $G$, we extend the path $R$ on the edge $v_1u_1$.

{\center
\RuleCvThree

}
\item \ncf{\cfCvThree}: The special vertex $u_1$ has exactly two remaining neighbors $v_1,v_2$ that are adjacent. The vertex $v_1$ has an odd degree in $G$.

\patred
In the reduced graph, we remove the edge $v_1v_2$. The vertex $v_1$ keeps an odd degree in $G'$: let $R$ be a path of the coloring of $G'$ that ends on $v_1$.

\patrec
In $G$, we extend the path $R$ on the edge $v_1u_1$, and we extend the extra color on the edges $u_1v_2$ and $v_2v_1$.

{\center
\RuleCFoura

}
\item \ncf{\cfCFoura}: The special vertex $u_1$ has exactly three remaining neighbors $v_1,v_2,v_3$, such that $v_1,v_2$ are non-adjacent (remark that $v_1,v_2$ are not necessarily consecutive in the cyclic order of the neighbors of $u_1$).

\patred
In the reduced graph, we add the edge $v_1v_2$.

\patrec
In $G$, we deviate the color of $v_1v_2$ on $u_1$ and extend the extra color on the edge $u_1v_3$.

\ifthenelse{\equal{\isThesis}{true}}
{\vfill
\pagebreak}
{}

{\center
\RuleCFourb

}
\item \ncf{\cfCFourb}: The special vertex $u_1$ has exactly three remaining neighbors $v_1,v_2,v_3$, such that the edges $v_1v_2$ and $v_2v_3$ belong to $G$.

\patred
In the reduced graph, we remove the edge $v_2v_3$.

\patrec
In $G$, we deviate the color of $v_1v_2$ on $u_1$ and extend the extra color on the edges $u_1v_3$, $v_3v_2$, $v_2v_1$.

\end{itemize}

	} 
	
}%
\newcommand{\AllAliasDesc}{%
	{ 
	\tikzstyle{bluenode}=[mynode,fill=white]
	\tikzstyle{purplenode}=[mynode,fill=white]
	\renewcommand\mySubEdge[4]{\draw[sedge]  ($(##1) + (##2:0.75)$) 
	-- (##1);}
	\renewcommand\mySubRedEdge[4]{\draw[sedge]  ($(##1) + (##2:0.75)$) 
	-- (##1);}

\renewcommand{\MyScale}{1.0}
\renewcommand\OrdonneeFleche{1.2}

{\center
\hspace{10mm}

\tkcf{\cfCVPlus} %
	\TexteCentre{$:=\{$}{\OrdonneeFleche} %
	\tkcf{\cfCOnea} %
	\TexteCentre{$\vert$}{\OrdonneeFleche} %
	\tkcf{\cfCV} %
	\TexteCentre{$\vert$}{\OrdonneeFleche} %
	\tkcf{\cfCFoura} %
	\TexteCentre{$\}$}{\OrdonneeFleche}%

}

\begin{itemize}
\item \ncf{\cfCVPlus}: The special vertex $u_1$ has either $0$, $2$ or $3$ remaining neighbors $v_1,v_2,v_3$, such that at least two of them are non-adjacent (not necessarily consecutive in the cyclic order of the neighbors). If it has $0$, this is configuration \ncf{\cfCOnea}; if it has exactly $2$ and they are non-adjacent this is configuration \ncf{\cfCV}; and if it has three remaining neighbors, and at least two of them are non-adjacent, this is configuration \ncf{\cfCFoura}.


{\center

\tkcf{\cfCN} %
			  \TexteCentre{$:=\{$}{\OrdonneeFleche}%
			  \tkcf{\cfCV} %
			  \TexteCentre{$\vert$}{\OrdonneeFleche} %
			  \tkcf{\cfCvTwo} %
			  \TexteCentre{$\vert$}{\OrdonneeFleche} %
			  \tkcf{\cfCvThree}%
			  \TexteCentre{$\}$}{\OrdonneeFleche}%

}

\item \ncf{\cfCN}: The special vertex $u_1$ has $2$ remaining neighbors $v_1,v_2$. If $v_1,v_2$ are non-adjacent, this is configuration \ncf{\cfCV}. Otherwise, if one of $v_1,v_2$ has an even degree in $G$, this is configuration \ncf{\cfCvTwo}, and if both have an odd degree in $G$, this is configuration \ncf{\cfCvThree}.

\ifthenelse{\equal{\isThesis}{true}}
{\vfill
\pagebreak}
{}

{\center

\tkcf{\cfCNP} %
			  \TexteCentre{$:=\{$}{\OrdonneeFleche} %
			  \tkcf{\cfCOnea} %
			  \TexteCentre{$\vert$}{\OrdonneeFleche} %
			  \tkcf{\cfCExt} %
			  \TexteCentre{$\vert$}{\OrdonneeFleche} %
			  \tkcf{\cfCN} %
			  \TexteCentre{$\vert$}{\OrdonneeFleche} %
			  \tkcf{\cfCFoura} %
			  \TexteCentre{$\vert$}{\OrdonneeFleche} %
			  \tkcf{\cfCFourb} %
			  \TexteCentre{$\}$}{\OrdonneeFleche}%

}

\item \ncf{\cfCNP}: The special vertex $u_1$ has between $0$ and $3$ remaining neighbors $v_1,v_2,v_3$. If it has $0$, $1$ or $2$, this is configuration \ncf{\cfCOnea}, \ncf{\cfCExt} and \ncf{\cfCN} respectively. If it has $3$ remaining neighbors, then if two of them (not necessarily consecutive in the cyclic order) are non-adjacent this is configuration \ncf{\cfCFoura} and otherwise \ncf{\cfCFourb}.
\end{itemize}
	
	} 


}%
\newcommand\myprivateBentEdge[6]{
	\draw[#3, thick, segment aspect=0, bend #4 = #5] (#1) to node[#3,nodelabel,midway] {#6} (#2);
}
\newcommand\myBentEdge[5]{
	\ifthenelse{\equal{#3}{\subColor}}
{\myprivateBentEdge{#1}{#2}{#3}{#4}{#5}{$S$}
}
{\ifthenelse{\equal{#3}{\subColorOne}}
	{\myprivateBentEdge{#1}{#2}{#3}{#4}{#5}{$P_1$}}{
		\ifthenelse{\equal{#3}{\subColorTwo}}
		{\myprivateBentEdge{#1}{#2}{#3}{#4}{#5}{$P_2$}}
		{
			\myprivateBentEdge{#1}{#2}{#3}{#4}{#5}{}
		}
	}
}
}%
\newcommand\myPathPrivate[6]{
\draw[#3, thick, decorate, decoration=snake, segment aspect=0, segment amplitude=0.5mm, segment length = 4mm, bend #4 = #5] (#1) to node[#3,nodelabel,midway] {#6} (#2);}
\newcommand\myPath[5]{
	\ifthenelse{\equal{#3}{\subColor}}
		{\myPathPrivate{#1}{#2}{#3}{#4}{#5}{$S$}
		}
		{\ifthenelse{\equal{#3}{\subColorOne}}
		{\myPathPrivate{#1}{#2}{#3}{#4}{#5}{$P_1$}}{
			\ifthenelse{\equal{#3}{\subColorTwo}}
			{\myPathPrivate{#1}{#2}{#3}{#4}{#5}{$P_2$}}
			{
			\myPathPrivate{#1}{#2}{#3}{#4}{#5}{}
			}
		}
	}
}%
\newcommand\patternCV[4]{%
	\myCEdge{#1}{#2}{#4}{}%
	\myCEdge{#1}{#3}{#4}{}%
	\myDEdge{#2}{#3}%
	\node at ($0.33*(#1) + 0.33*(#2) + 0.33*(#3)$)  {\textcolor{#4}{$\mathcal{C}_V$}};
}%
\newcommand\patternCN[4]{%
    \myCEdge{#1}{#2}{#4}{}%
	\myCEdge{#1}{#3}{#4}{}%
    \myCEdge{#2}{#3}{#4}{}%
    \node at ($0.33*(#1) + 0.33*(#2) + 0.33*(#3)$)  {\textcolor{#4}{$\mathcal{C}_N$}};
}%
\newcommand\patternCDOnene[6]{%
    \myCEdge{#1}{#2}{#6}{}%
	\myCEdge{#1}{#3}{#6}{}%
    \myCEdge{#2}{#3}{#6}{}%
    \node at ($0.33*(#1) + 0.33*(#2) + 0.33*(#3)$)  {\textcolor{#6}{$\mathcal{C}_{D1}$}};
    \myCEdge{#4}{#3}{#6}{}%
	\myCEdge{#4}{#5}{#6}{}%
    \myCEdge{#3}{#5}{#6}{}%
}%
\newcommand\tkcfSansNomFigure{
    \begin{tikzpicture}[scale=\MyScale, every node/.style={scale=\MyScale}, auto]
                \Myspecialnodes
                \Mynodes
                \MyEdgesBefore
    \end{tikzpicture}%
}%
\newcommand\flecheACentrer[1]{%
		\begin{tikzpicture}[scale=\MyScale, every node/.style={scale=\MyScale}, auto]
		\node[label=above:\Huge{$\leadsto$}] (arrow) at (0,#1) {};
		\node (q) at (0,0) {};
		\end{tikzpicture}%
}%
\newcommand\drawSimpleRule[3]{ 
    \begin{figure}%
        \ifthenelse{\isundefined{\NoFigures}}{%
        \centering
        \tkcfSansNomFigure{}%
        \flecheACentrer{#2}%
        \tikzPartRule{\Myspecialnodes \Mynodes \MyEdgesAfter}%
        }{}%
        \caption{#3}%
        \label{#1}%
    \end{figure}%
}%
\newcommand\drawSimpleRuleSansFigure[1]{
		\centering
        \tkcfSansNomFigure{}%
        \petiteFlecheACentrer{#1}%
        \tikzPartRule{\Myspecialnodes \Mynodes \MyEdgesAfter}%
}
\newcommand\petiteFlecheACentrer[1]{%
		\begin{tikzpicture}[scale=\MyScale, every node/.style={scale=\MyScale}, auto]
		\node[label=above:\Large{$\leadsto$}] (arrow) at (0,#1) {};
		\node (q) at (0,0) {};
		\end{tikzpicture}%
}%
\newcommand\TexteCentre[2]{
		\begin{tikzpicture}[scale=\MyScale, every node/.style={scale=\MyScale}, auto]%
		\node[label=above:\huge{#1}] (text) at (0,#2) {};%
		\node (q) at (0,0) {};%
		\end{tikzpicture}%
}%
\newcommand\drawRuleSansFigure[1]{ 
	\tkcf{}%
    \petiteFlecheACentrer{#1}%
    \tikzstyle{endpath}=[->]%
    \tikzPartRule{\TikzRedefOdd \Mynodes \MyEdgesRemoved}%
    \petiteFlecheACentrer{#1}%
    \tikzstyle{endpath}=[-]%
    \tikzPartRule{\Myspecialnodes \Mynodes \MyEdgesAfter}%
}
\newcommand{\RuleRouting}{%
	\renewcommand\Mynodes{
        \node[whitenode,label=left:$v_1$] (v1) at (-0.5,1-0.5) {};
        \node[whitenode,label=right:$v_2$] (v2) at (0.5,1-0.5) {};
        \node[whitenode,label=above right:$w$] (w) at (1,0-0.5) {};
    }%
    \renewcommand\Myspecialnodes{%
        \node[blacknode,label=left:$u$] (u1) at (0,0-0.5) {};
        \node[blacknode,label=left:$u'$] (u2) at (0,3-0.5) {};
        \node[blacknode,label=right:$u''$] (u'') at (1.5,2.5-0.5) {};
        \node (u''') at (2.5,0-0.5) {};
    }%
    \renewcommand\MyEdgesBefore{%
        \myEdge{u1}{v1}
        \myEdge{u1}{v2}
        \myEdge{v1}{v2}
        \myPath{v1}{u2}{\subColor}{left}{0}
 		\draw ($(v1)+(0.12,1.095)$) node[\myPurple,nodelabel] {$P$};
		\myPath{v1}{u''}{\subColor}{left}{0}
		\draw ($(v2)+(-0.1,0.85)$) node[\myPurple,nodelabel] {$P$};

        \myDEdge{v1}{w}
        \myEdge{v2}{w}
        \myCEdge{u1}{w}{sedgemixed}{$S$}
        \myPath{w}{u'''}{\subColor}{left}{0}
        \draw ($(w)+(0.77,0.1)$) node[\myPurple,nodelabel] {$S$};

		\myHalfEdge{u1}{-90+30}{sedgemixed}{$S$}%
        \myHalfEdge{u1}{-90-30}{sedgemixed}{$S$}%
    }%
    \renewcommand\MyEdgesRemoved{
        \myCEdge{u1}{v1}{sedgemixed}{$P'$}
        \myEdge{u1}{v2}
        \myEdge{v1}{v2}
        \myPath{v1}{u2}{\subColor}{left}{0}
        \draw ($(v1)+(0.1,1.1)$) node[\myPurple,nodelabel] {$P'$};
		\myPath{v1}{u''}{black}{left}{0}

        \myDEdge{v1}{w}
        \myEdge{v2}{w}
        \myCEdge{u1}{w}{black}{}
        \myPath{w}{u'''}{black}{left}{0}

		\myHalfEdge{u1}{-90+30}{sedgemixed}{$S$}%
        \myHalfEdge{u1}{-90-30}{sedgemixed}{$S$}%
    }%
    \renewcommand\MyEdgesAfter{%
    	\myDEdge{v1}{w}

		\myCEdge{u1}{v1}{black}{}
        \myCEdge{u1}{v2}{sedgemixed}{$P'$}
        \myCEdge{v1}{v2}{sedgemixed}{$P'$}
        \myPath{v1}{u2}{\subColor}{left}{0}
        \draw ($(v1)+(0.1,1.1)$) node[\myPurple,nodelabel] {$P'$};
		\myPath{v1}{u''}{black}{left}{0}

        \myEdge{v2}{w}
        \myCEdge{u1}{w}{black}{}
        \myPath{w}{u'''}{black}{left}{0}

		\myHalfEdge{u1}{-90+30}{sedgemixed}{$S$}%
        \myHalfEdge{u1}{-90-30}{sedgemixed}{$S$}%
    }%
    \renewcommand{\CfN}{}%
    \begin{figure}[ht]%
    	\ifthenelse{\isundefined{\NoFigures}}{%
        \centering%
        \tkcf{} %
        \petiteFlecheACentrer{1} %
        \tikzPartRule{\Myspecialnodes \Mynodes \MyEdgesRemoved}%
        \petiteFlecheACentrer{1} %
        \tikzPartRule{\Myspecialnodes \Mynodes \MyEdgesAfter}%
        }{}%
		\caption{On the left: $u$ causes a distant problem on a path $P = u'\sim u''$ of the subdivision. In the middle: a new subdivision is considered for some reduction rule, with a new path $P' = u'\sim u$. On the right: the routing operation modifies the path $P' = u'\sim u$ in order to choose two non-adjacent vertices ($v_1,w$) as remaining neighbors for $u$.}%
        \label{fig:routing}%
    \end{figure}%
}%
\newcommand{\PbDist}{%
	\renewcommand\Mynodes{
        \node[whitenode,label=left:$v_1$] (v1) at (-0.5,1-0.5) {};
        \node[whitenode,label=right:$v_2$] (v2) at (0.5,1-0.5) {};
    }%
    \renewcommand\Myspecialnodes{%
        \node[blacknode,label=left:$u$] (u1) at (0,0-0.5) {};
        \node[blacknode,label=left:$u'$] (u2) at (-1.5,2.5) {};
        \node[blacknode,label=right:$u''$] (u'') at (1.5,2.5) {};
    }%
    \renewcommand\MyEdgesBefore{%
		\myEdge{u1}{v1}
        \myEdge{u1}{v2}
        \myEdge{v1}{v2}
        \myPath{u2}{v1}{\subColor}{left}{0}
		\myPath{v1}{u''}{\subColor}{left}{0}


		\myHalfEdge{u1}{-90}{sedgemixed}{$S$}%
		\myHalfEdge{u1}{-90+40}{sedgemixed}{$S$}%
        \myHalfEdge{u1}{-90-40}{sedgemixed}{$S$}%
    }%
    \renewcommand\MyEdgesAfter{%
		\myCEdge{u1}{v1}{black}{}
        \myPath{u2}{v1}{\subColor}{left}{0}
		\myPath{v1}{u''}{\subColor}{left}{0}


		\myHalfEdge{u1}{-90}{sedgemixed}{$S$}%
		\myHalfEdge{u1}{-90+40}{sedgemixed}{$S$}%
        \myHalfEdge{u1}{-90-40}{sedgemixed}{$S$}%
    }%
    \renewcommand{\CfN}{}%
    \begin{figure}[ht]%
    	\ifthenelse{\isundefined{\NoFigures}}{%
        \centering%
        \tkcf{} %
        }{}%
        \caption{Distant problem caused by a special vertex $u$}
        \label{fig:pbdist}%
    \end{figure}%
}%
\newcommand{\cfCDOnene}{%
    \renewcommand\Mynodes{
        \node[whitenode,label=right:$v_1$] (v1) at (1.5,1.5) {};
        \node[whitenode,label=right:$v_2$] (v2) at (1.5,0.5) {};
        \node[whitenode,label=above:$v$] (v) at (0.75, 1) {};
    }%
    \renewcommand\Myspecialnodes{%
        \node[blacknode,label=left:$u_1$] (u1) at (0,2) {};
        \node[blacknode,label=left:$u_2$] (u2) at (0,0) {};
    }%
    \renewcommand\MyEdgesBefore{%
		\myHalfEdge{u1}{135}{sedgemixed}{$S$}%
        \myHalfEdge{u1}{20}{sedgemixed}{$S$}%
        \myHalfEdge{u2}{-135}{sedgemixed}{$S$}%
        \myHalfEdge{u2}{-20}{sedgemixed}{$S$}%
        \myPath{u1}{u2}{\subColor}{left}{0}

        \myEdge{u1}{v}
        \myEdge{v}{v2}
        \myEdge{v}{v1}
        \myEdge{u1}{v1}
        \myEdge{u2}{v}
        \myEdge{u2}{v2}
    }%
    \renewcommand{\CfN}{$(\C_{D1})$}%
    \renewcommand{\CfL}{fig:CDOneNE}%
}%
\newcommand{\cfCDTwowo}{%
    \renewcommand\Mynodes{
        \node[whitenode,label=above right:$v'$] (v') at (1.5,1) {};
        \node[whitenode,label=above:$v$] (v) at (0.75, 1) {};
    }%
    \renewcommand\Myspecialnodes{%
        \node[blacknode,label=left:$u_1$] (u1) at (0,2) {};
        \node[blacknode,label=left:$u_2$] (u2) at (0,0) {};
    }%
    \renewcommand\MyEdgesBefore{%
		\myHalfEdge{u1}{135}{sedgemixed}{$S$}%
        \myHalfEdge{u1}{20}{sedgemixed}{$S$}%
        \myHalfEdge{u2}{-135}{sedgemixed}{$S$}%
        \myHalfEdge{u2}{-20}{sedgemixed}{$S$}%
        \myPath{u1}{u2}{\subColor}{left}{0}

        \myEdge{u1}{v}
        \myEdge{u2}{v}
        \myBentEdge{u1}{v'}{black}{left}{20}
        \myBentEdge{u2}{v'}{black}{right}{20}
    }%
    \renewcommand{\CfN}{$(\C_{D2})$}%
    \renewcommand{\CfL}{fig:CDTwoWO}%
}%
\newcommand{\cfSettledA}{%
    \renewcommand\Mynodes{
        \node[whitenode,label=left:$v_1$] (v1) at (-0.6,1) {};
        \node[whitenode,label=above:$v$] (v) at (0.4, 1.25) {};

        \node (w1) at (-0.6, 2.25) {};
        \node (w2) at (1.4, 0.25) {};
    }%
    \renewcommand\Myspecialnodes{%
        \node[blacknode,label=left:$u_1$] (u1) at (0,0) {};
    }%
    \renewcommand\MyEdgesBefore{%
		\myHalfEdge{u1}{-120}{sedgemixed}{$S$}%
        \myHalfEdge{u1}{0}{sedgemixed}{$S$}%
        \myHalfEdge{u1}{-60}{sedgemixed}{$S$}%

        \myPath{w1}{v}{\subColor}{left}{0}
        \myPath{v}{w2}{\subColor}{left}{0}

        \myEdge{u1}{v}
        \myEdge{v}{v1}
        \myEdge{u1}{v1}
    }%
    \renewcommand{\CfN}{}%
    \renewcommand{\CfL}{fig:SettledA}%
}%
\newcommand{\cfSettledB}{%
    \renewcommand\Mynodes{
        \node[whitenode,label=left:$v_1$] (v1) at (0.4,1.25) {};
        \node[whitenode,label=right:$v_2$] (v2) at (1.6,1.25) {};
        \node[whitenode,label=above:$v$] (v) at (1, 0.75) {};
    }%
    \renewcommand\Myspecialnodes{%
        \node[blacknode,label=right:$u_1$] (u1) at (0,0) {};
        \node[blacknode,label=left:$u_2$] (u2) at (2,0) {};
    }%
    \renewcommand\MyEdgesBefore{%
		\myHalfEdge{u1}{-120}{sedgemixed}{$S$}%
        \myHalfEdge{u1}{180}{sedgemixed}{$S$}%
        \myHalfEdge{u1}{300}{sedgemixed}{$S$}%
        \myHalfEdge{u2}{-120}{sedgemixed}{$S$}%
        \myHalfEdge{u2}{0}{sedgemixed}{$S$}%
        \myHalfEdge{u2}{-60}{sedgemixed}{$S$}%

        \myEdge{u1}{v}
        \myEdge{v}{v2}
        \myEdge{v}{v1}
        \myEdge{u1}{v1}
        \myEdge{u2}{v}
        \myEdge{u2}{v2}
    }%
    \renewcommand{\CfN}{}%
    \renewcommand{\CfL}{fig:SettledB}%
}%
\newcommand{\cfSettledC}{%
    \renewcommand\Mynodes{
        \node[whitenode,label=above:$v_1$] (v1) at (1.5,1.7) {};
        \node[whitenode,label=below:$v_2$] (v2) at (1.5,-0.2) {};
    }%
    \renewcommand\Myspecialnodes{%
        \node[blacknode,label=below right:$u_1$] (u1) at (0,1.5) {};
        \node[blacknode,label=above right:$u_2$] (u2) at (0,0) {};
        \node[blacknode,label=left:$u_3$] (u3) at (3,0.75) {};
    }%
    \renewcommand\MyEdgesBefore{%
		\myHalfEdge{u1}{90}{sedgemixed}{$S$}%
        \myHalfEdge{u1}{90+60}{sedgemixed}{$S$}%
        \myHalfEdge{u1}{90+2*60}{sedgemixed}{$S$}%
        \myHalfEdge{u2}{-90}{sedgemixed}{$S$}%
        \myHalfEdge{u2}{-90-60}{sedgemixed}{$S$}%
        \myHalfEdge{u2}{-90-2*60}{sedgemixed}{$S$}%
        \myHalfEdge{u3}{60}{sedgemixed}{$S$}%
        \myHalfEdge{u3}{0}{sedgemixed}{$S$}%
        \myHalfEdge{u3}{-60}{sedgemixed}{$S$}%

        \myEdge{u1}{v1}
        \myEdge{u1}{u2}
        \myEdge{u2}{v2}
        \myEdge{u3}{v1}
        \myEdge{u3}{v2}
        \myDEdge{v1}{v2}
    }%
    \renewcommand{\CfN}{}%
    \renewcommand{\CfL}{fig:SettledC}%
}%
\newcommand{\RuleCDDRed}{%
	\renewcommand\Myspecialnodes{
        \node[blacknode,label=below:$u_1$] (u1) at (-3,0) {};
        \node[blacknode,label=below:$u_2$] (u2) at (3,0) {};
        \node[blacknode,label=left:$u_3$] (u3) at (0,2) {};
        \node[blacknode,label=left:$u_4$] (u4) at (0,6) {};
    }%
    \renewcommand\Mynodes{
    	\node[whitenode,label=right:$v_1$] (v1) at (0,3) {};
    	\node[whitenode,label=left:$v_1'$] (v1') at (-0.8,4) {};
    	\node[whitenode,label=left:$v_2$] (v2) at (0,3.8) {};
    	\node[whitenode,label=right:$v_2'$] (v2') at (1.2,4) {};
    	\node[whitenode] (w4) at (0,4.6) {};
    	\node[whitenode] (v3) at (0,0.2) {};
    	\node[whitenode] (v3') at (0.8,0.6) {};
    }%
    \renewcommand\MyEdgesBefore{
    	\myBentEdge{u1}{u2}{black}{right}{20}
        \myPath{u1}{u3}{\subColor}{left}{0}
        \myPath{u2}{u3}{\subColor}{left}{0}
        \myPath{u1}{u4}{\subColor}{left}{30}
        \myPath{u2}{u4}{\subColor}{right}{30}
        \myPath{u3}{v1}{\subColor}{left}{0}
        \myPath{v1}{v2}{black}{left}{0}
        \myPath{v2}{w4}{\subColor}{left}{0}
        \myBentEdge{w4}{u4}{\subColor}{left}{0}
		\myBentEdge{u1}{v1}{\subColor}{left}{15}
        \myBentEdge{u1}{v1'}{black}{left}{15}
        \myBentEdge{u2}{v2}{\subColor}{right}{15}
        \myBentEdge{u2}{v2'}{black}{right}{15}
        \myEdge{v1}{v1'}
        \myEdge{v2}{v2'}
        \myEdge{u3}{v3}
        \myEdge{u3}{v3'}

		\myEdge{u4}{v2'}
		\myEdge{u4}{v1'}
    }%
    \renewcommand\MyEdgesAfter{
        \myPath{u1}{u3}{\subColor}{left}{0}
        \myPath{u2}{u3}{\subColor}{left}{0}
        \myPath{u1}{u4}{\subColor}{left}{30}
        \myPath{u2}{u4}{\subColor}{right}{30}
        \myPath{u3}{v1}{\subColor}{left}{0}
        \myPath{v1}{v2}{black}{left}{0}
        \myPath{v2}{w4}{black}{left}{0}
		\myBentEdge{u1}{v1}{\subColor}{left}{15}
        \myBentEdge{u2}{v2}{\subColor}{right}{15}
        \myEdge{v1}{v1'}
        \myBentEdge{v2}{v2'}{\subColor}{left}{0}
        \myBentEdge{u4}{v2'}{\subColor}{left}{0}

        \patternCN{u3}{v3}{v3'}{\PatternColorIII}

        \myBentEdge{u1}{u2}{\PatternColorI}{right}{20}
        \myBentEdge{u1}{v1'}{\PatternColorI}{left}{15}
        \myBentEdge{u2}{v2'}{\PatternColorI}{right}{15}
        \node at ($(u1)+(1.5,0)$)  {\textcolor{\PatternColorI}{$\mathcal{C}_{U}$}};
		\patternCN{u4}{w4}{v1'}{\PatternColorII}
    }%
    \begin{figure}%
    	\ifthenelse{\isundefined{\NoFigures}}{%
        \centering%
        \tkcfSansNomFigure{}%
        \petiteFlecheACentrer{3.5}%
        \tikzPartRule{\Myspecialnodes \Mynodes \MyEdgesAfter}%
        }{}%
        \caption{Reduction of configuration $C_{4+}^{U}$.}%
        \label{fig:C4DDRed}%
    \end{figure}%
}%
\newcommand{\cfExcGraph}{%
    \renewcommand\Mynodes{
        \node[whitenode] (v1) at (-126:2) {};
        \node[whitenode] (v2) at (162:2) {};
        \node[whitenode] (v3) at (90:2) {};
        \node[whitenode] (v4) at (18:2) {};
        \node[whitenode] (v5) at (-54:2) {};

        \node[whitenode] (w1) at (-4, -2) {};
        \node[whitenode] (w5) at (4, -2) {};
    }%
    \renewcommand\Myspecialnodes{%
    }%
    \renewcommand\MyEdgesBefore{%
		 \myPath{w1}{v1}{black}{left}{0}
		 \myEdge{v1}{v2}
		 \myEdge{v1}{v3}
		 \myEdge{v1}{v4}
		 \myEdge{v2}{v3}
		 \myEdge{v2}{v4}
		 \myEdge{v2}{v5}
		 \myEdge{v3}{v4}
		 \myEdge{v3}{v5}
		 \myEdge{v4}{v5}
		 \myPath{v5}{w5}{black}{left}{0}
    }%
    \renewcommand{\CfN}{}%
    \renewcommand{\CfL}{fig:ExcGraph}%
}%
\newcommand{\allPatterns}{

\renewcommand{\MyScale}{1.10}
\renewcommand\OrdonneeFleche{1.4}

The following patterns involve one special vertex.

{\center
\hspace{8mm}
\RuleCV

}
\begin{itemize}
\item \ncf{\cfCV}: Identical to the elementary partial configuration \ncf{\cfCV} from
\ifthenelse{\equal{\isThesis}{true}}
{Chapter~\ref{ch:ci}.}
{Section~\ref{sec:mce}.}

{\center
\RuleCvp

}
\item \ncf{\cfCvp}: The special vertex $u_1$ has two adjacent remaining neighbors $v_1,v_2$, and the edge $v_1v_2$ belongs to $S$.

\patred
In the reduced graph, we add the edge $v_1v_2$.

\patrec
In $G$, we deviate the color of $v_1v_2$ on $u_1$. The color of $v_1v_2$ in the recoloring of $G$ is given by the subdivision.

\renewcommand{\MyScale}{1.00}
\renewcommand\OrdonneeFleche{1.4}

{\center
\RuleCvTwo

}
\item \ncf{\cfCvTwo}: Identical to the elementary partial configuration \ncf{\cfCvTwo} from
\ifthenelse{\equal{\isThesis}{true}}
{Chapter~\ref{ch:ci}.}
{Section~\ref{sec:mce}.}

\patcol
If the remaining neighbor $v_1$ is even, then the other remaining neighbor $v_2$ of $u_1$ cannot touch the color of $S$ that ends on $u_1$.

{\center
\RuleCvThree

}
\item \ncf{\cfCvThree}: Identical to the elementary partial configuration \ncf{\cfCvThree} from
\ifthenelse{\equal{\isThesis}{true}}
{Chapter~\ref{ch:ci}.}
{Section~\ref{sec:mce}.}

\patcol
None of the remaining neighbors $v_1,v_2$ of $u_1$ can touch the color of $S$ that ends on $u_1$.
\end{itemize}


The following patterns involve two special vertices.

\renewcommand{\MyScale}{1.20}

{\center
\hspace{8mm}
\RuleCU

}
\begin{itemize}
\item \ncf{\cfCU}: The two special vertices $u_1, u_2$ are adjacent but the edge $u_1u_2$ does not belong to $S$. Let $v_1$ (resp. $v_2$) be the remaining neighbor of $u_1$ (resp. $u_2$) distinct from $u_2$ (resp $u_1$). The vertices $v_1,v_2$ are distinct, non-adjacent and disjoint from $U$.

\patred
In the reduced graph, we add the edge $v_1v_2$.

\patrec
In $G$, we deviate the color of $v_1v_2$ on $u_1,u_2$.
\end{itemize}


\renewcommand{\MyScale}{1.3}
\renewcommand\OrdonneeFleche{1.4}

{\center
\RuleCDOnenea

}
\begin{itemize}
\item \ncf{\cfCDOnenea}:
The two special vertices $u_1,u_2$ are adjacent and the edge $u_1,u_2$ belongs to $S$. The special vertices $u_1,u_2$ have precisely one common remaining neighbor $v$, and $u_1,u_2$ have $v_1,v_2$ respectively as their other remaining neighbor. The vertices $v_1,v_2$ are adjacent and both are adjacent to $v$.
The vertices $v,v_1,v_2$ are disjoint from $S$.

\patred
In the reduced graph, we remove the edge $v_1v_2$.

\patrec
In $G$, we deviate the color of $vv_1$ on the edges $vu_2$, $u_2u_1$ and $u_1v_1$. We redirect the path $u_1\sim u_2$ of $S$ through the edges $u_1v$, $vv_1$, $v_1v_2$ and $v_2u_2$.

{\center
\RuleCDOneneb

}
\item \ncf{\cfCDOneneb}:
The two special vertices $u_1,u_2$ are adjacent and the edge $u_1,u_2$ belongs to $S$. The special vertices $u_1,u_2$ have precisely one common remaining neighbor $v$, and $u_1,u_2$ have $v_1,v_2$ respectively as their other remaining neighbor. The vertices $v_1,v_2$ are not adjacent and both are adjacent to $v$.
The vertex $v$ does not belong to $S$.

\patred
In the reduced graph, we add the edge $v_1v_2$.

\patrec
In $G$, we deviate the color of $v_1v_2$ on the edges $v_1u_1$, $u_1u_2$ and $u_2v_2$. We redirect the path $u_1\sim u_2$ of $S$ to make it go through the edges $u_1v$ and $vu_2$.
\end{itemize}

\renewcommand\OrdonneeFleche{1.7}
\renewcommand{\MyScale}{1.37}

{\center
\RuleCTOnea

}
\begin{itemize}
\item \ncf{\cfCTOnea}: The two special vertices $u_1,u_2$ have precisely one remaining neighbor $v$ in common. We denote $v_1,v_2$ the other remaining neighbor of $u_1,u_2$ respectively. Both $v_1$ and $v_2$ are adjacent to $v$. The vertices $v,v_1,v_2$ are disjoint from $U$.
The vertex $v_1$ has an even degree.

\patcol
The vertices $v,v_2$ cannot touch the color of $S$ that ends on $u_1$.

\patred
In the reduced graph, $v_1$ has an odd degree: let $Q$ be a path of the coloring of $G'$ that ends on $v_1$.

\patrec
In $G$, we deviate the color of $vv_2$ on $u_2$, we extend the path $Q$ on the edge $v_1u_1$, and we extend the extra color that ends on $u_1$ on the edges $u_1v$ and $vv_2$.
\end{itemize}

\ifthenelse{\equal{\isThesis}{true}}
{}
{\vfill
\pagebreak}

\renewcommand{\MyScale}{1.27}

{\center
\RuleCTOneb

}
\begin{itemize}
\item \ncf{\cfCTOneb}: The two special vertices $u_1,u_2$ have precisely one remaining neighbor $v$ in common. We denote $v_1,v_2$ the other remaining neighbor of $u_1,u_2$ respectively. Both $v_1$ and $v_2$ are adjacent to $v$. The vertices $v,v_1,v_2$ are disjoint from $U$.
The vertices $v_1, v_2$ both have an odd degree in $G$.

\patcol
The colors ending on $u_1,u_2$ in a $2$-coloring of $S$ must be different. The vertex $v_1$ (resp. $v_2$) cannot touch the color that ends on $u_1$ (resp. $u_2$).

\patred
In the reduced graph, we remove the edges $vv_1$ and $vv_2$. The vertices $v_1,v_2$ keep an odd degree in $G'$: let $Q,R$ be paths of the coloring of $G'$ that end on $v_1,v_2$ respectively.

\patrec
In $G$, we extend the paths $Q,R$ on the edges $v_1u_1$ and $v_2,u_2$ respectively. We extend the extra color ending on $u_1$ on the edges $u_1v$ and $vv_1$, and we extend the extra color ending on $u_2$ on the edges $u_2v$ and $vv_2$.
\end{itemize}


\renewcommand{\MyScale}{1.37}

{\center
\RuleCTTAa

}
\begin{itemize}
\item \ncf{\cfCTTAa}: The two special vertices $u_1,u_2$ have both their remaining neighbors $v,v'$ in common, which are adjacent and disjoint from $U$.
The vertex $v$ has an odd degree in $G$.

\patcol
There is a color of $S$ ending on $u_1$ or $u_2$ (let us say $u_1$) that does not touch $v$ nor $v'$.

\patred
In the reduced graph, $v$ keeps an odd degree: let $Q$ be a path of the coloring of $G'$ that ends on $v$.

\patrec
In $G$, we extend the path $Q$ on the edge $vu_1$, we deviate the color of $vv'$ on $u_2$, and we extend the extra color ending on $u_1$ on the edges $u_1v'$ and $v'v$.
\end{itemize}

\renewcommand{\MyScale}{1.28}

{\center
\RuleCTTAb

}
\begin{itemize}
\item \ncf{\cfCTTAb}: The two special vertices $u_1,u_2$ have both their remaining neighbors $v,v'$ in common, which are adjacent and disjoint from $U$.
Both $v$ and $v'$ have an even degree in $G$.

\patcol
The colors of $S$ that end on $u_1$ and $u_2$ must be different. At least one of $v,v'$ (let us say $v'$) does not touch at least one of the two colors of $S$ (let us say the one ending on $u_1$).

\patred
In the reduced graph, we remove the edge $vv'$. The vertices $v,v'$ have an odd degree in $G'$: let $T,R$ be paths of the coloring of $G'$ that end on $v,v'$ respectively.

\patrec
In $G$, we extend the paths $T,R$ on the edges $vu_1$ and $v'u_2$ respectively. We extend the extra color ending on $u_1$ on the edges $u_1v'$ and $v'v$, and we extend the extra color ending on $u_2$ on the edge $u_2v$.
\end{itemize}

\renewcommand{\MyScale}{1.37}

{\center
\RuleCTTNAa

}
\begin{itemize}
\item \ncf{\cfCTTNAa}: The two special vertices $u_1,u_2$ have both their remaining neighbors $v,v'$ in common, which are not adjacent and are disjoint from $U$.
The vertex $v$ has an even degree in the $G$.

\patcol
One of $v,v'$ (let us say $v'$) does not touch a color of $S$ that ends on $u_1$ or $u_2$ (let us say $u_1$).

\patred
In the reduced graph, we add the edge $vv'$. The vertex $v$ has an odd degree in $G'$: let $Q$ be a path of the coloring of $G'$ that ends on $v$.

\patrec
In $G$, we extend the path $Q$ on the edge $vu_1$, we deviate the color of $vv'$ on $u_2$, and we extend the extra color ending on $u_1$ on the edge $u_1v'$.
\end{itemize}

\renewcommand{\MyScale}{1.28}

{\center
\RuleCTTNAb

}
\begin{itemize}
\item \ncf{\cfCTTNAb}: The two special vertices $u_1,u_2$ have both their remaining neighbors $v,v'$ in common, which are not adjacent and are disjoint from $U$.
Both $v$ and $v'$ have an odd degree in $G$.

\patcol
One of $v,v'$ (let us say $v'$) does not touch the color ending on $u_1$, the other ($v$) does not touch the one ending on $u_2$.

\patred
In the reduced graph, $v,v'$ keep an odd degree: let $T,R$ be paths of the coloring of $G'$ that end on $v,v'$ respectively.

\patrec
In $G$, we extend the paths $T,R$ on the edges $vu_1$ and $v'u_2$ respectively. We extend the extra colors ending on $u_1,u_2$ on the edges $u_1v'$ and $u_2v$ respectively.
\end{itemize}

\medskip

For convenience, we define some aliases which group several patterns together.

\renewcommand{\MyScale}{1.15}
\renewcommand\OrdonneeFleche{1.2}

{\center

\tkcf{\cfCN} %
			  \TexteCentre{$:=\{$}{\OrdonneeFleche}%
			  \tkcf{\cfCV} %
			  \TexteCentre{$\vert$}{\OrdonneeFleche} %
			  \tkcf{\cfCvTwo} %
			  \TexteCentre{$\vert$}{\OrdonneeFleche} %
			  \tkcf{\cfCvThree}%
			  \TexteCentre{$\}$}{\OrdonneeFleche}%

}

\begin{itemize}
\item \ncf{\cfCN}: The special vertex $u_1$ has $2$ remaining neighbors $v_1,v_2$. If $v_1,v_2$ are non-adjacent, this is configuration \ncf{\cfCV}. Otherwise, if one of $v_1,v_2$ has an even degree in $G$, this is configuration \ncf{\cfCvTwo}, and if both have an odd degree in $G$, this is configuration \ncf{\cfCvThree}.

\renewcommand{\MyScale}{1.2}
\renewcommand\OrdonneeFleche{1.7}

{\center

\tkcf{\cfCTOne} %
			  \TexteCentre{$:=\{$}{\OrdonneeFleche}%
			  \tkcf{\cfCTOnea} %
			  \TexteCentre{$\vert$}{\OrdonneeFleche} %
			  \tkcf{\cfCTOneb}%
			  \TexteCentre{$\}$}{\OrdonneeFleche}%

}

\item \ncf{\cfCTOne}: The two special vertices $u_1,u_2$ have one remaining neighbor $v$ in common. We denote $v_1,v_2$ the other remaining neighbor of $u_1,u_2$ respectively. Both $v_1$ and $v_2$ are adjacent to $v$. The vertices $v,v_1,v_2$ are disjoint from $U$.

{\center

\tkcf{\cfCTTA} %
			  \TexteCentre{$:=\{$}{\OrdonneeFleche} %
			  \tkcf{\cfCTTAa} %
			  \TexteCentre{$\vert$}{\OrdonneeFleche} %
			  \tkcf{\cfCTTAb}
			  \TexteCentre{$\}$}{\OrdonneeFleche}%

}

\item \ncf{\cfCTTA}: The two special vertices $u_1,u_2$ have both their remaining neighbors $v,v'$ in common, which are adjacent and disjoint from $U$.

{\center

\tkcf{\cfCTTNA} %
			  \TexteCentre{$:=\{$}{\OrdonneeFleche} %
			  \tkcf{\cfCTTNAa} %
			  \TexteCentre{$\vert$}{\OrdonneeFleche} %
			  \tkcf{\cfCTTNAb}
			  \TexteCentre{$\}$}{\OrdonneeFleche}%

}

\item \ncf{\cfCTTNA}: The two special vertices $u_1,u_2$ have both their remaining neighbors $v,v'$ in common, which are not adjacent and are disjoint from $U$.
\end{itemize}

}
\newcommand{\cfExOne}{%
	\renewcommand\Myspecialnodes{%
        \node[blacknode,label={[label distance=-0.08cm]45:\labelUOne}] (u1) at (0,0) {};%
		\node[blacknode,label={[label distance=-0.08cm](90+45):\labelUTwo}] (u2) at (2,0) {};%
    }%
    \renewcommand\Mynodes{%
        \node[whitenode]
        	(v1) at (0,1) {};%
        \node[whitenode]
        	(v1') at (-1,0) {};%
        \node[whitenode]
        	(v1'') at (0,-1) {};%
        \node[whitenode]
        	(v2) at ($(2,0)+(45:1)$) {};%
        \node[whitenode]
        	(v2') at ($(2,0)+(-45:1)$) {};%
    }%
    \renewcommand\MyEdgesBefore{%
    	\myPath{u1}{u2}{black}{left}{0}%
        \myEdge{u1}{v1}%
        \myEdge{u1}{v1'}%
        \myEdge{u1}{v1''}%
        \myEdge{u2}{v2}%
        \myEdge{u2}{v2'}%
        \myEdge{v1}{v1'}%
        \myEdge{v1'}{v1''}%
        \myDEdge{v2}{v2'}%
    }%
    \renewcommand{\CfN}{}%
}%
\newcommand{\RuleExOne}{%
    \cfExOne
    \renewcommand\MyEdgesRemoved{
        \myCEdge{v1}{v1'}{blue}{$Q$}
        \myCEdge{v2}{v2'}{green}{$R$}
        \myDEdge{v1'}{v1''}%
    }%
    \renewcommand\MyEdgesAfter{%
        \draw[red, thick, decorate, decoration=snake, segment aspect=0, segment amplitude=0.5mm, segment length = 4mm, bend left = 0] (u1) to node[red,nodelabel,midway] {$P$} (u2);
        \myCEdge{u1}{v1}{blue}{$Q$}%
        \myCEdge{u1}{v1'}{blue}{$Q$}%
        \myCEdge{u1}{v1''}{red}{$P$}%
        \myCEdge{u2}{v2}{green}{$R$}%
        \myCEdge{u2}{v2'}{green}{$R$}%
        \myCEdge{v1}{v1'}{red}{$P$}%
        \myCEdge{v1'}{v1''}{red}{$P$}%
        \myDEdge{v2}{v2'}%
    }%
	\drawRule{}{}%
}%
\newcommand{\cfExTwo}{%
	\renewcommand\Myspecialnodes{%
        \node[blacknode,label=below:$u_1$] (u1) at (-3,0) {};%
		\node[blacknode,label=below:$u_2$] (u2) at (3,0) {};%
		\node[blacknode,label=left:$u_3$] (u3) at (0,3) {};%
		\node[blacknode,label=above left:$u_4$] (u4) at (0,6) {};%
    }%
    \renewcommand\Mynodes{%
		\node[whitenode] (v1) at ($(-3,0)+(32:2)$) {};%
        \node[whitenode] (v1') at ($(-3,0)+(3:2)$) {};%
        \node[whitenode] (v2) at ($(3,0)+(180-32:2)$) {};%
        \node[whitenode] (v2') at ($(3,0)+(180-3:2)$) {};%
        \node[whitenode] (v3) at ($(0,3)+(-90-20:2)$) {};%
        \node[whitenode] (v3') at ($(0,3)+(-90+20:2)$) {};%
        \node[whitenode] (v4) at ($(0,6)+(-78:2)$) {};%
        \node[whitenode] (v4') at ($(0,6)+(-56:2)$) {};%
    }%
    \renewcommand\MyEdgesBefore{%
    	\myPath{u1}{u2}{\subColor}{right}{20}
    	\myPath{u1}{u3}{\subColor}{left}{10}
    	\myPath{u1}{u4}{\subColor}{left}{20}
    	\myPath{u3}{u2}{\subColor}{left}{10}
    	\myPath{u2}{u4}{\subColor}{right}{20}
    	\myPath{u3}{u4}{\subColor}{left}{0}
    	
    	\patternCN{u1}{v1}{v1'}{black}
    	\patternCV{u2}{v2}{v2'}{black}
    	\patternCV{u3}{v3}{v3'}{black}
    	\patternCN{u4}{v4}{v4'}{black}
		
    }%
    \renewcommand{\CfN}{}%
}%
\newcommand{\RuleExTwo}{%
    \cfExTwo
    \renewcommand\MyEdgesRemoved{%
		\myEdge{v1}{v1'}
		\myCEdge{v2}{v2'}{\PatternColorII}{$Q_2$}
		\myCEdge{v3}{v3'}{\PatternColorIII}{$Q_3$}
		\myDEdge{v4}{v4'}
		\myHalfEdge{v1'}{-10}{\PatternColorI}{$Q_1$}%
		\myHalfEdge{v4'}{-60}{\PatternColorIV}{$Q_4$}%
    }%
    \renewcommand\MyEdgesAfter{%
    	\myPath{u1}{u2}{\subColorOne}{right}{20}
    	\myPath{u1}{u3}{\subColorTwo}{left}{10}
    	\myPath{u1}{u4}{\subColorTwo}{left}{20}
    	\myPath{u3}{u2}{\subColorOne}{left}{10}
    	\myPath{u2}{u4}{\subColorTwo}{right}{20}
    	\myPath{u3}{u4}{\subColorOne}{left}{0}
    	
		\myBentEdge{u1}{v1}{\subColorOne}{left}{0}
		\myCEdge{u1}{v1'}{\PatternColorI}{$Q_1$}
		\myHalfEdge{v1'}{-10}{\PatternColorI}{}%
		\myEdge{v1'}{v1}%
		\myCEdge{u2}{v2}{\PatternColorII}{$Q_2$}
		\myCEdge{u2}{v2'}{\PatternColorII}{$Q_2$}
		\myDEdge{v2}{v2'}
		\myCEdge{u3}{v3}{\PatternColorIII}{$Q_3$}
		\myCEdge{u3}{v3'}{\PatternColorIII}{$Q_3$}
		\myDEdge{v3}{v3'}
		\myCEdge{u4}{v4}{\subColorOne}{}
		\myCEdge{u4}{v4'}{\PatternColorIV}{$Q_4$}
		\myHalfEdge{v4'}{-60}{\PatternColorIV}{}%
		\myBentEdge{v4'}{v4}{\subColorOne}{left}{0}
    }%
    \drawRule{}{}%
}%
\newcommand{\cfExThree}{%
	\renewcommand\Myspecialnodes{%
        \node[blacknode,label=below:$u_1$] (u1) at (-3,0) {};%
		\node[blacknode,label=below:$u_2$] (u2) at (3,0) {};%
		\node[blacknode,label=left:$u_3$] (u3) at (0,3) {};%
		\node[blacknode,label=above:$u_4$] (u4) at (0,6) {};%
    }%
    \renewcommand\Mynodes{%
        \node[whitenode] (v12) at (0,0.5) {};
    	\node[contactnode] (v4) at (-0.5,4) {};
    	\node[whitenode] (v4') at (-1.2,4.3) {};
    	\node[contactnode] (v3) at (1,4) {};
    	\node[whitenode] (v3') at (2,3.5) {};
    	\node[contactnode] (v2) at (0.5,1.5) {};
    	\node[contactnode] (v1) at (-0.5,1.5) {};
    	\node[contactnode] (w2) at (1.5,-0.5) {};
    	\node[contactnode] (w1) at (-1.5,-0.5) {};
    }%
    \renewcommand\MyEdgesBefore{%
    	\myBentEdge{u1}{w1}{\subColor}{left}{0}
    	\myBentEdge{w2}{u2}{\subColor}{left}{0}
    	\myPath{w1}{w2}{\subColor}{right}{10}
    	\myPath{u1}{u3}{\subColor}{left}{0}
    	\myPath{u1}{u4}{\subColor}{left}{20}
    	\myPath{u3}{u2}{\subColor}{left}{0}
    	\myPath{u2}{v3'}{\subColor}{right}{10}
    	\myPath{v3'}{u4}{\subColor}{right}{10}
    	\myPath{u3}{u4}{\subColor}{left}{0}
    	
		\myEdge{u1}{v12}
		\myEdge{u2}{v12}
		\myEdge{u1}{v1}
		\myEdge{u2}{v2}
		\myEdge{v1}{v12}
		\myEdge{v2}{v12}
		\myEdge{u3}{v3}
		\myEdge{u3}{v3'}
		\myEdge{v3}{v3'}
		\myEdge{u4}{v4}
		\myEdge{u4}{v4'}
		\myEdge{v4}{v4'}
		\myDEdge{v1}{w1}
		\myDEdge{v2}{w2}
    }%
    \renewcommand\MyEdgesAfter{%
    	\myPath{w1}{w2}{black}{right}{10}
    	\myPath{u1}{u3}{\subColorOne}{left}{0}
    	\myPath{u1}{u4}{\subColorTwo}{left}{20}
    	\myPath{u3}{u2}{\subColorTwo}{left}{0}
    	\myPath{u2}{v3'}{\subColorOne}{right}{10}
    	\myPath{v3'}{u4}{\subColorOne}{right}{10}
    	\myPath{u3}{u4}{\subColorOne}{left}{0}
    	
		\myBentEdge{u1}{v12}{\subColorTwo}{left}{0}
		\myBentEdge{v12}{u2}{\subColorTwo}{left}{0}
		\myBentEdge{u1}{v1}{\PatternColorI}{left}{0}
		\myBentEdge{u1}{w1}{\PatternColorI}{left}{0}
		\node at (-2.2,-0.6) {\textcolor{\PatternColorI}{{\CV}}};
		\myBentEdge{u2}{v2}{\PatternColorII}{left}{0}
		\myBentEdge{u2}{w2}{\PatternColorII}{left}{0}
		\node at (2.2,-0.6) {\textcolor{\PatternColorII}{{\CV}}};
		\myEdge{v1}{v12}
		\myEdge{v2}{v12}
		\patternCN{u3}{v3}{v3'}{\subColorTwo}
		\patternCN{u4}{v4}{v4'}{\subColorTwo}
		\myDEdge{v1}{w1}
		\myDEdge{v2}{w2}
    }%
    \renewcommand{\CfN}{}%
	\drawSimpleRule{fig:cii_distclose}{2.8}{A $(C_{II})$ configuration where $u_1,u_2$ form a close problem and $u_3$ a distant problem. The close problem is eliminated by redirection of the subdivision $S$, and the distant problem is inactivated by the $2$-coloring of $S$.}
}%
\newcommand{\cfExFour}{%
    \renewcommand\Mynodes{
        \node[whitenode] (v1) at (-0.5,0) {};
        \node[whitenode] (v2) at (1.5,0) {};
        \node[whitenode] (v3) at (-1,1) {};
        \node[whitenode] (v4) at (2,1) {};
        \node[whitenode] (v) at ($ (0,1) + (60:1) $) {};
    }%
    \renewcommand\Myspecialnodes{%
        \node[blacknode,label={[label distance=-0.15cm]135:\labelUOne}] (u1) at (0,1) {};%
        \node[blacknode,label={[label distance=-0.15cm]20:\labelUTwo}] (u2) at (1,1) {};%
    }%
    \renewcommand\MyEdgesBefore{%
    	\myDEdge{v1}{v2}%
        \myEdge{u1}{v1}
        \myEdge{u1}{u2}
        \myEdge{u2}{v2}
        \myEdge{u1}{v}
        \myEdge{u2}{v}
        \myEdge{u1}{v3}
        \myEdge{u2}{v4}
        \node at (-0.5,-0.78) {}; 
    }%
    \renewcommand{\CfN}{}%
    \renewcommand{\CfL}{}%
}%
\newcommand{\RuleExFour}{%
    \cfExFour
    \renewcommand\MyEdgesRemoved{%
        \myCEdge{v1}{v2}{blue}{$Q$}%
    }%
    \renewcommand\MyEdgesAfter{%
    	\myDEdge{v1}{v2}
		\myCEdge{u1}{v3}{red}{$P$}
        \myCEdge{u1}{v}{red}{$P$}
        \myCEdge{v}{u2}{red}{$P$}
        \myCEdge{u2}{v4}{red}{$P$}%
        \myCEdge{v1}{u1}{blue}{$Q$}
        \myCEdge{u1}{u2}{blue}{$Q$}
        \myCEdge{u2}{v2}{blue}{$Q$}%
    }%
    \drawRule{}{}%
}%
\newcommand{\SemiCFour}{%
    \renewcommand\Mynodes{
        \node[whitenode,label=above:$w$] (w) at (0,1.5) {};
    }%
    \renewcommand\Myspecialnodes{%
		\node[blacknode,label=left:$u_1$] (u1) at (-3+0.5,3) {};
        \node[blacknode,label=above left:$u_2$] (u2) at (-3+0.5,0) {};
        \node[blacknode,label=above right:$u_3$] (u3) at (3-0.5,3) {};
        \node[blacknode,label=above right:$u_4$] (u4) at (3-0.5,0) {};
    }%
    \renewcommand\MyEdgesBefore{%
    	\myPath{u1}{u3}{\subColorOne}{left}{30}%
        \myPath{u1}{w}{\subColorTwo}{left}{0}%
        \myPath{w}{u3}{\subColorTwo}{left}{0}%
		\myPath{u2}{u4}{\subColorTwo}{right}{30}
        \myPath{u2}{u1}{\subColorOne}{right}{0}%
        \myPath{u3}{u4}{\subColorTwo}{left}{0}%
		\myPath{w}{u2}{\subColorOne}{left}{0}
		\myPath{u4}{w}{\subColorOne}{left}{0}
    }%
    \renewcommand{\CfN}{}%
    \renewcommand{\CfL}{}%
	\begin{figure}[ht]%
    	\ifthenelse{\isundefined{\NoFigures}}{%
        \centering%
        \tkcf{} %
        }{}%
        \caption{A $2$-colored semi-$C_{4+}$-subdivision}
        \label{fig:semiCFour}%
    \end{figure}%
}%
\newcommand{\boxlines}[5]{
	\draw[ultra thick, #1] (#2-0.02,#3) -- (#4+0.02,#3);
	\draw[ultra thick, #1] (#4,#3) -- (#4,#5);
	\draw[ultra thick, #1] (#4+0.02,#5) -- (#2-0.02,#5);
	\draw[ultra thick, #1] (#2,#5) -- (#2,#3);
}
\newcommand{\KTTOne}{%
    \renewcommand\Mynodes{
        \node[whitenode,label=left:$w_1$] (w1) at (-2,3.2) {};
        \node[whitenode,label=left:$w_2$] (w2) at (-2,1.8) {};
        \node[whitenode,label=above:$v_1$] (v1) at (-1,3.5) {};
        \node[whitenode,label=below:$v_2$] (v2) at (-1,1.5) {};
        \node[whitenode,label=below right:$v$] (v) at (0.5,2.5) {};
    }%
    \renewcommand\Myspecialnodes{%
		\node[blacknode,label=left:$u_1$] (u1) at (-2,5) {};
        \node[blacknode,label=left:$u_2$] (u2) at (-2,0) {};
        \node[blacknode,label=right:$u_3$] (u3) at (4,5) {};
        \node[blacknode,label=right:$u_4$] (u4) at (4,0) {};
    }%
    \renewcommand\MyEdgesBefore{%
    	\myPath{u1}{u3}{\subColor}{left}{0}%
        \myPath{u2}{u4}{\subColor}{left}{0}%
        \myPath{w1}{w2}{\subColor}{left}{0}%
        \myPath{u4}{u3}{\subColor}{left}{0}%
        \myPath{u1}{u4}{\subColor}{left}{25}%
        \myBentEdge{u1}{v}{black}{left}{30}%
        \myBentEdge{u2}{v}{black}{right}{30}%
        \myBentEdge{u1}{w1}{\subColor}{left}{0}%
        \myBentEdge{u2}{w2}{\subColor}{left}{0}%
        \myEdge{v1}{v}%
        \myEdge{v2}{v}%
        \myEdge{w1}{v1}%
        \myEdge{w2}{v2}%
        \myEdge{u1}{v1}%
        \myEdge{u2}{v2}%
        \myBentEdge{u3}{v}{black}{left}{25}%
        \myBentEdge{u3}{w1}{black}{left}{30}%
        
        \boxlines{red}{-2.8}{5.4}{-1.5}{4.6}
        \boxlines{red}{-2.8}{3.6}{-1.5}{2.8}
        \boxlines{red}{0}{2.8}{1}{2}
        \boxlines{green}{-1.3}{4.2}{-0.7}{3.2}
        \boxlines{green}{-2.8}{0.4}{4.8}{-0.4}
        \boxlines{green}{3.5}{5.4}{4.8}{4.6}
    }%
    \renewcommand{\CfN}{}%
    \renewcommand{\CfL}{}%
	\begin{figure}[ht]%
    	\ifthenelse{\isundefined{\NoFigures}}{%
        \centering%
        \tkcf{} %
        }{}%
        \caption{The $K_{3,3}$-minor formed by $\{u_1,w_1,v\}$ and $\{v_1,u_2=u_4,u_3\}$ if $S$ is a $K_4$-subdivision (the path $u_2\sim u_3$ is not pictured)}
        \label{fig:KTTOne}%
    \end{figure}%
}%
\newcommand{\KTTTwo}{%
    \renewcommand\Mynodes{
        \node[whitenode,label=left:$w_1$] (w1) at (-2,3.2) {};
        \node[whitenode,label=left:$w_2$] (w2) at (-2,1.8) {};
        \node[whitenode,label=above:$v_1$] (v1) at (-1,3.5) {};
        \node[whitenode,label=below:$v_2$] (v2) at (-1,1.5) {};
        \node[whitenode,label=below right:$v$] (v) at (0.5,2.5) {};
    }%
    \renewcommand\Myspecialnodes{%
		\node[blacknode,label=left:$u_1$] (u1) at (-2,5) {};
        \node[blacknode,label=left:$u_2$] (u2) at (-2,0) {};
        \node[blacknode,label=right:$u_3$] (u3) at (4,5) {};
        \node[blacknode,label=right:$u_4$] (u4) at (4,0) {};
    }%
    \renewcommand\MyEdgesBefore{%
    	\myPath{u1}{u3}{\subColor}{left}{0}%
        \myPath{u2}{u4}{\subColor}{left}{0}%
        \myPath{w1}{w2}{\subColor}{left}{0}%
        \myPath{u4}{u3}{\subColor}{left}{0}%
        \myBentEdge{u1}{v}{black}{left}{30}%
        \myBentEdge{u2}{v}{black}{right}{30}%
        \myBentEdge{u1}{w1}{\subColor}{left}{0}%
        \myBentEdge{u2}{w2}{\subColor}{left}{0}%
        \myEdge{v1}{v}%
        \myEdge{v2}{v}%
        \myEdge{w1}{v1}%
        \myEdge{w2}{v2}%
        \myEdge{u1}{v1}%
        \myEdge{u2}{v2}%
        \myBentEdge{u3}{v}{black}{left}{25}%
        \myBentEdge{u3}{w1}{black}{left}{30}%
        \myPath{u2}{u1}{\subColor}{left}{30}%
        \myPath{u3}{u4}{\subColor}{left}{30}%
        
        \boxlines{red}{-2.8}{5.4}{-1.5}{4.6}
        \boxlines{red}{-2.8}{3.6}{-1.5}{2.8}
        \boxlines{red}{0}{2.8}{1}{2}
        \boxlines{green}{-1.3}{4.2}{-0.7}{3.2}
        \boxlines{green}{-2.8}{0.4}{-1.5}{-0.4}
        \boxlines{green}{3.5}{5.4}{4.8}{4.6}
    }%
    \renewcommand{\CfN}{}%
    \renewcommand{\CfL}{}%
	\begin{figure}[ht]%
    	\ifthenelse{\isundefined{\NoFigures}}{%
        \centering%
        \tkcf{} %
        }{}%
        \caption{The $K_{3,3}$-minor formed by $\{u_1,w_1,v\}$ and $\{v_1,u_2,u_3\}$ if $S$ is a $C_{4+}$-subdivision}
        \label{fig:KTTTwo}%
    \end{figure}%
}%
\newcommand{\ObsTwoPict}{%
    \renewcommand\Mynodes{
        \node[whitenode,label=above:$v_1$] (v1) at (-0.5,3.5) {};
        \node[whitenode,label=below:$v_2$] (v2) at (0,1.5) {};
        \node[whitenode,label=left:$v$] (v) at (-1,2.5) {};
    }%
    \renewcommand\Myspecialnodes{%
		\node[blacknode,label=left:$u_1$] (u1) at (-2,5) {};
        \node[blacknode,label=left:$u_2$] (u2) at (-2,0) {};
        \node[blacknode,label=right:$u_3$] (u3) at (4,5) {};
        \node[blacknode,label=right:$u_4$] (u4) at (4,0) {};
    }%
    \renewcommand\MyEdgesBefore{%
        \myPath{u2}{u4}{\subColor}{left}{0}%
        \myPath{u4}{u3}{\subColor}{left}{0}%
        \myPath{u1}{u2}{\subColor}{left}{0}%
        \myEdge{u1}{v}%
        \myEdge{u2}{v}%
        \myEdge{v1}{v}%
        \myEdge{v2}{v}%
        \myEdge{u1}{v1}%
        \myEdge{u2}{v2}%
        \myEdge{u3}{v1}%
        \myPath{u1}{v2}{\subColor}{left}{75}%
        \myPath{v2}{u3}{\subColor}{right}{25}%
        
        \boxlines{red}{-2.8}{5.4}{-1.5}{4.6}
        \boxlines{red}{3.5}{5.4}{4.8}{4.6}
        \boxlines{red}{-1.6}{2.8}{-0.7}{2.2}
        \boxlines{green}{-0.8}{4.1}{-0.2}{3.2}
        \boxlines{green}{-0.3}{1.8}{0.3}{0.9}
        \boxlines{green}{-2.8}{0.4}{-1.5}{-0.4}
        
    }%
    \renewcommand{\CfN}{}%
    \renewcommand{\CfL}{}%
	\begin{figure}[ht]%
    	\ifthenelse{\isundefined{\NoFigures}}{%
        \centering%
        \tkcf{} %
        }{}%
        \caption{The $K_{3,3}$-minor formed by $\{u_1,u_3,v\}$ and $\{u_2,v_1,v_2\}$ in Observation~\ref{obs:red1}}
        \label{fig:ObsTwoPict}%
    \end{figure}%
}%
\newcommand{\ObsThreePict}{%
    \renewcommand\Mynodes{
        \node[whitenode,label=above:$v_1$] (v1) at (0,4) {};
        \node[whitenode,label=below:$v_2$] (v2) at (2,1.5) {};
        \node[whitenode,label=left:$v$] (v) at (-1,3.5) {};
    }%
    \renewcommand\Myspecialnodes{%
		\node[blacknode,label=left:$u_1$] (u1) at (-2,5) {};
        \node[blacknode,label=left:$u_2$] (u2) at (-2,0) {};
        \node[blacknode,label=left:$u_3$] (u3) at (-0.5,2) {};
        \node[blacknode,label=right:$u_4$] (u4) at (-1,1) {};
    }%
    \renewcommand\MyEdgesBefore{%
    	\begin{scope}
		\clip (-2.5,-0.2) rectangle (2.7,5.5);
		
        \myPath{u2}{u4}{\subColor}{left}{0}%
        \myPath{u4}{u3}{\subColor}{left}{0}%
        \myPath{u1}{u2}{\subColor}{left}{0}%
        \myEdge{u1}{v}%
        \myEdge{u2}{v}%
        \myEdge{v1}{v}%
        \myEdge{v2}{v}%
        \myEdge{u1}{v1}%
        \myEdge{u2}{v2}%
        \myEdge{u3}{v}%
        \myPath{u1}{v2}{\subColor}{left}{75}%
        \myPath{v2}{u3}{\subColor}{left}{0}%
        \node[label=above:\textcolor{\subColor}{$Q_{13}$}] at (2.3,3.5) {};
        \end{scope}
    }%
    \renewcommand{\CfN}{}%
    \renewcommand{\CfL}{}%
	\begin{figure}[ht]%
    	\ifthenelse{\isundefined{\NoFigures}}{%
        \centering%
        \tkcf{} %
        }{}%
        \caption{The planar embedding of the graph of Observation~\ref{obs:red2}}
        \label{fig:ObsThreePict}%
    \end{figure}%
}%
\newcommand{\ObsFourPictOne}{%
    \renewcommand\Mynodes{
        \node[whitenode,label=above:$v_1$] (v1) at (-0.5,3.5) {};
        \node[whitenode,label=below:$v_2$] (v2) at (0,1.5) {};
        \node[whitenode,label=left:$v$] (v) at (-1,2.5) {};
        \node[whitenode,label=above:$v_4$] (v4) at (2,2.5) {};
    }%
    \renewcommand\Myspecialnodes{%
		\node[blacknode,label=left:$u_1$] (u1) at (-2,5) {};
        \node[blacknode,label=left:$u_2$] (u2) at (-2,0) {};
        \node[blacknode,label=right:$u_3$] (u3) at (4,5) {};
        \node[blacknode,label=right:$u_4$] (u4) at (4,0) {};
    }%
    \renewcommand\MyEdgesBefore{%
        \myPath{u2}{u4}{\subColor}{left}{0}%
        \myPath{u4}{u3}{\subColor}{left}{0}%
        \myPath{u1}{u2}{\subColor}{left}{0}%
        \myEdge{u1}{v}%
        \myEdge{u2}{v}%
        \myEdge{v1}{v}%
        \myEdge{v2}{v}%
        \myEdge{u1}{v1}%
        \myEdge{u2}{v2}%
        \myEdge{u4}{v1}%
        \myPath{u1}{v2}{\subColor}{left}{75}%
        \myPath{v2}{v4}{\subColor}{right}{15}%
        \myPath{v4}{u3}{\subColor}{right}{15}%
        \myEdge{u4}{v4}
        
        \boxlines{red}{-2.8}{5.4}{-1.5}{4.6}
        \boxlines{red}{3.5}{0.4}{4.8}{-0.4}
        \boxlines{red}{-1.6}{2.8}{-0.7}{2.2}
        \boxlines{green}{-0.8}{4.1}{-0.2}{3.2}
		\draw[ultra thick, green] (-0.6,1.5) -- (2.1,3.35);
		\draw[ultra thick, green] (-0.05,0.7) -- (2.7,2.55);
		\draw[ultra thick, green] (-0.6,1.5) -- (-0.05,0.7);
		\draw[ultra thick, green] (2.1,3.35) -- (2.7,2.55);
        \boxlines{green}{-2.8}{0.4}{-1.5}{-0.4}
        
    }%
    \renewcommand{\CfN}{}%
    \renewcommand{\CfL}{}%
}%
\newcommand{\ObsFourPictTwo}{%
    \renewcommand\Mynodes{
        \node[whitenode,label=above:$v_1$] (v1) at (0,4) {};
        \node[whitenode,label=below:$v_2$] (v2) at (2,1.5) {};
        \node[whitenode,label=left:$v$] (v) at (-1,3.5) {};
    }%
    \renewcommand\Myspecialnodes{%
		\node[blacknode,label=left:$u_1$] (u1) at (-2,5) {};
        \node[blacknode,label=left:$u_2$] (u2) at (-2,0) {};
        \node[blacknode,label=below:$u_3$] (u3) at (0.5,1.8) {};
        \node[blacknode,label=left:$u_4$] (u4) at (-0.5,2) {};
    }%
    \renewcommand\MyEdgesBefore{%
        \myPath{u2}{u4}{\subColor}{left}{0}%
        \myPath{u4}{u3}{\subColor}{left}{0}%
        \myPath{u1}{u2}{\subColor}{left}{0}%
        \myEdge{u1}{v}%
        \myEdge{u2}{v}%
        \myEdge{v1}{v}%
        \myEdge{v2}{v}%
        \myEdge{u1}{v1}%
        \myEdge{u2}{v2}%
        \myEdge{u4}{v}%
        \myPath{u1}{v2}{\subColor}{left}{75}%
        \myPath{v2}{u3}{\subColor}{left}{0}%
        \node[label=above:\textcolor{\subColor}{$Q_{13}$}] at (2.3,3.5) {};
        \node[whitenode,label=below:$v_4$] (v4) at (1.1,1.65) {};
        \myBentEdge{u4}{v4}{black}{left}{50}%
    }%
    \renewcommand{\CfN}{}%
    \renewcommand{\CfL}{}%
}%
\newcommand{\ObsFourPict}{
	\begin{figure}[ht]
	\centering
	\begin{subfigure}[t]{7cm}
		\centering%
		\ObsFourPictOne%
		\tkcf{}%
		\caption{The $K_{3,3}$-minor formed by $\{u_1,u_4,v\}$\\and $\{u_2,v_1,v_2=v_4\}$ in Observation~\ref{obs:red3}\\if $v_4'=v_1$}\label{fig:obsfourpictone}		
	\end{subfigure}
	\begin{subfigure}[t]{5cm}
		\centering%
		\ObsFourPictTwo%
		\tkcf{}%
		\caption{The planar embedding of the graph of Observation~\ref{obs:red3} if $v_4'=v$}\label{fig:obsfourpicttwo}
	\end{subfigure}
	\caption{The $K_{3,3}$-minor and the planar embedding of Observation~\ref{obs:red3}}\label{fig:red3}
\end{figure}
}
\newcommand{\ObsFivePict}{%
    \renewcommand\Mynodes{
        \node[whitenode,label=above:$v_1$] (v1) at (-0.5,3.5) {};
        \node[whitenode,label=below:$v_2$] (v2) at (0,1.5) {};
        \node[whitenode,label=left:$v$] (v) at (-1,2.5) {};
        \node[whitenode,label=above:$w_1$] (w1) at (-0.2,5.2) {};
    }%
    \renewcommand\Myspecialnodes{%
		\node[blacknode,label=left:$u_1$] (u1) at (-2,5) {};
        \node[blacknode,label=left:$u_2$] (u2) at (-2,0) {};
        \node[blacknode,label=right:$u_3$] (u3) at (2,3) {};
        \node[blacknode,label=right:$u_4$] (u4) at (4,0) {};
    }%
    \renewcommand\MyEdgesBefore{%
        \myPath{u2}{u4}{\subColor}{left}{0}%
        \myPath{u4}{u3}{\subColor}{left}{0}%
        \myPath{u1}{u2}{\subColor}{left}{0}%
        \myEdge{u1}{v}%
        \myEdge{u2}{v}%
        \myEdge{v1}{v}%
        \myEdge{v2}{v}%
        \myEdge{u1}{v1}%
        \myEdge{u2}{v2}%
        \myBentEdge{u4}{v}{black}{right}{20}%
        \myBentEdge{u4}{w1}{black}{right}{30}%
        \myBentEdge{u1}{w1}{\subColor}{left}{0}%
        \myPath{w1}{v2}{\subColor}{left}{50}%
        \myPath{v2}{u3}{\subColor}{right}{15}%
        
        \boxlines{red}{-2.8}{5.4}{-1.5}{4.6}
        \boxlines{red}{3.5}{0.4}{4.8}{-0.4}
        \boxlines{green}{-1.6}{2.8}{-0.7}{2.2}
        \boxlines{green}{-0.5}{5.8}{0.1}{4.9}
        \boxlines{red}{-0.3}{1.8}{0.3}{0.9}
        \boxlines{green}{-2.8}{0.4}{-1.5}{-0.4}
        
    }%
    \renewcommand{\CfN}{}%
    \renewcommand{\CfL}{}%
	\begin{figure}[ht]%
    	\ifthenelse{\isundefined{\NoFigures}}{%
        \centering%
        \tkcf{} %
        }{}%
        \caption{The $K_{3,3}$-minor formed by $\{u_1,u_4,v_2\}$ and $\{u_2,w_1,v\}$ in Observation~\ref{obs:red4}}
        \label{fig:ObsFivePict}%
    \end{figure}%
}%
\newcommand\CfPrintNom[1]{\textbf{\underline{Configuration #1{\CfN}}}\\\ } %
\newcommand\CfProperties{} %
\newcommand\Propcf[1]{Properties: #1{\CfProperties}} %
\newcommand\CfPict{} %
\newcommand\Pictcf[1]{#1{\CfPict}} %
\newcommand\CfText{} %
\newcommand\Textcf[1]{#1{\CfText}} %
\definecolor{myRed}{RGB}{228,26,28}
\definecolor{myRed2}{RGB}{227,26,28}
\definecolor{myBlue}{RGB}{55,126,184}
\definecolor{myBlue2}{RGB}{55,125,184}
\definecolor{myGreen}{RGB}{77,175,74}
\definecolor{myPurple}{RGB}{152,78,163}
\definecolor{myOrange}{RGB}{255,130,0}
\definecolor{myYellow}{RGB}{255,205,51}
\definecolor{myBrown}{RGB}{166,86,40}
\newcommand\subColor{myPurple}
\newcommand\subColorOne{myRed}
\newcommand\subColorTwo{myBlue}
\newcommand\PatternColorI{myGreen}
\newcommand\PatternColorII{myOrange}
\newcommand\PatternColorIII{myBrown}
\newcommand\PatternColorIV{myYellow}
\newcommand{\RuOne}{%
   \renewcommand\CfN{\nameRuOne}%
   \renewcommand\CfProperties{
		\begin{itemize}
			\item The graph has a strong $K_4$-subdivision $S$ rooted on $u_1,u_2,u_3,u_4$, with $2$ special vertices involved in a close problem: $u_1,u_2$ share a remaining neighbor $v$
			\item $v\notin S$
			\item $u_1, u_2$ each have another remaining neighbor $v_1,v_2$ respectively, and $v_1 \neq v_2$
			\item \textbf{Remark:} each of $u_3, u_4$ is either settled or causes a distant problem
		\end{itemize}
   }%
   \renewcommand\CfPict{
    
    \renewcommand\Myspecialnodes{
        \node[blacknode,label=below:$u_1$] (u1) at (-3,0) {};
        \node[blacknode,label=below:$u_2$] (u2) at (3,0) {};
        \node[blacknode,label=above left:$u_3$] (u3) at (0,3) {};
        \node[blacknode,label=above:$u_4$] (u4) at (0,6) {};
    }
    \renewcommand\Mynodes{
    	\node[whitenode,label=below:$v$] (v12) at (0,0.5) {};
    	\node[contactnode] (v4) at (-0.5,4) {};
    	\node[whitenode] (v4') at (-1.2,4.3) {};
    	\node[contactnode] (v3) at (1,4.2) {};
    	\node[whitenode] (v3') at (1.5,3.5) {};
    	\node[contactnode,label=above:$v_2$] (v2) at (0.5,1.5) {};
    	\node[contactnode,label=above:$v_1$] (v1) at (-0.5,1.5) {};
    }
    \renewcommand\MyEdgesBefore{
        \myPath{u1}{u2}{\subColor}{right}{20}
        \myPath{u1}{u3}{\subColor}{left}{20}
        \myPath{u2}{u3}{\subColor}{right}{20}
        \myPath{u1}{u4}{\subColor}{left}{20}
        \myPath{u2}{u4}{\subColor}{right}{20}
        \myPath{u3}{u4}{\subColor}{left}{0}
        \myEdge{u4}{v4}
        \myEdge{u4}{v4'}
        \myEdge{u3}{v3}
        \myEdge{u3}{v3'}
        \myEdge{u1}{v12}
        \myEdge{u2}{v12}
        \myEdge{u2}{v2}
        \myEdge{u1}{v1}

    }

    \renewcommand\MyEdgesAfter{
		\myPath{u1}{u2}{\PatternColorIII}{right}{20}
        \myPath{u1}{u3}{\subColor}{left}{20}
        \myPath{u2}{u3}{\subColor}{right}{20}
        \myPath{u1}{u4}{\subColor}{left}{20}
        \myPath{u2}{u4}{\subColor}{right}{20}
        \myPath{u3}{u4}{\subColor}{left}{0}

		\patternCN{u4}{v4}{v4'}{\PatternColorI}
		\patternCN{u3}{v3}{v3'}{\PatternColorII}
		\patternCDOnene{u1}{v1}{v12}{u2}{v2}{\PatternColorIII}
		
    }
    \renewcommand{\CfL}{fig:R2}
    \begin{figure}[h]%
        \ifthenelse{\isundefined{\NoFigures}}{%
        \centering
        \tkcfSansNomFigure{}%
        \flecheACentrer{2.8}%
        \tikzPartRule{\Myspecialnodes \Mynodes \MyEdgesAfter}%
        }{}%
        \caption{Reduction of configuration \CfN. $u_3,u_4$ may cause distant problems}%
    \end{figure}%
	}%
	\renewcommand\CfText{We consider a $2$-coloring of $S$ given by Claim~\ref{clm:inact} (p.~\pageref{clm:inact}) to inactivate the two potential distant problems on $u_3$ and $u_4$.
	If one of $v_1,v_2$ is not adjacent to $v$, then its associated special vertex forms a {\CV} pattern and is thus settled: a contradiction, as $u_1$ and $u_2$ are the ones causing a close problem. Hence $v_1,v_2$ are both adjacent to $v$. The vertices $u_1$ and $u_2$ form a {\CDOne} configuration, hence a {\CDA} or {\CDB} pattern. Note that $v_2$ cannot belong to the path $u_1\sim u_3$ and $v_1$ cannot belong to $u_2\sim u_3$, as this would form a {\redirXThree} configuration, forbidden by the redirection procedure.
	
	The patterns used are {\CDA} or {\CDB}$(u_1,u_2)$, {\CN}$(u_3)$, {\CN}$(u_4)$.
	}%
}
\newcommand{\RuTwo}{%
   \renewcommand\CfN{\nameRuTwo}%
   \renewcommand\CfProperties{
		\begin{itemize}
			\item The graph has a strong $K_4$-subdivision $S$ rooted on $u_1,u_2,u_3,u_4$, without distant problems and such that $2$ special vertices are involved in a close problem: $u_1,u_2$ share two remaining neighbors $v,v'$
			\item $v\notin S$ and $v'\in S$
			\item \textbf{Remark:} each of $u_3, u_4$ is either settled or causes a distant problem
		\end{itemize}
   }%
   \renewcommand\CfPict{
    
    \renewcommand\Myspecialnodes{
        \node[blacknode,label=below:$u_1$] (u1) at (-3,0) {};
        \node[blacknode,label=below:$u_2$] (u2) at (3,0) {};
        \node[blacknode,label=above left:$u_3$] (u3) at (0,3) {};
        \node[blacknode,label=above:$u_4$] (u4) at (0,6) {};
    }
    \renewcommand\Mynodes{
    	\node[whitenode,label=below right:$v$] (v) at (0,-0.5) {};
    	\node[whitenode,label=left:$v'$] (v') at (0,4.5) {};
    	\node[whitenode] (v4) at (-1.5,5.5) {};
    	\node[whitenode] (v4') at (-1.5,6.5) {};
    	\node[contactnode] (v3) at (0,1.2) {};
    	\node[whitenode] (v3') at (1.2,1.5) {};
    }
    \renewcommand\MyEdgesBefore{
		\myEdge{u1}{v}
		\myEdge{u2}{v}
        \myPath{u1}{u3}{\subColor}{left}{20}
        \myPath{u2}{u3}{\subColor}{right}{20}
        \myPath{u1}{u4}{\subColor}{left}{20}
        \myPath{u2}{u4}{\subColor}{right}{20}
        \myPath{u3}{v'}{\subColor}{left}{0}
        \myPath{v'}{u4}{\subColor}{left}{0}
        \myEdge{u4}{v4}
        \myEdge{u4}{v4'}
        \myEdge{u3}{v3}
        \myEdge{u3}{v3'}
		\myPath{u1}{u2}{\subColor}{left}{10}
    	\myBentEdge{u1}{v'}{black}{left}{15}
        \myBentEdge{u2}{v'}{black}{right}{15}

        \path ($(v3) + (-0.35,-0.15)$) edge [->, bend right = 40] ($0.5*(u1)+0.5*(u2)+0.12*(u3) + (-0.55,0.15)$);
    }

	\renewcommand\MyEdgesAfter{
        \myPath{u1}{u3}{\subColorOne}{left}{20}
        \myPath{u2}{u3}{\subColorTwo}{right}{20}
        \myPath{u1}{u4}{\subColorTwo}{left}{20}
        \myPath{u2}{u4}{\subColorTwo}{right}{20}
        \myPath{v'}{u4}{\subColorOne}{left}{0}
        \myPath{u3}{v'}{\subColorOne}{left}{0}
        
        \myBentEdge{u1}{v'}{\PatternColorII}{left}{15}
        \myBentEdge{u2}{v'}{\PatternColorII}{right}{15}
    	\myBentEdge{u1}{v}{\PatternColorII}{left}{0}
        \myBentEdge{u2}{v}{\PatternColorII}{left}{0}

		\myPath{u1}{u2}{\subColorOne}{left}{10}
        
		\patternCN{u4}{v4}{v4'}{\subColorOne}
		\patternCN{u3}{v3}{v3'}{\subColorTwo}
		
		\node at ($(u1)+(1.5,1)$) {\textcolor{\PatternColorII}{{\CTTNA}}};
		\path ($(v3) + (-0.35,-0.15)$) edge [->, bend right = 40] ($0.5*(u1)+0.5*(u2)+0.12*(u3) + (-0.55,0.15)$);
    }
    \renewcommand{\CfL}{fig:R3}
    \drawSimpleRule{fig:R3}{2.8}{Reduction of configuration \CfN}
	}%
	\renewcommand\CfText{
%
%

	By planarity and property A, there is at most one distant problem, caused by $u_3$ or $u_4$ on the path $u_1\sim u_2$. If it is the case, we assume w.l.o.g. that this is $u_3$.
	
	Let us color $S$ with this $2$-coloring:
	$\{red = (u_2\rightarrow u_1\rightarrow u_3\rightarrow u_4), blue = (u_1\rightarrow u_4\rightarrow u_2\rightarrow u_3)\}$.
	The colors ending on $u_1,u_2$ are different, so $u_1,u_2$ form a {\CTTNA} pattern that crosses the red path $u_3\sim u_4$. This is authorized by the definition of {\CTTNA}.
	The potential distant problem of $u_3$ is inactive in this coloring of $S$.
	
	The patterns used are {\CTTNA}$(u_1,u_2)$, {\CN}$(u_3)$ (or {\CVp}), {\CN}$(u_4)$.
	}%
}
\newcommand{\RuThree}{%
	\renewcommand\CfN{\nameRuThree}%
	\renewcommand\CfProperties{%
		\begin{itemize}
			\item The graph has a strong $K_4$-subdivision $S$
			\item $u_1, u_2$ each have two remaining neighbors $v_1,v_1'$ and $v_2,v_2'$ respectively
			\item $v_1,v_2$ belong to $u_3\sim u_4$; by convention $u_3\sim u_4 = (u_3,P_1,v_1,P_2,v_2,P_3,u_4)$, with $l(P_1),l(P_3)\geq 1$ and $l(P_2)\geq 0$ (so $v_1$ may equal $v_2$)
			\item If $v_1\neq v_2$, $v_1',v_2'$ are disjoint from $S$
			\item \textbf{Remark:} if $v_1=v_2$, then $v_1',v_2'$ may belong to $u_3\sim u_4$ and $v_1'$ may equal $v_2'$
			\item If $v_1'$ (resp. $v_2'$) does not belong to $S$, then it is adjacent to $v_1$ (resp. $v_2$)
			\item $u_3,u_4$ each have two remaining neighbors $v_3,v_3'$ and $v_4,v_4'$ respectively
			\item The path $u_1\sim u_2$ does not have length $1$
			\item If $u_1\sim u_2$ has length $2$, let $w$ be its middle vertex. Then 
			$w$ has at most $1$ neighbor among $u_3,u_4$, or at least one of $v_1',v_2'$ does not have a neighbor in $\{u_3,u_4\}$
		\end{itemize}
	}%
	\renewcommand\CfPict{%
		\renewcommand\Myspecialnodes{%
			\node[blacknode,label=below:$u_1$] (u1) at (-3,0) {};%
			\node[blacknode,label=below:$u_2$] (u2) at (3,0) {};%
			\node[blacknode,label=above right:$u_3$] (u3) at (0,1.5) {};%
			\node[blacknode,label=above:$u_4$] (u4) at (0,6) {};%
		}%
		\renewcommand\Mynodes{%
			\node[whitenode,label=right:$v_1$] (v1) at (0,3) {};%
			\node[whitenode,label=left:$v_1'$] (v1') at (-1,3.6) {};%
			\node[whitenode,label=left:$v_2$] (v2) at (0,4.2) {};%
			\node[whitenode,label=right:$v_2'$] (v2') at (0.8,4.8) {};%
			\node[whitenode] (v3) at (-0.8,0.4) {};%
			\node[whitenode] (v3') at (0,-0.0) {};%
			\node[whitenode] (v4) at (-1,-2.5) {};%
			\node[whitenode] (w1) at (-1.5,-0.4) {};%
			\node[whitenode] (w2) at (1.5,-0.4) {};%
			\node[whitenode] (v4') at (-0.1,-1.6) {};%
		}%
		\renewcommand\MyEdgesBefore{%
			\begin{scope}
				\clip (-3.2,-3) rectangle (5.2,6.5);
				
				\myBentEdge{u1}{w1}{\subColor}{left}{0}%
				\myBentEdge{u2}{w2}{\subColor}{left}{0}%
				\myPath{w1}{w2}{\subColor}{right}{15}%
				\myPath{u1}{u3}{\subColor}{left}{5}%
				\myPath{u3}{u2}{\subColor}{left}{5}%
				\myPath{u1}{u4}{\subColor}{left}{35}%
				\myPath{u2}{u4}{\subColor}{right}{35}%
				\myPath{u3}{v1}{\subColor}{left}{0}%
				\myPath{v1}{v2}{\subColor}{left}{0}%
				\myPath{v2}{u4}{\subColor}{left}{0}%
				\myBentEdge{u1}{v1}{black}{left}{15}%
				\myBentEdge{u1}{v1'}{black}{left}{15}%
				\myBentEdge{u2}{v2}{black}{right}{15}%
				\myBentEdge{u2}{v2'}{black}{right}{15}%
				\myEdge{v1}{v1'}
				\myEdge{v2}{v2'}
				\myEdge{u3}{v3}%
				\myEdge{u3}{v3'}%
				%
				\myBentEdge{u4}{v4'}{black}{left}{0,looseness=2, out=90+10, in=90-10}%
				\myBentEdge{u4}{v4}{black}{left}{0,looseness=2.2, out=90+18, in=90-10}%
				\path ($(v3') + (-0.3,0.05)$) edge [->, bend right = 20] ($(w1) + (0.5,0.1)$);%
				\path ($(v4') + (0.3,0.15)$) edge [->, bend right = 40] ($(w2) + (-0.5,-0.5)$);%
				
			\end{scope}
		}%
		\renewcommand\MyEdgesAfter{
			\begin{scope}
				\clip (-3.2,-3) rectangle (5.2,6.5);
				
				\myBentEdge{u1}{v1}{\subColorOne}{left}{15}%
				\myPath{w1}{w2}{black}{right}{15}%
				\myPath{u1}{u3}{\subColorTwo}{left}{5}%
				\myPath{u3}{u2}{\subColorTwo}{left}{5}%
				\myPath{u1}{u4}{\subColorOne}{left}{35}%
				\myPath{u2}{u4}{\subColorOne}{right}{35}%
				\myPath{u3}{v1}{\subColorOne}{left}{0}%
				\myPath{v1}{v2}{black}{left}{0}%
				\myPath{v2}{u4}{\subColorTwo}{left}{0}%
				
				\myBentEdge{u2}{v2}{\subColorTwo}{right}{15}%
				
				\myBentEdge{u2}{v2'}{\PatternColorII}{right}{15}%
				\myBentEdge{u2}{w2}{\PatternColorII}{left}{0}%
				\myDEdge{w2}{v2'}
				\node at ($(u2)+(-0.8,-0.6)$) {\textcolor{\PatternColorII}{{\CV}}};%
				
				\myBentEdge{u1}{v1'}{\PatternColorI}{left}{15}%
				\myBentEdge{u1}{w1}{\PatternColorI}{left}{0}%
				\myDEdge{w1}{v1'}
				\node at ($(u1)+(0.8,-0.6)$) {\textcolor{\PatternColorI}{{\CV}}};%
				
				\myBentEdge{v1}{v1'}{black}{left}{0}%
				\myBentEdge{v2}{v2'}{black}{left}{0}%
				
				\patternCN{u3}{v3}{v3'}{\subColorOne}%
				\myBentEdge{u4}{v4'}{myBlue2}{left}{0,looseness=2, out=90+10, in=90-10}%
				\myBentEdge{u4}{v4}{myBlue2}{left}{0,looseness=2.2, out=90+18, in=90-10}%
				\myBentEdge{v4}{v4'}{myBlue2}{left}{0}%
				\node at ($(v4')+(0,-0.6)$) {\textcolor{\subColorTwo}{{\CN}}};%
				\path ($(v3') + (-0.3,0.05)$) edge [->, bend right = 20] ($(w1) + (0.5,0.1)$);%
				\path ($(v4') + (0.3,0.15)$) edge [->, bend right = 40] ($(w2) + (-0.5,-0.5)$);%
			\end{scope}
		}%
		\renewcommand{\CfL}{fig:R599}%
		\drawSimpleRule{fig:R599}{4}{\CfN\ when $v_1\neq v_2$, $v_1'\neq v_2'$}%
	}%
	\renewcommand\CfText{%
		We transform the $K_4$-subdivision $S$ into a $C_{4+}$-semi-subdivision $S'$ by removing the paths $u_1\sim u_2$ and $u_3\sim u_4$, and adding the paths $(u_1,v_1,P_1,u_3)$ and $(u_2,v_2,P_3,u_4)$ if $v_1\neq v_2$ or if $v_1=v_2$ and $v_1'\neq v_2'$. Otherwise, we have $v_1=v_2$ and $v_1'=v_2'$, and we assume $u_3\sim u_4 = (u_3,Q_1,v_1',Q_2,v_1,Q_3,u_4)$. In this case, we add the paths $(u_1,v_1',Q_1,u_3)$ and $(u_2,v_1,Q_3,u_4)$ to $S'$ instead.
		This semi-subdivision has two paths that intersect if $v_1=v_2$ and $v_1'\neq v_2'$ ($l(P_2) = 0$), but it is $2$-colorable with the coloring $\{red = (u_2\rightarrow u_4\rightarrow u_1\rightarrow v_1\rightarrow u_3), blue = (u_1\rightarrow u_3\rightarrow u_2\rightarrow v_2\rightarrow u_4)\}$.		
		
		Since the path $u_1\sim u_2$ of $S$ does not have length $1$, the special vertices $u_1,u_2$ form {\CV} patterns in $S'$.
		
		The special vertices $u_3,u_4$ may form {\CTTNA} patterns in $S'$ with each of $u_1,u_2$. These patterns may only touch each other on the path $u_1\sim u_2$ from $S$, but the last condition of the configuration ensures that this is not the case (this case is treated as configuration \ncf{\JFour}).
		
		Otherwise, the vertices $u_3,u_4$ form {\CN} patterns 
		if they do not have common remaining neighbors. These {\CN} patterns are compatible with the {\CV} patterns of $u_1,u_2$.
		
		\textbf{Remark:} If $v_1'\neq v_2'$ and $u_4$ has both as its remaining neighbors, then $u_4$ forms a {\CV} pattern which touches the {\CV} patterns of $u_1$ and $u_2$. The precise case where $u_4$ has $v_1,v_2$ as remaining neighbors and $u_1\sim u_2$ has length $1$ in $S$ is treated as configuration \ncf{\JThree}.
		
		The special vertices $u_3,u_4$ may also form a {\CTOne}, 
		{\CTTA} or {\CTTNA} pattern, since the colors ending on each vertex are different in any $2$-coloring of $S'$.
		
		The patterns used are {\CV}$(u_1)$, {\CV}$(u_2)$ and {\CTOne}, 
		{\CTTA} or {\CTTNA}$(u_3,u_4)$, or {\CN} for $u_3,u_4$.
	}%
}
\newcommand{\RuFour}{%
   \renewcommand\CfN{\nameRuFour}%
   \renewcommand\CfProperties{
		\begin{itemize}
			\item The graph has a strong $K_4$-subdivision $S$ rooted on $u_1,u_2,u_3,u_4$, with $3$ special vertices involved in a close problem: $u_1, u_2, u_3$.
			\item  The vertices $u_1,u_3$ share a remaining neighbor $v_{13}\notin S$
			\item $u_1,u_2,u_3$ share a remaining neighbor $v_{123}$ (different from $v_{13}$)
			\item $u_2$ has another remaining neighbor $v_2$ adjacent to $v_{123}$
			\item \textbf{Remark:} $u_4$ is either settled or causes a distant problem
		\end{itemize}
	}%
	\renewcommand\CfPict{
    
    \renewcommand\Myspecialnodes{
        \node[blacknode,label=below:$u_1$] (u1) at (-3,0) {};
        \node[blacknode,label=below:$u_2$] (u2) at (3,0) {};
        \node[blacknode,label=above left:$u_3$] (u3) at (0,3) {};
        \node[blacknode,label=above:$u_4$] (u4) at (0,6) {};
    }
    \renewcommand\Mynodes{
    	\node[whitenode,label=below:$v_{123}$] (v123) at (0,0.5) {};
    	\node[contactnode] (v4) at (-0.5,4) {};
    	\node[whitenode] (v4') at (-1.2,4.3) {};
    	\node[whitenode,label=left:$v_{13}$] (v13) at (-1,1) {};
    	\node[whitenode,label=left:$v_2$] (v2) at (1,-0.2) {};
    	\node[whitenode] (w1) at (-2,-1) {};
    	\node[whitenode] (w2) at (2,-1) {};
    }
    \renewcommand\MyEdgesBefore{
    	\myBentEdge{u1}{w1}{\subColor}{left}{0}
        \myPath{w1}{w2}{\subColor}{right}{10}
        \myBentEdge{u2}{w2}{\subColor}{left}{0}
        \myPath{u1}{u3}{\subColor}{left}{20}
        \myPath{u2}{u3}{\subColor}{right}{20}
        \myPath{u1}{u4}{\subColor}{left}{20}
        \myPath{u2}{u4}{\subColor}{right}{20}
        \myPath{u3}{u4}{\subColor}{left}{0}
        \myEdge{u4}{v4}
        \myEdge{u4}{v4'}
        \myBentEdge{u3}{v13}{black}{left}{10}
        \myBentEdge{u3}{v123}{black}{left}{10}
        \myBentEdge{u1}{v13}{black}{right}{10}
        \myBentEdge{u1}{v123}{black}{right}{10}
        \myEdge{u2}{v123}
        \myEdge{u2}{v2}
        \myEdge{v2}{v123}

    }
    \renewcommand\MyEdgesAfter{
        \myPath{w1}{w2}{black}{right}{10}
        \myPath{u1}{u3}{\subColor}{left}{20}
        \myPath{u2}{u3}{\subColor}{right}{20}
        \myPath{u1}{u4}{\subColor}{left}{20}
        \myPath{u2}{u4}{\subColor}{right}{20}
        \myPath{u3}{u4}{\subColor}{left}{0}
        \myBentEdge{u2}{v123}{\subColor}{left}{0}
        \myEdge{v2}{v123}

		\patternCN{u4}{v4}{v4'}{\PatternColorIV}
		
		\myBentEdge{u3}{v13}{\PatternColorIII}{left}{10}
        \myBentEdge{u3}{v123}{\PatternColorIII}{left}{10}
        \myDEdge{v123}{v13}
        \node at ($0.5*(v13) + 0.5*(v123) + (0.2,0.5)$)  {\textcolor{\PatternColorIII}{{\CV}}};
        
        \myBentEdge{u1}{w1}{\PatternColorI}{left}{0}
        \myBentEdge{u1}{v13}{\PatternColorI}{right}{10}
        \myDEdge{w1}{v13}
        \node at ($0.33*(u1) + 0.33*(v13) + 0.33*(w1) + (-0.2,-0.3)$)  {\textcolor{\PatternColorI}{{\CV}}};
        
        \myBentEdge{u1}{v123}{\subColor}{right}{10}
		
		\patternCN{u2}{v2}{w2}{\PatternColorII}	
		
    }
    \renewcommand{\CfL}{fig:R9}
    \drawSimpleRule{fig:R9}{2.8}{\CfN\ when $l(u_1\sim u_2) \geq 2$ in $S$}
	}%
	\renewcommand\CfText{
		By planarity and property A, $v_{13}$ and $v_{123}$ do not belong to $S$.
		The special vertices $u_1,u_3$ thus form a {\CDTwo} configuration, which is therefore a {\CTTNA} pattern: the vertices $v_{13}$ and $v_{123}$ are non-adjacent. However, this pattern is not compatible with a {\CN} pattern applied to $u_2$.
		
		The vertex $u_4$ may cause a distant problem or cause a {\CVp} pattern on the path $u_1\sim u_3$ or (w.l.o.g.) $u_2\sim u_3$. We replace the $K_4$-subdivision $S$ with another $K_4$-subdivision $S'$ by replacing the path $u_1\sim u_2$ with the path $(u_1,v_{123},u_2)$. If $u_4$ causes a distant problem on $u_1\sim u_3$ or $u_2\sim u_3$, we use Claim~\ref{clm:inact} (p.~\pageref{clm:inact}) and consider a $2$-coloring of $S'$ that inactivates it.
		
		By planarity and property A, $v_2$ does not belong to $S$.
		The vertices $u_1$ and $u_2$ form either a {\CV} and a {\CN} in $S'$, or a {\CU} pattern depending on the length of the original $u_1\sim u_2$ in $S$. 		
		
		The pattern used are {\CV}$(u_1)$, {\CN}$(u_2)$, or {\CU}$(u_1,u_2)$, {\CV}$(u_3)$, {\CN}$(u_4)$ (or {\CVp}).
	}%
}
\newcommand{\RuFive}{%
   \renewcommand\CfN{\nameRuFive}%
   \renewcommand\CfProperties{
		\begin{itemize}
			\item The graph has a strong $K_4$-subdivision $S$ rooted on $u_1,u_2,u_3,u_4$, with $3$ vertices involved in a close problem: $u_1, u_2, u_3$.
			\item $u_1,u_3$ share a remaining neighbor $v_{13}$ (which is not a neighbor of $u_2$)
			\item $u_1,u_2$ share a remaining neighbor $v_{12}$ (which is not a neighbor of $u_3$)
			\item $u_2, u_3$ each have another remaining neighbor $v_2,v_3$ respectively, and $v_2\neq v_3$
			\item $v_3,v_{12}$ are not adjacent
			\item None of $v_{13},v_{12},v_2,v_3$ belong to $S$
			\item The graph contains the edges $v_{13}v_3, v_{13}v_{12}, v_{12}v_2$
			\item \textbf{Remark:} $u_4$ is either settled or causes a distant problem
		\end{itemize}
   }%
   \renewcommand\CfPict{
   	\renewcommand\Myspecialnodes{
        \node[blacknode,label=below:$u_1$] (u1) at (-3,0) {};
        \node[blacknode,label=below:$u_2$] (u2) at (3,0) {};
        \node[blacknode,label=above left:$u_3$] (u3) at (0,3) {};
        \node[blacknode,label=above:$u_4$] (u4) at (0,6) {};
    }
    \renewcommand\Mynodes{
    	\node[whitenode,label=left:$v_{13}$] (v13) at (-1,1.4) {};
    	\node[whitenode,label=below:$v_{12}$] (v12) at (0,0.5) {};
    	\node[whitenode,label=right:$v_2$] (v2) at (0.8,1.2) {};
    	\node[whitenode,label=right:$v_3$] (v3) at (0.4,1.8) {};
    	\node[contactnode] (v4) at (-0.5,4) {};
    	\node[whitenode] (v4') at (-1.2,4.3) {}; 
    }
    \renewcommand\MyEdgesBefore{
        \myPath{u1}{u2}{\subColor}{right}{20}
        \myBentEdge{u1}{u3}{\subColor}{left}{20}
        \myPath{u2}{u3}{\subColor}{right}{20}
        \myPath{u1}{u4}{\subColor}{left}{20}
        \myPath{u2}{u4}{\subColor}{right}{20}
        \myPath{u3}{u4}{\subColor}{left}{0}
        \myEdge{u4}{v4}
        \myEdge{u4}{v4'}
        \myEdge{u1}{v13}
        \myEdge{u1}{v12}
        \myEdge{u2}{v2}
        \myEdge{u2}{v12}
        \myEdge{u3}{v13}
        \myEdge{u3}{v3}
        
        \myEdge{v12}{v13}
        \myEdge{v13}{v3}
        \myEdge{v12}{v2}
        
        \myDEdge{v12}{v3}
    }
    \renewcommand\MyEdgesAfter{
		\myPath{u1}{u2}{\subColor}{right}{20}
        \myBentEdge{u1}{u3}{\PatternColorI}{left}{20}
        \myPath{u2}{u3}{\subColor}{right}{20}
        \myPath{u1}{u4}{\subColor}{left}{20}
        \myPath{u2}{u4}{\subColor}{right}{20}
        \myPath{u3}{u4}{\subColor}{left}{0}

		\patternCN{u4}{v4}{v4'}{\PatternColorIII}
		
		\myBentEdge{u1}{v13}{\subColor}{left}{0}
		\myBentEdge{u3}{v13}{\subColor}{left}{0}

		\myBentEdge{u3}{v3}{\PatternColorI}{left}{0}
		\myBentEdge{u1}{v12}{\PatternColorI}{left}{0}
        
        \myEdge{v12}{v13}
        \myEdge{v13}{v3}

		\patternCN{u2}{v2}{v12}{\PatternColorII}
        
        \myDEdge{v12}{v3}
        
        \node at ($0.33*(u1) + 0.33*(v12) + 0.33*(v13)$)  {\textcolor{\PatternColorI}{{\CDB}}};
    }
    \renewcommand{\CfL}{fig:R14}
    \drawSimpleRule{fig:R14}{2.8}{Reduction of configuration \CfN. The special vertex $u_4$ may cause a distant problem on $u_1\sim u_2$ or $u_2\sim u_3$}
    }%
    \renewcommand\CfText{
		At least one edge among $v_2v_{13}, v_3v_{12}$ does not exist by planarity (otherwise $\{(u_1,v_2,v_3),$ $(u_2=u_3,v_{12},v_{13})\}$ form a $K_{3,3}$ minor by contracting the path $u_2\sim u_3$). Assume w.l.o.g. that the edge $v_3v_{12}$ is absent from the graph.
		
		The vertices $u_1,u_3$ form a {\CDOne} configuration, and since $v_3v_{12}$ does not exist, they form a {\CDB} pattern. By planarity and property A, $v_2$ does not belong to $S$. Hence, $u_2$ can be treated as a {\CN} pattern that is compatible with the {\CDB} pattern, since it touches only $v_{12}$. We use Claim~\ref{clm:inact} (p.~\pageref{clm:inact}) and consider a $2$-coloring of $G$ that inactivates the potential distant problem caused by $u_4$.
		
		The patterns used are {\CDB}$(u_1,u_3)$, {\CN}$(u_2)$, {\CN}$(u_4)$.
		%
%
	}%
}
\newcommand{\RuSix}{%
   \renewcommand\CfN{\nameRuSix}%
   \renewcommand\CfProperties{
		\begin{itemize}
			\item The graph has a strong $K_4$-subdivision $S$ rooted on $u_1,u_2,u_3,u_4$, with all $4$ special vertices involved in close problems
			\item $u_1,u_2$ share a remaining neighbor $v_{12} \notin S$
			\item $u_3,u_4$ share a remaining neighbor $v_{34} \notin S$
			\item $u_1,u_2,u_3,u_4$ each have another remaining neighbor $v_1,v_2,v_3,v_4$ respectively
			\item $v_1,v_2$ are disjoint from $v_3,v_4$
			\item \textbf{Remark:} it may be that $v_1=v_2$ or $v_3=v_4$
			\item $v_1,v_2,v_3,v_4$ are disjoint from $S$ 
		\end{itemize}
	}%
	\renewcommand\CfPict{
    
    \renewcommand\Myspecialnodes{
        \node[blacknode,label=below:$u_1$] (u1) at (-3,0) {};
        \node[blacknode,label=below:$u_2$] (u2) at (3,0) {};
        \node[blacknode,label=above left:$u_3$] (u3) at (0,3) {};
        \node[blacknode,label=above:$u_4$] (u4) at (0,6) {};
    }
    \renewcommand\Mynodes{
    	\node[whitenode,label=below:$v$] (v12) at (0,0.5) {};
    	\node[contactnode] (v4) at (0.9,4.7) {};
    	\node[contactnode] (v3) at (0.9,3.5) {};
    	\node[whitenode] (v34) at (0.5,4.2) {};
    	\node[contactnode] (v2) at (0.5,1.5) {};
    	\node[contactnode] (v1) at (-0.5,1.5) {};
    }
    \renewcommand\MyEdgesBefore{
        \myPath{u1}{u2}{\subColor}{right}{20}
        \myPath{u1}{u3}{\subColor}{left}{20}
        \myPath{u2}{u3}{\subColor}{right}{20}
        \myPath{u1}{u4}{\subColor}{left}{20}
        \myPath{u2}{u4}{\subColor}{right}{20}
        \myPath{u3}{u4}{\subColor}{left}{0}
        \myEdge{u4}{v4}
        \myEdge{u3}{v3}
        \myEdge{u3}{v34}
        \myEdge{u1}{v12}
        \myEdge{u2}{v12}
        \myEdge{u2}{v2}
        \myEdge{u1}{v1}
        \myEdge{u4}{v34}

    }

    \renewcommand\MyEdgesAfter{
		\myPath{u1}{u2}{\PatternColorI}{right}{20}
        \myPath{u1}{u3}{\subColor}{left}{20}
        \myPath{u2}{u3}{\subColor}{right}{20}
        \myPath{u1}{u4}{\subColor}{left}{20}
        \myPath{u2}{u4}{\subColor}{right}{20}
        \myPath{u3}{u4}{\PatternColorII}{left}{0}
		
		\myBentEdge{u3}{v3}{\PatternColorII}{left}{0}
		\myBentEdge{u3}{v34}{\PatternColorII}{left}{0}
		\myBentEdge{v3}{v34}{\PatternColorII}{left}{0}
		\myBentEdge{u4}{v34}{\PatternColorII}{left}{0}
		\myBentEdge{u4}{v4}{\PatternColorII}{left}{0}
		\myBentEdge{v4}{v34}{\PatternColorII}{left}{0}
		\node at ($(v3)+(0,-0.4)$)  {\textcolor{\PatternColorII}{{\CDOne}}};
		
		\patternCDOnene{u1}{v1}{v12}{u2}{v2}{\PatternColorI}
		
		\path ($(v1) + (0.2,0.2)$) edge [->, bend left = 40] ($(v2) + (-0.2,0.2)$);
		\path ($(v3) + (0.2,0.2)$) edge [->, bend right = 40] ($(v4) + (0.2,-0.2)$);
    }
    \renewcommand{\CfL}{fig:R32}
    \drawSimpleRule{fig:R32}{2.8}{\CfN\ when $u_1,u_2$ and $u_3,u_4$ form {\CDOne} configurations}
	}%
	\renewcommand\CfText{
		This case is straightforward: each of $(u_1,u_2)$ and $(u_3,u_4)$ forms a {\CDOne} or {\CDTwo} 
		configuration, which can be a {\CDA}, {\CDB} or {\CTTNA} 
		pattern. By definition, these two patterns are disjoint.
	}%
}
\newcommand{\RuSeven}{%
   \renewcommand\CfN{\nameRuSeven}%
   \renewcommand\CfProperties{
		\begin{itemize}
			\item The graph has a strong $K_4$-subdivision $S$ rooted on $u_1,u_2,u_3,u_4$, with all $4$ special vertices involved in one close problem
			\item $u_1,u_2,u_3$ share a remaining neighbor $v_{123} \notin S$
			\item $u_1,u_2,u_4$ share a remaining neighbor $v_{124}\notin S$
			\item $u_3,u_4$ do not share a remaining neighbor
			\item $u_3,u_4$ each have another remaining neighbor $v_3,v_4$ respectively, adjacent to $v_{123},v_{124}$ respectively
		\end{itemize}
	}%
	\renewcommand\CfPict{
    
    \renewcommand\Myspecialnodes{
        \node[blacknode,label=below:$u_1$] (u1) at (-3,0) {};
        \node[blacknode,label=below:$u_2$] (u2) at (3,0) {};
        \node[blacknode,label=left:$u_3$] (u3) at (0,3) {};
        \node[blacknode,label=above:$u_4$] (u4) at (0,6) {};
    }
    \renewcommand\Mynodes{
    	\node[contactnode] (v4) at (0.5,4.5) {};
    	\node[contactnode] (v3) at (-0.8,1.2) {};
    	\node[whitenode,label=right:$w_2$] (w2) at (2.8,1.3) {};
    	\node[whitenode,label=below left:$v_{124}$] (v12) at (0,-0.8) {};
    	\node[contactnode,label=below:$v_{123}$] (v12') at (0,0.8) {};
    	\node[whitenode,label=right:$w_4$] (w4) at (1.4,5.2) {};
    }
    \renewcommand\MyEdgesBefore{
    	\begin{scope}
		\clip (-3.1,-1.4) rectangle (4.3,6.2);
		
        \myPath{u1}{u2}{\subColor}{left}{0}
        \myPath{u1}{u3}{\subColor}{left}{0}
        \myPath{u2}{u3}{\subColor}{right}{0}
        \myPath{u1}{u4}{\subColor}{left}{20}
        \myBentEdge{u2}{w2}{\subColor}{left}{0}
        \myPath{w2}{w4}{\subColor}{right}{10}
        \myBentEdge{u3}{v12'}{black}{left}{0}
        \myBentEdge{w4}{u4}{\subColor}{left}{0}
        \myPath{u3}{u4}{\subColor}{left}{0}
        
        \myEdge{u3}{v3}
        \myEdge{v3}{v12'}
        \myEdge{u4}{v4}
        
		\myBentEdge{u4}{v12}{black}{left}{0, out=90+10, in=90-30, looseness=2.2}
        
        \myBentEdge{u1}{v12}{black}{left}{0}
        \myBentEdge{u2}{v12}{black}{right}{0}
        \myBentEdge{u1}{v12'}{black}{left}{0}
        \myBentEdge{u2}{v12'}{black}{right}{0}
        
        \end{scope}
    }%
    \renewcommand\MyEdgesAfter{
    	\begin{scope}
		\clip (-3.1,-1.4) rectangle (4.3,6.2);
		
		\myDEdge{v12'}{v12}
		
        \myPath{u1}{u2}{\subColor}{left}{0}
        \myPath{u1}{u3}{\subColor}{left}{0}
        \myPath{u2}{u3}{\subColor}{right}{0}
        \myPath{u1}{u4}{\subColor}{left}{20}
        \myPath{w2}{w4}{black}{right}{10}
        \myPath{u3}{u4}{\subColor}{left}{0}
        
        \patternCN{u3}{v3}{v12'}{\PatternColorIII}
		\myBentEdge{u1}{v12'}{\PatternColorI}{left}{0}
		\myBentEdge{u1}{v12}{\PatternColorI}{left}{0}
		\node at ($(u1)+(1,-0.6)$)  {\textcolor{\PatternColorI}{{\CV}}};
		
        \patternCV{u2}{w2}{v12'}{\PatternColorII}
        \patternCN{u4}{w4}{v4}{\PatternColorIV}
        
		\myBentEdge{u4}{v12}{\subColor}{left}{0, out=90+10, in=90-30, looseness=2.2}
        
        \myBentEdge{u2}{v12}{\subColor}{right}{0}
		
		\end{scope}
    }
    \renewcommand{\CfL}{fig:R38}
    \drawSimpleRule{fig:R38}{4}{\CfN\ when $l(u_2\sim u_4) \geq 2$ in $S$}
	}%
	\renewcommand\CfText{
		We first claim that $v_4$ cannot be adjacent to $v_{123}$. If it is the case, then $\{(u_1,u_2,v_4),$ $(u_4,v_{123},v_{124})\}$ form a $K_{3,3}$-minor, a contradiction with the planarity of $G$.	
	
		We replace the $K_4$-subdivision $S$ with another $K_4$-subdivision $S'$ by removing the path $u_2\sim u_4$ and adding the path $(u_2,v_{124},u_4)$.
		
		The special vertex $u_1$ forms a {\CV} pattern and $u_3$ a {\CN} pattern disjoint from $S'$ by property A and planarity.
		$u_2$ forms a {\CV} and $u_4$ a {\CN} pattern, unless the length of $u_2\sim u_4$ in $S$ is $1$.
		In this case $u_2,u_4$ form a {\CU} pattern, since $v_4,v_{123}$ are non-adjacent.
		The {\CN} patterns are disjoint and may only touch {\CV} patterns, hence the mapping is compatible.
		
		The patterns used are {\CV}$(u_1)$, {\CN}$(u_3)$ and either {\CV}$(u_2)$ and {\CN}$(u_4)$ or {\CU}$(u_2,u_4)$. 
	}%
}
\newcommand{\RuEight}{%
   \renewcommand\CfN{\nameRuEight}%
   \renewcommand\CfProperties{
		\begin{itemize}
			\item The graph has a strong $C_{4+}^*$-subdivision $S$ rooted on $u_1,u_2,u_3,u_4$, such that $u_1,u_2$ are $1$-linked and $u_1,u_3$ are $2$-linked
			\item There is at most one distant problem, caused by $u_3$ on a $(u_2,u_4)$-path of $S$ if there is one
			\item $u_1,u_2$ share a remaining neighbor
			\item There is no remaining neighbor in common between one of $u_1,u_2$ and one of $u_3,u_4$
			\item \textbf{Remark:} if there is no distant problem, $u_3$ and $u_4$ may share a remaining neighbor
		\end{itemize}
	}%
\renewcommand\CfPict{
	
	
   	\renewcommand\Myspecialnodes{
        \node[blacknode,label=left:$u_1$] (u1) at (-3+0.5,3) {};
        \node[blacknode,label=left:$u_2$] (u2) at (-3+0.5,0) {};
        \node[blacknode,label=right:$u_3$] (u3) at (3-0.5,3) {};
        \node[blacknode,label=below right:$u_4$] (u4) at (3-0.5,0) {};
    }
    \renewcommand\Mynodes{
    	
    	\node[whitenode] (v1) at (-1.2,2) {};
    	\node[whitenode] (v1') at (-2,1.5) {};
    	\node[whitenode] (v2) at (-1.2,1) {};
    	\node[whitenode] (v4) at (3.7,0.6) {};
    	\node[whitenode] (v4') at (3,1.4) {};
    	\node[whitenode] (v3) at (0.8,1) {};
    	\node[whitenode] (v3') at (1.2,0) {};
    }
	\renewcommand\MyEdgesBefore{
        \myPath{u1}{u3}{\subColor}{left}{60}%
        \myPath{u1}{u3}{\subColor}{left}{0}%
        \myPath{u2}{v3'}{\subColor}{left}{0}%
        \myPath{v3'}{u4}{\subColor}{left}{0}%
        \myPath{u2}{u4}{\subColor}{right}{60}%
        \myPath{u1}{u2}{\subColor}{right}{10}%
        \myPath{u4}{u3}{\subColor}{left}{0}%
        
        \myEdge{u1}{v1}
        \myEdge{u1}{v1'}
        \myEdge{v1}{v1'}
        \myEdge{u2}{v2}
        \myEdge{u2}{v1'}
        \myEdge{v1'}{v2}
        \myEdge{u3}{v3}
        \myEdge{u3}{v3'}
        \myEdge{u4}{v4}
        \myEdge{u4}{v4'}
        \myEdge{v4}{v4'}
        \myEdge{v3}{v3'}
    }%
    \renewcommand\MyEdgesAfter{
        \myPath{u1}{u3}{\subColorTwo}{left}{60}%
        \myPath{u1}{u3}{\subColorOne}{left}{0}%
        \myPath{u2}{v3'}{\subColorOne}{left}{0}%
        \myPath{v3'}{u4}{\subColorOne}{left}{0}%
        \myPath{u2}{u4}{\subColorTwo}{right}{60}%
        \myPath{u1}{u2}{\subColorTwo}{right}{10}%
        \myPath{u4}{u3}{\subColorOne}{left}{0}%
       
		\myBentEdge{u1}{v1}{\PatternColorI}{left}{0}
		\myBentEdge{u1}{v1'}{\PatternColorI}{left}{0}
		\myBentEdge{v1}{v1'}{\PatternColorI}{left}{0}
		\myBentEdge{u2}{v2}{\PatternColorI}{left}{0}
		\myBentEdge{u2}{v1'}{\PatternColorI}{left}{0}
		\myBentEdge{v2}{v1'}{\PatternColorI}{left}{0}
		\node at ($(v1')+(0.2,0.6)$) {\textcolor{\PatternColorI}{{\CDOne}}};
		
		\patternCN{u3}{v3}{v3'}{\subColorTwo}
		\patternCN{u4}{v4}{v4'}{\subColorTwo}
    }%
    \renewcommand{\CfL}{fig:R44}
    \drawSimpleRule{fig:R44}{2.8}{\CfN\ when $u_3$ causes a distant problem and $u_1,u_2$ form a {\CDOne} configuration. Example of a $2$-coloring of $S$}
	}%
	\renewcommand\CfText{	
		If $u_3$ causes a distant problem, it is necessarily on a parallel $(u_2,u_4)$-path of $S$ by property ``$1$-linked'' of $S$ and property A, and then we may swap the colors of the two $(u_2,u_4)$-paths in a $2$-coloring of $S$ to inactivate this distant problem.
	
		By the last property of this configuration, $u_1,u_2$ form a {\CDOne} or {\CDTwo} pattern, and $u_3,u_4$ as well if $u_3$ does not cause a distant problem.
		
		The patterns used are thus {\CDA}, {\CDB} or {\CTTNA} for $(u_1,u_2)$ and maybe for $(u_3,u_4)$, or {\CN}$(u_3)$, {\CN}$(u_4)$.
	}%
}
\newcommand{\RuNine}{%
   \renewcommand\CfN{\nameRuNine}%
   \renewcommand\CfProperties{
		\begin{itemize}
			\item The graph has a strong $C_{4+}^*$-subdivision $S$ rooted on $u_1,u_2,u_3,u_4$, such that $u_1,u_2$ are $1$-linked and $u_1,u_3$ are $2$-linked
			\item $u_2,u_4$ share exactly remaining neighbor $v_{24}$, and it belongs to a parallel $(u_1,u_3)$-path $P_{13}$ of $S$
			\item The other remaining neighbors $v_2,v_4$ of $u_2,u_4$ respectively are both adjacent to $v_{24}$
			\item The remaining neighbors of $u_1,u_3$ are disjoint from $S$
			\item \textbf{Remark:} there is no distant problem
		\end{itemize}
	}%
	\renewcommand\CfPict{
	
   	\renewcommand\Myspecialnodes{
        \node[blacknode,label=left:$u_1$] (u1) at (-3+0.5,3) {};
        \node[blacknode,label=above left:$u_2$] (u2) at (-3+0.5,0) {};
        \node[blacknode,label=above right:$u_3$] (u3) at (3-0.5,3) {};
        \node[blacknode,label=above right:$u_4$] (u4) at (3-0.5,0) {};
    }
    \renewcommand\Mynodes{
    	\node[whitenode,label=above:$v_{24}$] (a2) at (0,3) {};

    	\node[whitenode] (v1) at (-3.0,1.6) {};
    	\node[whitenode] (v1') at (-3.8,2) {};
    	\node[whitenode] (v2) at (-1.6,2) {};
    	\node[whitenode] (v3) at (3.0,1.6) {};
    	\node[whitenode] (v3') at (3.8,2) {};
    	\node[whitenode] (v4) at (1.6,2) {};
    	
    	\node[whitenode] (w2) at ($(u2) + (0.5,-0.7)$) {};
    	\node[whitenode] (w4) at ($(u4) + (-0.5,-0.7)$) {};
    }
	\renewcommand\MyEdgesBefore{
		\myPath{u1}{u3}{\subColor}{left}{60}%
        \myPath{u1}{a2}{\subColor}{left}{0}%
		\myPath{u2}{u4}{\subColor}{left}{0}%
		\myBentEdge{u2}{w2}{\subColor}{right}{0}%
        \myPath{a2}{u3}{\subColor}{left}{0}%
        \myBentEdge{u4}{w4}{\subColor}{left}{0}%
		\myPath{w2}{w4}{\subColor}{right}{30}
        \myPath{u1}{u2}{\subColor}{right}{0}%
        \myPath{u3}{u4}{\subColor}{left}{0}%
		\myEdge{u1}{v1}
		\myEdge{u1}{v1'}
        \myEdge{u2}{v2}
		\myEdge{u2}{a2}
		\myEdge{u3}{v3}
		\myEdge{u3}{v3'}
        \myEdge{u4}{v4}
		\myEdge{u4}{a2}
        \myEdge{v4}{a2}
		\myEdge{v2}{a2}
    }
    \renewcommand\MyEdgesAfter{
		\myPath{u1}{u3}{\subColorOne}{left}{60}%
        \myPath{u1}{a2}{\subColorTwo}{left}{0}%
		\myPath{u2}{u4}{\subColorTwo}{left}{0}%
        \myPath{a2}{u3}{\subColorOne}{left}{0}%
		\myPath{w2}{w4}{black}{right}{30}
        \myPath{u1}{u2}{\subColorOne}{right}{0}%
        \myPath{u3}{u4}{\subColorTwo}{left}{0}%
		\patternCN{u1}{v1}{v1'}{\subColorTwo}
		\patternCN{u3}{v3}{v3'}{\subColorTwo}
		\myBentEdge{a2}{u2}{\subColorTwo}{left}{0}
		\myBentEdge{u4}{a2}{\subColorOne}{left}{0}
        \myEdge{v4}{a2}
		\myEdge{v2}{a2}
		
		\myBentEdge{u2}{w2}{\PatternColorII}{left}{0}
		\myBentEdge{u2}{v2}{\PatternColorII}{left}{0}
		\myDEdge{v2}{w2}
		\node at ($(u2)+(-0.2,-0.4)$)  {\textcolor{\PatternColorII}{{\CV}}};
		\myBentEdge{u4}{w4}{\PatternColorIV}{left}{0}
		\myBentEdge{u4}{v4}{\PatternColorIV}{left}{0}
		\myDEdge{v4}{w4}
		\node at ($(u4)+(0.2,-0.4)$)  {\textcolor{\PatternColorIV}{{\CV}}};
		
    }
    \renewcommand{\CfL}{fig:R53}
    \drawSimpleRule{fig:R53}{3}{Reduction of configuration \CfN}
	}%
	\renewcommand\CfText{
		Let us write $P_{13} = (u_1,Q_1,v_{24},Q_2,u_3)$.
		We transform the $C_{4+}$-subdivision $S$ into another $C_{4+}$-(semi-)subdivision $S'$ by removing the paths $P_{13}$ and any of the two $(u_2,u_4)$-paths of $S$, $P_{24}$, and adding $(u_1,Q_1,v_{24},u_2)$ and $(u_3,Q_2,v_{24},u_4)$. This semi-subdivision has a contact between the two new paths on $v_{24}$, so in a $2$-coloring of $S'$ we may swap the colors of the two $(u_1,u_2)$-paths to have different colors on the new paths.
		
		The special vertices $u_2,u_4$ form {\CV} patterns in $S'$ by planarity, 
		and since the remaining neighbors of $u_1,u_3$ are disjoint from $S$, none of $(u_1,u_2)$ or $(u_3,u_4)$ form {\CTTNA} patterns.
		
		The patterns used are {\CN}$(u_1)$, {\CV}$(u_2)$, {\CN}$(u_3)$, {\CV}$(u_4)$, or maybe
		{\CTTNA}$(u_1,u_2)$ or {\CTTNA}$(u_3,u_4)$.
	}%
}
\newcommand{\DOne}{%
   \renewcommand\CfN{\nameDOne}%
   \renewcommand\CfProperties{
		\begin{itemize}
			\item The graph has a strong $K_4$-subdivision $S$ rooted on $u_1,u_2,u_3,u_4$
			\item $u_1$ causes a distant problem on the path $u_2\sim u_4$: it has a remaining neighbor $v_1$ such that $u_2\sim u_4 = (P_2^1,v_1,P_4^1)$
			\item $u_2$ causes a distant problem on the path $u_3\sim u_4$: it has a remaining neighbor $v_2$ such that $u_3\sim u_4 = (P_3^2,v_2,P_4^2)$
			\item $u_3$ causes a distant problem on the path $u_1\sim u_4$: it has a remaining neighbor $v_3$ such that $u_1\sim u_4 = (P_1^3,v_3,P_4^3)$
		\end{itemize}
		%
	}%
	\renewcommand\CfPict{
	
   	\renewcommand\Myspecialnodes{
        \node[blacknode,label=below:$u_1$] (u1) at (-3,0) {};
        \node[blacknode,label=right:$u_2$] (u2) at (3,0) {};
        \node[blacknode,label=below:$u_3$] (u3) at (0,3) {};
        \node[blacknode,label=below left:$u_4$] (u4) at (0,6) {};
    }
    \renewcommand\Mynodes{
    	\node[whitenode] (v1) at (0,-2) {};
    	\node[whitenode] (v1') at (-0.5,-2.5) {};
    	\node[whitenode] (v2) at (1.5,3) {};
    	\node[whitenode] (v2') at (2.3,3.3) {};
    	\node[whitenode] (v3) at (-1.8,3.8) {};
    	\node[whitenode] (v3') at (-1.2,4.6) {};
    	\node[whitenode] (v4) at (-0.5,7) {};
    	\node[whitenode] (v4') at (0.5,7) {};
    	
    }
    \renewcommand\MyEdgesBefore{
    	\begin{scope}
    	\clip (-3.3,-2.7) rectangle (4.3,7);
    	
         \myPath{u1}{u2}{\subColor}{right}{20}
         \myPath{u1}{u3}{\subColor}{left}{0}
         \myPath{u2}{u3}{\subColor}{left}{0}
         
         \myPath{u1}{v3}{\subColor}{left}{30}
         \myPath{v3}{u4}{\subColor}{left}{50}
         
         \myPath{v1}{u2}{\subColor}{right}{30}
		 \myPath{u4}{v1}{\subColor}{left}{0, out=90-10, in=90-30, looseness=1.8}
         
         \myPath{u3}{v2}{\subColor}{left}{0}
         \myPath{v2}{u4}{\subColor}{left}{0}
         
		 \myEdge{u1}{v1}
         \myEdge{u1}{v1'}
         \myEdge{u2}{v2}
         \myEdge{u2}{v2'}
         \myEdge{u3}{v3}
         \myEdge{u3}{v3'}
         \myEdge{u4}{v4}
         \myEdge{u4}{v4'}
         \myEdge{v1}{v1'}
         \myEdge{v2}{v2'}
         \myEdge{v3}{v3'}
       \end{scope}
    }%
    \renewcommand\MyEdgesAfter{
		\begin{scope}
    	\clip (-3.3,-2.7) rectangle (4.3,7);    
    
         \myPath{u1}{u2}{\subColor}{right}{20}
         \myPath{u1}{u3}{\subColor}{left}{0}
         \myPath{u2}{u3}{\subColor}{left}{0}
         
         \myPath{u1}{v3}{black}{left}{30}
         \myPath{v3}{u4}{\subColor}{left}{50}
         
         \myPath{v1}{u2}{black}{right}{30}
		 \myPath{u4}{v1}{\subColor}{left}{0, out=90-10, in=90-30, looseness=1.8}
         
         \myPath{u3}{v2}{black}{left}{0}
         \myPath{v2}{u4}{\subColor}{left}{0}
         
		 \myCEdge{u1}{v1}{\subColor}{$P$}
         \myEdge{u1}{v1'}
         \myCEdge{u2}{v2}{\subColor}{$P$}
         \myEdge{u2}{v2'}
         \myCEdge{u3}{v3}{\subColor}{$P$}
         \myEdge{u3}{v3'}      
         \myEdge{u4}{v4}
         \myEdge{u4}{v4'}
         \myEdge{v1}{v1'}
         \myEdge{v2}{v2'}
         \myEdge{v3}{v3'}
       	\end{scope}
    }%
    \renewcommand{\CfL}{\Ncf}%
    \ifthenelse{\equal{\isThesis}{true}}%
	{\drawSimpleRule{fig:D1}{2.8}{Semi-subdivision of \CfN}}%
	{\begin{figure}[ht]
        \ifthenelse{\isundefined{\NoFigures}}{%
        \centering
        \tkcfSansNomFigure{}%
        \flecheACentrer{2.8}%
        \tikzPartRule{\Myspecialnodes \Mynodes \MyEdgesAfter}%
        }{}%
        \caption{Semi-subdivision of \CfN}%
    \end{figure}%
	}%
	}%
    \renewcommand\CfText{
		We transform the $K_4$-subdivision $S$ into another $K_4$-subdivision $S'$, by removing the paths $u_1\sim u_4$, $u_2\sim u_4$, $u_3\sim u_4$, and adding the paths $(u_1,v_1,P_4^1)$, $(u_2,v_2,P_4^2)$, $(u_3,v_3,P_4^3)$.
		
		After the routing operation is applied, all unsettled 
		special vertices are turned into {\CV} patterns.
		
		\textbf{Remark:} $u_4$ could not form a {\CVp} pattern in $S$ by property A and planarity, hence it stays lone-settled in $S'$.
	}%
}%
\newcommand{\DTwo}{%
   \renewcommand\CfN{\nameDTwo}%
   \renewcommand\CfProperties{
		\begin{itemize}
			\item The graph has a strong $K_4$-subdivision $S$ rooted on $u_1,u_2,u_3,u_4$
			\item $u_2$ causes a distant problem on the path $u_1\sim u_3$: it has a remaining neighbor $v_2$ such that $u_1\sim u_3 = (P_1^2,v_2,P_3^2)$
			\item $u_3$ causes a distant problem on the path $u_2\sim u_4$: it has a remaining neighbor $v_3$ such that $u_2\sim u_4 = (P_2^3,v_3,P_4^3)$
			\item $u_4$ causes a distant problem on the path $u_1\sim u_2$: it has a remaining neighbor $v_4$ such that $u_1\sim u_2 = (P_1^4,v_4,P_2^4)$
		\end{itemize}
	}%
	\renewcommand\CfPict{
	
   	\renewcommand\Myspecialnodes{
        \node[blacknode,label=below:$u_1$] (u1) at (-3,0) {};
        \node[blacknode,label=right:$u_2$] (u2) at (3,0) {};
        \node[blacknode,label=left:$u_3$] (u3) at (0,3) {};
        \node[blacknode,label=above left:$u_4$] (u4) at (0,6) {};
    }
    \renewcommand\Mynodes{
    	\node[whitenode] (v1) at (-1.6,-0.2) {};
    	\node[whitenode] (v1') at (-2,1) {};
    	\node[whitenode] (v2) at (0,0.5) {};
    	\node[whitenode] (v2') at (0.5,1.2) {};
    	\node[whitenode] (v3) at (1.8,3.8) {};
    	\node[whitenode] (v3') at (1.4,4.2) {};
    	\node[whitenode] (v4) at (2.5,7) {};
    	\node[whitenode] (v4') at (2.2,7.5) {};
    	
    }
    \renewcommand\MyEdgesBefore{
		 \begin{scope}
    	 \clip (-3.1,-1.75) rectangle (5.8,7.5); 
    
    	 \myPath{u1}{u4}{\subColor}{left}{20}
    	 \myPath{u3}{u4}{\subColor}{left}{0}
         \myPath{u2}{u3}{\subColor}{right}{20}
    	 
		 \myPath{u1}{v4}{\subColor}{left}{0, out=-90, in=-90+10, looseness=2.2}
         \myPath{v4}{u2}{\subColor}{left}{30}
         
         \myPath{u1}{v2}{\subColor}{left}{10}
         \myPath{v2}{u3}{\subColor}{left}{10}
         
         \myPath{u2}{v3}{\subColor}{right}{30}
         \myPath{v3}{u4}{\subColor}{right}{40}
         
		 \myEdge{u1}{v1}
         \myEdge{u1}{v1'}
         \myEdge{u2}{v2}
         \myEdge{u2}{v2'}
         \myEdge{u3}{v3}
         \myEdge{u3}{v3'}      
         \myEdge{u4}{v4}
         \myEdge{u4}{v4'}
         \myEdge{v4}{v4'}
         \myEdge{v2}{v2'}
         \myEdge{v3}{v3'}
         
       \end{scope}
    }%
    \renewcommand\MyEdgesAfter{
		\begin{scope}
    	\clip (-3.1,-1.75) rectangle (5.8,7.5);     
    
         \myPath{u1}{u4}{\subColor}{left}{20}
    	 \myPath{u3}{u4}{\subColor}{left}{0}
         \myPath{u2}{u3}{\subColor}{right}{20}
    	 
		 \myPath{u1}{v4}{\subColor}{left}{0, out=-90, in=-90+10, looseness=2.2}
         \myPath{v4}{u2}{black}{left}{30}
         
         \myPath{u1}{v2}{\subColor}{left}{10}
         \myPath{v2}{u3}{black}{left}{10}
         
         \myPath{u2}{v3}{\subColor}{right}{30}
         \myPath{v3}{u4}{black}{right}{40}
         
		 \myEdge{u1}{v1}
         \myEdge{u1}{v1'}
         \myCEdge{u2}{v2}{\subColor}{}
         \myEdge{u2}{v2'}
         \myCEdge{u3}{v3}{\subColor}{}
         \myEdge{u3}{v3'}      
         \myCEdge{u4}{v4}{\subColor}{}
         \myEdge{u4}{v4'}
         \myEdge{v4}{v4'}
         \myEdge{v2}{v2'}
         \myEdge{v3}{v3'}
      \end{scope}
    }%
    \renewcommand{\CfL}{\Ncf}%
    \drawSimpleRule{fig:D2}{2.8}{Semi-subdivision of \CfN}%
    }%
    \renewcommand\CfText{
		We transform the $K_4$-subdivision $S$ into a $C_{4+}$-subdivision $S'$, by removing the paths $u_1\sim u_2$, $u_1\sim u_3$ and $u_2\sim u_4$, and adding the paths $(u_2,v_2,P_1^2,u_1)$, $(u_3,v_3,P_2^3,u_2)$, $(u_4,v_4,P_1^4,u_1)$.
		
		After the routing operation is applied, all unsettled 
		special vertices are turned into {\CV} patterns.
		
		\textbf{Remark:} $u_1$ could only form a {\CVp} pattern in $S$ on the path $u_3\sim u_4$ by property A and planarity, and this path was not modified in $S'$. Hence, $u_1$ remains lone-settled in $S'$.
	}%
}%
\newcommand{\DFour}{%
   \renewcommand\CfN{\nameDFour}%
   \renewcommand\CfProperties{
		\begin{itemize}
			\item The graph has a strong $K_4$-subdivision $S$ rooted on $u_1,u_2,u_3,u_4$
			\item $u_1$ causes a distant problem on the path $u_2\sim u_4$: it has a remaining neighbor $v_1$ such that $u_2\sim u_4 = (P_2^1,v_1,P_4^1)$
			\item $u_2$ causes a distant problem on the path $u_1\sim u_3$: it has a remaining neighbor $v_2$ such that $u_1\sim u_3 = (P_1^2,v_2,P_3^2)$
			\item $u_3$ causes a distant problem on the path $u_1\sim u_4$: it has a remaining neighbor $v_3$ such that $u_1\sim u_4 = (P_1^3,v_3,P_4^3)$
			\item $u_4$ causes a distant problem on the path $u_2\sim u_3$: it has a remaining neighbor $v_4$ such that $u_2\sim u_3 = (P_2^4,v_4,P_3^4)$
		\end{itemize}
	}%
	\renewcommand\CfPict{
	
   	\renewcommand\Myspecialnodes{
        \node[blacknode,label=below:$u_1$] (u1) at (-3,0) {};
        \node[blacknode,label=right:$u_2$] (u2) at (3,0) {};
        \node[blacknode,label=below right:$u_3$] (u3) at (0,3) {};
        \node[blacknode,label=below left:$u_4$] (u4) at (0,6) {};
    }
    \renewcommand\Mynodes{
    	\node[whitenode] (v1) at (0,-2) {};
    	\node[whitenode] (v1') at (-0.5,-2.5) {};
    	\node[whitenode] (v2) at (-0.2,1) {};
    	\node[whitenode] (v2') at (0.2,1.6) {};
    	\node[whitenode] (v3) at (-1.8,3.8) {};
    	\node[whitenode] (v3') at (-1.2,4.6) {};
    	\node[whitenode] (v4) at (1.5,3.6) {};
    	\node[whitenode] (v4') at (2,4) {};
    	
    }
    \renewcommand\MyEdgesBefore{
    	\begin{scope}
    	\clip (-3.3,-2.7) rectangle (4.3,6.2);
    	
         \myPath{u1}{u2}{\subColor}{right}{20}
         \myPath{u2}{v4}{\subColor}{left}{0}
         \myPath{v4}{u3}{\subColor}{left}{0}
         
         \myPath{u4}{u3}{\subColor}{left}{0}
         
         \myPath{u1}{v3}{\subColor}{left}{30}
         \myPath{v3}{u4}{\subColor}{left}{50}
         
         \myPath{v1}{u2}{\subColor}{right}{30}
         \myPath{u4}{v1}{\subColor}{left}{0, out=90-10, in=90-30, looseness=1.8}
         
         \myPath{u3}{v2}{\subColor}{right}{10}
         \myPath{v2}{u1}{\subColor}{right}{10}
         
		 \myEdge{u1}{v1}
         \myEdge{u1}{v1'}
         \myEdge{u2}{v2}
         \myEdge{u2}{v2'}
         \myEdge{u3}{v3}
         \myEdge{u3}{v3'}      
         \myEdge{u4}{v4}
         \myEdge{u4}{v4'}
         \myEdge{v1}{v1'}
         \myEdge{v2}{v2'}
         \myEdge{v3}{v3'}
         \myEdge{v4}{v4'}
       \end{scope}
    }%
    \renewcommand\MyEdgesAfter{
		\begin{scope}
    	\clip (-3.3,-2.7) rectangle (4.3,6.2);    
    
         \myPath{u1}{u2}{\subColor}{right}{20}
         \myPath{u2}{v4}{\subColor}{left}{0}
         \myPath{v4}{u3}{black}{left}{0}
         
         \myPath{u4}{u3}{\subColor}{left}{0}
         
         \myPath{u1}{v3}{\subColor}{left}{30}
         \myPath{v3}{u4}{black}{left}{50}
         
         \myPath{v1}{u2}{black}{right}{30}
         \myPath{u4}{v1}{\subColor}{left}{0, out=90-10, in=90-30, looseness=1.8}
         
         \myPath{u3}{v2}{\subColor}{right}{10}
         \myPath{v2}{u1}{black}{right}{10}
         
		 \myCEdge{u1}{v1}{\subColor}{}
         \myEdge{u1}{v1'}
         \myCEdge{u2}{v2}{\subColor}{}
         \myEdge{u2}{v2'}
         \myCEdge{u3}{v3}{\subColor}{}
         \myEdge{u3}{v3'}      
         \myCEdge{u4}{v4}{\subColor}{}
         \myEdge{u4}{v4'}
         \myEdge{v1}{v1'}
         \myEdge{v2}{v2'}
         \myEdge{v3}{v3'}
         \myEdge{v4}{v4'}
         
         \end{scope}
    }%
    \renewcommand{\CfL}{\Ncf}%
    \drawSimpleRule{fig:D5}{2.8}{Semi-subdivision of \CfN}%
    }%
    \renewcommand\CfText{
		We transform the $K_4$-subdivision $S$ into another $K_4$-subdivision $S'$ by keeping the paths $u_1\sim u_2$ and $u_3\sim u_4$ from $S$ and adding the paths $(u_1,v_1,P_4^1,u_4)$, $(u_2,v_2,P_3^2,u_3)$, $(u_3,v_3,P_1^3,u_1)$, $(u_4,v_4,P_2^4,u_2)$.
		
		By planarity and definition of distant problem, the routing operation does not need to be applied for all the 
		special vertices to be turned into {\CV} patterns. 
		%
	}%
}%
\newcommand{\subdK}{%
	\renewcommand\CfN{$K_4$}%
	\renewcommand\CfPict{%
   	\renewcommand\Myspecialnodes{%
        \node[blacknode,label=below:{\large $u_1$}] (u1) at (-3,0) {};
        \node[blacknode,label=below:{\large $u_2$}] (u2) at (3,0) {};
        \node[blacknode,label=left:{\large $u_3$}] (u3) at (0,3) {};
        \node[blacknode,label=left:{\large $u_4$}] (u4) at (0,6) {};
    }%
    \renewcommand\Mynodes{%
    	\node[whitenode] (v1) at (-3.8,0.5) {};
    	\node[whitenode] (v1') at (-3.8,-0.5) {};
    	\node[whitenode] (v2) at (3.8,0.5) {};
    	\node[whitenode] (v2') at (3.8,-0.5) {};
    	\node[whitenode] (v3) at (-0.5,4) {};
    	\node[whitenode] (v3') at (0.5,4) {};
    	\node[whitenode] (v4) at (-0.5,7) {};
    	\node[whitenode] (v4') at (0.5,7) {};
    }%
    \renewcommand\MyEdgesBefore{%
         \myPath{u1}{u2}{\subColor}{right}{20}
         \myPath{u1}{u3}{\subColor}{left}{0}
         \myPath{u2}{u3}{\subColor}{left}{0}
         \myPath{u1}{u4}{\subColor}{left}{30}
         \myPath{u2}{u4}{\subColor}{right}{30}
         \myPath{u3}{u4}{\subColor}{left}{0}
		 \myEdge{u1}{v1}
         \myEdge{u1}{v1'}
         \myEdge{u2}{v2}
         \myEdge{u2}{v2'}
         \myEdge{u3}{v3}
         \myEdge{u3}{v3'}      
         \myEdge{u4}{v4}
         \myEdge{u4}{v4'}
    }%
    \renewcommand{\CfL}{\Ncf}%
    \centering%
    \tkcfSansNomFigure%
    }%
}
\newcommand{\subdCFP}{%
   \renewcommand\CfN{$C_{4+}$}%
	\renewcommand\CfPict{
	
   	\renewcommand\Myspecialnodes{
        \node[blacknode,label=above left:$u_1$] (u1) at (-3+0.5,3) {};
        \node[blacknode,label=below left:$u_2$] (u2) at (-3+0.5,0) {};
        \node[blacknode,label=above right:$u_3$] (u3) at (3-0.5,3) {};
        \node[blacknode,label=below right:$u_4$] (u4) at (3-0.5,0) {};
    }
    \renewcommand\Mynodes{    	
    	\node[whitenode] (v1) at (-3.3,1.5) {};
    	\node[whitenode] (v1') at (-3.8,2) {};
    	\node[whitenode] (v2) at (-2.5+0.8,1.5) {};
    	\node[whitenode] (v2') at (-2.5+1.3,1) {};
    	\node[whitenode] (v3) at (3.3,1.5) {};
    	\node[whitenode] (v3') at (3.8,2) {};
    	\node[whitenode] (v4) at (2.5-0.8,1.5) {};
    	\node[whitenode] (v4') at (2.5-1.3,1) {};
    }
	\renewcommand\MyEdgesBefore{

		\myPath{u1}{u3}{\subColor}{left}{50}%
        \myPath{u1}{u3}{\subColor}{left}{0}%
		\myPath{u2}{u4}{\subColor}{left}{0}%

		\myPath{u2}{u4}{\subColor}{right}{50}
        \myPath{u1}{u2}{\subColor}{right}{0}%
        \myPath{u3}{u4}{\subColor}{left}{0}%

		\myEdge{u1}{v1}
		\myEdge{u1}{v1'}
        \myEdge{u2}{v2}
		\myEdge{u2}{v2'}
		\myEdge{u3}{v3}
		\myEdge{u3}{v3'}
        \myEdge{u4}{v4}
		\myEdge{u4}{v4'}
    }
    \renewcommand{\CfL}{\Ncf}%
    \centering%
    \tkcfSansNomFigure%
	}%
}
\newcommand{\propA}{%
	\renewcommand\CfN{A-chord}%
	\renewcommand\Mynodes{ 
        \node[whitenode,label=left:$v_1$] (v1) at (0,0.5) {}; 
        \node[whitenode,label=left:$v_2$] (v2) at (0,2) {};
    }%
    \renewcommand\Myspecialnodes{%
        \node[blacknode,label=left:$u$] (u1) at (0,0-0.5) {}; 
        \node[blacknode,label=left:$u'$] (u2) at (0,3) {};
    }%
    \renewcommand\MyEdgesBefore{
		\myCEdge{u1}{v1}{sedgemixed}{$S$}
        \myBentEdge{u1}{v2}{black}{right}{60}
        \myPath{v2}{v1}{\subColor}{left}{0}
        \myPath{u2}{v2}{\subColor}{left}{0} 
		
		\myHalfEdge{u1}{-90+30}{sedgemixed}{$S$}%
        \myHalfEdge{u1}{-90-30}{sedgemixed}{$S$}%
    }%
    \renewcommand{\CfL}{\Ncf}%
    \centering%
    \tkcfSansNomFigure%
}%
\newcommand{\propB}{%
	\renewcommand\CfN{B-chord}%
	\renewcommand\Mynodes{ 
        \node[whitenode,label=left:$v_1$] (v1) at (-1,1-0.5) {}; 
        \node[whitenode,label=right:$v_2$] (v2) at (1,1-0.5) {};
    }%
    \renewcommand\Myspecialnodes{%
        \node[blacknode,label=left:$u$] (u1) at (0,0-0.5) {}; 
        \node[blacknode,label=left:$u'$] (u2) at (-2,2.5) {}; 
        \node[blacknode,label=right:$u''$] (u'') at (2,2.5) {};
    }%
    \renewcommand\MyEdgesBefore{
		\myEdge{u1}{v1}
        \myEdge{u1}{v2}
        \myEdge{v1}{v2}
        \myPath{u2}{v1}{\subColor}{left}{0}
        \myPath{v1}{v2}{\subColor}{left}{70}
		\myPath{v2}{u''}{\subColor}{left}{0}  
		
		\myHalfEdge{u1}{-90}{sedgemixed}{$S$}%
		\myHalfEdge{u1}{-90+40}{sedgemixed}{$S$}%
        \myHalfEdge{u1}{-90-40}{sedgemixed}{$S$}%
    }%
    \renewcommand{\CfL}{\Ncf}%
    \centering%
    \tkcfSansNomFigure%
}%
\newcommand{\segm}[3]{\myCEdge{$(-2.6+1.25*0.3*#1,0.5+1*0.3*#1)+#2*(-1,1.25)$}{$(-2.6+1.25*0.3*#1,0.5+1*0.3*#1)+#3*(1,-1.25)$}{lightgray}{}}
\newcommand{\NOneGr}{
	\renewcommand\CfN{$N_1$-graph}
	\renewcommand\CfPict{
	
   	\renewcommand\Myspecialnodes{
        \node[blacknode,label=above:{\LARGE $u_1$}] (u1) at (-2.5,5.5) {};
        \node[blacknode,label=above right:{\LARGE $u_2$}] (u2) at (3,6) {};
        \node[blacknode,label=above:{\LARGE $u_3$}] (u3) at (2.5,0.5) {};
        \node[blacknode,label=left:{\LARGE $u_4$}] (u4) at (-3,0) {};
    }
    \renewcommand\Mynodes{
    	\node[whitenode] (v1) at (-3.5,5.5) {};
    	\node[whitenode] (v1') at (-2,6.5) {};
    	\node[whitenode] (v1'') at (-1,5) {};
    	\node[whitenode] (v1''') at (-2.5,4) {};
    	\node[whitenode] (v3) at (2.2,-0.5) {};
    	\node[whitenode] (v3') at (3.5,0.5) {};
    	\node[whitenode] (v3'') at (2.5,2) {};
    	\node[whitenode] (v3''') at (1,0.8) {};
    }
    \renewcommand\MyEdgesBefore{
         \myPath{v1'}{u2}{lightgray}{left}{20}
         \myPath{u2}{v3'}{lightgray}{left}{15}
         \myPath{v3}{u4}{lightgray}{left}{20}
         \myPath{u4}{v1}{lightgray}{left}{15}
		 
		 \myPath{v1}{v1'}{lightgray}{left}{10}
		 \myBentEdge{v1}{v1'''}{lightgray}{right}{15}
		 \myBentEdge{v1}{v1'''}{lightgray}{left}{15}
		 \myBentEdge{v1''}{v1'''}{lightgray}{right}{15}
		 \myBentEdge{v1''}{v1'''}{lightgray}{left}{15}
		 \myBentEdge{v1'}{v1''}{lightgray}{right}{15}
		 \myBentEdge{v1'}{v1''}{lightgray}{left}{15}
		 
		 \myPath{v3}{v3'}{lightgray}{right}{10}
		 \myBentEdge{v3}{v3'''}{lightgray}{right}{15}
		 \myBentEdge{v3}{v3'''}{lightgray}{left}{15}
		 \myBentEdge{v3''}{v3'''}{lightgray}{right}{15}
		 \myBentEdge{v3''}{v3'''}{lightgray}{left}{15}
		 \myBentEdge{v3'}{v3''}{lightgray}{right}{15}
		 \myBentEdge{v3'}{v3''}{lightgray}{left}{15}
		 
		\segm{0}{0.4}{0.4}
		\segm{1}{1}{1}
		\segm{2}{1.5}{1.3}
		\segm{3}{1.9}{1.6}
		\segm{4}{2}{1.8}
		\segm{5}{1.4}{2}
		\segm{6}{1.3}{1.1}
		\segm{7}{1.3}{1}
		\segm{8}{1.4}{1}
		\segm{9}{1.5}{1}
		\segm{10}{2.3}{1}
		\segm{11}{2.2}{1.9}
		\segm{12}{2}{1.4}
		\segm{13}{1.7}{1}
		\segm{14}{1.3}{0.6}
		\segm{15}{0.8}{0}
		\segm{6}{2.7}{-1.85}
		\segm{7}{2.7}{-1.85}
		\segm{8}{2.6}{-1.8}
		\segm{7}{-2.2}{1.45}
		\segm{8}{-2.3}{1.5}
		\segm{9}{-2.3}{1.5}
		
		\node[blacknode,label=above:{\LARGE $u_1$}] at (-2.5,5.5) {};
        \node[blacknode,label=above right:{\LARGE $u_2$}] at (3,6) {};
        \node[blacknode,label=above:{\LARGE $u_3$}] at (2.5,0.5) {};
        \node[blacknode,label=left:{\LARGE $u_4$}] at (-3,0) {};
    }%
    \renewcommand{\CfL}{\Ncf}%
    \centering%
    \tkcfSansNomFigure%
    }%
}
\newcommand{\NTwoGr}{
	\renewcommand\CfN{$N_2$-graph}
	\renewcommand\CfPict{
	
   	\renewcommand\Myspecialnodes{
        \node[blacknode,label=left:{\LARGE $u_1$}] (u1) at (1+-3+0.5,3) {};
        \node[blacknode,label=left:{\LARGE $u_2$}] (u2) at (1+-3+0.5,0) {};
        \node[blacknode,label=right:{\LARGE $u_3$}] (u3) at (-1+3-0.5+6,3) {};
        \node[blacknode,label=right:{\LARGE $u_4$}] (u4) at (-1+3-0.5+6,0) {};
    }
    \renewcommand\Mynodes{
    	\node[whitenode,label=above right:{\Large $a_1\ \ \ \dots$}] (a1) at (0,4.5) {};
    	\node[whitenode,label=above right:{\Large $a_2\ \ \ \dots$}] (a2) at (0,3) {};
    	\node[whitenode,label=above right:{\Large $a_3\ \ \ \dots$}] (a3) at (0,0) {};
    	\node[whitenode,label=above right:{\Large $a_4\ \ \ \dots$}] (a4) at (0,-1.5) {};
    	
    	\node[whitenode,label=above:{\Large $t_1$}] (t1) at (0+3,4.5) {};
    	\node[whitenode,label=above:{\Large $t_2$}] (t2) at (0+3,3) {};
    	\node[whitenode,label=above:{\Large $t_3$}] (t3) at (0+3,0) {};
    	\node[whitenode,label=above:{\Large $t_4$}] (t4) at (0+3,-1.5) {};
    	
    	\node[whitenode,label=above left:{\Large $\dots\ \ \ \ b_1$}] (b1) at (0+6,4.5) {};
    	\node[whitenode,label=above left:{\Large $\dots\ \ \ \ b_2$}] (b2) at (0+6,3) {};
    	\node[whitenode,label=above left:{\Large $\dots\ \ \ \ b_3$}] (b3) at (0+6,0) {};
    	\node[whitenode,label=above left:{\Large $\dots\ \ \ \ b_4$}] (b4) at (0+6,-1.5) {};
    	
    }
    \renewcommand\MyEdgesBefore{
    	\begin{scope}
		\clip (-3.8,-2.1) rectangle (9.8,5.2);
		\myBentEdge{a1}{a4}{lightgray}{left}{0, out=-90-32, in=-90+32, looseness=2.4}
    	\myBentEdge{b1}{b4}{lightgray}{left}{0, out=90+32, in=90-32, looseness=2.4}
        \myBentEdge{b1}{b2}{lightgray}{left}{40}%
        \myBentEdge{a1}{a2}{lightgray}{right}{40}%
        \myBentEdge{b2}{b3}{lightgray}{left}{24}%
        \myBentEdge{a2}{a3}{lightgray}{right}{24}%
        \myBentEdge{b3}{b4}{lightgray}{left}{40}%
        \myBentEdge{a3}{a4}{lightgray}{right}{40}%
        
        \myPath{a1}{t1}{black}{left}{0}
        \myPath{a2}{t2}{black}{left}{0}
        \myPath{a3}{t3}{black}{left}{0}
        \myPath{a4}{t4}{black}{left}{0}
        \myPath{t1}{b1}{black}{left}{0}
        \myPath{t2}{b2}{black}{left}{0}
        \myPath{t3}{b3}{black}{left}{0}
        \myPath{t4}{b4}{black}{left}{0}
        
        \segm{-2}{0}{1.35}
        \segm{-1}{0.55}{1.6}
        \segm{0}{0.95}{1.7}
        \segm{1}{1.2}{1.7}
        \segm{2}{1.45}{1.3}
        \segm{3}{1.7}{1.1}
        \segm{4}{1.8}{0.6}
        \segm{5}{1.85}{0.2}
        \segm{6}{1.8}{-0.1}
        \segm{7}{1.7}{-0.3}
        \segm{8}{1.5}{-0.85}
        \segm{9}{1.1}{-1}
        \segm{12}{-4.6}{4.4}
        \segm{13}{-5}{4.2}
        \segm{14}{-5.2}{3.8}
        \segm{15}{-5.3}{3.5}
        \segm{16}{-5.3}{3.2}
        \segm{17}{-5.2}{2.8}
        \segm{18}{-5}{2.2}
        \segm{19}{-4.8}{2}
        \segm{20}{-4.6}{1.6}
        \segm{21}{-4.2}{1.7}
        \segm{22}{-3.8}{1.85}
        \segm{23}{-3.3}{2.1}
        
		\myPath{u1}{a1}{black}{left}{0}
        \myPath{u1}{a2}{black}{left}{0}
        \myPath{u1}{u2}{black}{left}{0}
        \myPath{u2}{a3}{black}{left}{0}
        \myPath{u2}{a4}{black}{left}{0}
        \myPath{u3}{b1}{black}{left}{0}
        \myPath{u3}{b2}{black}{left}{0}
        \myPath{u3}{u4}{black}{left}{0}
        \myPath{u4}{b3}{black}{left}{0}
        \myPath{u4}{b4}{black}{left}{0}        
        
        \node[blacknode,label=left:{\LARGE $u_1$}] at (1+-3+0.5,3) {};
        \node[blacknode,label=left:{\LARGE $u_2$}] at (1+-3+0.5,0) {};
        \node[blacknode,label=right:{\LARGE $u_3$}] at (-1+3-0.5+6,3) {};
        \node[blacknode,label=right:{\LARGE $u_4$}] at (-1+3-0.5+6,0) {};
		\end{scope}
    }%
    \renewcommand{\CfL}{\Ncf}%
    \centering%
    \tkcfSansNomFigure%
    }%
}
\newcommand{\DThree}{%
   \renewcommand\CfN{\nameDThree}%
   \renewcommand\CfProperties{%
		\begin{itemize}
			\item The graph has a strong $K_4$-subdivision $S$ rooted on $u_1,u_2,u_3,u_4$
			\item $u_1$ and $u_2$ cause distant problems on the path $u_3\sim u_4$: they have remaining neighbors $v_1,v_2$ respectively, such that $u_3\sim u_4 = (P_3^{12},v_1,P^{12},v_2,P_4^{12})$
			\item $u_3$ causes a distant problem on the path $u_1\sim u_2$: it has a remaining neighbor $v_3$ such that $u_1\sim u_2 = (P_1^3,v_3,P_2^3)$
			\item \textbf{Remark:} $u_4$ may cause a distant problem on $u_1\sim u_2$; it has a remaining neighbor $v_4$ that may belong to $P_1^3$ or $P_2^3$
		\end{itemize}
	}%
	\renewcommand\CfPict{%
   	\renewcommand\Myspecialnodes{%
        \node[blacknode,label=below:$u_1$] (u1) at (-3,0) {};%
        \node[blacknode,label=below:$u_2$] (u2) at (3,0) {};%
        \node[blacknode,label=above right:$u_3$] (u3) at (0,1.5) {};%
        \node[blacknode,label=above:$u_4$] (u4) at (0,6) {};%
    }%
    \renewcommand\Mynodes{%
    	\node[whitenode,label=right:$v_1$] (v1) at (0,3.6) {};%
    	\node[whitenode,label=left:$v_1'$] (v1') at (-1,3.6) {};%
    	\node[whitenode,label=left:$v_2$] (v2) at (0,4.8) {};%
    	\node[whitenode,label=right:$v_2'$] (v2') at (0.8,4.8) {};%
    	\node[whitenode] (v3) at (-0.8,0) {};%
    	\node[whitenode] (v3') at (0,-0.6) {};%
    	\node[whitenode] (v4) at (-1,-2.5) {};%
    	\node[whitenode] (w2) at (1.5,-0.4) {};%
    	\node[whitenode] (v4') at (-0.1,-1.6) {};%
    	\node[whitenode] (w3) at (0,2.8) {};%
    }%
    \renewcommand\MyEdgesBefore{%
    	\begin{scope}
		\clip (-3.2,-3) rectangle (5.2,6.5);
    	
    	\myBentEdge{u2}{w2}{\subColor}{left}{0}%
        \myPath{u1}{v3'}{\subColor}{right}{15}%
        \myPath{v3'}{w2}{\subColor}{right}{10}%
        \myPath{u1}{u3}{\subColor}{left}{5}%
        \myPath{u3}{u2}{\subColor}{right}{5}%
        \myPath{u1}{u4}{\subColor}{left}{35}%
        \myPath{u2}{u4}{\subColor}{right}{35}%
        \myBentEdge{u3}{w3}{\subColor}{left}{0}%
        \myPath{w3}{v1}{\subColor}{left}{0}%
        \myPath{v1}{v2}{\subColor}{left}{0}%
        \myPath{v2}{u4}{\subColor}{left}{0}%
		\myBentEdge{u1}{v1}{black}{left}{15}%
        \myBentEdge{u1}{v1'}{black}{left}{15}%
        \myBentEdge{u2}{v2}{black}{right}{15}%
        \myBentEdge{u2}{v2'}{black}{right}{15}%
        \myEdge{v1}{v1'}
        \myEdge{v2}{v2'}
        \myEdge{u3}{v3}%
        \myEdge{u3}{v3'}%
        \myEdge{v3}{v3'}
        %
		\myBentEdge{u4}{v4'}{black}{left}{0,looseness=2, out=90+10, in=90-10}%
        \myBentEdge{u4}{v4}{black}{left}{0,looseness=2.2, out=90+18, in=90-10}%
       	\path ($(v4') + (-0.3,0.15)$) edge [->, bend left = 40] ($(-1.7,-0.4) + (0.5,-0.5)$);%
       	\path ($(v4') + (0.3,0.15)$) edge [->, bend right = 40] ($(w2) + (-0.5,-0.5)$);%
       	
       \end{scope}
    }%
    \renewcommand\MyEdgesAfter{
    	\begin{scope}
		\clip (-3.2,-3) rectangle (5.2,6.5);
		
		\myPath{u1}{v3'}{\subColorTwo}{right}{15}%
        \myPath{v3'}{w2}{black}{right}{10}%
        \myPath{u1}{u3}{\subColorOne}{left}{5}%
        \myPath{u3}{u2}{\subColorOne}{right}{5}%
        \myPath{u1}{u4}{\subColorTwo}{left}{35}%
        \myPath{u2}{u4}{\subColorOne}{right}{35}%
        \myPath{w3}{v1}{black}{left}{0}%
        \myPath{v1}{v2}{black}{left}{0}%
        \myPath{v2}{u4}{\subColorTwo}{left}{0}%
		
        \myBentEdge{u2}{v2}{\subColorTwo}{right}{15}%
        
        \myBentEdge{u2}{v2'}{\PatternColorII}{right}{15}%
        \myBentEdge{u2}{w2}{\PatternColorII}{left}{0}%
        \myDEdge{w2}{v2'}
        \node at ($(u2)+(-0.8,-0.6)$) {\textcolor{\PatternColorII}{{\CV}}};%
        
        \myBentEdge{u1}{v1}{myRed2}{left}{15}%
        \myBentEdge{u1}{v1'}{myRed2}{left}{15}%
		\myBentEdge{v1}{v1'}{myRed2}{left}{0}%
        \node at ($(v1')+(0,-0.4)$) {\textcolor{myRed2}{{\CN}}};%
        
        \myBentEdge{v2}{v2'}{black}{left}{0}%
        
		\myBentEdge{u3}{v3'}{\subColorTwo}{left}{0}%
		\myBentEdge{u3}{v3}{\PatternColorI}{left}{0}%
		\myEdge{v3}{v3'}
		\myBentEdge{u3}{w3}{\PatternColorI}{left}{0}%
		\myDEdge{v3}{w3}
		\node at ($(v3)+(-0.4,0)$) {\textcolor{\PatternColorI}{{\CV}}};%
        %
		\myBentEdge{u4}{v4'}{myRed2}{left}{0,looseness=2, out=90+10, in=90-10}%
        \myBentEdge{u4}{v4}{myRed2}{left}{0,looseness=2.2, out=90+18, in=90-10}%
        \myBentEdge{v4}{v4'}{myRed2}{left}{0}%
        \node at ($(v4')+(0,-0.6)$) {\textcolor{myRed2}{{\CN}}};%
       	\path ($(v4') + (-0.3,0.15)$) edge [->, bend left = 40] ($(-1.7,-0.4) + (0.5,-0.5)$);%
       	\path ($(v4') + (0.3,0.15)$) edge [->, bend right = 40] ($(w2) + (-0.5,-0.5)$);%
        \end{scope}
    }%
    \renewcommand{\CfL}{fig:D43}%
    \drawSimpleRule{fig:D43}{4}{Reduction of configuration \CfN}%
	}%
	\renewcommand\CfText{%
		We transform the $K_4$-subdivision $S$ into a $C_{4+}$-subdivision $S'$ by removing the paths $u_1\sim u_2$ and $u_3\sim u_4$, and adding the paths $(u_1,P_1^3,v_3,u_3)$ and $(u_2,v_2,P_4^{12},u_4)$. We consider the following $2$-coloring of $S'$: $\{red = (u_1\rightarrow u_3\rightarrow u_2\rightarrow u_4), blue = (u_2 \rightarrow v_2\rightarrow u_4\rightarrow u_1\rightarrow v_3\rightarrow u_3)\}$. There is no need to apply the routing operation.
		
		
		The special vertex $u_3$ is turned into a {\CV} pattern. 
		The special vertices $u_1,u_4$ are treated as {\CN} patterns. 
		The {\CN} pattern of $u_4$ may cause a distant problem on the new path $(u_1,P_1^3,v_3,u_3)$, but this path is colored blue and $u_4$ uses the color red, hence the distant problem is inactive.
		
		The patterns used are {\CN}$(u_1)$, {\CV}$(u_2)$, {\CN}$(u_3)$, 
		{\CN}$(u_4)$.
	}%
}
\newcommand{\JOne}{%
   \renewcommand\CfN{\nameJOne}%
   \renewcommand\CfProperties{
		\begin{itemize}
			\item The graph has a strong $K_4$-subdivision $S$ rooted on $u_1,u_2,u_3,u_4$
			\item At most $2$ special vertices cause distant problems
			\item If there are two distant problems, they are not on the same path of $S$
			\item The special vertices that do not cause distant problems are settled
		\end{itemize}
   }%
	\renewcommand\CfPict{
    \renewcommand\Myspecialnodes{
        \node[blacknode,label=below:$u_1$] (u1) at (-3,0) {};
        \node[blacknode,label=below:$u_2$] (u2) at (3,0) {};
        \node[blacknode,label=left:$u_3$] (u3) at (0,3) {};
        \node[blacknode,label=above left:$u_4$] (u4) at (0,6) {};
    }
    \renewcommand\Mynodes{
    	\node[whitenode] (v1) at (1,2) {};
    	\node[whitenode] (v1') at (0.8,1) {};
    	\node[whitenode] (v2) at (3.8,0.5) {};
    	\node[whitenode] (v2') at (3.8,-0.5) {};
    	\node[whitenode] (v3) at (2.2,4) {};
    	\node[whitenode] (v3') at (1,4.2) {};
    	\node[whitenode] (v4) at (-0.5,7) {};
    	\node[whitenode] (v4') at (0.5,7) {};
    }
    \renewcommand\MyEdgesBefore{
         \myPath{u1}{u2}{\subColor}{right}{20}
         \myPath{u1}{u3}{\subColor}{left}{0}
         \myPath{u2}{v1}{\subColor}{left}{0}
         \myPath{u3}{v1}{\subColor}{left}{0}
         \myPath{u1}{u4}{\subColor}{left}{30}
         \myPath{u2}{v3}{\subColor}{right}{10}
         \myPath{v3}{u4}{\subColor}{right}{10}
         \myPath{u3}{u4}{\subColor}{left}{0}
		 \myEdge{u1}{v1}
         \myEdge{u1}{v1'}
         \myEdge{v1}{v1'}
         \myEdge{u2}{v2}
         \myEdge{u2}{v2'}
         \myEdge{u3}{v3}
         \myEdge{u3}{v3'}
         \myEdge{v3}{v3'}     
         \myEdge{u4}{v4}
         \myEdge{u4}{v4'}
    }%
    \renewcommand\MyEdgesAfter{
    	 \myPath{u1}{u2}{\subColorTwo}{right}{20}
         \myPath{u1}{u3}{\subColorOne}{left}{0}
         \myPath{u2}{v1}{\subColorTwo}{left}{0}
         \myPath{u3}{v1}{\subColorTwo}{left}{0}
         \myPath{u1}{u4}{\subColorTwo}{left}{30}
         \myPath{u2}{v3}{\subColorOne}{right}{10}
         \myPath{v3}{u4}{\subColorOne}{right}{10}
         \myPath{u3}{u4}{\subColorOne}{left}{0}
		 \patternCN{u1}{v1}{v1'}{\subColorOne}
		 \patternCN{u2}{v2}{v2'}{\subColorOne}
		 \patternCN{u3}{v3}{v3'}{\subColorTwo}
		 \patternCN{u4}{v4}{v4'}{\subColorTwo}
    }
    \renewcommand{\CfL}{fig:J1}
    \drawSimpleRule{fig:J1}{2.8}{\CfN\ in a case where $u_1$ and $u_3$ cause distant problems}
	}%
	\renewcommand\CfText{We consider a $2$-coloring of $S$ given by Claim~\ref{clm:inact} (p.~\pageref{clm:inact}) to inactivate the two potential distant problems.
	
	The patterns used are {\CN} for all special vertices.
	}%
}
\newcommand{\JTwo}{%
   \renewcommand\CfN{\nameJTwo}%
   \renewcommand\CfProperties{
		\begin{itemize}
			\item The graph has a strong $K_4$-subdivision $S$ rooted on $u_1,u_2,u_3,u_4$
			\item $u_1$ and $u_2$ cause distant problems on the path $u_3\sim u_4$: the path $u_3\sim u_4 = (u_3,P_1,v_1,P_2,v_2,P_3,u_4)$, with $v_1,v_2$ being remaining neighbors of $u_1,u_2$ respectively, and $l(P_1),l(P_2),l(P_3)\geq 1$; we denote $v_1',v_2'$ the other remaining neighbor of $u_1,u_2$ respectively
			\item $u_3, u_4$ are settled
			\item Either $l(u_1\sim u_2)\geq 2$ or neither $u_3$ nor $u_4$ has $v_1',v_2'$ as remaining neighbors
			\item If $u_1\sim u_2$ has length $2$ ($u_1,w,u_2$), then $w$ has at most $1$ neighbor among $u_3,u_4$, or at least one of $v_1',v_2'$ does not have a neighbor in $\{u_3,u_4\}$
		\end{itemize}
   }%
	\renewcommand\CfPict{
    \renewcommand\Myspecialnodes{
        \node[blacknode,label=below:$u_1$] (u1) at (-3,0) {};
        \node[blacknode,label=below:$u_2$] (u2) at (3,0) {};
        \node[blacknode,label=left:$u_3$] (u3) at (0,2) {};
        \node[blacknode,label=above left:$u_4$] (u4) at (0,6) {};
    }
    \renewcommand\Mynodes{
    	\node[whitenode] (v1) at (0,3) {};
    	\node[whitenode] (v1') at (-1,3.5) {};
    	\node[whitenode] (v2) at (0,4.5) {};
    	\node[whitenode] (v2') at (0.5,5) {};
    	\node[whitenode] (v3) at (-0.5,1) {};
    	\node[whitenode] (v3') at (0.5,1) {};
    	\node[whitenode] (v4) at (-0.5,7) {};
    	\node[whitenode] (v4') at (0.5,7) {};
    	
    	\node[whitenode] (w1) at (-1.5,-0.5) {};
    	\node[whitenode] (w2) at (1.5,-0.5) {};
    }
    \renewcommand\MyEdgesBefore{
    	 \myBentEdge{u1}{w1}{\subColor}{left}{0}
    	 \myBentEdge{u2}{w2}{\subColor}{left}{0}
         \myPath{w1}{w2}{\subColor}{right}{10}
         \myPath{u1}{u3}{\subColor}{left}{0}
         \myPath{u2}{u3}{\subColor}{left}{0}
         \myPath{u1}{u4}{\subColor}{left}{30}
         \myPath{u2}{u4}{\subColor}{right}{30}
         \myPath{u3}{v1}{\subColor}{left}{0}
         \myPath{v1}{v2}{\subColor}{left}{0}
         \myPath{v2}{u4}{\subColor}{left}{0}
		 \myBentEdge{u1}{v1}{black}{left}{15}
         \myBentEdge{u1}{v1'}{black}{left}{15}
         \myBentEdge{u2}{v2}{black}{right}{15}
         \myBentEdge{u2}{v2'}{black}{right}{15}
         \myEdge{v1}{v1'}
         \myEdge{v2}{v2'}
         \myEdge{u3}{v3}
         \myEdge{u3}{v3'}      
         \myEdge{u4}{v4}
         \myEdge{u4}{v4'}
    }
    \renewcommand\MyEdgesAfter{
    	 \myPath{w1}{w2}{black}{right}{10}
         \myPath{u1}{u3}{\subColor}{left}{0}
         \myPath{u2}{u3}{\subColor}{left}{0}
         \myPath{u1}{u4}{\subColor}{left}{30}
         \myPath{u2}{u4}{\subColor}{right}{30}
         \myPath{u3}{v1}{\subColor}{left}{0}
         \myPath{v1}{v2}{black}{left}{0}
         \myPath{v2}{u4}{\subColor}{left}{0}
		 \myBentEdge{u1}{v1}{\subColor}{left}{15}
         \myBentEdge{u2}{v2}{\subColor}{right}{15}
         \myEdge{v1}{v1'}
         \myEdge{v2}{v2'}
         \myEdge{u3}{v3}
         \myEdge{u3}{v3'}      
         \myEdge{u4}{v4}
         \myEdge{u4}{v4'}
         
         \myBentEdge{u1}{v1'}{\PatternColorI}{left}{15}
         \myBentEdge{u1}{w1}{\PatternColorI}{left}{0}
         \myDEdge{v1'}{w1}
         \node at (-1.8,0.2) {\textcolor{\PatternColorI}{{\CV}}};
         
         \myBentEdge{u2}{v2'}{\PatternColorII}{right}{15}
         \myBentEdge{u2}{w2}{\PatternColorII}{left}{0}
         \myDEdge{v2'}{w2}
         \node at (1.8,0.2) {\textcolor{\PatternColorII}{{\CV}}};
    }
    \renewcommand{\CfL}{fig:J2}
    \drawSimpleRule{fig:J2}{2.8}{\CfN\ in a case where the length of $u_1\sim u_2$ is at least $2$}
	}%
	\renewcommand\CfText{We transform the $K_4$-subdivision $S$ into a $C_{4+}$-subdivision $S'$, by removing the paths $u_1\sim u_2$ and $u_3\sim u_4$ from $S$, and adding the paths $(u_1,v_1,P_1,u_3)$ and $(u_2,v_2,P_3,u_4)$.
	
	The special vertices $u_1,u_2$ are thus turned into {\CV} patterns, unless the path $u_1\sim u_2$ from $S$ has length $1$, in which case $u_1,u_2$ form a {\CU} pattern. By the fourth condition of this configuration, neither $u_3$ nor $u_4$ has $v_1',v_2'$ as remaining neighbors, thus they remain settled (the case where one has $v_1',v_2'$ as remaining neighbors is treated as \ncf{\JThree}). 
	
	Instead of {\CV} patterns, $u_1$ or $u_2$ may form {\CTTNA} patterns with $u_3$ or $u_4$. If there are two such patterns, for instance {\CTTNA}$(u_1,u_3)$ and {\CTTNA}$(u_2,u_4)$, they may only intersect if they have a common vertex in the path $u_1\sim u_2$ of $S$. By the last condition of the configuration, this is not the case (this case is treated as \ncf{\JFour}). 
	
	The patterns used are {\CV}$(u_1)$, {\CV}$(u_2)$, or {\CTTNA}$(u_i,u_j)$ for $i\in \{1,2\}$ and $j\in \{3,4\}$, or {\CU}$(u_1,u_2)$, {\CN}$(u_3)$, {\CN}$(u_4)$.
	}%
}
\newcommand{\JThree}{%
   \renewcommand\CfN{\nameJThree}%
   \renewcommand\CfProperties{
		\begin{itemize}
			\item The graph has a strong $K_4$-subdivision $S$ rooted on $u_1,u_2,u_3,u_4$
			\item $u_1$ and $u_2$ cause distant problems on the path $u_3\sim u_4$: the path $u_3\sim u_4 = (u_3,P_1,v_1,P_2,v_2,P_3,u_4)$, with $v_1,v_2$ being remaining neighbors of $u_1,u_2$ respectively, and $l(P_1),l(P_2),l(P_3)\geq 1$; we denote $v_1',v_2'$ the other remaining neighbor of $u_1,u_2$ respectively
			\item $l(u_1\sim u_2) = 1$
			\item $u_4$ has $v_1',v_2'$ as remaining neighbors
			\item $u_3$ is settled
		\end{itemize}
   }%
	\renewcommand\CfPict{
    \renewcommand\Myspecialnodes{
        \node[blacknode,label=below:$u_1$] (u1) at (-3,0) {};
        \node[blacknode,label=below:$u_2$] (u2) at (3,0) {};
        \node[blacknode,label=left:$u_3$] (u3) at (0,2) {};
        \node[blacknode,label=above:$u_4$] (u4) at (0,6) {};
    }%
    \renewcommand\Mynodes{
    	\node[whitenode,label=right:$v_1$] (v1) at (0,3) {};
    	\node[whitenode,label=left:$v_1'$] (v1') at (-0.8,4) {};
    	\node[whitenode,label=left:$v_2$] (v2) at (0,3.8) {};
    	\node[whitenode,label=right:$v_2'$] (v2') at (1.2,4) {};
    	\node[whitenode] (w4) at (0,4.6) {};
    	\node[whitenode] (v3) at (0,0.2) {};
    	\node[whitenode] (v3') at (0.8,0.6) {};
    }%
    \renewcommand\MyEdgesBefore{
    	\myBentEdge{u1}{u2}{\subColor}{right}{20}
        \myPath{u1}{u3}{\subColor}{left}{0}
        \myPath{u2}{u3}{\subColor}{left}{0}
        \myPath{u1}{u4}{\subColor}{left}{30}
        \myPath{u2}{u4}{\subColor}{right}{30}
        \myPath{u3}{v1}{\subColor}{left}{0}
        \myPath{v1}{v2}{\subColor}{left}{0}
        \myPath{v2}{w4}{\subColor}{left}{0}
        \myBentEdge{w4}{u4}{\subColor}{left}{0}
		\myBentEdge{u1}{v1}{black}{left}{15}
        \myBentEdge{u1}{v1'}{black}{left}{15}
        \myBentEdge{u2}{v2}{black}{right}{15}
        \myBentEdge{u2}{v2'}{black}{right}{15}
        \myEdge{v1}{v1'}
        \myEdge{v2}{v2'}
        \myEdge{u3}{v3}
        \myEdge{u3}{v3'}

		\myEdge{u4}{v2'}
		\myEdge{u4}{v1'}
    }%
    \renewcommand\MyEdgesAfter{
        \myPath{u1}{u3}{\subColor}{left}{0}
        \myPath{u2}{u3}{\subColor}{left}{0}
        \myPath{u1}{u4}{\subColor}{left}{30}
        \myPath{u2}{u4}{\subColor}{right}{30}
        \myPath{u3}{v1}{\subColor}{left}{0}
        \myPath{v1}{v2}{black}{left}{0}
        \myPath{v2}{w4}{black}{left}{0}
		\myBentEdge{u1}{v1}{\subColor}{left}{15}
        \myBentEdge{u2}{v2}{\subColor}{right}{15}
        \myEdge{v1}{v1'}
        \myBentEdge{v2}{v2'}{\subColor}{left}{0}
        \myBentEdge{u4}{v2'}{\subColor}{left}{0}
        
        \patternCN{u3}{v3}{v3'}{\PatternColorIII}
        
        \myBentEdge{u1}{u2}{\PatternColorI}{right}{20}
        \myBentEdge{u1}{v1'}{\PatternColorI}{left}{15}
        \myBentEdge{u2}{v2'}{\PatternColorI}{right}{15}
        \node at ($(u1)+(1.5,0)$)  {\textcolor{\PatternColorI}{{\CU}}};
		\patternCN{u4}{w4}{v1'}{\PatternColorII}
    }%
    \renewcommand{\CfL}{fig:J3}
    \drawSimpleRule{fig:J3}{2.8}{Reduction of configuration~\CfN}
	}%
	\renewcommand\CfText{\textbf{Remark:} $u_4$ is initially settled, but the rule that follows changes its {\CV} nature into a {\CN} one.
	
	We transform the $K_4$-subdivision $S$ into a $C_{4+}$-subdivision $S'$, by removing the paths $u_1\sim u_2$ and $u_3\sim u_4$ from $S$, and adding the paths $(u_1,v_1,P_1,u_3)$ and $(u_2,v_2,v_2',u_4)$.
	
	The special vertices $u_1,u_2$ are thus turned into a {\CU} pattern, while $u_4$ is turned into a {\CN}. If $v_2$ is adjacent to $u_4$, it becomes one of its remaining neighbors in $S'$, and in this case $u_4$ causes a distant problem. We inactivate this problem by maybe swapping the colors of the paths $u_2\sim u_4$ and $(u_2,v_2,v_2',u_4)$ in a $2$-coloring of $S'$.
	
	The patterns used are {\CU}$(u_1,u_2)$, {\CN}$(u_3)$, {\CN}$(u_4)$.
	}%
}
\newcommand{\JFour}{%
   \renewcommand\CfN{\nameJFour}%
   \renewcommand\CfProperties{%
		\begin{itemize}
			\item The graph has a strong $K_4$-subdivision $S$ rooted on $u_1,u_2,u_3,u_4$
			\item $u_1$ has two adjacent remaining neighbors $v_1,v_1'$, with $v_1\in u_3\sim u_4$
			\item $u_2$ has two adjacent remaining neighbors $v_2,v_2'$ with $v_2\in u_3\sim u_4$
			\item $v_1,v_2$ may be equal, or come in any order on $u_3\sim u_4$
			\item $u_1\sim u_2$ has length $2$: call the third vertex $w$
			\item $u_3$ has $v_2',w$ as remaining neighbors
			\item $u_4$ has $v_1',w$ as remaining neighbors
		\end{itemize}
	}%
	\renewcommand\CfPict{
	
   	\renewcommand\Myspecialnodes{
        \node[blacknode,label=below:$u_1$] (u1) at (-3,0) {};
        \node[blacknode,label=below:$u_2$] (u2) at (3,0) {};
        \node[blacknode,label=left:$u_3$] (u3) at (0,2) {};
        \node[blacknode,label=above:$u_4$] (u4) at (0,6) {};
    }
    \renewcommand\Mynodes{
    	\node[whitenode,label=left:$v_1$] (v1) at (0,3.8) {};
    	\node[whitenode,label=left:$v_1'$] (v1') at (-0.8,4.6) {};
    	\node[whitenode,label=left:$v_2$] (v2) at (0,4.5) {};
    	\node[whitenode,label=right:$v_2'$] (v2') at (0.5,3) {};
    	\node[whitenode,label=left:$w_3$] (w3) at (0,3) {};
    	\node[whitenode,label=below:$w$] (w) at (0,-0.5) {};
    }
    \renewcommand\MyEdgesBefore{
    	\begin{scope}
    	\clip (-3.1,-1.4) rectangle (4.3,6.2);
    	
    	\myBentEdge{u2}{w}{\subColor}{left}{0}
    	\myBentEdge{w}{u1}{\subColor}{left}{0}
        \myPath{u1}{u3}{\subColor}{left}{0}
        \myPath{u2}{u3}{\subColor}{left}{0}
        \myPath{u1}{u4}{\subColor}{left}{30}
        \myPath{u2}{u4}{\subColor}{right}{30}
        \myBentEdge{u3}{w3}{\subColor}{left}{0}
        \myPath{w3}{v1}{\subColor}{left}{0}
        \myPath{v1}{v2}{\subColor}{left}{0}
        \myPath{v2}{u4}{\subColor}{left}{0}
		\myBentEdge{u1}{v1}{black}{left}{15}
        \myBentEdge{u1}{v1'}{black}{left}{15}
        \myBentEdge{u2}{v2}{black}{right}{25}
        \myBentEdge{u2}{v2'}{black}{right}{15}
        \myEdge{v1}{v1'}
        \myEdge{v2}{v2'}
        \myEdge{u3}{w}
        
		\myBentEdge{u4}{w}{black}{left}{0, out=90-5, in=90-25, looseness=2.2}

		\myEdge{u3}{v2'}
		\myEdge{u4}{v1'}
        
        \end{scope}
    }%
    \renewcommand\MyEdgesAfter{
    	\begin{scope}
    	\clip (-3.1,-1.4) rectangle (4.3,6.2);
    	
        \myBentEdge{u2}{w}{\subColorOne}{left}{0}
    	\myBentEdge{w}{u1}{\subColorOne}{left}{0}
        \myPath{u1}{u3}{\subColorOne}{left}{0}
        \myPath{u2}{u3}{\subColorTwo}{left}{0}
        \myPath{u1}{u4}{\subColorTwo}{left}{30}
        \myPath{u2}{u4}{\subColorTwo}{right}{30}
        \myBentEdge{u3}{w3}{\PatternColorIII}{left}{0}
        \myPath{w3}{v1}{black}{left}{0}
        \myPath{v1}{v2}{black}{left}{0}
        \myPath{v2}{u4}{\subColorOne}{left}{0}
		\draw[\subColorTwo, thick, segment aspect=0, bend left = 15] (u1) to (v1);
		\draw[\subColorTwo, thick, segment aspect=0, bend left = 15] (u1) to (v1');
        \myBentEdge{u2}{v2}{\PatternColorII}{right}{25}
        \myBentEdge{u2}{v2'}{\PatternColorII}{right}{15}
        \myCEdge{v1}{v1'}{\subColorTwo}{}
        \myBentEdge{v2}{v2'}{\subColorOne}{left}{0}
		\myCEdge{u3}{v2'}{\subColorOne}{}
		\node[nodelabel] at ($(v2')+(0,-0.6)$)  {\textcolor{\subColorOne}{$P_1$}};
        \myBentEdge{u4}{v1'}{\PatternColorI}{left}{0}
        \myBentEdge{u3}{w}{\PatternColorIII}{left}{0}
        
		\myBentEdge{u4}{w}{\PatternColorI}{left}{0, out=90-5, in=90-25, looseness=2.2}

		\path (w) edge[black, dashed, thick, bend right = 20] (w3);

        \node at ($(u2)+(-0.8,1)$)  {\textcolor{\PatternColorII}{{\CVp}}};
        \node at ($(u1)+(2,3.6)$)  {\textcolor{\subColorTwo}{{\CN}}};
        \node at ($(u3)+(-0.3,-0.8)$)  {\textcolor{\PatternColorIII}{{\CV}}};
        \node at ($(u4)+(1.6,0)$)  {\textcolor{\PatternColorI}{{\CV}}};
        
        \end{scope}
    }
    \renewcommand{\CfL}{fig:J4}
    \drawSimpleRule{fig:J4}{3}{Reduction of configuration \CfN. Example of a $2$-coloring of $S'$}
	}%
	\renewcommand\CfText{
		We transform the $K_4$-subdivision $S$ into another $K_4$-subdivision $S'$ by replacing the path $u_3\sim u_4$ by the path $(u_3,v_2',v_2,\dots,u_4)$. The vertices $u_3,u_4$ are turned into {\CV} patterns and $u_2$ into a {\CVp} pattern. Depending on the order of $v_1,v_2$ on the path $u_3\sim u_4$ of $S$, $u_1$ forms a {\CN} that may cause a distant problem on the new path $(u_3,v_2',v_2,\dots,u_4)$. We consider a coloring of $S'$ given by Claim~\ref{clm:inact} (p.~\pageref{clm:inact}) to inactivate it.
		
		The patterns used are {\CN}$(u_1)$, {\CVp}$(u_2)$, {\CV}$(u_3)$, {\CV}$(u_4)$.
	}%
}
\newcommand{\JFive}{%
    \renewcommand\CfN{\nameJFive}%
    \renewcommand\CfProperties{
		\begin{itemize}
			\item The graph has a strong $C_{4+}^*$-subdivision $S$ rooted on $u_1,u_2,u_3,u_4$, such that $u_1,u_2$ are $1$-linked and $u_1,u_3$ are $2$-linked
			\item There are at most $2$ distant problems: if there is at least one, we may assume w.l.o.g. that $u_1$ causes a distant problem on a $(u_2,u_4)$-path $P_{24}$
			\item $u_2$ is settled
			\item $u_3$ is settled or causes a distant problem on the $(u_2,u_4)$-path $P_{24}'$ of $S$ different from $P_{24}$
		\end{itemize}
    }%
	\renewcommand\CfPict{
	
   	\renewcommand\Myspecialnodes{
        \node[blacknode,label=left:$u_1$] (u1) at (-3+0.5,3) {};
        \node[blacknode,label=left:$u_2$] (u2) at (-3+0.5,0) {};
        \node[blacknode,label=right:$u_3$] (u3) at (3-0.5,3) {};
        \node[blacknode,label=right:$u_4$] (u4) at (3-0.5,0) {};
    }
    \renewcommand\Mynodes{ 	
    	\node[whitenode] (v1) at (-0.8,1) {};
    	\node[whitenode] (v1') at (-1.2,0) {};
    	\node[whitenode] (v2) at (-3.5,0.6) {};
    	\node[whitenode] (v2') at (-2.8,1.2) {};
    	\node[whitenode] (v3) at (3.5,3-0.6) {};
    	\node[whitenode] (v3') at (2.8,3-1.2) {};
    	\node[whitenode] (v4) at (0.8,3-1) {};
    	\node[whitenode] (v4') at (1.2,3-0) {};
    }
    \renewcommand\MyEdgesBefore{
        \myPath{u1}{u3}{\subColor}{left}{60}%
        \myPath{u1}{v4'}{\subColor}{left}{0}%
        \myPath{u2}{v1'}{\subColor}{left}{0}%
        \myPath{v1'}{u4}{\subColor}{left}{0}%
        \myPath{u2}{u4}{\subColor}{right}{60}%
        \myPath{v4'}{u3}{\subColor}{left}{0}%
        \myPath{u1}{u2}{\subColor}{left}{0}%
        \myPath{u3}{u4}{\subColor}{left}{0}%
        
        \myEdge{u1}{v1}
        \myEdge{u1}{v1'}
        \myEdge{v1}{v1'}
        \myEdge{u2}{v2}
        \myEdge{u2}{v2'}
        \myEdge{u3}{v3}
        \myEdge{u3}{v3'}
        \myEdge{u4}{v4}
        \myEdge{u4}{v4'}
        \myEdge{v4}{v4'}
    }%
    \renewcommand\MyEdgesAfter{
        \myPath{u1}{u3}{\subColorOne}{left}{60}%
        \myPath{u1}{v4'}{\subColorTwo}{left}{0}%
        \myPath{u2}{v1'}{\subColorOne}{left}{0}%
        \myPath{v1'}{u4}{\subColorOne}{left}{0}%
        \myPath{u2}{u4}{\subColorTwo}{right}{60}%
        \myPath{v4'}{u3}{\subColorTwo}{left}{0}%
        \myPath{u1}{u2}{\subColorOne}{left}{0}%
        \myPath{u3}{u4}{\subColorTwo}{left}{0}%
       
		\patternCN{u1}{v1}{v1'}{\subColorTwo}
		\patternCN{u2}{v2}{v2'}{\subColorTwo}
		\patternCN{u3}{v3}{v3'}{\subColorOne}
		\patternCN{u4}{v4}{v4'}{\subColorOne}
    }%
	\renewcommand{\CfL}{fig:J5}
    \drawSimpleRule{fig:J5}{2.8}{\CfN\ when $u_1$ and $u_4$ cause distant problems}
	}%
	\renewcommand\CfText{We consider a $2$-coloring of $S$ that inactivates the distant problems: $\{red = (u_3\rightarrow u_1\rightarrow u_2\rightarrow u_4), blue = (u_1\rightarrow u_3\rightarrow u_4\rightarrow u_2)\}$ in such a way that $P_{24}$ receives the color red. The distant problem of $u_1$ is thus inactivated. Since the colors ending on $u_1$ and $u_3$ are different, and since the colors of $P_{24}$ and $P_{24}'$ are different, the potential distant problem of $u_3$ is inactivated. If $u_4$ causes a distant problem instead, we inactivate it by maybe swapping the colors of the two paths between $u_1$ and $u_3$.
		
	The patterns used are {\CN} for all the special vertices, or possibly {\CTTNA}$(u_1,u_2)$ and {\CTTNA}$(u_3,u_4)$.
	}%
}
\newcommand{\JSix}{%
    \renewcommand\CfN{\nameJSix}%
    \renewcommand\CfProperties{
		\begin{itemize}
			\item The graph has a strong $C_{4+}^*$-subdivision $S$ rooted on $u_1,u_2,u_3,u_4$, such that $u_1,u_2$ are $1$-linked and $u_1,u_3$ are $2$-linked
			\item $u_1$ and $u_3$ cause distant problems on a path $P_{24}$ between $u_2$ and $u_4$
			\item $u_2$ and $u_4$ are settled and their remaining neighbors are disjoint from $S$
		\end{itemize}
    }%
	\renewcommand\CfPict{
	
   	\renewcommand\Myspecialnodes{
        \node[blacknode,label=left:$u_1$] (u1) at (-3+0.5,3) {};
        \node[blacknode,label=left:$u_2$] (u2) at (-3+0.5,0) {};
        \node[blacknode,label=right:$u_3$] (u3) at (3-0.5,3) {};
        \node[blacknode,label=right:$u_4$] (u4) at (3-0.5,0) {};
    }
    \renewcommand\Mynodes{ 	
    	\node[whitenode] (v1) at (-0.8,1) {};
    	\node[whitenode] (v1') at (-1.2,0) {};
    	\node[whitenode] (v2) at (-3.3,0.6) {};
    	\node[whitenode] (v2') at (-2.8,1.2) {};
		\node[whitenode] (v3) at (0.8,1) {};
    	\node[whitenode] (v3') at (1.2,0) {};
    	\node[whitenode] (v4) at (3.3,0.6) {};
    	\node[whitenode] (v4') at (2.8,1.2) {};
    	
    	\node[whitenode] (w1) at (-1.6,4) {};
    	\node[whitenode] (w3) at (1.6,4) {};
    }
    \renewcommand\MyEdgesBefore{
    	\myBentEdge{u1}{w1}{\subColor}{left}{0}
    	\myBentEdge{u3}{w3}{\subColor}{left}{0}
        \myPath{w1}{w3}{\subColor}{left}{20}%
        \myPath{u1}{u3}{\subColor}{left}{0}%
        \myPath{u2}{v1'}{\subColor}{left}{0}%
        \myPath{v1'}{v3'}{\subColor}{left}{0}%
        \myPath{u2}{u4}{\subColor}{right}{60}%
        \myPath{v3'}{u4}{\subColor}{left}{0}%
        \myPath{u1}{u2}{\subColor}{left}{0}%
        \myPath{u3}{u4}{\subColor}{left}{0}%
        
        \myEdge{u1}{v1}
        \myEdge{u1}{v1'}
        \myEdge{v1}{v1'}
        \myEdge{u2}{v2}
        \myEdge{u2}{v2'}
        \myEdge{u3}{v3}
        \myEdge{u3}{v3'}
        \myEdge{v3}{v3'}
        \myEdge{u4}{v4}
        \myEdge{u4}{v4'}
    }%
    \renewcommand\MyEdgesAfter{
        \myPath{w1}{w3}{black}{left}{20}%
        \myPath{u1}{u3}{\subColor}{left}{0}%
        \myPath{u2}{v1'}{\subColor}{left}{0}%
        \myPath{v1'}{v3'}{black}{left}{0}%
        \myPath{u2}{u4}{\subColor}{right}{60}%
        \myPath{v3'}{u4}{\subColor}{left}{0}%
        \myPath{u1}{u2}{\subColor}{left}{0}%
        \myPath{u3}{u4}{\subColor}{left}{0}%
       
       	\patternCV{u1}{w1}{v1}{\PatternColorI}
		\patternCV{u3}{w3}{v3}{\PatternColorIII}
		\patternCN{u2}{v2}{v2'}{\PatternColorII}
		\patternCN{u4}{v4}{v4'}{\PatternColorIV}
		
        \myBentEdge{u1}{v1'}{\subColor}{left}{0}
        \myEdge{v1}{v1'}
        \myBentEdge{u3}{v3'}{\subColor}{left}{0}
        \myEdge{v3}{v3'}
    }%
	\renewcommand{\CfL}{fig:J6}
    \drawSimpleRule{fig:J6}{2.8}{Reduction of configuration \CfN}
	}%
	\renewcommand\CfText{Assume w.l.o.g. that the path $P_{24} = (u_2,Q_1,v_1,Q_2,v_3,Q_3,u_4)$, where $v_1,v_3$ are remaining neighbors of $u_1,u_3$ respectively, and $l(Q_1),l(Q_2),l(Q_3)\geq 1$. Each of $u_1,u_3$ has another remaining neighbor $v_1',v_3'$ respectively, adjacent to $v_1,v_3$ respectively. The vertices $v_1',v_3'$ belong to a region of the graph delimited by the four paths $P_{24}$, $u_1\sim u_2$, $u_3\sim u_4$ and a path $P_{13}$ of $S$ between $u_1$ and $u_3$. Let $P_{13}'$ be the other path of $S$ between $u_1$ and $u_3$.
	We transform the $C_{4+}$-subdivision $S$ into another $C_{4+}$-subdivision $S'$, by removing the paths $P_{13}'$ and $P_{24}$, and adding the paths $(u_1,v_1,Q_1,u_2)$ and $(u_3,v_3,Q_3,u_4)$.
	
	The remaining neighbors of $u_1$ (resp. $u_3$) w.r.t. $S'$ are non-adjacent, and since the remaining neighbors of $u_2$ (resp. $u_4$) are disjoint from $S$, $(u_1,u_2)$ (resp. $(u_3,u_4)$) cannot from a {\CTTNA} pattern.
	
	The special vertices $u_1,u_3$ are thus turned into {\CV} patterns, unless the path $P_{13}'$ has length $1$, in which case they form a {\CU} pattern. In the latter case, by property ``$0$-linked'', none of $u_2,u_4$ can have both $v_1',v_3'$ as remaining neighbors, and by property ``$2$-linked'', the remaining neighbors of $u_2,u_4$ are disjoint, so $u_2$ and $u_4$ remain settled w.r.t. $S'$.

	The patterns used are {\CV}$(u_1)$, {\CN}$(u_2)$, {\CV}$(u_3)$, {\CN}$(u_4)$, or {\CU}$(u_1,u_3)$.
	}%
}
\newcommand\AllDistantConf{

\renewcommand{\MyScale}{0.9}


\ifthenelse{\equal{\isThesis}{true}}
{\CfPrintNom{\DOne}\\
\Propcf{\DOne}
\Pictcf{\DOne}
\Textcf{\DOne}\\
}
{\Pictcf{\DOne}
\CfPrintNom{\DOne}\\
\Propcf{\DOne}
\Textcf{\DOne}\\
}

\vfill
\pagebreak

\renewcommand{\MyScale}{0.8}

\CfPrintNom{\DTwo}\\
\Propcf{\DTwo}
\Pictcf{\DTwo}
\Textcf{\DTwo}\\

\vfill
\pagebreak

\renewcommand{\MyScale}{0.9}

\CfPrintNom{\DThree}\\
\Propcf{\DThree}
\Pictcf{\DThree}
\Textcf{\DThree}\\

\vfill
\pagebreak

\renewcommand{\MyScale}{0.9}

\CfPrintNom{\DFour}\\
\Propcf{\DFour}
\Pictcf{\DFour}
\Textcf{\DFour}\\

}
\newcommand\AllCloseConf{

\renewcommand{\MyScale}{0.9}

\Pictcf{\RuOne}
\CfPrintNom{\RuOne}\\
\Propcf{\RuOne}
\Textcf{\RuOne}\\

\vfill
\pagebreak

\CfPrintNom{\RuTwo}\\
\Propcf{\RuTwo}
\Pictcf{\RuTwo}
\Textcf{\RuTwo}\\

\renewcommand{\MyScale}{0.85}

\CfPrintNom{\RuThree}\\
\Propcf{\RuThree}
\Pictcf{\RuThree}
\vfill
\pagebreak
\Textcf{\RuThree}\\

\vfill
\pagebreak

\renewcommand{\MyScale}{0.9}

\CfPrintNom{\RuFour}\\
\Propcf{\RuFour}
\Pictcf{\RuFour}
\Textcf{\RuFour}\\

\vfill
\pagebreak

\CfPrintNom{\RuFive}\\
\Propcf{\RuFive}
\Pictcf{\RuFive}
\Textcf{\RuFive}\\

\vfill
\pagebreak

\renewcommand{\MyScale}{0.9}

\CfPrintNom{\RuSix}\\
\Propcf{\RuSix}
\Pictcf{\RuSix}
\Textcf{\RuSix}\\

\vfill
\pagebreak

\renewcommand{\MyScale}{0.9}

\CfPrintNom{\RuSeven}\\
\Propcf{\RuSeven}
\Pictcf{\RuSeven}
\Textcf{\RuSeven}\\

\renewcommand{\MyScale}{1}

\vfill
\pagebreak

\CfPrintNom{\RuEight}\\
\Propcf{\RuEight}
\Pictcf{\RuEight}
\Textcf{\RuEight}\\

\renewcommand{\MyScale}{0.9}

\CfPrintNom{\RuNine}\\
\Propcf{\RuNine}
\Pictcf{\RuNine}
\Textcf{\RuNine}\\

}
\newcommand\AllSemiDistantConf{

\renewcommand{\MyScale}{0.9}

\ \\

\CfPrintNom{\JOne}\\
\Propcf{\JOne}
\Pictcf{\JOne}
\Textcf{\JOne}\\

\vfill
\pagebreak

\CfPrintNom{\JTwo}\\
\Propcf{\JTwo}
\Pictcf{\JTwo}
\Textcf{\JTwo}\\

\vfill
\pagebreak

\CfPrintNom{\JThree}\\
\Propcf{\JThree}
\Pictcf{\JThree}
\Textcf{\JThree}\\

\vfill
\pagebreak

\renewcommand{\MyScale}{0.9}

\CfPrintNom{\JFour}\\
\Propcf{\JFour}
\Pictcf{\JFour}
\Textcf{\JFour}\\

\renewcommand{\MyScale}{1}

\vfill
\pagebreak

\CfPrintNom{\JFive}\\
\Propcf{\JFive}
\Pictcf{\JFive}
\Textcf{\JFive}\\

\vfill
\pagebreak

\CfPrintNom{\JSix}\\
\Propcf{\JSix}
\Pictcf{\JSix}
\Textcf{\JSix}\\

\vfill
\pagebreak
}
\newcounter{obscounter}
\newtheorem{thm}{Theorem}[section]
\newtheorem{lem}[thm]{Lemma}
\newtheorem{defn}[thm]{Definition}
\newtheorem{obs}[obscounter]{Observation}
\newtheorem{claim}[thm]{Claim}
\newcounter{claimb}
\def\claimb{$$\vcenter\bgroup\advance\hsize by -8em\noindent
\refstepcounter{claimb}\ignorespaces\it}
\def\endclaimb{\rm\egroup\leqno(\theclaimb)$$\global\@ignoretrue}
\noindent \emph{Proof.} {}{#1}{}}{\hfill
\newcommand{\C}{\mathcal{C}}
\newcommand\Boxcf[1]{\fbox{\Ncf{#1}}}
\title{Gallai's path decomposition in planar graphs}
\author[1]{Alexandre Blanch\'e}
\author[1]{Marthe Bonamy}
\author[1]{Nicolas Bonichon}
\affil[1]{CNRS, LaBRI, Université de Bordeaux, Bordeaux, France}
\newcommand{\isThesis}{false}
\newcommand{\chapsec}{section}
\newcommand{\chCI}{Section~\ref{sec:mce}}
\newcommand{\chCII}{Section~\ref{sec:cii}}
\begin{document}
\maketitle

\begin{abstract}
In 1968, Gallai conjectured that the edges of any connected graph with $n$ vertices can be partitioned into $\lceil \frac{n}{2} \rceil$ paths. We show that this conjecture is true for every planar graph. More precisely, we show that every connected planar graph except $K_3$ and $K_5^-$ ($K_5$ minus one edge) can be decomposed into $\lfloor \frac{n}{2} \rfloor$ paths.
\end{abstract}

\section{Introduction}
Given a finite undirected connected graph $G$, a \emph{$k$-path decomposition} of $G$ is a partition of the edges of $G$ into $k$ paths.
In 1968, Gallai stated this simple but surprising conjecture~\cite{Lovasz-covering}: every graph on $n$ vertices admits a $\lceil \frac{n}{2} \rceil$-path decomposition. Gallai's conjecture is still unsolved as of today, and has only been confirmed on very specific graph classes:
graphs in which each vertex has degree $2$ or $4$~\cite{favaron},
graphs whose vertices of even degree induce a forest~\cite{PYBER1996152}, graphs for which each block of the subgraph induced by the vertices of even degree is triangle-free with maximum degree at most $3$~\cite{FAN2005117, botler2019gallais}, series-parallel graphs~\cite{series_parallel}, or planar $3$-trees~\cite{Botler_2020}.
Bonamy and Perrett confirmed the conjecture on graphs with maximum degree at most $5$~\cite{BP19}, and Chu, Fan and Liu for graphs of maximum degree $6$, under the condition that the vertices of degree $6$ form an independent set~\cite{CHU2021112212}.
Recently, Botler, Jim\'enez and Sambinelli proved that the conjecture is true in the case of triangle-free planar graphs~\cite{Botler_2018_triangle_free}. 

We confirm the conjecture for the whole class of planar graphs.

\begin{thm}
Every connected planar graph on $n$ vertices can be decomposed into $\lceil \frac{n}{2} \rceil$ paths.
\end{thm}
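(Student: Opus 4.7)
The plan is to prove the stronger statement advertised in the abstract: every connected planar graph except $K_3$ and $K_5^-$ admits an $\lfloor n/2 \rfloor$-path decomposition. I would argue by contradiction, taking a counterexample $G$ that minimizes $n+|E(G)|$ and drawing a fixed planar embedding. A first round of easy observations shows $G$ is $2$-edge-connected, has minimum degree at least $2$ (a pendant vertex would be absorbed into a path of a decomposition of $G-v$), and in fact contains no vertex of degree $1$ or any of the trivial local structures that can be removed: if $v$ has degree $2$ with non-adjacent neighbors, we can subdivide/identify to recurse, and similarly for other elementary situations. The classical Lov\'asz–Donald bound gives a decomposition into $\max(1,k)$ paths for a connected graph with $2k$ odd vertices, so the hard case is when $G$ is nearly Eulerian; in particular, we may assume that the set of odd-degree vertices is small, which is exactly what makes each local reduction delicate.

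The core of the argument is a long list of \emph{reducible configurations} — these are the patterns named $(a)$--$(u)$ and $(\C_V),(\C_N),(\C_U),(\C_{D1}),(\C_{D2}),(\C_{T1}),(\C_{T2A}),(\C_{T2NA}),\dots$ that occupy the bulk of the LaTeX macros. For each such configuration $H \subseteq G$, one forms a smaller graph $G'$ by deleting or contracting a controlled piece of $H$, invokes minimality to obtain an $\lfloor n'/2\rfloor$-path decomposition of $G'$, and then modifies this decomposition (following the ``Reduction'' and ``Recoloring'' recipes coded into the macros \texttt{\textbackslash patred} and \texttt{\textbackslash patrec}) by rerouting or deviating paths through the reduced vertices and spending at most one extra ``color'' (path) to absorb the leftover edges. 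The matching between the number of removed vertices and the number of newly introduced paths is arranged so that the total count remains $\lfloor n/2\rfloor$. The delicate step here is a local exchange argument: when a path $Q$ of the decomposition of $G'$ ends at a vertex $v$ that is created or freed by the reduction, we either \emph{extend} $Q$ through $v$ or we \emph{deviate} it via a new edge, keeping endpoint parity consistent with the degrees in $G$.

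Once reducibility is established for this catalog, the second half of the proof is a \emph{discharging argument} on the planar embedding of $G$. Assign to each vertex $v$ the charge $d(v)-4$ and to each face $f$ the charge $2\ell(f)-4$; by Euler's formula the total is $-8$. Design a small set of discharging rules that redistributes charge from ``rich'' vertices and long faces to low-degree vertices and triangles. The goal is to show that if none of the reducible configurations of the previous paragraph appears in $G$, then after discharging every vertex and every face has non-negative final charge, contradicting a negative total. The structural obstruction work — encoded in the subdivisions $(K_4)$, $(C_{4+}^1)$, $(C_{4+}^2)$ and the close/distant problem classification $(R_1,\dots,R_9)$ and $(J_1,\dots,J_6)$ — is exactly what is needed to guarantee that any configuration not on the reducible list can be assigned enough charge.

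The main obstacle, and what makes the planar case genuinely harder than the cases treated before, is handling the configurations built out of $K_4$- and $C_{4+}$-subdivisions where several ``special'' low-degree vertices share common neighbours. When two special vertices $u_1,u_2$ have two or three common neighbours on a subdivision path, or when one causes a \emph{distant problem} far along the subdivision, the naive local swap fails and one must perform a global reroute of a path of the decomposition (the ``routing operation'' of the macros \texttt{\textbackslash RuleRouting} and \texttt{\textbackslash PbDist}), then verify that no chord of the subdivision obstructs the new path by appeal to planarity (property A and the $K_{3,3}$-minor exclusions illustrated by \texttt{\textbackslash ObsTwoPict}, \texttt{\textbackslash ObsThreePict}, \texttt{\textbackslash ObsFourPict}, \texttt{\textbackslash ObsFivePict}). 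I expect the longest and most case-sensitive piece of the proof to be verifying that the six sub-rules $(h_1),\dots,(h_6)$ attached to configuration $(h)$ — and their analogues for $(R_3),(J_2)$ — exhaust all possible interactions between the recolored subdivision path $Q$ and the extra neighbours $v_5,v_6$, so that compatibility of the local patterns is preserved.
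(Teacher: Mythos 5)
Your reducibility half matches the paper's in outline (delete the special vertices, reduce, pre-color, extend or deviate paths, spend one extra path per two deleted vertices), but it silently assumes that minimality of the counterexample hands you a good coloring of the reduced graph $G'$. That step fails exactly when a connected component of $G'$ is $K_3$ or $K_5^-$ --- the two planar odd semi-cliques excluded from the theorem --- and such components do arise under the reductions. The paper devotes real machinery to this (Lemmas~\ref{lem:safety_ci} and~\ref{lem:safety_cii}): each bad component is first colored with a cycle (plus a path, for $K_5^-$), and then Lemma~\ref{L21} of Chu--Fan--Liu is used to merge each cycle with a path of the coloring into two paths, after verifying case by case that the ``exceptional graph'' obstruction is avoided; one sharp spot even forces an appeal to the specific rule $(m_2)$, and another a modified recoloring inside rule $(h_3)$. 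Without this cycle-absorption step your induction does not close, so you need to add it, not just the catalog of reduction rules.

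The second gap is structural and concerns your discharging plan. The paper's unavoidable configurations are \emph{not} purely local: $(C_{II})$ is a $4$-family of degree-$5$ vertices \emph{together with almost $4$-connectivity with respect to them}, and that connectivity hypothesis is indispensable --- it is what feeds Yu's theorem to produce a $K_4$- or $C_{4+}$-subdivision rooted on the four special vertices, and it underlies every $K_{3,3}$-minor exclusion in the close/distant-problem analysis you cite. A charge redistribution with $d(v)-4$ and $2\ell(f)-4$ (note also that with minimum degree $5$ these charges make degree-$5$ vertices positive, so the bookkeeping would have to be $d-6$, $2\ell-6$ in any case) can only certify the presence of a local low-degree pattern; it cannot certify the absence of $3$-cuts separating your chosen special vertices, so it cannot deliver a configuration to which the $(C_{II})$ reductions apply. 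The paper's actual route (Claims~\ref{cl:struct1} and~\ref{cl:struct3}) is different: pass to a \emph{minimal $2$-contraction} $H$, where minimality forces $3$-connectivity; if $H$ is not $4$-connected, take a $3$-cut minimizing its interior, find four degree-$5$ vertices inside it by a direct Euler-formula count, and then show by an exchange argument among $3$-cuts that $H$ (hence $G$) is almost $4$-connected with respect to them. You should replace the discharging phase by an argument of this connectivity-refining type; as written, the discharging phase would prove something too weak for the reducible configurations you have.
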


Our result is actually a little more precise.
An \emph{odd semi-clique} is a graph obtained from a clique on $2k+1$ vertices by deleting at most $k-1$ edges.
In 2016, Bonamy and Perrett asked the following question~\cite[Question 1.1]{BP19}:
does every connected graph on $n$ vertices that is not an odd semi-clique admit a $\lfloor \frac{n}{2} \rfloor$-path decomposition?
In other words, is it possible to save one path in the decomposition when $n$ is odd?

We answer this question positively for planar graphs. Only two odd semi-cliques are planar: the triangle $K_3$ and $K_5$ minus one edge, which we denote by $K_5^-$ (see Figure~\ref{fig:K3K5}). We can therefore state our result as follows:

\begin{thm}\label{thm:gallai}
Every connected planar graph on $n$ vertices, except $K_3$ and $K_5^-$, can be decomposed into $\lfloor \frac{n}{2} \rfloor$ paths.
\end{thm}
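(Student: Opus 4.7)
The plan is to argue by minimum counterexample: let $G$ be a planar graph on $n$ vertices, distinct from $K_3$ and $K_5^-$, with the smallest possible number of vertices, and among those the smallest number of edges, that does not admit a $\lfloor n/2 \rfloor$-path decomposition. I would first establish structural properties of $G$ that follow from minimality: $G$ must be $2$-edge-connected (a cut edge lets one glue decompositions of the two sides), it cannot contain isolated edges or pendant vertices (since removing them and decomposing the rest costs one path but saves at least one vertex in $\lfloor n/2 \rfloor$), and its minimum degree must be at least $2$. I would also want to show that $G$ has small cuts of a specific controlled shape, and that certain small subgraphs (triangles, bonded pairs, etc.) are absent or highly restricted.

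The core of the proof would be a large catalogue of \emph{reducible configurations}, matching those set up in the excerpt's macro definitions. Each configuration $(a), (b), \dots, (u)$ identifies a small local pattern (typically involving one or two ``special'' vertices of degree $2$, $3$, or $4$) together with a \emph{reduction} to a smaller planar graph $G'$ (obtained by contracting, deleting, or adding edges) and a \emph{recoloring} procedure that turns a $\lfloor n'/2 \rfloor$-path decomposition of $G'$ into a $\lfloor n/2 \rfloor$-path decomposition of $G$. The prototypical case is configuration $(a)$: a degree-$2$ vertex $u_1$ with non-adjacent neighbors $v_1,v_2$; we add the edge $v_1v_2$ in $G'$, decompose $G'$ by induction, and ``deviate'' the path through the new edge onto the two original edges via $u_1$. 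Most reductions are refinements of this idea, together with parity tricks (extending an odd-ending path through one extra edge) to save the one path that distinguishes $\lfloor n/2 \rfloor$ from $\lceil n/2 \rceil$. A careful accounting shows that for each configuration the recoloring indeed produces a valid decomposition with the correct number of paths, which contradicts the choice of $G$.

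To handle the cases where no purely local reduction applies — typically vertices of degree $3$ or $4$ whose neighborhoods are heavily interconnected — I would invoke the global structure and work with \emph{$K_4$-subdivisions} and \emph{$C_{4+}$-subdivisions} rooted at the remaining ``special'' vertices, as hinted by the macros $\C_H, \C_{D1}, \C_{D2}, \C_{T1}, \C_{T2A}, \C_{T2NA}, \ldots$. The idea is to pick such a subdivision $S$, classify each special vertex as \emph{settled}, causing a \emph{close problem}, or causing a \emph{distant problem}, and then combine a suitable $2$-colouring of $S$ (Claim~\ref{clm:inact}-style ``inactivation'' of distant problems) with pattern-based treatments ($\C_V, \C_N, \C_U, \C_{Da}, \C_{Db}, \C_{T1}, \C_{T2A}, \C_{T2NA}$) for the close problems, so that each such subdivision configuration is itself reducible. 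The enumeration of configurations $R_1,\ldots,R_9$ and $J_1,\ldots,J_6$ plus $D_1,\ldots,D_4$ would exhaust the possible interaction patterns.

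The main obstacle, and the bulk of the paper, is the \emph{unavoidability} step: proving that a $2$-edge-connected planar graph satisfying all the forbidden-configuration conclusions above cannot exist unless it is $K_3$ or $K_5^-$. I would prove this with a discharging argument on a planar embedding of $G$: assign an initial charge $d(v)-4$ to each vertex and $2\ell(f)-4$ to each face (or a similar scheme whose total is $-8$ by Euler's formula), then design discharging rules that redistribute charge from ``rich'' faces and high-degree vertices toward low-degree vertices and short faces. The absence of each reducible configuration forces enough structural constraints on the local degree/face pattern that every vertex and face ends with non-negative charge, contradicting the negative total. Correctly choosing the discharging rules so that \emph{every} residual local pattern matches one of $(a)$–$(u)$, $R_i$, $J_i$, or $D_i$ — and verifying this by exhaustive analysis of the possible neighborhoods of low-degree vertices, including the tricky interactions across $K_4$- and $C_{4+}$-subdivisions — is the most delicate and lengthy part of the argument, and is what distinguishes the planar case from the still-open general conjecture.
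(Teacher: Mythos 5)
Your outline of the reducible-configurations machinery (local $(C_I)$ rules with deviations and parity extensions, then $K_4$- and $C_{4+}$-subdivisions with settled vertices, close/distant problems, inactivating $2$-colorings, and the pattern catalogue) tracks the paper faithfully. But there are two genuine gaps. First, your unavoidability step is wrong in kind: you propose a discharging argument to show that a graph avoiding all configurations cannot exist, with charges $d(v)-4$ and $2\ell(f)-4$ and rules forcing every residual local pattern to match one of $(a)$--$(u)$, $R_i$, $J_i$, $D_i$. The difficulty is that the configuration the paper actually needs, $(C_{II})$, is not a local degree/face pattern: it is a $4$-family of degree-$5$ vertices \emph{with respect to which the graph is almost $4$-connected}. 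That connectivity hypothesis is indispensable, because the whole $(C_{II})$ reduction rests on Theorem~\ref{thm:YU42bis} (Yu's theorem), which produces the rooted $K_4$- or $C_{4+}$-subdivision only in the absence of $3$-cuts separating the roots; four degree-$5$ vertices found by discharging carry no such guarantee, and no system of local discharging rules can certify a global cut condition. The paper's Lemma~\ref{lem:nomce} instead works with minimal $2$-contractions: Claim~\ref{cl:struct3} chooses a $3$-cut $\{x_1,x_2,x_3\}$ minimizing the interior $I(\{x_1,x_2,x_3\})$, applies Euler's formula once inside that interior to find four degree-$5$ vertices, and then shows by a cut-exchange argument that no $3$-cut can separate them. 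Your plan is missing this idea entirely, and as stated the discharging step would not connect to the reductions it is supposed to feed.

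Second, your inductive recoloring step silently assumes the reduced graph $G'$ admits a $\lfloor n'/2\rfloor$-path decomposition, but $G'$ may be disconnected with $K_3$ or $K_5^-$ components --- precisely the excluded exceptions --- so the minimality of $G$ gives you nothing for those components. The paper has to work around this: it colors such components with cycles, and then uses Lemma~\ref{L21} (a cycle and a path meeting in at most five vertices decompose into two paths unless they form the exceptional graph) together with a careful analysis ruling out the exceptional graph (Observation~\ref{obs1}, and the dedicated arguments in Lemmas~\ref{lem:safety_ci} and~\ref{lem:safety_cii}, including special handling of rules \nameXXlp{} and \nameXXg{}). Without this cycle-absorption mechanism your induction does not close, so ``a careful accounting shows the recoloring produces a valid decomposition'' overstates what the local rules alone can deliver.
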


\begin{figure}[htb]
    \centering
    \begin{tikzpicture}[scale=1,auto]
        \node[blacknode] (u1) at (0,0) {};
        \node[blacknode] (u2) at (0,2) {};
        \node[blacknode] (u3) at (30:2) {};
        \myCEdge{u1}{u2}{blue}{2}
        \myCEdge{u3}{u2}{red}{1}
        \myCEdge{u3}{u1}{red}{1}
        \node[ghost] (u000) at (2,0) {};
    \end{tikzpicture}
    \begin{tikzpicture}[scale=1,auto]
        \node[blacknode] (u1) at (0,0) {};
        \node[blacknode] (u2) at (0,2) {};
        \node[blacknode] (u3) at (30:2) {};
        \node[blacknode] (u4) at (0.5,1) {};
        \node[blacknode] (u5) at (3,1) {};
        \myCEdge{u1}{u2}{red}{1}
        \myCEdge{u2}{u4}{red}{1}
        \myCEdge{u4}{u3}{red}{1}
        \myCEdge{u3}{u5}{red}{1}

        \myCEdge{u4}{u1}{green}{2}
        \myCEdge{u1}{u3}{green}{2}
        \myCEdge{u3}{u2}{green}{2}
        \myCEdge{u2}{u5}{green}{2}

        \myCEdge{u1}{u5}{blue}{3}

        \node[ghost] (u000) at (-1,0) {};
    \end{tikzpicture}
        \caption{A $2$-path decomposition of $K_3$ (left) and a $3$-path decomposition of $K_5^-$ (right)}
    \label{fig:K3K5}
\end{figure}
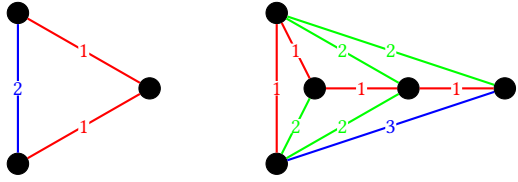


To prove this result, we proceed with a standard approach for coloring problems, by considering a planar graph that is a counterexample to our theorem and is vertex-minimum with respect to this property.
We prove that such a \emph{minimum counterexample} (MCE) does not contain a certain set of configurations, by providing for each of these configurations a reduction rule that takes advantage of the properties of the MCE and yields a contradiction. This technique is widely used in the literature on graph coloring and on Gallai's conjecture (\cite{BP19,CHU2021112212,Botler_2018_triangle_free,Botler2017}).
More precisely, these reducible configurations deal with vertices of small degree (at most $5$), and after showing that our MCE cannot contain any of these configurations (Lemma~\ref{lem:mce}, p.~\pageref{lem:mce}, in Section~\ref{sec:mce}), we know that it has mostly vertices of degree at least $6$.
We finally use Euler's formula and structural arguments to prove that there is no such graph (Lemma~\ref{lem:nomce}, p.~\pageref{lem:mce}, in Section~\ref{sec:nomce}).

\section{Preliminaries}

The graphs we consider are undirected, finite and without loops or multi-edges.
In a graph $G = (V,E)$ and given a subset $X \subseteq V$ of vertices, we denote $N(X) = \{v\in V\ \vert\ \exists u\in X, uv \in E\}$ the \emph{neighborhood} of $X$. The subgraph of $G$ \textbf{induced by} $X$ is the graph with vertices $X$ and edges $\{uv\in E\ \vert\ u,v\in X\}$, denoted $G[X]$. An \emph{$i$-vertex} is a vertex $v$ of degree $i$ in $G$, and we denote $i=d_G(v)$.

A \emph{path} of a graph $G$ is a finite sequence of edges $(v_1v_2,v_2v_3,\dots,v_{k-1}v_k)$ of $G$, denoted $P = (v_1,v_2,\dots,v_k)$, and such that all $v_1,\dots,v_k$ are pairwise distinct. We denote $V(P) = \{v_1,\dots,v_k\}$. We say that a vertex \textbf{touches} $P$ if it belongs to $V(P)$. The \emph{internal vertices} of $P$ are $v_2,\dots,v_{k-1}$ and its \emph{ends} are $v_1,v_k$. We say that two paths are \emph{internally disjoint} if they have no internal vertex in common. We also call $P$ a \emph{$(v_1,v_k)$-path}. A \emph{section} $Q = (v_i,\dots,v_j)$ of a path $P = (v_1,\dots,v_k)$, $1\leq i < j\leq k$ is a subsequence of consecutive edges of $P$. The \emph{length} $l(P)$ of a path $P = (v_1,\dots,v_k)$ is $k$. We say that two vertices $u,v$ are \emph{at distance} $k$ if the minimum length of a $(u,v)$-path is $k$.
For simplicity, we denote some paths by a sequence of subpaths: if $(Q_i)_{i=1..k}$ is a family of edge-disjoint paths, we may denote the path $P = (Q_1,Q_2,\dots,Q_k)$; we may also denote it by an alternation of vertices and subpaths: $P = (v_1,Q_1,v_2,\dots,v_{k-1},Q_k,v_k)$.

A path $P = (v_1,\dots,v_k)$ of a graph $G$ has a \emph{chord} if there is an edge $v_iv_j\in E(G)$ such that $v_i$ and $v_j$ belong to $V(P)$ but are not consecutive in $P$. We say that $P$ is \emph{chordless} if it has no chord.

A \emph{cycle} is a finite sequence of edges $(v_1v_2,\dots,v_kv_1)$, denoted $C = (v_1,v_2,\dots,v_k)$. Its \emph{length} is $k$.

\label{prelim:ksubd}
Given two graphs $G,K$, a \emph{$K$-subdivision} in $G$ is a subgraph of $G$ that is isomorphic to the graph obtained by replacing the edges of $K$ by internally disjoint paths of positive length. The \emph{roots} of the subdivision are the images in $G$ of the vertices of $K$, and the \emph{paths} of the subdivision are the images in $G$ of the edges of $K$. We say that a $K$-subdivision is \emph{chordless} if its paths are chordless. Given a set $U$ of vertices of a graph $G$, we say that a $K$-subdivision of $G$ is \emph{rooted on $U$} if its roots are exactly the vertices of $U$. Given a $4$-family $U = \{u_1,u_2,u_3,u_4\}$, the subdivisions we use in this paper are $K_4$-subdivisions and $C_{4+}$-subdivisions rooted on $U$, where $K_4$ is the clique on $4$ vertices and $C_{4+}$ is the graph made up of a cycle on $4$ vertices $U = \{x_1,x_2,x_3,x_4\}$ and two additional parallel edges $x_1x_3, x_2x_4$.
We say that two subdivisions $S,S'$ have the same \emph{type} if $S,S'$ are both $K_4$-subdivisions or both $C_{4+}$-subdivision.
For $u_i,u_j\in U$, we denote $u_i\sim u_j$ the $(u_i,u_j)$-path of a $K_4$- or $C_{4+}$-subdivision when there is no ambiguity.

To talk about path decompositions, the framework we adopt deals with paths as colors. We say that a ($k$-)\emph{coloring} of a graph $G = (V,E)$ is a function $c : E \rightarrow \llbracket 1,k \rrbracket$, with $k\in\mathbb{N}\setminus\{0\}$, such that the edges with the same color form a path. We denote $\vert c\vert = k$ the number of colors used in the coloring. We say that a color $x$ \emph{induces} a path $P$ if $E(P) = c^{-1}(\{x\})$. A \emph{good coloring} of a connected graph $G$ with $n$ vertices is a $\left \lfloor{\frac{n}{2}}\right \rfloor$-coloring.
A color $x$ \emph{ends} on a vertex $v$ if the path induced by the color $x$ ends on $v$.

To describe the $2$-coloring of a $K_4$-subdivision or a $C_{4+}$-subdivision $S$ rooted on $\{u_1,u_2,u_3,u_4\}$, we use the notation $\{red = (u_{i_1} \rightarrow u_{i_2} \rightarrow u_{i_3} \rightarrow u_{i_4}), blue = (u_{j_1} \rightarrow u_{j_2} \rightarrow u_{j_3} \rightarrow u_{j_4})\}$ for $i_1,i_2,i_3,i_4$ and $j_1,j_2,j_3,j_4$ two permutations of $1,2,3,4$. This notation means that we decompose $S$ into two paths $P_{red} = (u_{i_1}\sim u_{i_2}, u_{i_2}\sim u_{i_3}, u_{i_3}\sim u_{i_4})$ and $P_{blue} = (u_{j_1}\sim u_{j_2}, u_{j_2}\sim u_{j_3}, u_{j_3}\sim u_{j_4})$. The decompositions of this kind that we consider throughout the paper feature each edge of $S$ exactly once.
We sometimes insert vertices that are not roots in between the vertices from $U$ to describe the paths we take more precisely: the notation $(~\dots$~$\rightarrow u_i \rightarrow v \rightarrow u_j \rightarrow \dots)$ means that the path $u_i\sim u_j$ considered is the $(u_i,u_j)$-path of the subdivision that contains the vertex $v$.

We call \emph{minimum counterexample} (MCE) a planar graph that is distinct from $K_3$ or $K_5^-$, which does not admit a good coloring and is vertex minimum with respect to this property.

Given a planar graph $G$, a \emph{$2$-family} is a set $U$ of two vertices of $G$ of degree at most $4$.
A \emph{$4$-family} is a set of four $5$-vertices.
We say that a graph $G$ with a $4$-family $U$ is \emph{almost $4$-connected} w.r.t. $U$ if it does not contain a $3$-cut $A = \{a_1,a_2,a_3\}\subseteq V(G)$ 
that separates two vertices $u,u'\in U$ or two neighbors of some vertex $u\in U\cap A$.


\section{Minimal counterexamples properties}\label{sec:mce}

A \emph{configuration} $C$ is a property satisfied by a graph.
The configurations we consider are usually defined locally as specifications on the neighborhoods of some vertices in the graph, and the possible presence (or absence) of some paths between them.
By abuse of language, we say that a graph $G$ \emph{contains} a configuration $C$ when $G$ satisfies the properties of $C$.

The following lemma is the main result of this section and the next, and helps us prove the main theorem in Section~\ref{sec:nomce}. We prove the first property as Lemma~\ref{lem:ci} at the end of this section and the second property as Lemma~\ref{lem:cii} (p.~\pageref{lem:cii}) in Section~\ref{sec:cii}.

\begin{lem}\label{lem:mce} An MCE does not contain any of the following configurations:
    \begin{itemize}
        \item $(C_I)$: a $2$-family;
        \item $(C_{II})$: a $4$-family with respect to which the MCE is almost $4$-connected.
    \end{itemize}
\end{lem}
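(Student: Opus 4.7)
The plan is to treat the two parts separately, each by contradiction, and exploit the vertex-minimality of the MCE $G$ to extend a good coloring from a smaller graph $G'$ back to $G$.

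For part $(C_I)$ (handled as Lemma~\ref{lem:ci} in this section), I would assume $G$ contains a $2$-family $U=\{u_1,u_2\}$, i.e.\ two special vertices of degree at most $4$, and argue by a large but finite case analysis on the local structure of $N(U)$. The catalogue of rules \nameXXCOa, \nameXXa, \nameXXd, \nameXXbb, \nameXXc, \nameXXe--\nameXXg, \nameXXh--\nameXXp, \nameXXq--\nameXXt already built up in the preliminaries is designed precisely for this: each rule fixes the degrees of $u_1,u_2$, their adjacency, and the common/private neighbors, and prescribes a \textbf{reduction} $G\mapsto G'$ (deleting $u_1,u_2$ and possibly adding or removing edges among the neighbors) together with a \textbf{recoloring} procedure that extends a good coloring of $G'$ to one of $G$. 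The core argument at each step is uniform: $G'$ is strictly smaller, hence either admits a good coloring by minimality, or is one of the excluded graphs $K_3$ or $K_5^-$, which I must exclude on an ad hoc basis (this is where I would use that $G$ is planar and connected, together with the structure of $K_3$ and $K_5^-$, to show the reduced graph is neither). The recoloring in each rule extends the coloring of $G'$ to $G$ while gaining at most one extra color, so the resulting coloring uses $\lfloor|V(G')|/2\rfloor+1=\lfloor n/2\rfloor$ colors, contradicting the fact that $G$ is a counterexample. The main non-trivial step is showing the case analysis is exhaustive: I would organize by $d(u_1)+d(u_2)$, then by $|N(u_1)\cap N(u_2)|$, then by the presence/absence of edges among the common and private neighbors, verifying that in every resulting case one of the listed rules applies.

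For part $(C_{II})$, which will be proved as Lemma~\ref{lem:cii} in \chCII, I would assume $G$ contains a $4$-family $U=\{u_1,u_2,u_3,u_4\}$ of $5$-vertices with respect to which $G$ is almost $4$-connected. First I would invoke an auxiliary structural lemma (to be proved at the start of \chCII, using Menger's theorem applied to the almost-$4$-connectivity hypothesis) to produce either a $K_4$-subdivision or a $C_{4+}$-subdivision $S$ of $G$ rooted at $U$, chosen with additional minimality/planarity properties (such as property~A, which constrains chords from special vertices to $S$). Each special vertex $u_i$ then has some \emph{remaining neighbors} outside the path of $S$ it is rooted on, and is classified as \emph{settled}, a cause of a \emph{close problem} (a shared remaining neighbor with another $u_j$), or a cause of a \emph{distant problem} (a remaining neighbor lying on a distant path of $S$). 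The proof then proceeds by showing that whichever combination of problems arises, one of the configurations $R_1$--$R_9$, $D_1$--$D_4$, or $J_1$--$J_6$ must occur, and each of these comes equipped with a reduction/recoloring procedure using the patterns \CV, \CN, \CU, \CDA, \CDB, \CTTA, \CTTNA, \CVp\ etc.\ from the pattern catalogue. Each such procedure either directly extends a good coloring of a smaller graph $G'$ to $G$, or reroutes the subdivision $S$ into a new subdivision $S'$ in which the remaining patterns are compatible and can be realized.

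The hard part will be the exhaustiveness of the case analysis in $(C_{II})$. Close problems can involve $2$, $3$, or $4$ special vertices; distant problems can occur on any of the six $K_4$-paths or four $C_{4+}$-paths; and the two kinds of problems can coexist. Planarity must be used at almost every step to rule out $K_{3,3}$- or $K_5$-minors that would cause incompatible patterns, and the almost-$4$-connectivity hypothesis is needed both to guarantee the existence of the subdivision $S$ and to rule out small cuts that would let problems overlap in pathological ways. The overall structure of the proof will be: first reduce to the case where $S$ can be chosen strong and satisfying property~A; then split on the type of $S$ ($K_4$ versus $C_{4+}^*$); then split on the number of close problems ($0,1,2,3,4$); within each case verify that one of the listed configurations $R_i$, $D_i$, $J_i$ applies; and finally invoke its reduction. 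The bookkeeping to check compatibility of patterns — in particular that the simultaneous use of several patterns on the same subdivision yields a valid coloring of $G$ — is the place where the argument is most delicate, and I would isolate it in a preparatory claim (referred to in the excerpt as Claim~\ref{clm:inact}) guaranteeing that a suitable $2$-coloring of $S$ inactivates any remaining distant problems.
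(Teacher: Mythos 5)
Your blueprint reproduces the paper's architecture almost exactly (split into Lemma~\ref{lem:ci} and Lemma~\ref{lem:cii}; rule catalogue plus exhaustive case tree for $(C_I)$; strong subdivision, patterns, and the distant/close problem trichotomy with configurations \nameDOne--\nameDFour, \nameJOne--\nameJSix, \nameRuOne--\nameRuNine for $(C_{II})$), but two specific steps of your plan would fail as stated. First, you propose to dispose of the $K_3$/$K_5^-$ obstruction by showing ``the reduced graph is neither.'' This cannot be done: $G'$ may be disconnected, and it may genuinely contain $K_3$ or $K_5^-$ \emph{components} --- for instance, in rule \nameXXg{} the vertices $v_1,v_2,v_3,v_4$ together with a fifth vertex $v_7$ can form a $K_5^-$ component of $G'$, and rule \nameXXlp{} can leave a $K_3$ component through $v_2$. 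The paper's Lemma~\ref{lem:safety_ci} accordingly does not exclude such components; it colors each $K_3$ with a cycle and each $K_5^-$ with a cycle plus a path, transfers this mixed paths-and-cycles coloring to $G$, and then iteratively eliminates every cycle by pairing it with a path and invoking Lemma~\ref{L21} (the cycle-plus-path decomposition lemma of \cite{CHU2021112212}), using Observation~\ref{obs1} to certify that the pair never forms the exceptional graph --- with genuinely ad hoc arguments exactly in the \nameXXg{} and \nameXXlp{} cases. Without this cycle-absorption mechanism your induction on the MCE does not close, since ``$G'$ admits a good coloring by minimality'' is simply unavailable when a $K_3$ or $K_5^-$ component appears.

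Second, you assert that the rooted $K_4$- or $C_{4+}$-subdivision in part $(C_{II})$ follows from ``Menger's theorem applied to the almost-$4$-connectivity hypothesis.'' Menger yields disjoint paths between two prescribed vertex sets, but the existence of a $K_4$-subdivision rooted at four \emph{prescribed} vertices is a far stronger statement and is false in general under the no-small-cut hypothesis alone: the paper must invoke Yu's theorem (Theorem~\ref{thm:YU42}, extended in Theorem~\ref{thm:YU42bis} to drop $3$-connectivity), whose conclusion admits two exceptional outcomes, the $N_1$- and $N_2$-graphs; Lemma~\ref{lem:k4c4p} then extracts a chordless $C_{4+}^*$-subdivision from each exceptional outcome, verifying the ``$0$-linked,'' ``$1$-linked,'' and ``$2$-linked'' properties by planarity and almost-$4$-connectivity arguments ($K_{3,3}$- and $K_5$-minor exclusions). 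Your sketch has no counterpart for these obstructions, yet they are precisely the reason the entire $C_{4+}$ branch of the case analysis (configurations \nameJFive, \nameJSix, \nameRuEight, \nameRuNine, and Claim~\ref{clm:noCH}) exists; a proof that only ever produces a $K_4$-subdivision would be incomplete. The remainder of your outline --- the redirection procedure, the routing operation, the compatibility bookkeeping, and the use of Claim~\ref{clm:inact} to inactivate residual distant problems --- matches the paper's proof faithfully.
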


\ifthenelse{\equal{\isThesis}{true}}
{In this {\chapsec}, we prove Lemma~\ref{lem:ci} (p.~\pageref{lem:ci}), which constitutes the first property of the main lemma of our proof, Lemma~\ref{lem:mce} in the previous {\chapsec}.

\medskip}
{}

To prove that an MCE $G$ does not contain a configuration $\C$, we proceed by contradiction: we assume that $G$ does contain the configuration, then use its nature of MCE to build a good coloring of $G$. The only configurations $\C$ we consider are specifications of $(C_I)$ and $(C_{II})$. To build a good coloring of $G$, we start by removing the two or four \emph{special vertices} forming the $2$- or $4$-family of $\C$, along with their incident edges. Depending on the case, we add or remove some edges from the obtained graph, and define the resulting graph as the \emph{reduced graph} $G'$. Ideally, this graph is connected and not a $K_3$ or $K_5^-$: in this case, since it is smaller than the minimum counterexample $G$, it admits a good coloring, that we call \emph{pre-coloring}. We then adapt this pre-coloring to the original graph $G$, and since $G$ has two or four more vertices than $G'$, we may use $1$ additional color (for $(C_I)$) or $2$ (for $(C_{II})$) to color $G$. The cases where $G'$ is disconnected are equally easy, unless $G'$ has some $K_3$ or $K_5^-$ connected components, in which case a complementary method is used to combine a ``bad'' coloring of these components with the rest of the coloring.

In the present {\chapsec},
we split the configuration $(C_I)$ into simpler, specified cases, and for each of them provide a \emph{reduction rule} describing the adaptation of the pre-coloring to a good coloring of $G$.
The general method consists in fixing a shortest path $P$ between the two special vertices. The edges of $P$ are removed alongside the special vertices and possibly some additional edges, in order to obtain the reduced graph $G'$. In $G$, the edges of $P$ are then colored with the extra color. This has the advantage of having an end of this extra color on each of the two special vertices, and the path induced by the extra color can be conveniently extended to help cover all the edges of $G$ that were missing in $G'$.

This is the method used when the two special vertices are sufficiently distant from each other, and in this case the adapting methods for each are independent and can be defined separately. Figure~\ref{fig:ci_half} depicts a $(C_I)$ configuration where the two special vertices $u_1,u_2$ are at distance at least $3$, and their neighborhoods are taken care of with two independent \emph{elementary partial rules} that when combined form a complete reduction rule. The extra color induces the path $P$ in red. These rules are defined in \ifthenelse{\equal{\isThesis}{true}}
{Section~\ref{subsec:red_ci}}
{Subsection~\ref{subsec:red_ci}}
 on page~\pageref{half_ci_rules}.

\renewcommand{\MyScale}{1}
\renewcommand{\OrdonneeFleche}{0.7}

\begin{figure}[ht]
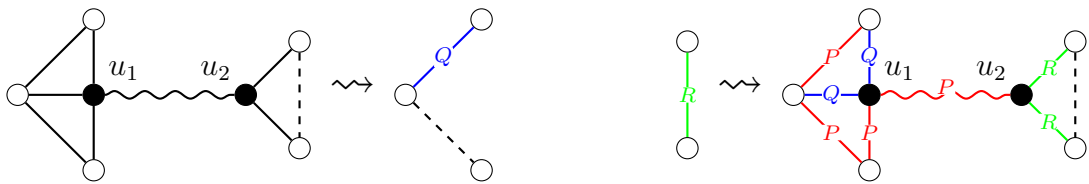

\centering
\RuleExOne
\caption{A $(C_I)$ reduction rule made up of two elementary partial rules}
\label{fig:ci_half}
\end{figure}

When instead the special vertices are too close to each other and share some neighbors, two elementary partial rules would interfere with each other and possibly create some cycles in the decomposition. In these cases, we discard the shortest path and instead use a custom reduction rule to treat the neighborhoods of both special vertices at once. These rules correspond to rules \ncf{\cfCZa}, \ncf{\cfXXa}, $\dots$, \ncf{\cfXXup} in
\ifthenelse{\equal{\isThesis}{true}}
{Section~\ref{subsec:red_ci}}
{Subsection~\ref{subsec:red_ci}}
 on page~\pageref{full_ci_rules}. Figure~\ref{fig:ci_full} features an example of such a rule, and the extra color is again represented in red as the path $P$.

\renewcommand{\OrdonneeFleche}{1.4}

\begin{figure}[ht]
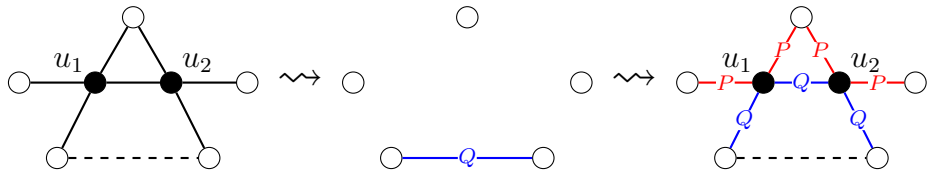

\centering
\RuleExFour
\vspace{-0.5cm}
\caption{A $(C_I)$ reduction rule treating close special vertices}
\label{fig:ci_full}
\end{figure}

The next
\ifthenelse{\equal{\isThesis}{true}}
{section}
{subsection}
 introduces the reduction rules we need to treat the $(C_I)$ configurations. The generalization to this method for $(C_{II})$ configurations is presented at the beginning of 
{\chCII}, on 
\ifthenelse{\equal{\isThesis}{true}}
{page~\pageref{ch:cii}.}
{page~\pageref{sec:cii}.}


\ifthenelse{\equal{\isThesis}{true}}
{\section{Reduction rules for $(C_I)$ configurations}\label{subsec:red_ci}}
{\subsection{Reduction rules for $(C_I)$ configurations}\label{subsec:red_ci}}
This notion of reducible subgraph has been used in previous
\ifthenelse{\equal{\isThesis}{true}}
{works~\cite{BP19,Botler2017,Botler_2018_triangle_free,botler2019gallais,Botler_2020}.}
{works~\cite{botler2019gallais,Botler_2020,BP19,Botler_2018_triangle_free,Botler2017}.}
We present here a formalism appropriate for our subgraphs.

A \emph{reduction rule} $\mathcal{R} = (\C, f^r, f^c)$ is composed of a configuration $\C$, a \emph{reduction function} $f^r$ and a \emph{recoloring function} $f^c$. The configuration distinguishes a $2$-family or $4$-family $U$, that we call \textbf{special vertices}. Given a planar graph $G$ that contains the configuration $\C$, we call $f^r(G)$ the \emph{reduced graph} $G'$, whose vertex set is $V(G') = V(G)\setminus U$ and some of its edges were added or removed from $G$.

Given $G$ and a coloring $pc$ (called \emph{pre-coloring}) of the reduced graph $G' = f^r(G)$, the recoloring function $f^c(G,pc)$ gives a coloring of $G$.

For instance, let us consider the rule shown in Figure~\ref{fig:exRule}, whose formalism will be fully described in the next
\ifthenelse{\equal{\isThesis}{true}}
{section. }
{subsection. }
The configuration of the rule is the following: the graph contains at least 5 vertices $u_1, u_2, v, v_1, v_2$ and potentially a vertex $v_3$ such that $u_1$ is a $3$-vertex adjacent to $v_1, v$ and $u_2$; the vertex $u_2$ is adjacent to $u_1, v, v_2$ and potentially $v_3$, but not to other vertices; and $v_2$ is a vertex of even degree. The reduction function consists in removing every edge incident to $u_1$ and $u_2$. As $v_2$ has an even degree in $G$, it has an odd degree in the reduced graph $G'$. Hence it is the end of a path $Q$. The recoloring function is the following: color the edges $(v_2,u_2)$ and $(u_2,u_1)$ with the color of $Q$, use a new color to color the edges $(v_1,u_1),(u_1,v),(v,u_2)$ and $(u_2,v_3)$ if $u_2$ is a 4-vertex, to form a new path $P$. For all other edges, use colors of $pc$.

\begin{figure}[thb]
\centering
\renewcommand{\MyScale}{1}
\renewcommand\OrdonneeFleche{1.2}
\RuleXXa
{\ifthenelse{\equal{\isThesis}{true}}
{\caption{Example of a reduction rule. The formalism used to describe reduction rules is given in Section~\ref{subsec:red_ci}.} \label{fig:exRule}}
{\caption{Example of a reduction rule. The formalism used to describe reduction rules is given in Subsection~\ref{subsec:red_ci}.} \label{fig:exRule}}
}
\end{figure}

A reduction rule $\mathcal{R} = (\C,f^r,f^c)$ is \emph{valid} if for any planar graph $G$ that contains the configuration $\C$, then the reduced graph $G' = f^r(G)$ is planar; for any path coloring $pc$ of $G'$, $f^c(G,pc)$ is a path coloring; and for any coloring $pc$ of $G'$, $\vert f^c(G,pc)\vert - \vert pc\vert \leq \lfloor\frac{\vert V(G)\vert - \vert V(G')\vert}{2}\rfloor$.
One can easily check that the rule of Figure~\ref{fig:exRule} is valid.

The rest of 
this {\chapsec}
and the entirety of 
{\chCII}
are dedicated to describing a set of configurations that cover all the cases of Lemma~\ref{lem:mce}, and providing a resolution rule for each of these configurations.
For each rule we provide, we justify that it is valid.
The existence of a valid rule $\mathcal{R}=(\C,f^r,f^c)$ is in itself not enough to guarantee that an MCE cannot contain the configuration $\C$. Indeed, the reduced graph could contain a $K_3$ or $K_5^-$ connected component, and therefore not admit a good coloring. However, we argue in Lemma~\ref{lem:safety_ci} (on page~\pageref{lem:safety_ci}, for rules associated with $(C_I)$ configurations) and Lemma~\ref{lem:safety_cii} (on page~\pageref{lem:safety_cii}, for rules associated with $(C_{II})$ configurations) that our rules are sufficient to build a good coloring of the MCE regardless of the presence of $K_3$ or $K_5^-$ components in the reduced graph.


\ \\
\ifthenelse{\equal{\isThesis}{true}}
{}
{\vfill\pagebreak}

The rest of 
this {\chapsec}
is dedicated to proving the first part of Lemma~\ref{lem:mce} (p.~\pageref{lem:mce}), which we reformulate as the following lemma.

\begin{lem}
\label{lem:ci}
An MCE does not contain a configuration $(C_I)$.
\end{lem}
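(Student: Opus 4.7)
The plan is to proceed by contradiction. Suppose an MCE $G$ contains a $2$-family $\{u_1,u_2\}$, i.e.\ two vertices each of degree at most $4$. My goal is to exhibit, for every possible local configuration of $u_1$ and $u_2$ in $G$, a valid reduction rule. Each such rule produces a planar reduced graph $G'$ with $\vert V(G')\vert=\vert V(G)\vert-2$; by minimality of the counterexample $G$, the graph $G'$ admits a good coloring, and the recoloring function of the rule then turns it into a good coloring of $G$ using at most one additional color. This contradicts that $G$ is an MCE.

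I will split the analysis according to the distance $d_G(u_1,u_2)$. When $d_G(u_1,u_2)\ge 3$, the neighborhoods $N(u_1)$ and $N(u_2)$ are disjoint, so a reduction rule can be assembled by combining two independent \emph{elementary partial rules}, each depending only on the local structure at a single special vertex (its degree, which neighbors are pairwise adjacent, and the parities of the neighbor degrees). The construction template is: fix a shortest $(u_1,u_2)$-path $P$; delete $u_1,u_2$ together with their incident edges; possibly add or remove one balancing edge among the neighbors of a special vertex so that the relevant parities in $G'$ are correct; take a pre-coloring of $G'$; finally color the edges of $P$ with one fresh color and extend this fresh color along the edges incident to $u_1$ and $u_2$ that do not lie on $P$, while deviating or extending appropriate existing color paths of $G'$ through the removed vertices as prescribed by the partial rule.

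When $d_G(u_1,u_2)\le 2$, the two partial rules can interact badly — they may recreate a short cycle using the fresh color, or demand the same color on conflicting edges — so I will instead provide a bespoke rule for each possible close configuration. The case analysis is organized by: (i) whether $u_1u_2\in E(G)$, (ii) the size of $N(u_1)\cap N(u_2)$, (iii) how many pairs of common neighbors are adjacent, (iv) whether some common neighbor lies on a triangle through $u_1u_2$, and (v) the parities of the degrees of the relevant neighbors. For each resulting case I specify which edges of $G$ to delete and which to add in forming $G'$, how to extend or deviate existing paths of the pre-coloring across $u_1$ and $u_2$, and where the single new color is placed. Correctness of each individual rule (planarity of $G'$, every output color class being a path, and the one-new-color budget) is verified by direct inspection of the accompanying figure.

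The main obstacle is twofold. The combinatorial one is \emph{exhaustiveness}: showing that every near-neighborhood of a $2$-family falls into one of the enumerated cases; this will be handled by a systematic finite case split on the parameters above, using the degree bound $d_G(u_i)\le 4$ to keep the list small. The more delicate obstacle is that the reduced graph $G'$ might fail to be admissible for the minimality argument — it could be disconnected, or contain a $K_3$ or $K_5^-$ connected component, which has no good coloring. I will handle this through an auxiliary safety lemma (Lemma~\ref{lem:safety_ci}) showing that even when exceptional components appear in $G'$, one can combine a non-good coloring of each such component with the pre-coloring of the remainder and the free color introduced by the reduction rule to still obtain a good coloring of $G$. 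Once this safety lemma and the exhaustive list of valid rules are both in place, together they yield the desired contradiction and hence Lemma~\ref{lem:ci}.
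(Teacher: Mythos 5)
Your proposal is correct and follows essentially the same architecture as the paper: a library of reduction rules (composite elementary partial rules built around a shortest path when the special vertices' neighborhoods barely interact, bespoke rules when they overlap), a validity check for each rule, a safety lemma absorbing $K_3$ and $K_5^-$ components of the reduced graph via combining a cycle with a path into two paths, and an exhaustive case analysis showing every $(C_I)$ configuration is covered. The only cosmetic difference is your split by distance $d_G(u_1,u_2)\ge 3$ versus the paper's split by the number of common remaining neighbors (the paper applies composite rules even to adjacent or distance-$2$ special vertices with at most one common neighbor), which merely shifts a few cases between the two regimes.
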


To prove this result let us introduce a first set of reduction rules, defined over local conditions. We show that each reduction rule is valid in Lemmas~\ref{lem:valid1}, p.~\pageref{lem:valid1}, and~\ref{lem:valid2}, p.~\pageref{lem:valid2}.
We then prove that the application of each of these rules on an MCE $G$ is sufficient to provide a good coloring of $G$ (Lemma~\ref{lem:safety_ci}, p.~\pageref{lem:safety_ci}).
Finally, we show (Lemma~\ref{lem:conf_CI}, p.~\pageref{lem:conf_CI}) that the configuration $(C_I)$ is of the form of at least one of the reducible configurations that we list below.

We define the rules with both a graphical and a textual formalism, each of them being self-sufficient.

\paragraph{\textbf{Graphical formalism}}
Each rule $(\C,f^r,f^c)$ is formally defined by a triplet of drawings (see for instance Figure~\ref{fig:exRule}).
The first drawing defines the configuration $\C$ of the rule, the reduction function $f^r$ of defined by the difference between the first two drawings, and finally the third drawing defines the recoloring function $f^c$. Let us first describe the graphical formalism used to define the configuration of a rule.

\renewcommand{\MyScale}{0.8}

The vertices involved in a configuration are represented by circles (%
\tkSymbol{\node[whitenode] (wn) at (0,0) {};} %
 or
\tkSymbol{\node[blacknode] (wn) at (0,0) {};}%
), diamonds (%
\tkSymbol{\node[oddnode] (wn) at (0,0) {};}%
) or squares (%
\tkSymbol{\node[evennode] (wn) at (0,0) {};}%
). The existence of an edge is materialized by a solid line (%
\tkSymbol{\node[blacknode] (d1) at (0,9) {};%
\node[blacknode] (d2) at (1,9) {};%
\draw (d1) -- (d2);}%
) between the vertices. The absence of an edge is materialized by a dashed line (%
\tkSymbol{\node[blacknode] (c1) at (0,8) {};%
\node[blacknode] (c2) at (1,8) {};%
\draw[dashed] (c1) -- (c2);}%
). A waved line (%
\tkSymbol{\node[blacknode] (b1) at (0,7) {};%
\node[blacknode] (b2) at (1,7) {};%
\draw[path] (b1) -- (b2);}%
) represents a path between two vertices that avoids other represented vertices (unless specified).
When an edge is represented by a dash-dotted edge (%
\tkSymbol{\node[blacknode] (u1) at (0,0) {};%
        \node[blacknode] (u2) at (1,0) {};%
        \myCEdge{u1}{u2}{optedge,black}{}%
        }%
), this means that we consider the cases when this edge is present and when it is absent. A solid line with a gray vertex in the middle (%
\tkSymbol{\node[blacknode] (u1) at (0,0) {};%
        \node[blacknode] (u2) at (1,0) {};%
        \node[graynode] (uvm) at (0.5,0) {};%
        \myCEdge{u1}{uvm}{black}{}%
        \myCEdge{uvm}{u2}{black}{}%
        }%
) represents a path of length $1$ or $2$. If it is of length $2$, the middle vertex is distinct from the other vertices on the figure.

When all the incident edges of a vertex are explicitly drawn (with solid or dash-dotted edges), the vertex is represented by a black circle (%
\tkSymbol{\node[blacknode] (wn) at (0,0) {};}%
). Vertices of odd (resp. even) degree are represented by diamonds (%
\tkSymbol{\node[oddnode] (wn) at (0,0) {};}%
) (resp. squares (%
\tkSymbol{\node[evennode] (wn) at (0,0) {};}%
)). A dashed waved line (%
\tkSymbol{\node[blacknode] (b1) at (0,7) {};%
\node[blacknode] (b2) at (1,7) {};%
\draw[path,dashed] (b1) -- (b2);}%
) means that the graph does not contain a path between its endpoints that avoids every vertex represented.

For the second drawing of the triplet, we need a few other conventions, because it also encodes information on the pre-coloring.
Letters $Q,R$ denote paths from the pre-coloring, and the two letters may represent the same path.
If an edge is colored $Q$ and another is colored $\overline{Q}$, this means that they have different colors.
If an edge stays black in the second drawing, it means that the edge keeps its color in the recoloring of $G$.
An incoming arrow colored $Q$ at a vertex means that this vertex is the end of a path colored $Q$. If this arrow is on a half-edge, this means that the last edge of the path is not determined by the figure, 
and one of the drawn vertices could be the other end of the edge.

A black solid (resp. dashed) arrow between a vertex and a path means that the vertex is (resp. is not) on the path (see for example the cases {\nameXXgFive} and {\nameXXgSix} of rule \ncf{\cfXXg}).

The definition of the reduction function is quite straightforward. Every edge that is in the first (resp. second) drawing but not in the second (resp. first) is deleted (resp. added) by the reduction function and both vertices $u_1, u_2$ are deleted (together with their incident edges).

For rules~\ncf{\cfXXg} and \ncf{\cfXXr}, a case analysis is needed to define the recoloring function: the second drawing is split into several copies of the same graph, depicting the different possible properties of the pre-coloring of the reduced graph.

The third graph encodes directly the recoloring function by giving colors explicitly to the edges of $G \setminus G'$ from the pre-coloring of the reduced graph $G'$.


\paragraph{\textbf{Textual formalism}}
%
For each rule $(\C,f^r,f^c)$, we first define textually the configuration $\C$, over local conditions around the special vertices. We then define the reduction function $f^r$ by specifying the edges that are added or removed to form the reduced graph $G'$. The special vertices $u_1,u_2$ and their incident edges are removed in each rule and omitted in the descriptions. Finally, we define the recoloring function $f^c$ by describing the operations applied to a coloring of $G'$ to color all the edges of $G$.

We call \emph{deviation} the recoloring operation that consists in replacing a color inducing a path $P'$ in $G'$ by a color inducing a path $P$ in $G$, such that $P$ is obtained from $P'$ by replacing an edge $vv'$ with a section $(v,u,v')$ of length $2$ or $(v,u,u',v')$ of length $3$, using only special vertices $u,u'$ as internal vertices. An example of deviation is the path $Q$ in rule \ncf{\cfCZa}.

We call \emph{extension} the recoloring operation that extends a path $Q$ induced by a color of $G'$ on several additional edges in the coloring of $G$, those edges having only the endpoints of $Q$ and special vertices as ends.
In particular, when a non-special vertex has an odd degree in $G'$, a color must end on it and we may extend this color in $G$. The rules are frequently defined so as to ``force'' some vertices to have an odd degree in the reduced graph.


When the rule involves $2$ special vertices (which is the case for all of them except \ncf{\cfCZa}), we may use an \emph{extra color} (inducing the red path $P$ on the drawings) to color the graph $G$, as by definition of valid rule, one ($\left\lfloor\frac{\vert V(G)\vert - \vert V(G')\vert}{2}\right\rfloor$) additional color is allowed when adapting the pre-coloring of $G'$ to a coloring of $G$.

\ifthenelse{\equal{\isThesis}{true}}
{\vfill
\pagebreak}
{\medskip}

With all this formalism in mind, we can introduce our first set of rules: \ncf{\cfCZa}, \ncf{\cfXXa}, $\dots$, \ncf{\cfXXup}.
Note that we justify the planarity of the reduced graph and the validity of all the rules in Lemma~\ref{lem:valid1} after the definitions.\\

\ifthenelse{\equal{\isThesis}{true}}
{}
{\vfill
\pagebreak}

\textbf{List of the rules \ncf{\cfCZa}, \ncf{\cfXXa}, $\dots$, \ncf{\cfXXup}:}
\label{full_ci_rules}

\AllRulesDesc

\medskip

  \begin{lem}\label{lem:valid1}
      The reduction rules of configurations \ncf{\cfCZa}, \ncf{\cfXXa}, $\dots$, \ncf{\cfXXup} are valid.
  \end{lem}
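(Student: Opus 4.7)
The plan is to verify the three validity requirements (planarity of $G'$, the fact that $f^c(G,pc)$ is a path coloring, and the color-count bound) uniformly across all listed rules, then point out the handful of rules where a genuine case analysis is needed. For the color-count bound the verification is essentially by inspection: rule \ncf{\cfCZa} removes one special vertex and introduces no new color, while every other rule removes two special vertices and uses at most one extra color $P$; since $\lfloor (|V(G)|-|V(G')|)/2\rfloor$ equals $0$ and $1$ respectively, the inequality $|f^c(G,pc)| - |pc| \leq \lfloor (|V(G)|-|V(G')|)/2\rfloor$ follows immediately.

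For planarity, I would argue once and for all that deleting vertices and edges preserves planarity, so the only concern is the edges added by $f^r$ (namely $v_1v_2$, $v_2v'$, $vv'$, $v'v''$, etc.). In every rule each such added edge connects two vertices that were both neighbors of the same removed special vertex $u_1$ or $u_2$ in $G$; fixing a planar embedding of $G$, one can reroute the new edge inside the face previously occupied by $u_1$ (or along the deleted $u_1u_2$-edge, when the added edge joins neighbors of different special vertices as in \ncf{\cfXXn}, \ncf{\cfXXo}, \ncf{\cfXXh}, \ncf{\cfXXi}, \ncf{\cfXXlp}). This yields a planar embedding of $G'$.

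The bulk of the work is checking that $f^c(G,pc)$ is a path coloring. Here I would split the verification by operation: (i) every \emph{extension} of a path $Q$ from $pc$ onto new edges is allowed because the parity hypothesis of the rule forces $Q$ to end on a chosen vertex of $G'$ (so $Q$ really is extendable) and the special vertices added at the extension end were absent from $G'$, hence not in $Q$; (ii) every \emph{deviation} replacing an edge $vv'$ with a section through a special vertex $u_i$ preserves the path, again because $u_i\notin V(G')$; (iii) the extra color $P$ always traces an edge-simple walk whose internal vertices are either special (hence unused by $pc$) or appear as explicit neighbors of the special vertices, and one inspects each figure to see that no vertex repeats along $P$ and that $P$ is edge-disjoint from every $pc$-color. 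One must also verify that no edge of $G$ is left uncolored: the dash-dotted optional edges in each figure are precisely the edges that are reabsorbed into the extensions/extra color when they exist, which matches the drawings.

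The main obstacle, and the one step where the proof is not purely mechanical, is rule \ncf{\cfXXg} with its six subcases \nameXXgOne--\nameXXgSix: here the recoloring depends on where the vertices $v_5,v_6,v_1,v_2$ lie along the pre-existing path $Q=(Q_1,v_3,v_4,Q_2)$ induced by the color of $v_3v_4$ in $pc$. I would first argue that the six subcases together cover all positions of $v_5,v_6$ relative to the subpaths of $Q$ (this is essentially what the textual description asserts, using that $v_5$ and $v_6$ each avoid at least one named subpath of $Q$), and then verify in each subcase separately that the replacements $Q\rightsquigarrow Q'$ and the extra color $P$ form disjoint simple paths covering the right edge-set, paying particular attention to planarity-based impossibilities that are used implicitly (for instance that $v_6$ cannot meet $R_1$ in \nameXXgThree, or $R_1''$ in \nameXXgFour). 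Rule \ncf{\cfXXr} requires a similar, but much shorter, dichotomy on whether some color of $pc$ ends on $v$ or merely passes through it, and is handled analogously.
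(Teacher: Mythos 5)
Your overall route coincides with the paper's: the color-count bound is checked by inspection (no new color for \ncf{\cfCZa}, exactly one for every two-special-vertex rule), the path-coloring property is verified by noting that only deviations and extensions are performed and each special vertex participates in at most one, and the genuinely non-mechanical work is isolated in \ncf{\cfXXr} (the dichotomy on whether a color of $pc$ ends on $v$) and in the six subcases of \ncf{\cfXXg}, where you correctly identify the planarity-based impossibilities the paper invokes (e.g.\ that $v_6$ cannot meet $R_1$ in {\nameXXgThree} nor $R_1''$ in {\nameXXgFour}). All of that matches the paper's proof in substance.

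There is, however, one genuine gap: planarity of the reduced graph for rule \ncf{\cfXXg}. Your blanket claim that every added edge joins two vertices that were both neighbors of the same removed special vertex --- with the fallback of routing along the deleted $u_1u_2$-edge for \ncf{\cfXXn}, \ncf{\cfXXo}, \ncf{\cfXXh}, \ncf{\cfXXi}, \ncf{\cfXXlp} --- fails exactly at \ncf{\cfXXg}: the added edge is $v_3v_4$, where $v_3$ is a neighbor of $u_1$ only and $v_4$ of $u_2$ only, and in this configuration $u_1,u_2$ are \emph{non-adjacent}, so neither of your rerouting devices applies. Nor does the deleted material provide a corridor: every $(v_3,v_4)$-path in $G$ passes through $v_1$ or $v_2$, vertices that survive in $G'$ together with all their other incident edges, so one cannot simply draw $v_3v_4$ along a removed path. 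The paper closes this case with an argument specific to \ncf{\cfXXg}: deleting $u_1$ makes $v_3,v_1,v_2$ cofacial and deleting $u_2$ makes $v_4,v_1,v_2$ cofacial; if these two faces differ, the hypothesis that $\{v_1,v_2\}$ is a $2$-cut separating $u_1$ from $u_2$ (hence $v_3$ from $v_4$) allows re-embedding one side of the separating pair so that $v_3$ and $v_4$ become cofacial, after which the edge $v_3v_4$ can be added planarly. It is telling that your proposal never uses the $2$-cut hypothesis of \ncf{\cfXXg} for planarity --- that hypothesis exists precisely to make this step work, and without it (or an equivalent re-embedding argument) the planarity verification for this rule is incomplete.
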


  \begin{proof}
	For each rule, we need to check three properties: when the rule applied to a planar graph $G$, the reduced graph $G'$ produced is planar; the recoloring function yields a path coloring (i.e. does not introduce cycles); and the number of additional colors used in the recoloring function is at most $\left\lfloor\frac{\vert V(G)\vert - \vert V(G')\vert}{2}\right\rfloor$.

	For the first property, observe that in all considered rules except rule \ncf{\cfXXg}, an edge $ab$ added by the reduction function replaces a deleted path of length $2$ or $3$ between $a$ and $b$ that goes through $1$ or $2$ special vertices. Hence the planarity is preserved in these cases. Now, let us consider rule \ncf{\cfXXg}. Let $G''$ be the graph obtained from $G$ by removing the vertices $u_1$ and $u_2$ and their incident edges. When removing $u_1$ and its incident edges, we obtain a face $f_1$ incident with $v_3$, $v_1$ and $v_2$. For the same reason, $v_4$, $v_1$ and $v_2$ have a common face $f_2$. If $f_1 \neq f_2$, since $v_1,v_2$ is a separating pair that separates $v_3$ from $v_4$, there exists a planar embedding of $G''$ such that $v_3$ and $v_4$ are on the same face. Hence in both cases there is an embedding of $G''$ such that $v_3$ and $v_4$ are incident with a common face. Hence we can add the edge $v_3v_4$ to obtain $G'$ while preserving the planarity.

	The third property is easy to check, since the rule \ncf{\cfCZa} does not introduce any new color, and all other rules have $2$ special vertices and introduce exactly $1$ new color (the color red on the figures, inducing the new path $P$).

	Finally, let us check the second property. We can easily check on each rule that when the recoloring function $f^c$ is applied on a planar graph $G$ and a coloring $pc$ of the reduced graph $G'$, the coloring $f^c(G,pc)$ provides a color for each edge of $G$ (as a reminder, the edges drawn in black in the second and third drawings of a rule keep their color from the pre-coloring $pc$ in $G$).

	In $G$, the only edges of the new path $P$ are the ones represented in the third drawing of each rules. One can easily check for each rule that these edges form a path.
  For colors used in the pre-coloring $pc$, except in rules \ncf{\cfXXg} and \ncf{\cfXXr}, we only perform two types of modification, deviations and extensions. 
  Since each special vertex is only involved in one such modification, each color of $pc$ induces a path in $G$.

  In the first case of rule~\ncf{\cfXXr}, we do a simple path extension as before. In the second case, we first split an existing path into two paths, using the new color (the path $P$ in red). Then we extend these two subpaths toward special vertices, hence we obtain a path coloring.

  For rule~\ncf{\cfXXg}, heavier modifications are made on the path $Q$ in $G'$. In rule {\nameXXgOne}, the edge $v_3v_4$ is replaced by the path $(v_3,u_1,v_2,u_2,v_4)$. Since the vertex $v_2$ explicitly avoids the path $Q$, the resulting coloring does not introduce cycles.
 In rule {\nameXXgTwo}, the vertex $v_5$ avoids the subpath $Q_2$ of $Q$, i.e. the section after $v_4$, and $v_6$ avoids $Q_1$, i.e. the section before $v_2$. Hence in the final coloring, $P$ and $Q'$ are paths.
 In rule {\nameXXgThree},
 we can easily check that $Q'$ is a path.
 Moreover, $v_5$ avoids the section $R_1$ of $Q$, i.e. the section between $v_2$ and $v_3$. Since by planarity $v_6$ cannot touch $R_1$, then $P$ is a path. The same argument applies to rule {\nameXXgFour} with both $v_5$ and $v_6$ avoiding the section $R_1''$, the former by hypothesis and the latter by planarity.
 In rule {\nameXXgFive}, $Q'$ is a path since $v_5$ cannot touch the section $R_1'$ of $Q$ by hypothesis, and the section $Q_2$ by planarity. Similarly, $P$ is a path since $v_6$ cannot touch the section $R_1$ of $Q$ by hypothesis, and the section $R_1''$ by planarity.
 In rule {\nameXXgSix}, we can easily check that $Q'$ is a path. Since both $v_5$ and $v_6$ avoid the section $R_1'$ of $Q$, then $P$ is a path.
 This completes the examination of all cases.
  \end{proof}

The second group of rules we consider are given by the neighborhood of two vertices $u_1$ and $u_2$ of degree at most $4$ together with a shortest path $S$ joining them, such that 
$\vert N(u_1)\ \cap\ N(u_2)\vert \leq 1$. Note that this includes the case where $u_1$ and $u_2$ are adjacent, with no common neighbors.
These rules can be described as the product of two so-called \emph{elementary partial rules}, that specifies the behavior of the rule around each endpoint of the path. In 
{\chCII},
we will introduce more general rules and partial rules to deal with $(C_{II})$ configurations.

More formally, an \emph{elementary partial configuration} $\C_i$ is a configuration defined over the neighborhood of one \emph{special vertex} $u_i$, with one identified incident edge, called \emph{subdivision edge}. We say that a neighbor $v$ of $u_i$ is a \emph{remaining neighbor} of $u_i$ if $u_iv$ is not the subdivision edge.

Given a graph $G$, two vertices $u_1,u_2$ of $G$, a shortest $(u_1,u_2)$-path $S$, and two elementary partial configurations $\C_1$ and $\C_2$, the \emph{path composite configuration} 
$(\{\C_1(u_1),\C_2(u_2)\}, S)$ 
is defined as the following configuration: $u_1$ (resp. $u_2$) satisfies the partial configuration $\C_1$ (resp. $\C_2$), the path $S$ contains the subdivision edges of $\C_1$ and $\C_2$ and does not touch the other neighbors of $u_1$ and $u_2$. For ease of notation, we may simply write $\C_1\oplus\C_2$.

An \emph{elementary partial rule} is a rule $\mathcal{R}_i = (\C_i,f_i^r,f_i^c)$ associated with an elementary partial configuration $\C_i$, a partial reduction function $f_i^r$ and a partial recoloring function $f_i^c$. The partial reduction function $f_i^r$ of the rule is encoded by a set $\mathcal{O}_i \subseteq \{\text{\textit{add}}, \text{\textit{remove}}\}\times E(\C_i)$ with straightforward semantics.
In particular, we can identify all the vertices of $V(G)\setminus N(u_i)$ between $G$ and $f_i^r(G)$. The partial recoloring function defines the coloring of the edges, based on existing colors plus an extra color (represented as red on the figures) used in part for the edges of $S$.




If $\mathcal{R}_1=(\C_1,f_1^r,f_1^c)$ and $\mathcal{R}_2=(\C_2,f_2^r,f_2^c)$ are two elementary partial rules, $u_1,u_2$ two vertices and $S$ a shortest $(u_1,u_2)$-path, the \emph{path composite rule} 
$(\{\mathcal{R}_1(u_1),\mathcal{R}_2(u_2)\}, S)$ 
is the reduction rule $(\C_c, f_c^r, f_c^c)$ associated with the path composite configuration\\$(\{\C_1(u_1),\C_2(u_2)\}, S)$, and is defined as follows. Let $U = \{u_1,u_2\}$. 
The reduction function $f_c^r$ is defined by $f_c^r(G) = (f_1^r \circ f_2^r(G))\setminus (U \cup E(S))$, i.e. the successive application of the operations in $\mathcal{O}_2$ and $\mathcal{O}_1$ and the removal of the special vertices and the edges of $S$ to form the reduced graph $G'$.


Let $pc$ be a coloring of $G' = f_c^r(G)$, and $c_S$ a $1$-coloring of the path $S$. The recoloring function $f_c^c$ is defined by $f_c^c(G,pc) = f_2^c(G, f_1^c(f_2^r(G), pc\cup c_S))$; in other words, the path $S$ is added to $G'$ and colored with $c_S$, then the reduction of $\C_1$ is undone, the edges in the neighborhood of $u_1$ are colored according to the partial recoloring function $f_1^c$, and finally the reduction of $\C_2$ is undone (to obtain $G$) and the edges in the neighborhood of $u_2$ are colored according to the partial recoloring function $f_2^c$.\\


Let us present the list of elementary partial rules that we consider in this 
{\chapsec}.
We extend our graphical formalism by representing the subdivision edge as a red edge with a double arrow (\tkSymbol{\node[blacknode] (f1) at (1,11) {};  \myHalfEdge{f1}{0}{sedge}{};\draw (f1) node[right] {};}). 

Note again that we justify the planarity of the reduced graph and the validity of the rules formed by two elementary partial rules in Lemma~\ref{lem:valid2} after the definitions.

\ \\

\textbf{List of the elementary partial configurations:}
\label{half_ci_rules}

\ifthenelse{\equal{\isThesis}{true}}
{}
{\ \\}

\AllSemiRulesTwoDesc

\ \\
For convenience, we define some aliases which group several elementary partial configurations together. Note that these aliases are not disjoint: the configuration \ncf{\cfCV} appears in both \ncf{\cfCVPlus} and \ncf{\cfCN}, and these two aliases are particular cases of \ncf{\cfCNP}.

\AllAliasDesc

\medskip

Since two partial rules must be applied on the same graph, we need to make sure that they are compatible and do not interfere with each other. For example, if two \ncf{\cfCV} rules were to be applied on the same non-edge $v_1v_2$, the color of $v_1v_2$ in the reduced graph would have to be deviated to two different special vertices, creating a cycle in the path decomposition. Alternatively, if two \ncf{\cfCvTwo} or \ncf{\cfCvThree} configurations share a remaining neighbor $v$, then the extra color may be extended to $v$ from both remaining neighbors, again creating a cycle in the decomposition. The following lemma provides sufficient conditions of compatibility between partial rules, which are satisfied in the composite configurations given in the proof of Lemma~\ref{lem:conf_CI} (p.~\pageref{lem:conf_CI}).

\begin{lem}[Sufficient conditions of compatibility between partial rules]
\label{lem:valid2}
Let $u_1, u_2$ be two special vertices, and $S$ a shortest $(u_1,u_2)$-path, 
let $\C_a\in$\ \ncf{\cfCNP} and $\C_b\in$\ \ncf{\cfCNP}, and let $\mathcal{R}_a,\mathcal{R}_b$ be the elementary partial rules associated with $\C_a,\C_b$ respectively.
%
Then the path composite rule 
$(\{\mathcal{R}_a(u_1),\mathcal{R}_b(u_2)\}, S)$
is valid if the following conditions are satisfied:
\begin{itemize}
\item If none of $\C_a,\C_b$ are \ncf{\cfCV} or \ncf{\cfCFoura} configurations, then $u_1,u_2$ share no remaining neighbors;
\item If $\C_a$ is a configuration \ncf{\cfCV} or \ncf{\cfCFoura}, then $u_1$ shares at most one remaining neighbor $v$ with $u_2$, and $v$ is non-adjacent to at least one other remaining neighbor of $u_1$ (i.e. $v$ is not $v_3$ in the definition of \ncf{\cfCFoura}).
\end{itemize}
\end{lem}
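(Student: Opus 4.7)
The plan is to verify the three properties that define validity: (i) $G' = f_c^r(G)$ is planar, (ii) $f_c^c(G,pc)$ assigns every edge of $G$ a color and each color class induces a path, and (iii) $|f_c^c(G,pc)| - |pc| \le \lfloor(|V(G)|-|V(G')|)/2\rfloor = 1$. Conditions (i) and (iii) are routine, so the real work lies in (ii).

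First I would handle planarity. Each elementary partial reduction deletes the special vertex $u_i$ with its incident edges, and, depending on the type, either adds no edge (\ncf{\cfCExt}, \ncf{\cfCvTwo}) or adds exactly one edge that replaces a deleted path of length~$2$ through~$u_i$ (\ncf{\cfCV}, \ncf{\cfCvThree}, \ncf{\cfCFoura}, \ncf{\cfCFourb}); in all cases planarity is preserved. Removing the shortest path $S$ only deletes internal vertices of degree~$2$ in~$S$, so it preserves planarity as well. Since $f_1^r$ and $f_2^r$ act on disjoint neighborhoods (their remaining neighbors are disjoint, or share at most the one vertex $v$ permitted by the hypothesis), they commute and together produce a planar $G'$. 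For (iii), the only new color is $c_S$ (the extra color assigned to $S$ and extended at both ends), and exactly two vertices are removed, so the bound holds with equality.

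The hard part will be (ii). The extra color $c_S$ initially induces the path $S$; the partial recoloring $f_1^c$ extends it only by edges incident to $u_1$ and to remaining neighbors of $u_1$, and symmetrically $f_2^c$ extends it only inside $N(u_2)$. Under either hypothesis of the lemma, the endpoints of $S$ lie in disjoint local regions (up to a single shared vertex that is handled below), so the two extensions cannot meet to close a cycle; $c_S$ remains a path. For each old color $Q$ of $pc$, the partial rules can only (a) extend $Q$ at one of its endpoints (cases \ncf{\cfCvTwo}, \ncf{\cfCvThree}) or (b) deviate one edge of $Q$ through $u_i$ (cases \ncf{\cfCV}, \ncf{\cfCFoura}, \ncf{\cfCFourb}). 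Operations of type (a) by $\mathcal{R}_a$ only touch vertices in $\{u_1\}\cup N(u_1)$, and similarly for $\mathcal{R}_b$; since in the first hypothesis $u_1$ and $u_2$ share no remaining neighbors, two extensions of the same path $Q$ cannot both reach a common vertex, so $Q$ stays a path.

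The obstacle is the interaction between a deviation performed by $\mathcal{R}_a$ (so $\C_a\in$\,\ncf{\cfCV}\,/\,\ncf{\cfCFoura}) and an extension or deviation performed by $\mathcal{R}_b$. A conflict would arise if the edge $v_1v_2$ deviated by $\mathcal{R}_a$ had an endpoint $v$ that is also a remaining neighbor of $u_2$, because then the color of $v_1v_2$ could be both deviated through $u_1$ and acted upon at $u_2$, potentially creating a cycle or violating odd-degree constraints at $v$. The second hypothesis blocks this: the unique possible shared neighbor $v$ is non-adjacent to some other remaining neighbor $v'$ of $u_1$, so in \ncf{\cfCFoura} we may choose to add the edge $v'v''$ (with $v'' \neq v$) rather than involving $v$, and in \ncf{\cfCV} there are only two remaining neighbors of $u_1$, so $v$ cannot be shared while also being paired in the deviation with a vertex of $N(u_2)$. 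After this choice, the deviation affects only edges disjoint from every operation of $\mathcal{R}_b$, and the argument above applies unchanged to each color of $pc$. By symmetry the same reasoning covers the case where $\C_b\in$\,\ncf{\cfCV}\,/\,\ncf{\cfCFoura}. Hence every color class in $f_c^c(G,pc)$ is a path, which completes the verification of validity.
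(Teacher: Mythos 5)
Your overall skeleton (planarity, color count, acyclicity) matches the paper's, and the first two properties are handled essentially the same way, modulo a small slip: the reduction $f_c^r$ removes only the \emph{edges} of $S$, not its internal vertices, so $|V(G)|-|V(G')|=2$ exactly (your claim that internal vertices of $S$ are deleted contradicts your own count, though neither reading breaks properties (i) or (iii)). The genuine gap is in the shared-neighbor case, where you have the role of the second hypothesis backwards. The condition ``$v$ is not $v_3$'' in fact forces the shared vertex to be an endpoint of the deviated pair $\{v_1,v_2\}$; its purpose is not to keep $v$ out of the deviation, but to keep $v$ off the \emph{extra-color} edges $P_a$ contributed by $\mathcal{R}_a$ (in \ncf{\cfCFoura} these are exactly $\{u_1v_3\}$; in \ncf{\cfCV} there are none). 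The real cycle threat, which your argument never rules out, is that both partial recolorings extend the single new color to the common vertex $v$: for instance \ncf{\cfCFoura} with $v=v_3$ at $u_1$ against \ncf{\cfCExt} (or \ncf{\cfCvThree}, \ncf{\cfCFourb}) at $u_2$ would place both $u_1v$ and $u_2v$ in the new color, which together with $S$ closes a cycle. The paper dispatches this in one line: writing $P=S\cup P_a\cup P_b$, each of $S\cup P_a$ and $S\cup P_b$ is a path by the individual rules, and since $v\notin V(P_a)$ while all other remaining neighbors are disjoint, $P_a$ and $P_b$ are vertex-disjoint, so $P$ is a path. Your sentence asserting that the two extensions of $c_S$ ``cannot meet'' up to a shared vertex ``handled below'' defers precisely this point, and the handling below addresses deviations instead.

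Your proposed fix is also not available: you suggest that in \ncf{\cfCFoura} one ``may choose'' to add and deviate a different edge $v'v''$ avoiding $v$, but the rule $\mathcal{R}_a$ is fixed (validity must hold for the rule as defined, and the reduced graph is determined before any pre-coloring exists), and the configuration only guarantees that the pair $v_1,v_2$ is non-adjacent, so an alternative non-adjacent pair need not exist. Moreover, the situation you are trying to avoid is harmless: a deviation whose deviated edge has the shared vertex $v$ as an endpoint merely inserts the fresh vertex $u_1$ into an old color, while any operation of $\mathcal{R}_b$ involves the distinct fresh vertex $u_2$ on different edges, so no cycle among pre-coloring colors can arise even when $v$ is shared — which is why the paper settles the old colors simply by observing that each special vertex takes part in at most one deviation or extension.
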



\begin{proof}
We need to check three properties: when the path composite rule is applied on a planar graph $G$, the reduced graph $G'$ produced is planar; the recoloring function does not introduce cycles; and the number of additional colors used in the recoloring function is at most $\lfloor\frac{\vert V(G)\vert - \vert V(G')\vert}{2}\rfloor$.

The first property is easy to check, as in all considered elementary partial rules, an edge $v_1v_2$ added by the reduction function replaces a deleted path of length $2$ between $v_1$ and $v_2$ that goes through a special vertex, and each special vertex is involved in at most one such operation.

For the the third property, observe that the rule 
$(\{\mathcal{R}_a(u_1),\mathcal{R}_b(u_2)\}, S)$
involves two special vertices. The recoloring function only uses colors from the pre-coloring, plus $1$ new color ($c_S$ in the definition, the path $P$ drawn in red on the figures) which is exactly $\lfloor\frac{\vert V(G)\vert - \vert V(G')\vert}{2}\rfloor$.

For colors used in the pre-coloring, we only perform two types of modification: deviations and extensions.
Since each special vertex is involved in at most one such modification, the recoloring function does not introduce cycles involving these colors.

It remains to check that the set $P$ of edges colored with the new color in $c_S$ is indeed a path. It is made up of a shortest path $S$ between the two special vertices, and maybe some edges incident with only remaining neighbors and special vertices. Let $P_a$ (resp. $P_b$) be the edges of $P$ in the recoloring of $\mathcal{R}_a$ (resp. $\mathcal{R}_b$). So $P = S\cup P_a\cup P_b$. We can check that each elementary partial rule does not introduce cycles within the elementary partial configuration, i.e. $S\cup P_a$ and $S\cup P_b$ are paths.
If $\C_a$ and $\C_b$ do not share remaining neighbors, then $P_a$ and $P_b$ are vertex-disjoint, and thus $P$ induces a path.
Now assume that $\C_a$ is a configuration \ncf{\cfCV} or \ncf{\cfCFoura}, that (w.l.o.g.) the vertex $v_1$ of $\C_a$ (w.r.t. the notations in the definition of \ncf{\cfCV} and \ncf{\cfCFoura}) also belongs to $\C_b$, and that the other remaining neighbors of $\C_a$ and $\C_b$ are disjoint. Since $v_1$ does not touch the edges of $P_a$, then $P_a$ and $P_b$ are vertex-disjoint and $P$ is a path. This concludes the proof.
\end{proof}



\ifthenelse{\equal{\isThesis}{true}}
{\section{Sufficiency of the $(C_I)$ rules}}
{\subsection{Sufficiency of the $(C_I)$ rules}}

We prove with Lemma~\ref{lem:safety_ci} that, because of the reduction rules associated with configurations \ncf{\cfCZa}, \ncf{\cfXXa}, $\dots$, \ncf{\cfXXup},
\ncf{\cfCNP}~$\oplus$~\ncf{\cfCNP}, an MCE $G$ cannot contain any of them.
We first show that it is easy to derive a contradiction if the reduced graph $G'$ does not contain $K_3$ or $K_5^-$ components, by building a good coloring of $G$. Then, we assume that $G'$ contains some $K_3$ or $K_5^-$ connected components, and we build once again a good (path-)coloring of $G$, with the help of an intermediate coloring of $G'$ with paths and cycles.
The proof then combines cycles and paths together to save colors. Let us introduce a relevant lemma from \cite{CHU2021112212}. The \emph{exceptional graph} is a graph consisting of a cycle $C$ of length $5$ and a path $P$, such that $V(C)\subseteq V(P)$ and $V(C)$ induces a $K_5^-$. We restate here Lemma 2.1. from \cite{CHU2021112212}.

\begin{figure}[ht]
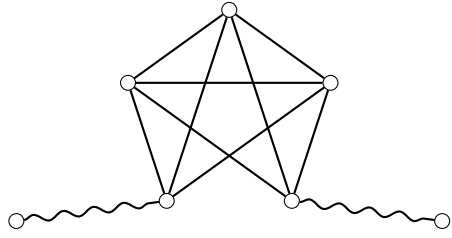

\renewcommand{\MyScale}{0.7}
\cfExcGraph
\tkcf{}
\caption{The exceptional graph}
\label{fig:ExcGraph}
\end{figure}

\begin{lem}[\cite{CHU2021112212}]
\label{L21}
Let $C$ be a cycle and $P$ a path, such that $C$ and $P$ are edge-disjoint. If $1\leq \vert V(C)\cap V(P)\vert \leq 5$, then $E(C)\cup E(P)$ can be decomposed into $2$ paths, unless $C\cup P$ is the exceptional graph.
\end{lem}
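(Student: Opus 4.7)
My plan is to reduce the problem to a degree-parity analysis on $H:=C\cup P$, find a canonical ``cut vertex'' of degree~$2$, and then construct the decomposition explicitly. The first step is to observe that $H$ is connected (since $C$ and $P$ share at least one vertex), has maximum degree~$4$, and has as its only odd-degree vertices the two endpoints $p_1,p_n$ of $P$: an internal vertex of $P$ off $C$ has degree~$2$, a vertex of $C$ off $P$ has degree~$2$, a shared vertex internal to $P$ has degree~$4$, and a shared endpoint of $P$ has degree~$3$.

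Next I would derive a necessary condition on any $2$-path decomposition $\{P_1,P_2\}$. The number of paths having $v$ as an endpoint has the same parity as $d_H(v)$, and each path contributes at most one endpoint at a given vertex. Counting the $4$ path-endpoints forces a unique pattern: one endpoint at $p_1$, one at $p_n$, and two endpoints at a single even-degree vertex $w$ that is a common endpoint of $P_1$ and $P_2$. Since each $P_i$ contributes exactly $1$ to $d_H(w)$, we get $d_H(w)=2$, so $H$ must contain an even-degree vertex of degree~$2$.

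I would then show that such a $w$ exists unless $C\cup P$ is the exceptional graph. The candidates for $w$ are the vertices of $(V(C)\setminus V(P))\cup\{\text{internal }P\text{-vertices outside }V(C)\}$. When $V(C)\not\subseteq V(P)$ a candidate trivially exists. When $V(C)\subseteq V(P)$, a short edge-count using that $C,P$ are edge-disjoint shows $P$ has at most $\binom{|V(C)|}{2}-|V(C)|$ edges inside $V(C)$; for $|V(C)|\in\{3,4\}$ this forces an internal $P$-vertex outside $V(C)$, and for $|V(C)|=5$ the only configuration avoiding a degree-$2$ vertex has $V(P)\setminus V(C)\subseteq\{p_1,p_n\}$ with $V(C)$ inducing a $K_5^-$---which is exactly the exceptional graph, in which case the necessary condition from the previous paragraph fails and no $2$-path decomposition exists.

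It remains to construct $P_1,P_2$ when a degree-$2$ vertex $w$ is available. My plan is to pick a shared vertex $v\in V(C)\cap V(P)$, split $P$ at $v$ into $P_a,P_b$, and splice in $C=(v,c_1,\dots,c_m,v)$: the natural candidate is $P_1:=P_a\cdot(v,c_1,\dots,c_m)$ and $P_2:=(c_m,v)\cdot P_b$, which is already correct when $|V(C)\cap V(P)|=1$. For larger intersections the splicing must be refined so that the two resulting walks do not repeat a vertex; the cleanest unified approach is to take any Eulerian trail of $H$ from $p_1$ to $p_n$ (which exists by the parity analysis), cut it at the single visit to $w$, and verify the two halves are simple paths. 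The heart of the argument---and the main obstacle I expect---is exactly this verification: every degree-$4$ vertex of $H$ is visited twice by any Eulerian trail, and one must arrange that its two visits straddle $w$. This can be done by choosing the trail greedily and, when needed, locally swapping two excursions through a degree-$4$ vertex, using the bound $|V(C)\cap V(P)|\le 5$ to keep the required case analysis finite.
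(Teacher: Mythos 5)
This lemma is imported by the paper from \cite{CHU2021112212} and restated without proof, so there is no in-paper argument to compare against; judged on its own, your proposal contains a genuine gap. Your parity analysis is correct, and the necessary condition it yields is a nice observation: in any $2$-path decomposition of $H=C\cup P$, the two paths must share an endpoint $w$ with $d_H(w)=2$. The trouble is the pivot that follows, namely ``a degree-$2$ vertex exists unless $H$ is exceptional, and once one exists the decomposition can be built by cutting an Eulerian $(p_1,p_n)$-trail at $w$.'' The exceptional graph, as the paper defines it (and as Figure~\ref{fig:ExcGraph} shows), only requires $V(C)\subseteq V(P)$ and that $V(C)$ induce a $K_5^-$; the path may continue outside the core along two tails of arbitrary length, and every internal tail vertex has degree $2$. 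Such graphs pass your degree-$2$ test yet admit no $2$-path decomposition: the $9$ edges of the $K_5^-$ core split between the two paths, so one path carries at least $5$ of them, while a simple path can contain at most $|V(C)|-1=4$ edges with both ends inside a $5$-vertex set. Hence your final step cannot be correct as stated --- applied to a tailed exceptional graph, the Eulerian cut plus ``local swaps at degree-$4$ vertices'' would manufacture a decomposition that provably does not exist. Non-exceptionality has to be invoked \emph{inside} the construction, not only to certify that $w$ exists, and your sketch never does this. Your paragraph-3 classification is also off: for $|V(C)|=5$ you identify the tailless configuration ($V(P)\setminus V(C)\subseteq\{p_1,p_n\}$) with ``the exceptional graph,'' mischaracterizing the family the lemma actually excludes.

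Relatedly, the verification you defer is essentially the whole lemma, and it is harder than you suggest. After cutting the trail at $w$, every vertex of degree at least $3$ --- including a degree-$3$ endpoint of $P$ lying on $C$, which an Eulerian trail visits once as an endpoint and once in passing --- must have its visits straddle the cut; the bound $|V(C)\cap V(P)|\le 5$ does not by itself make this a finite check, since tails of unbounded length and the order in which the intersection vertices occur along $P$ both matter (one would at least need to suppress degree-$2$ vertices and argue on a bounded core, then track where $w$ may sit relative to it). A workable repair would be to keep your necessary condition, but prove directly, by case analysis on how the visits of $P$ to $V(C)$ interleave, that a suitable $w$ together with a valid splicing of $C$ into the two sides of $P$ exists whenever $H$ is not exceptional --- which is the kind of direct constructive analysis the cited source carries out, rather than a detour through Eulerian trails.
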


\textbf{Remark:} Lemma~\ref{L21} cannot be applied to $K_5^-$, as in any decomposition of $K_5^-$ into a cycle of length $5$ and a path of length $4$, the path and the cycle form the exceptional graph.

There are several possible decompositions of the exceptional graph into a path and a cycle. The following observation states that in any such decomposition, the path and the cycle satisfy the properties of the definition.

\begin{obs}
Let $C$ be a cycle and $P$ a path, such that $C\cup P$ forms the exceptional graph. Then $C$ has length $5$, $V(C) \subseteq V(P)$ and $(C\cup P)[V(C)]$ (the subgraph of $C\cup P$ induced by the vertices of $C$) is a $K_5^-$.
\label{obs1}
\end{obs}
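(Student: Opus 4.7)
}

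Let $G_0 = C_0 \cup P_0$ be the exceptional graph and set $X := V(C_0)$, so $|X|=5$ and $G_0[X]$ is $K_5^-$. Let $a,b \in X$ be the two vertices of degree $3$ in $G_0[X]$ (endpoints of the missing edge) and $c,d,e \in X$ the vertices of degree $4$ in $G_0[X]$. My plan is to show that for \emph{any} edge-disjoint decomposition $G_0 = C \cup P$ with $C$ a cycle and $P$ a path, we must have $V(C)=X$. Once this is established, all three conclusions follow immediately: since $C$ is a simple cycle, $|E(C)|=|V(C)|=5$; since $V(P) \supseteq V(G_0) \supseteq X = V(C)$ we have $V(C) \subseteq V(P)$; and $G_0[V(C)] = G_0[X]$ is by definition $K_5^-$.

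To prove $V(C)=X$, I would first argue $V(C) \subseteq X$ by examining the ``tail'' vertices $V(G_0) \setminus X$. Any such vertex $w$ lies only on $P_0$ (not on $C_0$), hence has $G_0$-degree at most $2$, and both edges at $w$ are $P_0$-edges along the tail. If $w$ belonged to $V(C)$, then $C$ would use both of $w$'s incident edges and would therefore be forced to continue along the tail until reaching a $P_0$-endpoint, which has degree $1$ in $G_0$---impossible for a cycle. Hence no tail vertex lies on $C$, i.e.\ $V(C) \subseteq X$.

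Next, I would argue $X \subseteq V(C)$ by a direct degree argument. Every vertex of $X$ has at least $3$ edges going to other vertices of $X$ in $G_0$ (this is the minimum degree of $K_5^-$). If some $v \in X$ were not in $V(C)$, then $P$ would have to cover all $\geq 3$ inside-$X$ edges at $v$, contradicting the fact that a path has degree at most $2$ at every vertex. Hence $X \subseteq V(C)$, so $V(C)=X$ and the observation follows. The one subtlety to be careful about---and the only place the structure matters---is that the degree-$3$ vertices $a,b$ of $G_0[X]$ might also carry extra $P_0$-edges going into the tails; but this only increases their $G_0$-degree and so only strengthens the argument that $a,b \in V(C)$.
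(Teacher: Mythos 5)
Your proposal is correct, and it reaches the conclusion by a somewhat different mechanism than the paper. The paper's proof pins down $V(C)\cap V(P)$ first: it observes that the exceptional graph has exactly five vertices of degree at least $3$, that a vertex lying on both the cycle and the path has degree at least $3$ while all other vertices have degree at most $2$, and hence that $V(C)\cap V(P)$ is exactly the vertex set $X$ of the $K_5^-$; it then rules out cycle edges outside the induced $K_5^-$ by invoking the fact, used elsewhere in the paper, that $K_5^-$ admits no $2$-path decomposition. You instead prove the two inclusions $V(C)\subseteq X$ and $X\subseteq V(C)$ directly: the first via the tail structure (your forced-march argument, equivalently the fact that tail edges are bridges and a cycle uses no bridge), the second by playing the minimum degree $3$ of $K_5^-$ against the maximum degree $2$ of a path. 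Your route is fully self-contained --- it never needs the non-decomposability of $K_5^-$ into two paths --- which is a small gain in robustness; the paper's version is shorter because it leans on that fact. One step of yours deserves an explicit line: the assertion $V(P)\supseteq V(G_0)$ is not a generic property of cycle-plus-path decompositions, so it should not be presented as immediate. It is true here, and it follows from what you have already established (once $V(C)=X$, every tail edge lies on $P$, and every vertex of $X$ has degree at least $3$ with only two edges on $C$, hence an edge on $P$); but the cleaner move is to skip spanning-ness altogether and argue directly that each $v\in V(C)=X$ satisfies $\deg_{G_0}(v)\geq 3$ while $C$ uses exactly two edges at $v$, so $v$ is incident with a $P$-edge, giving $V(C)\subseteq V(P)$ outright.
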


\begin{proof}
There are exactly $5$ vertices of degree at least $3$ in $C\cup P$. Since the vertices of $(C\setminus P) \cup (P\setminus C)$ have degree $1$ or $2$, we deduce that $\vert C\cap P\vert = 5$.

$(C\cup P)[V(C\cap P)]$ is a $K_5^-$, so if there is an edge $e\in C$ that does not belong to $E(C\cap P)$, then there is a $2$-path decomposition of $K_5^-$, a contradiction. So $E(C)\subseteq E(C\cap P)$, hence $V(C)=V(C\cap P)$. We deduce that $V(C)\subseteq V(P)$, $C$ has length $5$ and $(C\cup P)[V(C)]$ is a $K_5^-$.
\end{proof}


\begin{lem}
An MCE does not contain any of the configurations \ncf{\cfCZa}, \ncf{\cfXXa}, $\dots$, \ncf{\cfXXup}, %
and does not contain a path composite configuration \ncf{\cfCNP}~$\oplus$~\ncf{\cfCNP} that satisfies the conditions of Lemma~\ref{lem:valid2} (p.~\pageref{lem:valid2}).
\label{lem:safety_ci}
\end{lem}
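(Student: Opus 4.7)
The strategy is the standard minimality argument. Suppose for a contradiction that an MCE $G$ on $n$ vertices contains one of the listed configurations (or a path composite one satisfying the hypotheses of Lemma~\ref{lem:valid2}). Apply the associated rule to obtain a reduced graph $G'$ with $n-1$ or $n-2$ vertices; by Lemmas~\ref{lem:valid1} and~\ref{lem:valid2}, $G'$ is planar. If every connected component of $G'$ is smaller than $G$ and is neither $K_3$ nor $K_5^-$, each component admits a good coloring by minimality of $G$. Their union is a path coloring of $G'$ with at most $\lfloor |V(G')|/2\rfloor$ colors, and the recoloring function of the rule (which adds at most one new color and uses it precisely when $|V(G)|-|V(G')|=2$) yields a path coloring of $G$ with at most $\lfloor n/2\rfloor$ colors, contradicting the assumption that $G$ is an MCE.

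The main obstacle is thus handling components of $G'$ that are isomorphic to $K_3$ or $K_5^-$. For each such component $H$, I will fix an auxiliary decomposition that uses the ``correct'' number of colors but contains exactly one cycle: a triangle if $H\cong K_3$, or a $C_5$ together with one path of length $4$ if $H\cong K_5^-$ (this decomposition of $K_5^-$ is explicit, e.g.\ cycle $12{-}24{-}43{-}35{-}51$ and path $4{-}1{-}3{-}2{-}5$, where $45$ is the missing edge). Call the resulting mixed object a \emph{quasi-coloring} of $G'$; it uses the same number of colors as a good coloring would. Since $G$ is connected and the reduction only removes the special vertices and a bounded number of incident edges, every bad component $H$ of $G'$ contains at least one vertex $v_H$ adjacent in $G$ to a special vertex, and hence lying on the extra-color path $P$ produced by the recoloring function. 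Thus the cycle $C_H$ of the quasi-coloring shares the vertex $v_H$ with $P$ (choosing the decomposition of $K_5^-$ so that $v_H\in V(C_H)$, which is possible because the $C_5$ in our decomposition spans all five vertices).

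Next, I will invoke Lemma~\ref{L21} iteratively: for each bad component $H$ in turn, the cycle $C_H$ and the current extra-color path $P$ satisfy $1\le|V(C_H)\cap V(P)|\le 5$, so $E(C_H)\cup E(P)$ decomposes into two paths unless $C_H\cup P$ is the exceptional graph. This replaces one cycle plus one path by two paths, strictly reducing the number of cycles without increasing the total number of colors. Iterating turns the quasi-coloring of $G$ into a path coloring of $G$ with at most $\lfloor n/2\rfloor$ colors, the required contradiction. The remaining delicate point is to rule out the exceptional configuration $C_H\cup P$: for $H\cong K_3$ this is immediate since the exceptional graph has $|V(C)|=5$ (Observation~\ref{obs1}); for $H\cong K_5^-$ I would argue that if $C_H\cup P$ is exceptional then, by Observation~\ref{obs1}, $V(C_H)\subseteq V(P)$ and the edges induced on $V(C_H)$ form a $K_5^-$, so $P$ must use at least one edge of $H\setminus C_H$—but the only edges of $G$ incident with $V(H)$ outside $H$ are those joining $H$ to the special vertices, and inspection of the recoloring functions (each special vertex has at most four non-special neighbors, at most two of which lie in any single $K_5^-$ component by planarity and the fact that $G'$ is obtained by removing very few edges) shows that $P$ cannot traverse five vertices of $V(H)$ consecutively. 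In the worst residual case I would instead pair $C_H$ with the length-$4$ path from the quasi-coloring of $H$ itself (which shares at least one vertex with $C_H$) and apply Lemma~\ref{L21} internally to $H$, reducing to a previously handled case.

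Finally, the argument is independent for distinct bad components: each application of Lemma~\ref{L21} is local to $C_H\cup P$ (or to $H$), and after processing $H$ the new extra-color path still touches every later $v_{H'}$ through the original special vertices, so the iteration can be carried out in any order. Gathering all cases yields a path decomposition of $G$ with at most $\lfloor n/2\rfloor$ colors, contradicting the choice of $G$ as an MCE and completing the proof.
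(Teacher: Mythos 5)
Your overall skeleton matches the paper's proof (quasi-coloring the $K_3$ and $K_5^-$ components with one cycle each, then iteratively invoking Lemma~\ref{L21} and Observation~\ref{obs1} to trade each cycle plus a path for two paths), but two of your case-closing steps fail. First, the $K_3$ case is \emph{not} immediate: the recoloring functions can deviate a color of the pre-coloring through one or two special vertices, so the triangle of a $K_3$ component can become a cycle of length up to $5$ in $G$, and the exceptional graph can genuinely arise. The paper handles exactly this: when $C\cup\widehat{P}$ is exceptional, Observation~\ref{obs1} forces $C$ to have length $5$ and to contain both special vertices, and one then checks that $G$ must fit configuration \ncf{\cfXXlp}, whose rule precludes a $K_3$ component with a deviated cycle --- a structural argument your proposal has no substitute for. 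Second, your fallback for $K_5^-$ components --- pairing $C_H$ with the length-$4$ path of the quasi-coloring of $H$ itself and applying Lemma~\ref{L21} internally --- can never work: as the paper remarks right after Lemma~\ref{L21}, \emph{every} decomposition of $K_5^-$ into a $5$-cycle and a $4$-path is the exceptional graph. The correct move (the paper's) splits by whether $C'$ was deviated: if yes, $|V(C_0)|\neq 5$ kills exceptionality; if no, one picks a \emph{different} path $\widehat{P}$ touching $C_0$ and observes that at least two edges of $G[V(C_0)]$ already belong to $P_0$, so $(C_0\cup\widehat{P})[V(C_0)]$ is not a $K_5^-$.

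Two further gaps: your claim that every bad component contains a vertex lying on the extra-color path $P$ is unjustified (a component may attach to the special vertices only through edges carrying deviated pre-coloring colors); the paper instead uses that the cycles of $c_0$ are pairwise vertex-disjoint --- because each special vertex is involved in at most one deviation --- together with connectivity of $G$, to find \emph{some} touching path $\widehat{P}$. And your uniform treatment silently excludes the rules \ncf{\cfCZa} and \ncf{\cfXXg}, which the paper must handle separately: \ncf{\cfCZa} uses no extra color and has a connected reduced graph (if $G'\in\{K_3,K_5^-\}$ then $G$ is that graph with a subdivided edge), while \ncf{\cfXXg} reroutes an entire pre-coloring path $Q$, breaking the ``one deviation or extension per special vertex'' disjointness argument; there the possible $K_3$/$K_5^-$ components are located explicitly (containing only $v_5$, only $v_6$, or $v_1,\dots,v_4$ with an extra vertex $v_7$), and the last case requires choosing a specific cycle/path decomposition of the $K_5^-$ in $G'$ and modifying one edge color in the recoloring of case {\nameXXgThree}. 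Without these pieces the contradiction is not established.
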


\begin{proof}

Let us consider such a configuration $X$ 
and let $\mathcal{R}_X = (X,f_X^r,f_X^c)$ be its associated reduction rule. By Lemmas~\ref{lem:valid1} (p.~\pageref{lem:valid1}) and \ref{lem:valid2} (p.~\pageref{lem:valid2}), $\mathcal{R}_X$ is valid.
Let $G$ be an MCE containing the configuration $X$, and $G' = f^r(G)$ its reduced graph.

Let $n'_1,\dots,n'_p$ be the sizes of the connected components $G'_1,\dots, G'_p$ of $G'$. Observe that $\sum_{j\leq k} n'_j = |V(G')|$.
Observe that each $G_i'$ that is not a $K_3$ nor a $K_5^-$ component is a connected planar graph that is smaller than the minimum counterexample $G$, hence these $G_i'$ each admit a good coloring, using $\lfloor \frac{n'_i}{2} \rfloor$ colors colors. Each $K_3$ component can be decomposed into $1$ cycle and each $K_5^-$ component into $1$ path and $1$ cycle.
Then $G'$ admits a coloring $pc$ into paths and cycles with $\sum_{i=1}^p \lfloor \frac{n'_i}{2} \rfloor \leq \lfloor \frac{|V(G')|}{2} \rfloor$ colors. Since the reduction rule is valid, $G$ admits a coloring $c_0$ into paths and cycles with at most $\lfloor \frac{|V(G')|}{2} \rfloor + \lfloor\frac{\vert V(G)\vert - \vert V(G')\vert}{2}\rfloor \leq \lfloor \frac{|V(G)|}{2} \rfloor$ colors, such that no new cycles are created.
%


Observe that all the cycles in $c_0$ are vertex-disjoint. Indeed, the cycles in $pc$ are vertex-disjoint because they belong to different connected components of $G'$; the cycles may have been deviated into longer cycles in $G$, but since the internal vertices of the deviated sections are all special vertices, and since each special vertex is involved in at most one deviation, then no vertex of $G$ can belong to the intersection of two cycles of $c_0$.

We build iteratively a good coloring $c$ of $G$, by starting from $c_0$ and using Lemma~\ref{L21} (p.~\pageref{L21}) at each iteration to replace a cycle and a path of $c$ by two new paths that decompose the same set of edges. We first consider the case where $X\neq$ \ncf{\cfCZa} and $X\neq$ \ncf{\cfXXg}.


We successively treat the $K_5^-$ and $K_3$ components in $G'$.
First, let us consider a component $K$ in $G'$ that is a $K_5^-$, colored with a cycle $C'$ of length $5$ and a path $P'$ of length $4$ in $pc$. 
$C'$ is turned into a cycle $C_0$ from $c_0$ after possibly some deviations, and Lemma~\ref{L21} has not been applied to it yet, so $C_0$ is also induced by a color of $c$.
$P'$ may have been extended (such as with path $Q$ in rule \ncf{\cfCvTwo}) and deviated to special vertices, into a path $P_0$ of $G$ induced by a color of $c_0$. So $V(P_0)\subseteq V(P')\cup U$, and since each special vertex is involved in at most one deviation or one extension, then $P_0$ is disjoint from the cycles of $c_0$ 
different from $C_0$, so Lemma~\ref{L21} has not been applied to it in previous iterations, and thus $P_0$ is also induced by a color of $c$.
Finally, let $\widehat{P}$ be a path induced by a color of $c$ such that $V(\widehat{P}) \cap V(C_0)\neq \emptyset$ and $\widehat{P} \neq P_0$.
Such a path $\widehat{P}$ exists because $G$ is connected and the cycles in $c$ are disjoint.

Observe that $V(P_0)\cap V(C_0) = V(K)$, so $\vert V(P_0)\cap V(C_0) \vert = 5$.
If there is at least one deviation on $C'$, then by Observation~\ref{obs1} (p.~\pageref{obs1}), $C\cup P$ does not form the exceptional graph. We may then apply Lemma~\ref{L21} (p.~\pageref{L21}) on $C$ and $P$.
Otherwise there is no deviation on $C'$, so $C_0 = C'$ (of length $5$). At least $2$ edges of $G[V(C_0)]$ belong to $P_0$, and thus by Observation~\ref{obs1} (p.~\pageref{obs1}), $C_0\cup \widehat{P}$ does not form the exceptional graph. We can apply Lemma~\ref{L21} (p.~\pageref{L21}) on $C_0$ and $\widehat{P}$, as $\vert V(C_0) \cap V(\widehat{P})\vert \leq \vert V(C_0)\vert = 5$.

Now consider the case where $K$ is a $K_3$, colored by a cycle $C'$ of length $3$ in $G'$. Again let $C$ be the cycle in $G$ that is induced by the same color in $c$ as $C'$ in $pc$, after possibly some deviations, and let $\widehat{P}$ be a path induced by a color of $c$ such that $V(\widehat{P}) \cap V(C)\neq \emptyset$. At most two deviations were performed on $C'$, so $\vert V(C)\vert \leq 5$. We apply Lemma~\ref{L21} (p.~\pageref{L21}) on $C$ and $\widehat{P}$, unless they form the exceptional graph. Assume it is the case and $C$ is disjoint from any other path in the coloring of $G$.
By Observation~\ref{obs1} (p.~\pageref{obs1}), $C$ has length $5$ and thus contains both special vertices.
Since $G$ is not a $K_5^-$, $\widehat{P}$ has at least one other edge, necessarily from one special vertex to a vertex outside $V(C)$.
$G$ fits the description of the configuration \ncf{\cfXXlp}. In the rule associated with \ncf{\cfXXlp}, if the reduced graph contains a $K_3$ component, it contains only the vertex $v_2$ (in the definition of the rule), and its cycle is not deviated to the special vertices, a contradiction.

In all cases, we were able to find a cycle and a path that do not form the exceptional graph. We apply to them Lemma~\ref{L21} (p.~\pageref{L21}) and obtain two new paths that decompose them. We replace the cycle and the path in $c$ by the two new paths, to obtain a coloring of $G$ that uses the same number of colors and contains one less cycle. After this method has been successively applied to all $K_3$ and $K_5^-$ appearing in $G'$, $c$ that has the right number of colors and contains only paths, a contradiction with $G$ being a counterexample.

\medskip
We are left with rules~\ncf{\cfCZa} and \ncf{\cfXXg}.
If $X =$ \ncf{\cfCZa}, observe that the reduced graph $G'$ is connected. If $G' = K_3$ or $G' = K_5^-$, 
then $G$ is the same graph with one subdivided edge, so we can easily find a good coloring of $G$.
Finally, let us consider the case $X =$ \ncf{\cfXXg}. First note that if a $K_3$ appears in $G'$, it contains only $v_5$ or only $v_6$ among the vertices represented in the figures. It cannot contain both as there would be a path from $v_3$ to $v_4$ in $G$, a contradiction with the definition of \ncf{\cfXXg}. A $K_5^-$ in $G'$ contains only $v_5$, only $v_6$, or only $v_1,v_2,v_3,v_4$ among the vertices represented.
In the latter case, the $K_5^-$ must be on vertices $v_1,v_2,v_3,v_4$ and another vertex $v_7$. If there is a non-edge between $v_1$ and $v_2$, then $v_7$ is adjacent to $v_1,v_2,v_3,v_4$. Then the path $v_3,v_7,v_4$ contradicts the definition of \ncf{\cfXXg}. Thus, there is an edge $v_1v_2$, and we may assume w.l.o.g. that $v_7$ is adjacent to $v_1,v_2,v_3$.

If a $K_3$ (resp. $K_5^-$) component appears in $G'$ and contains (only) $v_5$ or $v_6$, then the red path $P$ or the blue path $Q$ in $f_X^c(G,pc)$ can easily be extended to color both the path and the $K_3$ (resp. $K_5^-$) with $2$ colors (resp. $3$ colors) (which is equivalent to applying Lemma~\ref{L21}, p.~\pageref{L21}).
Now assume that $v_1,v_2,v_3,v_4,v_7$ form a $K_5^-$ in $G'$. We color it in $G'$ with the path $R = (v_7,v_3,v_1,v_2,v_4)$ and a cycle $Q = (v_7,v_1,v_4,v_3,v_2)$. We thus place ourselves in case 3 of \ncf{\cfXXg}, treated with rule {\nameXXgThree}. When applying the recoloring function, we change the color of the edge $u_2v_4$ from $Q$ (blue) to $R$ (green), to turn $Q$ into a path. We thus built a good coloring of $G$ in all cases, a contradiction.
\end{proof}


\ifthenelse{\equal{\isThesis}{true}}
{\section{The rules cover all cases}}
{\subsection{The rules cover all cases}}

\begin{lem}\label{lem:conf_CI}
If a graph contains a configuration $(C_I)$ and is different from $K_3$, then it contains at least one configuration 
among \ncf{\cfCZa}, \ncf{\cfXXa}, $\dots$, \ncf{\cfXXup},
\ncf{\cfCNP}~$\oplus$~\ncf{\cfCNP}, \ncf{\cfCVPlus}~$\oplus$~\ncf{\cfCNP}, in which case the conditions of Lemma~\ref{lem:valid2} (p.~\pageref{lem:valid2}) are satisfied.
\end{lem}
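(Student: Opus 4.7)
Let $G$ be a graph containing a $(C_I)$ configuration with $G \neq K_3$, and let $\{u_1,u_2\}$ be the associated $2$-family, so $d_G(u_i)\leq 4$. The plan is a case analysis based on the distance $d(u_1,u_2)$ and, when $u_1,u_2$ are close, on the local structure of their shared neighborhood.

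First I would dispose of the degenerate trigger: if either $u_i$ has degree $2$ in $G$ and its two neighbors are non-adjacent, then \ncf{\cfCZa} is present and we are done (note this rule only needs a single special vertex of degree $2$, and the $(C_I)$ hypothesis guarantees its existence). I assume from here on that no such vertex exists.

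Next I would split on $d(u_1,u_2)$. If $u_1,u_2$ lie at distance at least $3$, they share no neighbor and no edge. Fix any shortest $(u_1,u_2)$-path $S$; the edge of $S$ incident with $u_i$ plays the role of the subdivision edge for $u_i$. After removing it, $u_i$ has between $0$ and $3$ remaining neighbors. A short check based on the degree of $u_i$, the adjacencies among its remaining neighbors, and the parity of the degree of a remaining neighbor shows that $u_i$ satisfies one of \ncf{\cfCOnea}, \ncf{\cfCExt}, \ncf{\cfCV}, \ncf{\cfCvTwo}, \ncf{\cfCvThree}, \ncf{\cfCFoura}, \ncf{\cfCFourb}, hence \ncf{\cfCNP}. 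Because $u_1,u_2$ share no remaining neighbor, the first bullet of Lemma~\ref{lem:valid2} applies and we obtain a composite \ncf{\cfCNP}~$\oplus$~\ncf{\cfCNP} configuration along $S$.

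The bulk of the proof is the case $d(u_1,u_2)\in\{1,2\}$, handled by running through sub-cases indexed by (i) whether $u_1u_2\in E(G)$, (ii) the size $k=|N(u_1)\cap N(u_2)|\leq 4$, (iii) the adjacencies among common and non-common neighbors, and (iv) the degrees $d(u_1),d(u_2)\in\{2,3,4\}$. For each sub-case, the strategy is to match the local picture to exactly one of the dedicated rules \ncf{\cfXXa}--\ncf{\cfXXup}, or to check that the (at most one) shared remaining neighbor satisfies the second bullet of Lemma~\ref{lem:valid2} so that a composite \ncf{\cfCVPlus}~$\oplus$~\ncf{\cfCNP} still applies. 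Schematically: the adjacent case ($d(u_1,u_2)=1$) with $k\in\{0,1\}$ is covered by \ncf{\cfXXp}, \ncf{\cfXXa}, \ncf{\cfXXbb}, \ncf{\cfXXc}, \ncf{\cfXXd}, \ncf{\cfXXm}, \ncf{\cfXXr}; with $k=2$ by \ncf{\cfXXh}, \ncf{\cfXXi}, \ncf{\cfXXj}, \ncf{\cfXXk}, \ncf{\cfXXm}; with $k=3$ by \ncf{\cfXXl}, \ncf{\cfXXlp}, \ncf{\cfXXn}, \ncf{\cfXXo}; the non-adjacent distance-$2$ case by \ncf{\cfXXe}, \ncf{\cfXXf}, \ncf{\cfXXg}, \ncf{\cfXXs}, \ncf{\cfXXt}, \ncf{\cfXXup}. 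Parity hypotheses that trigger \ncf{\cfCvTwo}/\ncf{\cfCvThree} and rules such as \ncf{\cfXXs}/\ncf{\cfXXt} are dictated by a local handshake argument together with the exclusion $G\neq K_3$, which rules out the single obstructing triangle component.

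The main obstacle is the length of the short-distance case analysis rather than any deep difficulty: each branch requires identifying the unique rule whose configuration matches the local picture and verifying that its structural conditions (planarity, presence or absence of specific edges and chords, parities) are actually met. The compatibility clauses of Lemma~\ref{lem:valid2} are calibrated precisely so that the only shared-neighbor configurations escaping a dedicated rule are those in which exactly one remaining neighbor is shared, and this neighbor is non-adjacent to at least one other remaining neighbor on one side; these are exactly the situations subsumed by the composite \ncf{\cfCVPlus}~$\oplus$~\ncf{\cfCNP}, so the list of rules is exhaustive.
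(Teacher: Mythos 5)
Your proposal follows essentially the same route as the paper's proof: an exhaustive case analysis on the local structure around $u_1,u_2$, terminating in the same list of dedicated rules plus the two path composite configurations, with a shortest path supplying the subdivision edges and Lemma~\ref{lem:valid2} certifying the composites. The paper organizes the analysis by the number of common \emph{remaining} neighbors (the branches $(0C)$, $(1C)$, $(2C)$, $(3C)$ in the tree of Figure~\ref{fig:treeCI}) rather than by distance first, but the two organizations sweep the same case space; your upfront disposal of any degree-$2$ special vertex with non-adjacent neighbors via \ncf{\cfCZa} is a legitimate shortcut for the leaves $(22n)$ and $(23n)$, and your use of $G\neq K_3$ matches the paper's use in the \ncf{\cfXXr} branch.

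Two concrete repairs are needed before your schematic becomes a proof, and both trace to the same source. First, you reserve \ncf{\cfCNP}~$\oplus$~\ncf{\cfCNP} for $d(u_1,u_2)\geq 3$, but every dedicated rule you list for the adjacent case (\ncf{\cfXXp}, \ncf{\cfXXa}, \ncf{\cfXXbb}, \ncf{\cfXXc}, \ncf{\cfXXd}, \ncf{\cfXXr}) presupposes exactly one common neighbor $v$; so when $u_1,u_2$ are adjacent (or at distance $2$) with \emph{no} common remaining neighbor, none of your listed rules applies, and these cases must also be routed to the composite \ncf{\cfCNP}~$\oplus$~\ncf{\cfCNP} with $S$ the edge $u_1u_2$ or the length-$2$ path, exactly as the paper's $(0C)$ branch does (the first bullet of Lemma~\ref{lem:valid2} then holds vacuously). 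Second, your parameter $k=\vert N(u_1)\cap N(u_2)\vert$ conflates the middle vertex $w$ of a length-$2$ path with genuine common remaining neighbors, which produces several misfilings in your map: \ncf{\cfXXm} requires precisely two adjacent common neighbors, so it does not belong under $k\leq 1$; \ncf{\cfXXl} and \ncf{\cfXXlp} require \emph{non-adjacent} special vertices with three common neighbors (two remaining ones plus $w$), so they belong to the distance-$2$ branch rather than to adjacent $k=3$; and \ncf{\cfXXh}, \ncf{\cfXXi}, \ncf{\cfXXj}, \ncf{\cfXXk}, \ncf{\cfXXn}, \ncf{\cfXXo} all cover the distance-$2$ situation as well via the path $P_{12}=(u_1,w,u_2)$, so your distance-$2$ list is incomplete. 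If you keep the distance-first organization, count common remaining neighbors relative to the chosen shortest path at each node, as the paper does; with these corrections your argument coincides with the paper's.
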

\begin{proof}
We make a case analysis, described by the tree of Figure~\ref{fig:treeCI}.
For each inner node, we define the configurations of its children, and we show that if a graph contains the configuration described by the inner node, then it contains (at least) one of its children configurations. The configurations drawn inside a frame are the leaves of the tree, and the others are inner nodes.

\renewcommand{\MyScale}{0.5}

	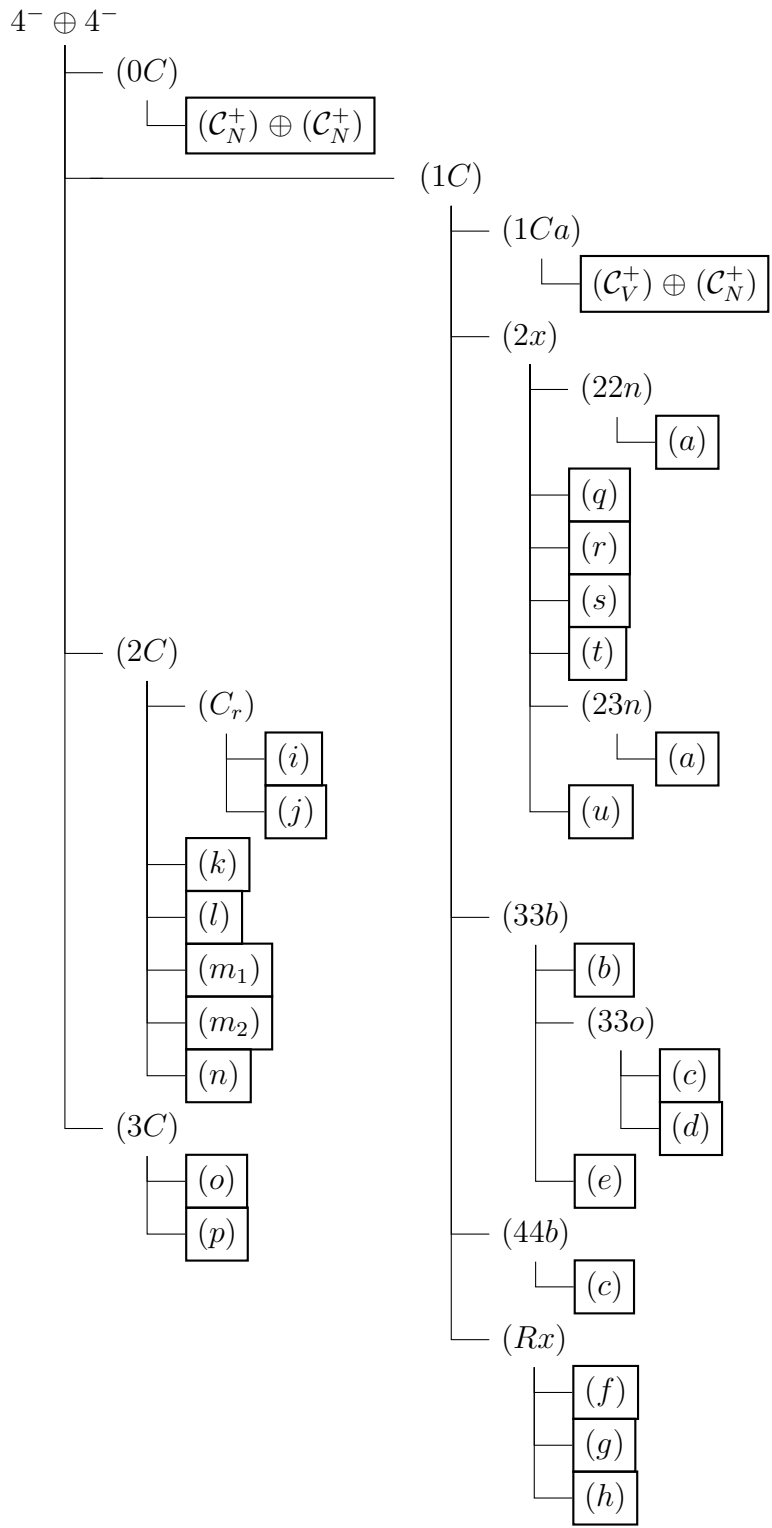
\begin{figure}[htb]
		\tikzstyle{every node}=[thick,anchor=west,align=left]
		\begin{tikzpicture}
		  [grow via three points={one child at (0.5,-0.7) and
		  two children at (0.5,-0.7) and (0.5,-1.4)},
		  edge from parent path={(\tikzparentnode.south) |-(\tikzchildnode.west)}]
		  \node [] {\Ncf{\cfCFmFm}}
			child { node [] {{\Ncf{\cfCZC}}}
				child { node [draw] {\NcfP{\cfCNP}{\cfCNP}}
				}
			  }
			child [missing] {}
			child { node  [] {{\hspace{4cm}\Ncf{\cfCOC}\hspace{4cm}}}
				child { node [] {\Ncf{\cfCOCa}}
				  child { node [draw] {\NcfP{\cfCVPlus}{\cfCNP}} }
				}
				child [missing] {}
				child { node [] {\Ncf{\cfTwoAny}}
					child { node [] {\Ncf{\cfTwoTwoN}}
						child { node [draw] {\Ncf{\cfCZa}}}
					}
					child [missing] {}
					child { node [draw] {\Ncf{\cfXXp}}}
					child { node [draw] {\Ncf{\cfXXr}}}
					child { node [draw] {\Ncf{\cfXXs}}}
					child { node [draw] {\Ncf{\cfXXt}}}
					child { node [] {\Ncf{\cfXXu}} 
						child { node [draw] {\Ncf{\cfCZa}}}
						}
					child [missing] {}
					child { node [draw] {\Ncf{\cfXXup}}} 
				}
				child [missing] {}
				child [missing] {}
				child [missing] {}
				child [missing] {}
				child [missing] {}
				child [missing] {}
				child [missing] {}
				child [missing] {}
				child [missing] {}
				child [missing] {}
				child { node [] {\Ncf{\cfTTb}}
				  child { node [draw] {\Ncf{\cfXXa}} }
                    child { node [] {\Ncf{\cfXXb}}
                      child { node [draw] {\Ncf{\cfXXd}}
                      }
                      child { node [draw] {\Ncf{\cfXXbb}}
                      }
                    }
                    child [missing] {}
                    child [missing] {}
				  child { node [draw] {\Ncf{\cfXXc}} }
				}
			  child [missing] {}
                child [missing] {}
                child [missing] {}
			  child [missing] {}
			  child [missing] {}
			  child { node [] {\Ncf{\cfFFb}}
				child { node [draw] {\Ncf{\cfXXd}}
				}
			  }
			  child [missing] {}
			  child { node [] {\Ncf{\cfRx}}
				child { node [draw] {\Ncf{\cfXXe}} }
				child { node [draw] {\Ncf{\cfXXf}}
				}
				child { node [draw] {\Ncf{\cfXXg}}
				}
			  }
			}
			child [missing] {}
			child [missing] {}
			child [missing] {}
			child [missing] {}
			child [missing] {}
			child [missing] {}
			child [missing] {}
			child [missing] {}
			child { node [] {{\Ncf{\cfCTC}}}
				child { node [] {\Ncf{\cfCrr}}
				  child { node [draw] {\Ncf{\cfXXh}}
				  }
				  child { node [draw] {\Ncf{\cfXXi}}
				  }
				}
				child [missing] {}
				child [missing] {}
					child { node [draw] {\Ncf{\cfXXj}}
				}
				child { node [draw] {\Ncf{\cfXXk}}
				}
				child { node [draw] {\Ncf{\cfXXl}}
				}
				child { node [draw] {\Ncf{\cfXXlp}}
				}
				child { node [draw] {\Ncf{\cfXXm}}
				}
			}
			child [missing] {}
			child [missing] {}
			child [missing] {}
			child [missing] {}
			child [missing] {}
			child [missing] {}
			child [missing] {}
			child [missing] {}
			child { node [] {{\Ncf{\cfCFC}}}
			  child { node [draw] {\Ncf{\cfXXn}}
			  }
			  child { node [draw] {\Ncf{\cfXXo}}
			  }
		};
		\draw[] ($(1.2,-2.100)$) -- ($(5.2,-2.100)$);
		\end{tikzpicture}
		\caption{Tree of implications between configurations.}
    \label{fig:treeCI}
		\end{figure}
    \clearpage

\renewcommand\MyScale{0.9}
\renewcommand\OrdonneeFleche{1.5}

\begin{center}
\hspace{1cm}
\tkcf{\cfCFmFm} %
			  \TexteCentre{$\Rightarrow$}{\OrdonneeFleche} %
			  \tkcf{\cfCZC} %
			  \TexteCentre{$\vert$}{\OrdonneeFleche} %
			  \tkcf{\cfCOC} %
			  \TexteCentre{$\vert$}{\OrdonneeFleche} %
			  \\
			  \tkcf{\cfCTC} %
			  \TexteCentre{$\vert$}{\OrdonneeFleche} %
			  \tkcf{\cfCFC}%
\end{center}

\begin{itemize}
		\item \ncf{\cfCFmFm}:
		In the following configurations, let $u_1,u_2$ be two special vertices that form a configuration $(C_I)$ and let $P$ be a shortest path between $u_1$ and $u_2$.
		\begin{itemize}
		\item \ncf{\cfCZC} ($0$ common remaining neighbor): the vertices $u_1$ and $u_2$ have degree $1$, $2$, $3$ or $4$; the vertices $u_1$ and $u_2$ have no common remaining neighbor.
		\item \ncf{\cfCOC} ($1$ common remaining neighbor): the special vertices have degree $2$, $3$ or $4$; the path $P$ has length $1$ or $2$; the special vertices have exactly one common remaining neighbor $v$.
		\item \ncf{\cfCTC} ($2$ common remaining neighbors): the special vertices have degree $3$ or $4$; the path $P$ has length $1$ or $2$; the special vertices have exactly two common remaining neighbors $v$ and $v'$.
		\item \ncf{\cfCFC} ($3$ common remaining neighbors): the special vertices have degree $4$; the path $P$ has length $1$ or $2$; the special vertices have exactly three common remaining neighbors $v$, $v'$ and $v''$.
		\end{itemize}
		Depending on the number of common remaining neighbors between $u_1$ and $u_2$, we have one of the configurations \ncf{\cfCZC}, \ncf{\cfCOC}, \ncf{\cfCTC} or \ncf{\cfCFC}.

		Since $P$ is a shortest path between $u_1$ and $u_2$, when $u_1$ and $u_2$ have a common neighbor, $P$ has length at most $2$, hence all cases are covered.\\
		
		\begin{center}
		\tkcf{\cfCZC} %
			   \TexteCentre{$\Rightarrow$}{\OrdonneeFleche} %
			   \ensuremath{\boxed{\relabeledCNP{$u_1$}{$v_1$}{$v_2$}{$v_3$}{} \ \ \TexteCentre{$\oplus$}{\OrdonneeFleche} \relabeledCNP{$u_2$}{$v_4$}{$v_5$}{$v_6$}{}}}
		\end{center}

		 \item \ncf{\cfCZC}:
		 The neighborhoods of both special vertices match the elementary partial configuration \ncf{\cfCNP}. Since $u_1$ and $u_2$ have no common remaining neighbors, we have a path composite configuration \ncf{\cfCNP}~$\oplus$~\ncf{\cfCNP} that satisfies the conditions of Lemma~\ref{lem:valid2} (p.~\pageref{lem:valid2}).\\

	 \begin{center}
	 \tkcf{\cfCOC} %
			  \TexteCentre{$\Rightarrow$}{\OrdonneeFleche} %
			  \tkcf{\cfCOCa}%
			  \TexteCentre{$\vert$}{\OrdonneeFleche}%
			  \tkcf{\cfTwoAny}%
			  \TexteCentre{$\vert$}{\OrdonneeFleche} %
			  \\
			  \tkcf{\cfTTb}%
			  \TexteCentre{$\vert$}{\OrdonneeFleche}%
			  \tkcf{\cfFFb}%
			  \TexteCentre{$\vert$}{\OrdonneeFleche} %
			  \renewcommand\MyScale{0.8}%
			  \tkcf{\cfRx}%
			  \renewcommand\MyScale{0.9}%
		\end{center}

		\item \ncf{\cfCOC}:

		\begin{itemize}
			\item \ncf{\cfCOCa}: The special vertex $u_1$ has degree $3$ or $4$ and the special vertex $u_2$ has degree $2$, $3$ or $4$; the special vertices have one common remaining neighbor $v$ and $u_1$ and $u_2$ are linked by a path $P$ of length $1$ or $2$; moreover $u_1$ has a remaining neighbor $v_1$ that is not adjacent to $v$.
			\item \ncf{\cfTwoAny}: The special vertex $u_1$ has degree $2$ and the special vertex $u_2$ has degree $2$, $3$ or $4$; the special vertices have one common remaining neighbor $v$ and $u_1$ and $u_2$ are linked by a path $P$ of length $1$ or $2$; if the degree of $u_2$ is greater than $2$, let $v_1$ and possibly $v_2$ be its other remaining neighbors and these neighbors are adjacent to $v$.
			\item \ncf{\cfRx}: The special vertices $u_1$ and $u_2$ have degree $3$ or $4$; $u_1$ and $u_2$ have $2$ common neighbors $v_1$ and $v_2$; each special vertex has a remaining neighbor ($v_3$ and $v_4$ respectively) that is adjacent to $v_1$ and $v_2$.
			\item \ncf{\cfTTb}: The special vertices $u_1$ and $u_2$ have degree $3$; $u_1$ is adjacent to $u_2$ and they have a common neighbor $v$.
			\item \ncf{\cfFFb}: The special vertex $u_1$ has degree $3$ or $4$ and the special vertex $u_2$ has degree $4$; $u_1$ is adjacent to $u_2$ and they have a common neighbor $v$; the remaining neighbors of $u_1$ are adjacent to $v$.
		\end{itemize}

 If there is a remaining neighbor of a special vertex that is not adjacent with the common remaining neighbor, then we have the configuration \ncf{\cfCOCa}. Otherwise, every remaining neighbor is adjacent to the common neighbor $v$.
 If at least one of the special vertices has degree $2$, then we have the configuration \ncf{\cfTwoAny}. Otherwise, every special vertex has degree $3$ or $4$.
 If the distance between the special vertices is $2$, then $u_1$ and $u_2$ have $2$ common neighbors (let us call them $v$ and $v'$). In this case, all the other neighbors are adjacent to both of them (otherwise we are back in case \ncf{\cfCOCa}, possibly changing the role of $v$ and $v'$). In this case, we have the configuration \ncf{\cfRx}. Otherwise, the special vertices are adjacent. If both vertices have degree $2$, we have the configuration \ncf{\cfTTb}, otherwise we have the configuration \ncf{\cfFFb}.\\


		\begin{center}
		\tkcf{\cfTwoAny} %
			  \TexteCentre{$\Rightarrow$}{\OrdonneeFleche} %
			  \tkcf{\cfTwoTwoN}%
			  \TexteCentre{$\vert$}{\OrdonneeFleche}%
			  \boxed{\relabeledXXp{$u_1$}{$u_2$}{$v$}{$v_1$}{$v_2$}} %
			  \TexteCentre{$\vert$}{\OrdonneeFleche} %
			  \boxed{\relabeledXXr{$u_1$}{$u_2$}{$v$}{}{}} %
			  \\
			  \TexteCentre{$\vert$}{\OrdonneeFleche} %
			  \boxed{\relabeledXXs{$u_1$}{$u_2$}{$w$}{$v$}} %
			  \TexteCentre{$\vert$}{\OrdonneeFleche} %
			  \boxed{\relabeledXXt{$u_1$}{$u_2$}{$w$}{$v$}} %
			  \TexteCentre{$\vert$}{\OrdonneeFleche} %
			  \tkcf{\cfXXu} %
			  \TexteCentre{$\vert$}{\OrdonneeFleche} %
			  \\
			  \vspace{0.2cm}
			  \boxed{\relabeledXXup{$u_1$}{$u_2$}{$w$}{$v$}{$v_1$}{$v_2$}}%
		\end{center}		 

			\item \ncf{\cfTwoAny}:
    		\begin{itemize}
    			\item \ncf{\cfTwoTwoN}: The two special vertices $u_1$ and $u_2$ have degree $2$; they have two common neighbors $v_1$ and $v_2$; moreover $v_1$ and $v_2$ are not adjacent.
    			\item \ncf{\cfXXu}: The special vertex $u_1$ has degree $2$ and $u_2$ has degree $3$ or $4$; $u_1$ and $u_2$ have two common neighbors $v_1$ and $v_2$; $v_1$ and $v_2$ are not adjacent; $u_2$ has a neighbor $v_3$ that is adjacent to $v_2$.
			\end{itemize}
			\ \\
			In this case, $u_1$ has degree $2$, and $u_1,u_2$ are either adjacent or share a neighbor $w$ in addition to their common remaining neighbor $v$. First let us assume that $u_2$ has degree $2$. If $u_1,u_2$ are adjacent, and since the graph is different from $K_3$, then $v$ has degree at least $3$ and this is configuration \ncf{\cfXXr}. Otherwise, $u_1,u_2$ are non-adjacent. If $v,w$ are non-adjacent, this is configuration \ncf{\cfTwoTwoN}, and otherwise this is configuration \ncf{\cfXXs} if one among $v,w$ has an odd degree, and configuration \ncf{\cfXXt} if both have an even degree.

			Now assume $u_2$ has degree at least $3$. If $u_1,u_2$ are adjacent this is configuration \ncf{\cfXXp}, and otherwise this is configuration 
			%
			\ncf{\cfXXu} if $v,w$ are non-adjacent, or \ncf{\cfXXup} otherwise.\\


		\begin{center}
		\tkcf{\cfTwoTwoN} %
			  \TexteCentre{$\Rightarrow$}{\OrdonneeFleche} %
			  \boxed{\tkcf{\cfCZa}}%
		\end{center}		 

		\item \ncf{\cfTwoTwoN}: In this case have the configuration \ncf{\cfCZa} around special vertex $u_1$.\\

		\begin{center}
		\tkcf{\cfXXu} 
			  \TexteCentre{$\Rightarrow$}{\OrdonneeFleche} %
			  \boxed{\tkcf{\cfCZa}}%
		\end{center}

			\item \ncf{\cfXXu}: In this case have the configuration \ncf{\cfCZa} around special vertex $u_1$.\\

		\begin{center}
		\tkcf{\cfCOCa} %
			  \TexteCentre{$\Rightarrow$}{\OrdonneeFleche} %
			  \boxed{ 
			  \relabeledCVPlus{$u_1$}{$v$}{$v_1$}{$v_2$}{} \ \ \TexteCentre{$\oplus$}{\OrdonneeFleche-0.2}\relabeledCNP{$u_2$}{$v$}{$v_5$}{$v_4$}{} \ 
			  }%
		\end{center}
		
		\item \ncf{\cfCOCa}:
		The neighborhood of the first special vertex matches the elementary partial configuration \ncf{\cfCVPlus} and the neighborhood of the second one matches the elementary partial configuration \ncf{\cfCNP}. Moreover both special vertices have only one remaining neighbor in common, $v$, which is non-adjacent to $v_1$. Hence the conditions of Lemma~\ref{lem:valid2} (p.~\pageref{lem:valid2}) are satisfied.\\

		\begin{center} \tkcf{\cfFFb} %
			  \TexteCentre{$\Rightarrow$}{\OrdonneeFleche} %
			  \boxed{\relabeledXXd{$u_1$}{$u_2$}{$v$}{$v_4$}{$v_3$}{$v_1$}{$v_2$}}%
		\end{center}

		\item \ncf{\cfFFb}:
		Since the graph is planar, at least one of the remaining neighbors of $u_2$ is not adjacent to the remaining neighbor of $u_1$. Hence we have the configuration \ncf{\cfXXd}.\\
		
		\vfill
		\pagebreak
		
		\begin{center}
		\tkcf{\cfTTb}%
			  \TexteCentre{$\Rightarrow$}{\OrdonneeFleche}%
			  \boxed{\relabeledXXa{$u_1$}{$u_2$}{$v$}{$v_1$}{$v_2$}{}}%
			  \TexteCentre{$\vert$}{\OrdonneeFleche}%
			  \tkcf{\cfXXb}%
			  \TexteCentre{$\vert$}{\OrdonneeFleche} %
			  \\
			  \vspace{0.2cm}
			  \boxed{\tkcf{\cfXXc}}%
		\end{center}
		\item \ncf{\cfTTb}:
		\begin{itemize}
			\item \ncf{\cfXXb}: The special vertices $u_1$ and $u_2$ have degree 3; they are adjacent and they have a common neighbor $v$ of odd degree; each special vertex has another neighbor of odd degree ($v_1$ and $v_2$ respectively) that is adjacent to $v$.
		\end{itemize}

		If there is a special vertex with an even non-common remaining neighbor, we are in the configuration \ncf{\cfXXa}. Otherwise, depending of the parity of the common neighbor we are in configuration \ncf{\cfXXb} (odd) or in configuration \ncf{\cfXXc} (even).\\

        \begin{center}
        \tkcf{\cfXXb}%
			  \TexteCentre{$\Rightarrow$}{\OrdonneeFleche}%
			  \boxed{\tkcf{\cfXXd}}
			  \TexteCentre{$\vert$}{\OrdonneeFleche}%
			  \boxed{\tkcf{\cfXXbb}}%
		\end{center}         

        \item \ncf{\cfXXb}: We split the case depending on whether there is an edge between $v_1$ and $v_2$, the associated rules are \ncf{\cfXXd} 
        if there is not and \ncf{\cfXXbb} otherwise.\\


		\begin{center}
		\renewcommand\MyScale{0.8}%
			  \tkcf{\cfRx}%
			  \TexteCentre{$\Rightarrow$}{\OrdonneeFleche+0.4}%
			  \boxed{\tkcf{\cfXXe}} %
			  \TexteCentre{$\vert$}{\OrdonneeFleche+0.2} %
			  \\
			  \boxed{\tkcf{\cfXXf}} %
			  \TexteCentre{$\vert$}{\OrdonneeFleche+0.2} %
			  \boxed{\tkcf{\cfXXg}}%
			  \renewcommand\MyScale{0.9}%
		\end{center}		 
		
		\item \ncf{\cfRx}:
		If the two common neighbors are non-adjacent we are in configuration \ncf{\cfXXe}, Otherwise, if the two common neighbors form a separating pair, then we are in configuration \ncf{\cfXXg}, otherwise we are in configuration \ncf{\cfXXf}.\\


		\begin{center}
		\tkcf{\cfCTC}%
			  \TexteCentre{$\Rightarrow$}{\OrdonneeFleche}%
			  \tkcf{\cfCrr}%
			  \TexteCentre{$\vert$}{\OrdonneeFleche} %
			  \boxed{\tkcf{\cfXXj}} %
			  \TexteCentre{$\vert$}{\OrdonneeFleche} %
			  \\
			  \boxed{\tkcf{\cfXXk}} %
			  \TexteCentre{$\vert$}{\OrdonneeFleche} %
			  \boxed{\tkcf{\cfXXl}} %
			  \TexteCentre{$\vert$}{\OrdonneeFleche} %
			  \boxed{\tkcf{\cfXXlp}} %
			  \TexteCentre{$\vert$}{\OrdonneeFleche} %
			  \\
			  \vspace{0.2cm}
			  \boxed{\tkcf{\cfXXm}}%
		\end{center}		 
		
		\item \ncf{\cfCTC}:
		\begin{itemize}
			\item  \ncf{\cfCrr}: the special vertex $u_1$ has degree $3$ or $4$ and the special vertex $u_2$ has degree $4$; $u_1$ and $u_2$ are linked by a path $P$ of length $1$ or $2$; they have two common remaining neighbors $v$ and $v'$; moreover, $v$ and $v'$ are not adjacent.
		\end{itemize}

		If there are two common neighbors that are non-adjacent, then we are in configuration \ncf{\cfCrr} if at least one special vertex has degree $4$, and otherwise in configuration \ncf{\cfXXj} or \ncf{\cfXXk} depending on whether one of these neighbors has an even degree.
		Otherwise, all common neighbors are pairwise adjacent. If the special vertices are adjacent, then we are in configuration \ncf{\cfXXm}. Otherwise, the path $P$ has length $2$ and a middle-vertex $v''$. If both special vertices have degree $3$, then we are in configuration \ncf{\cfXXl}, and if at least one has degree $4$, we are in configuration \ncf{\cfXXlp}.\\
		\textbf{Remark}: Note that if the graph is a $K_5^-$, then it is a configuration \ncf{\cfXXl}, but in this case the associated rule defines a coloring of the graph with $3$ colors. This case does not occur if the graph is an MCE.

		
		\begin{center}
		\renewcommand\MyScale{0.81}%
			  \tkcf{\cfCrr}%
			  \TexteCentre{$\Rightarrow$}{\OrdonneeFleche}%
			  \boxed{\tkcf{\cfXXh}}%
			  \TexteCentre{$\vert$}{\OrdonneeFleche}%
			  \boxed{\tkcf{\cfXXi}}%
			  \renewcommand\MyScale{0.9}%

		\end{center}
		
		\item \ncf{\cfCrr}:	 
		If there is a non-common remaining neighbor that is not adjacent to a common remaining neighbor we are in configuration \ncf{\cfXXh}, otherwise at least one remaining neighbor is adjacent to both common remaining neighbors and we are in configuration \ncf{\cfXXi}.\\

		\begin{center}
		\tkcf{\cfCFC}%
			  \TexteCentre{$\Rightarrow$}{\OrdonneeFleche}%
			  \boxed{\tkcf{\cfXXn}}%
			  \TexteCentre{$\vert$}{\OrdonneeFleche}%
			  \boxed{\tkcf{\cfXXo}}%
		\end{center}		 

		\item \ncf{\cfCFC}:
		Since the graph is planar, there are at least two non-adjacent common remaining neighbors. If none of them are adjacent we are in configuration \ncf{\cfXXn}, otherwise we are in configuration \ncf{\cfXXo}.
\end{itemize}

\end{proof}

The proof of the main lemma of this 
{\chapsec}
is now straightforward.

\begin{proof}[Proof of Lemma~\ref{lem:ci} (p.~\pageref{lem:ci})]
Let $G$ be an MCE, and assume it contains a configuration $(C_I)$. By Lemma~\ref{lem:conf_CI}, $G$ contains a configuration among \ncf{\cfCZa}, \ncf{\cfXXa}, $\dots$, \ncf{\cfXXup}, or a path composite configuration \ncf{\cfCVPlus} $\oplus$ \ncf{\cfCNP} or \ncf{\cfCNP} $\oplus$ \ncf{\cfCNP}.
Lemma~\ref{lem:safety_ci} (p.~\pageref{lem:safety_ci}) provides a contradiction.
\end{proof}

\vfill
\pagebreak

\section{Configuration $(C_{II})$}
\label{sec:cii}


In this {\chapsec}, we prove the following lemma, which constitutes the second property of Lemma~\ref{lem:mce}.

\begin{lem}\label{lem:cii}
An MCE does not contain a configuration $(C_{II})$.
\end{lem}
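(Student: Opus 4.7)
The plan is to follow the same overall strategy as the proof of Lemma~\ref{lem:ci}, but generalized to four special vertices and with two additional colors available instead of one. First I would use the fact that $G$ is almost $4$-connected with respect to the $4$-family $U = \{u_1, u_2, u_3, u_4\}$ to produce a chordless $K_4$-subdivision or $C_{4+}$-subdivision $S$ of $G$ rooted on $U$, obtained by applying Menger's theorem within $G[U \cup (V(G)\setminus U)]$ after removing each $u_i$. Such a subdivision uses one edge incident to each $u_i$ per path, leaving (in the $K_4$ case) two \emph{remaining neighbors} per special vertex and (in the $C_{4+}$ case) two or three remaining neighbors. I would then choose $S$ so as to minimize its total length, and if possible so that the paths of $S$ satisfy a planarity-type property (call it \emph{property A}) forbidding certain internal chords; this choice will be crucial to rule out unnecessary incidences later.

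Next, I would extend the single-vertex elementary partial configurations of \chCI\ (namely $\C_V$, $\C_{Ne}$, $\C_{No}$) and introduce the new two-vertex patterns ($\C_U$, $\C_{Da}$, $\C_{Db}$, $\C_{T1a}$, $\C_{T1b}$, $\C_{T2Aa}$, $\C_{T2Ab}$, $\C_{T2NAa}$, $\C_{T2NAb}$) already laid out in the preamble. For each special vertex $u_i$, its behaviour with respect to $S$ falls into one of three types: (i) \emph{settled}, meaning $u_i$ together with its remaining neighbors matches an elementary partial pattern that can be resolved locally without interfering with $S$; (ii) part of a \emph{close problem}, meaning $u_i$ and some $u_j$ share a remaining neighbor and must be treated jointly using one of the two-vertex patterns; or (iii) part of a \emph{distant problem}, meaning $u_i$ has a remaining neighbor on a path of $S$ to which it is not adjacent, which forces the corresponding color of $S$ to be chosen so that the problem is inactive. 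I would prove a key coloring claim (Claim~\ref{clm:inact}, to be formally stated and proven) that shows every $K_4$- or $C_{4+}$-subdivision admits a $2$-coloring that inactivates any fixed set of distant problems compatible with its combinatorial structure, using parity arguments on the paths of $S$.

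Then I would perform the main case analysis, organized by the number and type of problems encountered. When all four special vertices are settled, the patterns combine directly using a variant of Lemma~\ref{lem:valid2}. When one or more close problems occur, I would show that the neighbourhood structure matches one of $R_1, \dots, R_9$; when distant problems occur, the structure matches one of $D_1, \dots, D_4$ (or one of the ``joint'' types $J_1, \dots, J_6$ when a close problem coexists with a distant one). For each such configuration, I would either apply a reduction directly on $S$ or first \emph{reroute} $S$ into a new subdivision $S'$ of the same type (for instance, replacing a path $u_i \sim u_j$ by a detour through a remaining neighbor of $u_k$), and then apply a $2$-coloring of $S'$ together with the appropriate elementary patterns on the other special vertices. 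The validity of each such rule reduces to (a) planarity of the reduced graph (typically preserved because each added edge replaces a deleted path of length $2$ or $3$ through a special vertex), (b) the recoloring producing only paths (checked by verifying that the patterns touch $S$ only at compatible points), and (c) using at most $2$ extra colors.

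The main obstacle, as already visible from the number of rules declared in the preamble, is the combinatorial explosion of cases: close and distant problems can interact in many configurations, and the routing operation needed to move between subdivisions can itself create new incidences that must be excluded. The bulk of the work will be in proving an analogue of Lemma~\ref{lem:conf_CI}, namely that every $(C_{II})$ configuration in an almost-$4$-connected planar graph contains one of the configurations $R_1, \dots, R_9, J_1, \dots, J_6, D_1, \dots, D_4$; this will require repeated use of planarity (via $K_{3,3}$- and $K_5$-minor arguments, as in Observations~\ref{obs:red1}--\ref{obs:red4} to be established) to eliminate incidences and reduce to the listed cases. The $K_3$ and $K_5^-$ components appearing in the reduced graph are handled exactly as in Lemma~\ref{lem:safety_ci}, via Lemma~\ref{L21} and Observation~\ref{obs1}, after checking that none of the rules produce an exceptional graph. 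Combining all these reductions yields a good coloring of $G$ in every case, contradicting the assumption that $G$ is an MCE and establishing the lemma.
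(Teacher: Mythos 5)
Your overall architecture matches the paper's (patterns, settled vertices, distant versus close problems, the $D_1,\dots,D_4$ / $J_1,\dots,J_6$ / $R_1,\dots,R_9$ case lemmas, and the treatment of $K_3$ and $K_5^-$ components via Lemma~\ref{L21} and Observation~\ref{obs1}), but your very first step contains a genuine gap. You cannot obtain a $K_4$- or $C_{4+}$-subdivision rooted on the prescribed $4$-family $U$ ``by applying Menger's theorem'': Menger yields internally disjoint paths between two vertex sets, not a topological $K_4$ with its four branch vertices at four prescribed locations, and this rooted-subdivision question is a genuinely harder linkage problem. Indeed, a rooted $K_4$-subdivision need not exist at all under almost $4$-connectivity: the paper invokes Yu's Theorem (Theorem $4.2$ of \cite{YU199810}, restated here as Theorem~\ref{thm:YU42bis} after removing the $3$-connectivity hypothesis by induction on cuts of size at most $2$), and that theorem comes with two obstruction classes, the $N_1$- and $N_2$-graphs, in which no rooted $K_4$-subdivision exists. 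The entire purpose of the $C_{4+}^*$ notion, with its ``$0$-linked'', ``$1$-linked'' and ``$2$-linked'' properties established in Lemma~\ref{lem:k4c4p} via the $4$-cuts of the obstructions and $K_{3,3}$- and $K_5$-minor arguments, is to extract a usable substitute structure from those exceptional classes; your plan silently assumes this deep existence result, and no elementary connectivity argument can supply it.

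A second, smaller gap: you propose to choose $S$ of minimum total length and hope this yields a chord-freeness property A ``if possible''. The paper in fact needs properties A, B \emph{and} C simultaneously, i.e., a \emph{strong} subdivision on which none of the redirection operations $\C_{X1},\dots,\C_{X4}$ applies (Claim~\ref{clm:redir3r}); property C is what powers the close lemma, for instance ruling out the case $v_{13}\in S$ and forcing the $\C_{D1}$ and $\C_{D2}$ macros to resolve as $\C_{Da}$, $\C_{Db}$ or $\C_{T2NA}$ patterns. Length-minimality does not deliver property C, because redirections need not decrease length (the operation $\C_{X2}$ with $w_1=w_2$ replaces a path of length $2$ by one of length $3$); the paper's termination argument is instead that each redirection increases the number of special vertices with pairwise non-adjacent remaining neighbors, and one must separately prove (Claim~\ref{clm:redirAB}, itself requiring the $K_{3,3}$-minor Observations~\ref{obs:red1}--\ref{obs:red4}) that properties A and B survive each redirection. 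Without this redirection machinery, or an explicit replacement for it, your case analysis in the close configurations would not close.
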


As a reminder, a planar graph $G$ has a configuration $(C_{II})$ if it is almost $4$-connected w.r.t. a $4$-family $U$, i.e. a set of four vertices of degree $5$; we say that $G$ has a $(C_{II})$ configuration w.r.t. $U$.
The proof of Lemma~\ref{lem:cii} is given at the end of this 
{\chapsec},
and uses a method similar to the proof of Lemma~\ref{lem:ci} (p.~\pageref{lem:ci}) in the previous 
{\chapsec}.



In this 
{\chapsec},
we take care of our four special vertices by generalizing the tools of the previous 
{\chapsec}.
Instead of considering a shortest path like in $(C_I)$ rules, we use a \emph{subdivision}, either a $K_4$ or a $C_{4+}$-subdivision (see Figure~\ref{fig:subdK}), and again remove it in the reduction, then color it with our extra colors. These structures have the same convenient properties as the shortest path of the $(C_I)$ rules: they can be colored with $2$ extra colors, and there is an end of an extra color on each of the four special vertices, which is again helpful to take care of all the missing edges. 
We generalize the concept of elementary partial rule and consider \emph{patterns} that recolor the neighborhoods of one or two special vertices at once. Similarly to the distant special vertices in the $(C_I)$ rules, when the remaining neighbors of the special vertices are disjoint, we combine four ``normal'' patterns (called {\CN} like in 
{\chCI}
)
to form a complete reduction rule. Figure~\ref{fig:cii_CN} shows four special vertices $u_1,u_2,u_3,u_4$ linked by a $K_4$-subdivision $S$, with disjoint remaining neighbors and thus treated with the pattern {\CN}.

%

\renewcommand{\OrdonneeFleche}{2}
\renewcommand{\MyScale}{0.8}

\begin{figure}[h]
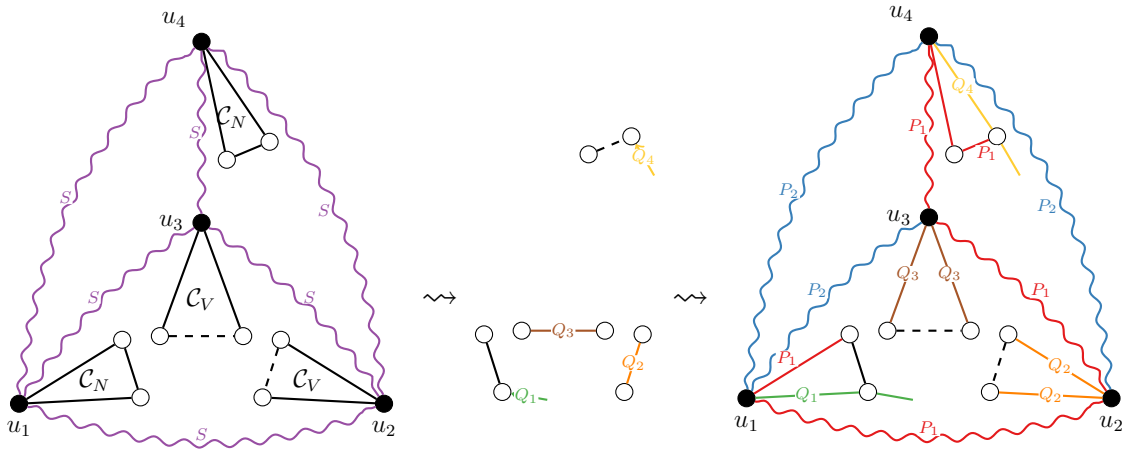

\RuleExTwo
\caption{A $(C_{II})$ reducible configuration featuring {\CN} patterns}
\label{fig:cii_CN}
\end{figure}

Several problems can occur however, which we classify in two types, \emph{distant} and \emph{close} problems. A special vertex forms a distant problem when its remaining neighbors are adjacent and touch the subdivision. If left untreated, a distant problem may cause a {\CN} pattern to create a cycle in the final coloring. The distant problems are eliminated in two ways: either by modifying the subdivision to assign new remaining neighbors to the problematic special vertices, or by \textbf{inactivating} them by finding a $2$-coloring of the subdivision that is compatible with the {\CN} patterns treating the remaining distant problems. Figure~\ref{fig:cii_distclose} features a $(C_{II})$ configuration with a $K_4$-subdivision $S$, where the special vertex $u_3$ causes a distant problem on a $(u_2,u_4)$-path of $S$. This problem is inactivated by a carefully chosen $2$-coloring of $S$.

The close problems occur when two special vertices share some remaining neighbors, with these remaining neighbors possibly touching the subdivision as well. These problems are treated in a custom manner, by redirecting the subdivision to assign new remaining neighbors to the special vertices and finding a compatible set of patterns to treat all four special vertices. In Figure~\ref{fig:cii_distclose}, $u_1$ and $u_2$ initially form a close problem, which is eliminated by a redirection of the subdivision. The special vertices $u_1, u_2$ are then both treated with the {\CV} pattern.

\renewcommand{\MyScale}{0.885}

\cfExThree


More precisely, we first treat the cases with at least $3$ distant problems (and no close ones), in the \textit{distant lemma} (Lemma~\ref{lem:dist1}, p.~\pageref{lem:dist1}), then the cases with at most $2$ distant problems and no close ones in the \textit{semi-distant lemma} (Lemma~\ref{lem:dist2}, p.~\pageref{lem:dist2}), and finally the cases with at most $2$ distant problems and some close problems in the \textit{close lemma} (Lemma~\ref{lem:close}, p.~\pageref{lem:close}).\\


The following claim is a corollary of the properties of almost $4$-connectivity, and is useful in various proofs of this 
{\chapsec}.

\begin{claim}
Let $G$ be a planar graph that is almost $4$-connected w.r.t. a $4$-family $U$. Then $G$ does not have a special vertex $u\in U$ that forms a $K_4$ with three of its neighbors.
\label{clm:inducedK4}
\end{claim}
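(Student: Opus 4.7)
The plan is to argue by contradiction using planarity together with the definition of almost $4$-connectivity. Suppose some $u \in U$ together with three of its neighbors $v_1, v_2, v_3$ induces a $K_4$. Since $u$ is a special vertex of degree exactly $5$, it has two further neighbors $v_4, v_5$, distinct from $v_1, v_2, v_3$. I want to exhibit a $3$-cut of the form $\{u, v_i, v_j\}$ (so that $u \in U \cap A$) that separates two neighbors of $u$, which directly contradicts the hypothesis that $G$ is almost $4$-connected w.r.t. $U$.

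The first step is a planarity argument about the four triangular faces of the embedded $K_4$ on $\{u, v_1, v_2, v_3\}$. Exactly three of these faces, call them $T_1, T_2, T_3$ (with $T_k$ bounded by $\{u, v_i, v_j\}$ for $\{i,j,k\} = \{1,2,3\}$), have $u$ on their boundary; the fourth face $T_0$ is bounded by the triangle $v_1 v_2 v_3$ and does not contain $u$. I would then observe that $v_4$ cannot lie in (the interior of) $T_0$: since $v_4$ is joined to $u$ by an edge and $u$ lies on the opposite side of the cycle $v_1 v_2 v_3$, any planar drawing of the edge $uv_4$ would have to cross the triangle $v_1 v_2 v_3$, contradicting planarity.

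Hence $v_4$ is strictly inside one of the faces $T_k$, say $T_3$ bounded by $\{u, v_1, v_2\}$ (note $v_4 \notin \{v_1, v_2, v_3\}$ because $v_4$ is a fifth neighbor of $u$, so "strictly inside" is meaningful). The next step is the separation argument: any path in $G$ from $v_4$ to the vertex $v_3$ must, when drawn in the plane, cross the boundary cycle $uv_1v_2$ of $T_3$ and hence in $G$ must pass through one of $\{u, v_1, v_2\}$. Thus $A := \{u, v_1, v_2\}$ is a $3$-cut of $G$ separating $v_3$ from $v_4$; both $v_3$ and $v_4$ are neighbors of $u$, and $u \in U \cap A$, contradicting almost $4$-connectivity w.r.t.\ $U$.

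I do not anticipate a real obstacle here: the only subtlety is to phrase carefully why $v_4$ (and similarly $v_5$) cannot sit in the face $T_0$ not incident with $u$, and why lying strictly inside a face $T_k$ forces the boundary triple to be an actual $v_3$-$v_4$ separator in $G$. Both are purely topological consequences of the planar embedding, so the argument stays short.
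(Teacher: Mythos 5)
Your proposal is correct and follows the same argument as the paper: place the fifth neighbor $v_4$ inside a face of the embedded $K_4$ incident with $u$ (w.l.o.g.\ the one bounded by $\{u,v_1,v_2\}$) and conclude that $\{u,v_1,v_2\}$ is a $3$-cut separating the neighbors $v_3$ and $v_4$ of $u$, contradicting almost $4$-connectivity. The only difference is that you spell out the topological step the paper compresses into a ``w.l.o.g.'' (namely, that the edge $uv_4$ prevents $v_4$ from lying in the face bounded by $v_1v_2v_3$), which is a fair elaboration rather than a new idea.
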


\begin{proof}
Let $v_1,v_2,v_3\in N(u)$, such that $\{u,v_1,v_2,v_3\}$ form an induced $K_4$ in $G$. Since $d(u) = 5$, $u$ has a neighbor $v_4$ distinct from $v_1,v_2,v_3$. W.l.o.g., $v_4$ belongs to the face delimited by $\{u,v_1,v_2\}$. Then $\{u,v_1,v_2\}$ is a $3$-cut that separates $v_3$ from $v_4$, a contradiction to the definition of almost $4$-connectivity.
\end{proof}

%
%
%

\ifthenelse{\equal{\isThesis}{true}}
{\section{$K_4$, $C_{4+}$-subdivisions}}
{\subsection{$K_4$, $C_{4+}$-subdivisions}}

In 
{\chCI},
the composite rules that we considered were associated with a path between the two special vertices. In order to apply a similar method to the $(C_{II})$ configurations made up of $4$ special vertices, we consider more complex structures: $K_4$-subdivisions and $C_{4+}$-subdivisions (defined in the preliminaries {\chapsec}).

\begin{figure}[htb]
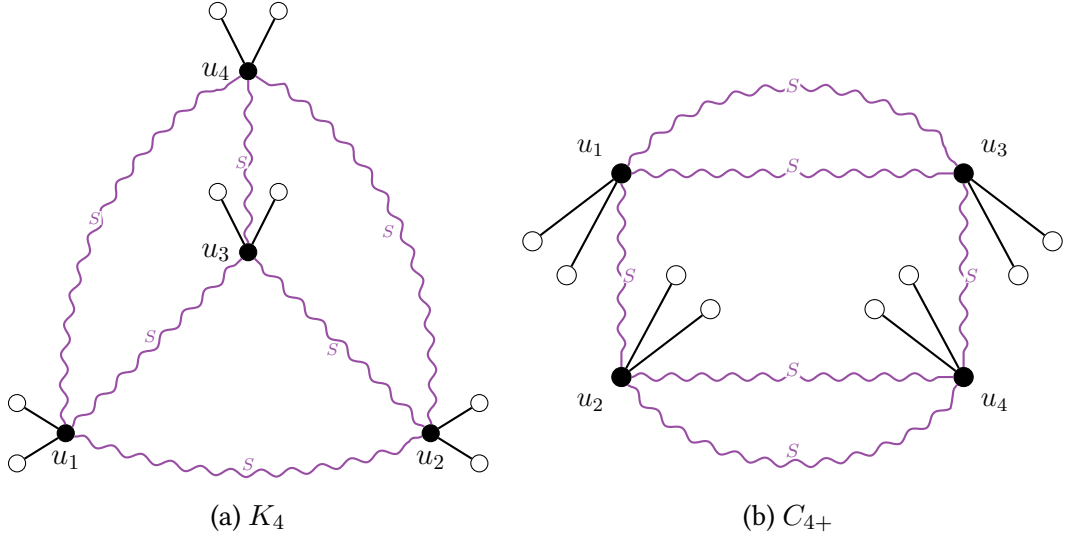

    \centering

    \begin{subfigure}[b]{7cm}
    	\renewcommand{\MyScale}{0.8}
        \centering
        \Pictcf{\subdK}
        \caption{\Ncf{\subdK}}
        \label{K4pic}
    \end{subfigure}
    \begin{subfigure}[b]{7cm}
    	\renewcommand{\MyScale}{0.9}
        \centering
        \Pictcf{\subdCFP}
        \caption{\Ncf{\subdCFP}}
        \label{C4pic}
    \end{subfigure}
    \caption{The two subdivisions considered in the proof of Lemma~\ref{lem:k4c4p}: $K_4$-subdivision and $C_{4+}$-subdivision}
    \label{fig:subdK}
\end{figure}

The general idea is to color the edges of the subdivision with $2$ new ``extra'' colors such that each special vertex is the endpoint of one path. We can then proceed as in the case of the configuration $(C_I)$ to color the neighborhood of the special vertices.
However, not only are the remaining neighbors of the special vertices still not necessarily disjoint, but they can now be touched by paths of the subdivision, which forces us to consider a large number of subcases.

A result by Yu~\cite{YU199810} gives us a $K_4$-subdivision (see Figure~\ref{K4pic}) under the condition of almost $4$-connectivity, with two exceptions. We show in Lemma~\ref{lem:k4c4p} below that we are able to extract from these two exceptions a $C_{4+}$-subdivision (see Figure~\ref{C4pic}) with some additional properties, which we call $C_{4+}^*$-subdivision.

In a subdivision $S$ rooted on a $4$-family $U$, let us say that two special vertices $u_i,u_j$ are \emph{$k$-linked}, $k\in \{0,1,2\}$, if there are $k$ $(u_i,u_j)$-paths in $S$ with no special vertex as an internal vertex. In a $K_4$-subdivision, all special vertices $u_i,u_j$ are pairwise $1$-linked, while in a $C_{4+}$-subdivision there are two pairs of $0$-linked, two pairs of $1$-linked, and two pairs of $2$-linked special vertices. Note that it is sufficient to specify one pair of $1$-linked and one pair of $2$-linked special vertices to deduce the link of all pairs.
If $u_i,u_j$ are $1$-linked, we call the $(u_i,u_j)$-path a \emph{solo path} of $S$. If $u_i,u_j$ are $2$-linked, we call the two $(u_i,u_j)$-paths \emph{parallel paths} of $S$.

Just like in the previous 
{\chapsec},
if $u_i\in U$ is a special vertex and $v$ one of its neighbors, we say that $v$ is a \emph{remaining neighbor} of $u_i$ if the edge $u_iv$ does not belong to $S$.

\begin{defn}[$C_{4+}^*$-subdivision]
Let $G$ be a planar graph with a $C_{4+}$-subdivision $S$ rooted on a $4$-family $U$.
$S$ is a $C_{4+}^*$ if its satisfies the following three conditions:
\begin{itemize}
\item \emph{Property ``$0$-linked'':} Two $0$-linked special vertices have no common remaining neighbor;
\item \emph{Property ``$1$-linked'':} No internal vertex of a solo $(u_i,u_j)$-path of $S$ is a remaining neighbor of some $u_k\in U\setminus \{u_i,u_j\}$;
\item \emph{Property ``$2$-linked'':} If $u_i,u_j\in U$ are $2$-linked, then $u_i,u_j$ have at most one common remaining neighbor, and it belongs to a parallel path of $S$ that is not incident with $u_i,u_j$.
\end{itemize}
\end{defn}

We say that $S$ is a \emph{$\mathcal{K}$-subdivision} if it is a $K_4$-subdivision or a $C_{4+}^*$-subdivision.
In the next lemma, we show how we find such a subdivision in a planar graph $G$ with a $(C_{II})$ configuration $U$. 
In order to reduce the number of cases in the rest of the proof, we want to guarantee the additional property of chordlessness of the subdivision (as defined in the Preliminaries {\chapsec}).

\begin{lem}\label{lem:k4c4p}
Let $H$ be a planar graph that is almost $4$-connected w.r.t. a $4$-family $U = \{u_1,u_2,u_3,u_4\}$.
Then $H$ contains a chordless 
$\mathcal{K}$-subdivision rooted on $U$.
\end{lem}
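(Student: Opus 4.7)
The plan is to invoke a theorem of Yu on $K_4$-subdivisions rooted at a prescribed four-tuple in planar graphs, which, under a sufficient connectivity assumption, guarantees either a $K_4$-subdivision rooted on $U$ or one of two explicit exceptional structural families, namely the $N_1$- and $N_2$-graphs pictured in the paper's figures. The lemma then breaks into three cases, following exactly this trichotomy.

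In the generic case, Yu's theorem directly provides a $K_4$-subdivision rooted on $U$, which is already a $\mathcal{K}$-subdivision. Chordlessness is obtained by the usual minimality argument: among all $K_4$-subdivisions rooted on $U$, choose one minimizing the total number of edges; any chord on a path $u_i\sim u_j$ would allow replacing that path by a strictly shorter one through the chord, contradicting minimality.

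For each of the two exceptional families, I would build a $C_{4+}$-subdivision rooted on $U$ by direct inspection of the structure, with a specific pairing of $0$-, $1$-, and $2$-linked special vertices dictated by the topology of the exception. The three starred properties are then enforced by local reroutings that use almost $4$-connectivity w.r.t.\ $U$. Concretely: for Property ``$0$-linked'', a common remaining neighbor $v$ of two $0$-linked special vertices $u_i,u_j$ would either let us promote the $C_{4+}$-subdivision to a $K_4$-subdivision rooted on $U$ (returning us to the generic case above), or by planarity produce a $3$-vertex separator of $V(H)$ disconnecting two members of $U$, contradicting almost $4$-connectivity. For Property ``$1$-linked'', any internal vertex $v$ of a solo path $u_i\sim u_j$ that is a remaining neighbor of some $u_k\in U\setminus\{u_i,u_j\}$ can be used to reroute one of the $(u_k,\cdot)$-paths through $v$, strictly decreasing the lexicographic parameter $(|E(S)|,\ell)$ where $\ell$ is the total length of solo paths. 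For Property ``$2$-linked'', the presence of two common remaining neighbors of a $2$-linked pair $u_i,u_j$, or of one such neighbor on a parallel path incident with $u_i$ or $u_j$, is likewise ruled out: either by almost $4$-connectivity (the would-be common neighbors together with a third vertex on the relevant paths form a forbidden $3$-cut), or by a rerouting that moves the common neighbor onto the other parallel path.

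Finally, chordlessness of the $C_{4+}^*$-subdivision is secured by a second minimization: among all $C_{4+}^*$-subdivisions rooted on $U$ realizing the selected linkage pattern, pick one with minimum total edge count. A chord would again let us shorten a path, and a quick verification shows that the three starred properties are preserved by this shortening, since it only removes internal vertices from one path of $S$ and never identifies distinct special vertices or merges paths. The main obstacle will be the bookkeeping inside the exceptional cases: one has to check that the reroutings enforcing the three starred properties can be performed simultaneously and do in fact terminate, which requires choosing a lexicographic measure (for instance, first minimizing the number of violations of the ``$1$-linked'' and ``$2$-linked'' properties, then minimizing $|E(S)|$) that is strictly decreased by every rerouting step and that is compatible with the final pass toward chordlessness.
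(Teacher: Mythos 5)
Your skeleton for the generic case matches the paper: invoke Yu's rooted-$K_4$ theorem and make the subdivision chordless by repeatedly shortcutting along chords (the paper phrases this as a terminating chord-elimination operation rather than a global edge-minimization, but these are equivalent, modulo one subtlety you should note: a shortcut between two roots is forbidden when the edge already constitutes a path of $S$, which matters in the $C_{4+}$ case where a $2$-linked pair has two parallel paths). However, there are two genuine gaps. First, Yu's Theorem~4.2 assumes the ambient graph is $3$-connected, and $H$ here is only almost $4$-connected w.r.t.\ $U$; that hypothesis yields ``no $3$-cut separating two vertices of $U$'' but \emph{not} $3$-connectivity. The paper must first prove a strengthened version of Yu's theorem without the $3$-connectivity assumption (its Theorem~\ref{thm:YU42bis}), via an induction that collapses components hanging off cuts of size at most $2$. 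Your proposal silently assumes this strengthening.

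Second, and more seriously, your treatment of the exceptional $N_1$- and $N_2$-cases takes a wrong approach. The three starred properties of a $C_{4+}^*$-subdivision are not \emph{enforced} by local reroutings governed by a lexicographic termination measure; in the paper they are shown to hold \emph{automatically} for the chordless subdivision extracted from the exceptional structure, because every putative violation is outright impossible. For instance, an internal vertex of a solo $(u_i,u_j)$-path that is a remaining neighbor of $u_k$ would give a path between $0$-linked roots avoiding the $4$-cut $T_1$ (in the $N_2$ case) or a $2$-cut (in the $N_1$ case) --- a contradiction, and note that no rerouting could repair it anyway, since any $(u_k,\cdot)$-path through such a vertex meets the solo path internally and destroys the $C_{4+}$ structure, so your measure has no well-defined step to decrease. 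Likewise, two common remaining neighbors of a $2$-linked pair produce a $K_{3,3}$-minor or, via a $K_5$-minor argument, a forbidden $3$-cut; your suggested ``promotion to a $K_4$-subdivision'' for the $0$-linked property does not go through either, since a single extra path between one $0$-linked pair does not complete $C_{4+}$ to $K_4$. Finally, your order of operations is backwards in a way that breaks the argument: you enforce the properties first and eliminate chords last, claiming the shortening preserves them, but eliminating a chord $u_iv$ incident to a root removes a subdivision edge at $u_i$ and thereby \emph{creates new remaining neighbors}, which can introduce fresh violations of the ``$0$-linked'' and ``$2$-linked'' properties. The paper eliminates chords first and then verifies the properties on the resulting $S'$, which requires the nontrivial structural fact your plan never confronts: the $4$-cut $T_1$ survives chord elimination, i.e.\ $T_1\subseteq V(S')$, and this is precisely what makes the ``$0$-linked'' verification succeed.
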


In order to prove this lemma, we use a result by Yu~\cite{YU199810}, which deals with graphs with two types of structure constraints. Let us introduce them, as $N_1$-graphs and $N_2$-graphs. The following definitions are taken straight from the beginning of Section $4$ of \cite{YU199810}, as the two \textit{obstructions}, pictured in Figure $7$ of \cite{YU199810} on page $36$.

An $N_1$-graph (Figure~\ref{fig:none}) is a planar graph $H$ that has a $4$-family $U = \{u_1,u_2,u_3,u_4\}$ and a facial cycle $C$ (that we assume is the outer cycle), such that for each $i\in \{1,2,3,4\}$ either $u_i\in C$ or $H$ has a $4$-cut $X_i$ separating $u_i$ from $U\setminus \{u_i\}$ (so $u_j\notin X_j$ for $j\neq i$), and $\vert X_i\cap C\vert = 2$. Moreover, if $H_i$ is the component of $H\setminus X_i$ containing $u_i$, then the components $H_i$ for $i\in\{1,\dots,4\}$ are disjoint.

\begin{figure}[ht]
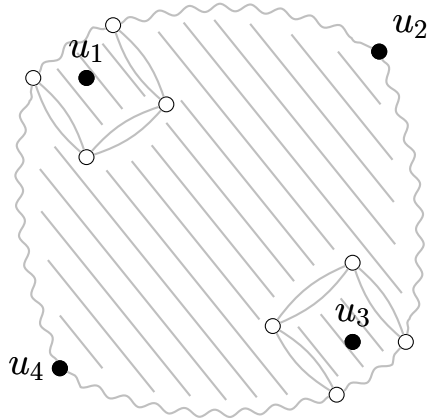

\center
\renewcommand{\MyScale}{0.7}
\Pictcf{\NOneGr}
\caption{An $N_1$-graph, with $u_2,u_4$ on the outer cycle, and $u_1,u_3$ surrounded by $4$-cuts}
\label{fig:none}
\end{figure}

An $N_2$-graph (Figure~\ref{fig:ntwo}) is a planar graph $H$ with a $4$-family $U = \{u_1,u_2,u_3,u_4\}$ and distinct (but not necessarily disjoint) $4$-cuts $T_i$, $i\in\{1,\dots,m\}$. The $4$-cuts are such that each $T_i$ separates two vertices of $U$, say $\{u_1,u_2\}$, from $T_{i+1}-T_i\neq \emptyset$; $T_1 = \{a_1,a_2,a_3,a_4\}$, $T_m = \{b_1,b_2,b_3,b_4\}$ and $H$ contains $4$ disjoint paths $S_i$ from $a_i$ to $b_i$ for $i\in\{1,\dots,4\}$ respectively. Additionally, $H$ has no $4$-cut $T$ separating $T_i\setminus T\neq \emptyset$ from $T_{i+1}\setminus T\neq \emptyset$, or separating $\{u_1,u_2\}$ from $T_1\setminus T\neq \emptyset$, or separating $\{u_3,u_4\}$ from $T_m\setminus T\neq \emptyset$; and either $T_i\cap T_{i+1}\neq \emptyset$ or two vertices of $T_i$ and two vertices of $T_{i+1}$ are cofacial in $H$. 
Finally, the ten following paths exist and are internally disjoint: a $(u_1,u_2)$-path $P_{12}$, a $(u_1,a_1)$-path $Q_1$, a $(u_1,a_2)$-path $Q_2$, a $(u_2,a_3)$-path $Q_3$, a $(u_2,a_4)$-path $Q_4$, a $(u_3,u_4)$-path $P_{34}$, a $(u_3,b_1)$-path $Q_1'$, a $(u_3,b_2)$-path $Q_2'$, a $(u_4,b_3)$-path $Q_3'$, and a $(u_4,b_4)$-path $Q_4'$. This last property is not part of the exact definition from \cite{YU199810}, but deduced from the first remark in Section $3$ on page $20$ of \cite{YU199810}.

\begin{figure}[ht]
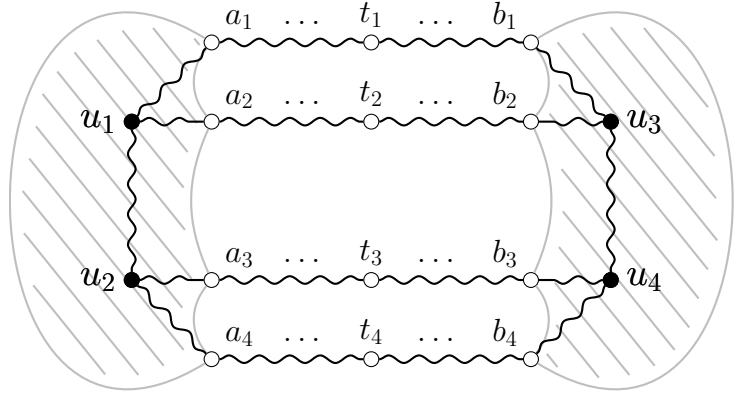

\center
\renewcommand{\MyScale}{0.7}
\Pictcf{\NTwoGr}
\caption{An $N_2$-graph, with three $4$-cuts represented: $\{a_1,a_2,a_3,a_4\}$, $\{t_1,t_2,t_3,t_4\}$, $\{b_1,b_2,b_3,b_4\}$}
\label{fig:ntwo}
\end{figure}

Let us restate Theorem $4.2$ from \cite{YU199810} in the new formalism. The definitions in Yu's paper have stronger constaints and the theorem is an equivalence, but in the present paper we only need one implication. Our definitions of $N_1$-graph and $N_2$-graph are slightly simplified and the theorem is restated as an implication.

\begin{thm}[Theorem $4.2$ of \cite{YU199810}]
Let $G$ be a $3$-connected planar graph and $U = \{u_1,u_2,u_3,u_4\}\subseteq V(G)$ be such that $G$ has no $3$-cut separating two vertices in $U$. Then $G$ has a $K_4$-subdivision rooted on $U$, or $G$ is an $N_1$-graph or an $N_2$-graph.
\label{thm:YU42}
\end{thm}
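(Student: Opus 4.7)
The plan is to prove the theorem by induction on $|V(G)|$, arguing by contradiction: suppose $G$ is a minimum counterexample, i.e., a $3$-connected planar graph with a $4$-family $U=\{u_1,u_2,u_3,u_4\}$ admitting no $3$-cut separating two vertices of $U$, no rooted $K_4$-subdivision on $U$, and such that $G$ is neither an $N_1$-graph nor an $N_2$-graph. The task is to derive a contradiction.

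First I would normalize the connectivity. Every $3$-cut $T$ of $G$ leaves all four vertices of $U$ on a common side (otherwise some pair of $U$ would be separated). Let $H$ be the component of $G\setminus T$ disjoint from $U$. By minimality we may collapse each such $H\cup T$ into a single virtual edge (or a triangle on $T$) and apply the induction hypothesis to the smaller graph $G'$. If $G'$ has the desired rooted $K_4$-subdivision, the subdivision lifts to $G$ by re-expanding the collapsed pieces and routing through them using $3$-connectivity of $H\cup T$, giving the contradiction. If $G'$ is of type $N_1$ or $N_2$, a careful lifting argument shows $G$ is also of that type, again a contradiction. After this reduction, we may assume every $3$-cut of $G$ is ``touching'' $U$ in a strong sense — in particular $G$ is essentially $4$-connected except possibly near single vertices $u_i\in U$.

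With this reduction done, I would try to construct the six internally disjoint paths $P_{ij}$ for $1\le i<j\le 4$ that make up a $K_4$-subdivision rooted on $U$. The key tool is the classical theorem on $2$-linked planar graphs (Seymour, Shiloach, Thomassen): in a $4$-connected planar graph, for any two pairs $(s_1,t_1),(s_2,t_2)$, one can find internally disjoint $(s_i,t_i)$-paths unless a specific planar obstruction arises. Applying this iteratively pair-by-pair, each failure step is witnessed by Menger's theorem producing a small cut. Since we have essentially no $3$-cuts left, such failures come from $4$-cuts. I would show that a $4$-cut obstructing the construction must separate $\{u_1,u_2\}$ from $\{u_3,u_4\}$ (up to relabeling), and the sequence of such $4$-cuts, combined with the uniqueness of the planar embedding of a $3$-connected planar graph (Whitney), yields exactly the four parallel paths $S_1,\dots,S_4$ together with the ten connecting paths $P_{12},P_{34},Q_1,\dots,Q_4,Q_1',\dots,Q_4'$ that define an $N_2$-graph; the cofaciality condition on consecutive $4$-cuts likewise comes from the unique planar embedding. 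A separate subcase handles the ``one vertex isolated from the rest of $U$'' phenomenon: when a $4$-cut $X_i$ surrounds a single $u_i$, choosing a facial cycle $C$ through $X_i\cap C$ for each isolated $u_i$ yields the $N_1$-structure.

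The hard part will be the exhaustive case analysis showing that these are the only two obstruction families. The delicate point is the interaction between the planar embedding and the order in which the six rooted paths must be routed: one has to check that every non-routability witness is indexed consistently by a single cyclic ordering of $U$ on a face (forcing $N_1$) or by a linear chain of $4$-cuts (forcing $N_2$), with no hybrid configurations. A secondary but persistent obstacle is bookkeeping around $3$-cuts incident to $U$: these do not separate pairs of $U$ but may locally disconnect neighbors of a single $u_i$, and one must verify that these local cuts, when combined across all four $u_i$, are compatible with at most one of $N_1$ or $N_2$. Resolving this compatibility — and ruling out simultaneous $N_1$-and-$N_2$-like fragments coexisting in $G$ — is what requires the longest and most technical part of Yu's argument in~\cite{YU199810}, and mirroring it faithfully would constitute the bulk of a full proof.
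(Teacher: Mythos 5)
There is nothing in the paper to compare your argument against: the paper does not prove this statement. Theorem~\ref{thm:YU42} is imported verbatim (with slightly simplified definitions of $N_1$- and $N_2$-graphs, and with Yu's equivalence weakened to the single implication needed) from \cite{YU199810}, and the only related proof the paper supplies is that of Theorem~\ref{thm:YU42bis}, a short induction removing the $3$-connectivity hypothesis that invokes Theorem~\ref{thm:YU42} as a black box. So your proposal would have to stand on its own as a proof of Yu's theorem, and as such it has genuine gaps rather than being a complete alternative route.

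Concretely, three steps do not go through as written. First, you explicitly defer the core of the argument --- the exhaustive analysis showing that every failure to route the six paths is witnessed by exactly the $N_1$- or the $N_2$-structure, with no hybrids --- to ``mirroring Yu's argument''; that is a citation, not a proof, and it is precisely the content of the theorem. Second, the iterative use of the two-linkage theorem of Seymour, Shiloach and Thomassen does not compose: a $K_4$-subdivision rooted on $U$ requires six paths that are \emph{simultaneously} internally disjoint, so after routing some $P_{ij}$ the remaining pairs must be linked in the graph obtained by deleting the internal vertices already used, where $4$-connectivity (indeed even the no-$3$-cut hypothesis) fails; there is no pair-by-pair induction, and this non-composability is exactly why the rooted-$K_4$ problem required Yu's separate machinery rather than repeated $2$-linkage. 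Third, your opening reduction --- collapsing the $U$-free side of every $3$-cut and lifting the outcome back --- is asserted, not proven: lifting a rooted $K_4$-subdivision through a $3$-cut $T$ needs at most three disjoint paths through $T$ (which must be checked), but lifting an $N_1$-structure must preserve a \emph{facial} cycle $C$ and the condition $\lvert X_i\cap C\rvert=2$, and lifting an $N_2$-structure must preserve the cofaciality of consecutive $4$-cuts; neither is automatic under re-expansion, since collapsing can change which cycles are facial. Until these points are supplied, the proposal is a plausible roadmap of Yu's proof strategy, not a proof.
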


We argue that Theorem~\ref{thm:YU42} does not in fact require the $3$-connectivity assumption.

\begin{thm}
Let $G$ be a planar graph and $U = \{u_1,u_2,u_3,u_4\}\subseteq V(G)$ be such that $G$ has no $3$-cut separating two vertices in $U$. Then $G$ has a $K_4$-subdivision rooted on $U$, or $G$ is an $N_1$-graph or an $N_2$-graph.
\label{thm:YU42bis}
\end{thm}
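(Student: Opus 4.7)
The plan is to reduce to the 3-connected case by induction on $|V(G)|$, where Theorem~\ref{thm:YU42} applies directly. For the inductive step, suppose $G$ is not 3-connected, so $G$ has a cut $S$ with $|S|\leq 2$. By hypothesis, $S$ does not separate vertices of $U$, hence there is a single component $C$ of $G-S$ containing all of $U\setminus S$; the other components of $G-S$ attach to $G$ only through $S$.

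We form a smaller auxiliary graph $G'$ on the vertex set $C\cup S$, keeping all edges of $G[C\cup S]$ and (in the case $|S|=2$, say $S=\{a,b\}$) adding a virtual edge $ab$ if not already present, to encode the connectivity provided by the removed components. The graph $G'$ is planar, since the virtual edge can be drawn in the face of $G[C\cup S]$ that previously contained the removed components in any planar embedding of $G$. It is strictly smaller than $G$, and a routine argument shows it still satisfies the hypothesis that no cut of size at most $3$ separates vertices of $U$: any such cut in $G'$ would, using the fact that the removed components only reach $G'$ through $S$, yield a cut of the same size in $G$ with the same separation properties on $U$. By induction, $G'$ either admits a $K_4$-subdivision rooted on $U$, or is an $N_1$- or $N_2$-graph. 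In the first case, the $K_4$-subdivision in $G'$ lifts directly to $G$: if it uses the virtual edge $ab$, we reroute it through a path in $G$ between $a$ and $b$ passing through a removed component, which exists because $G$ is connected through those components.

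The main obstacle is the second case: showing that the $N_1$- or $N_2$-structure of $G'$ yields an $N_1$- or $N_2$-structure of $G$. For $N_1$, the key issue is the facial cycle of $G'$: if it uses the virtual edge $ab$, we must re-embed the removed components of $G-S$ in the appropriate face of $G$ so that the resulting cycle in $G$ remains facial, and verify that the $4$-cuts $X_i$ separating the $u_i$'s from the rest still satisfy the required properties in $G$. For $N_2$, the chain of $4$-cuts $T_i$ and the disjoint paths $S_i$ between consecutive cuts must both be preserved under the lifting; replacing the virtual edge $ab$ by a path through a removed component may enlarge one of the $S_i$ or may force augmenting a $T_i$ by one or both vertices of $S$. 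These verifications are local in nature and use that the removed components attach at only $|S|\leq 2$ vertices, so they interact minimally with the $4$-cut structure. Nonetheless the $N_2$ case, with its several interlocking paths and nested cuts, will require the most delicate bookkeeping to handle rigorously.
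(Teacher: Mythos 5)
Your proposal follows essentially the same route as the paper's proof: induct on $|V(G)|$; when $G$ is not $3$-connected, cut along a set $S$ with $|S|\le 2$, keep the component containing $U$ together with $S$, add a virtual edge between the two cut vertices when $|S|=2$, apply induction (with Theorem~\ref{thm:YU42} as the $3$-connected base case), and lift the outcome back to $G$. One small but genuine omission: to reroute a $K_4$-subdivision that uses the virtual edge $ab$, you invoke an $(a,b)$-path whose interior lies in a removed component, ``which exists because $G$ is connected through those components'' --- but this fails for an arbitrary cut, since a removed component may attach to $a$ only. The paper forestalls this by taking the cut to be \emph{minimal}, which guarantees that every removed component has neighbors at both cut vertices; alternatively, if no removed component sees both $a$ and $b$, then $\{a\}$ or $\{b\}$ was already a smaller cut you should have used instead. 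You should state this choice explicitly.

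As for the $N_1$/$N_2$ lifting that you single out as ``the main obstacle'': this is exactly what the paper dispatches in a single parenthetical, and the reason it is harmless is worth internalizing. Since the removed components meet $G'$ only in $S$, any path of $G$ that detours through them enters and leaves via $S$, i.e.\ it simulates the virtual edge $ab$, which is already an edge of $G'$. Consequently every separation statement --- the $4$-cuts $X_i$ of an $N_1$-graph, the chain of cuts $T_i$ and the disjoint paths $S_i$ of an $N_2$-graph, and the disjointness of the components $H_i$ --- transfers verbatim from $G'$ to $G$: no cut needs to be augmented by vertices of $S$, contrary to what you anticipate. The only genuinely geometric point is keeping the cycle $C$ of an $N_1$-graph facial, which is handled by re-embedding: when $|S|=1$, tuck the removed components into a non-outer face at the cut vertex; when $|S|=2$, embed them where the virtual edge was, with the rerouting $(a,b)$-path on the outer boundary of the attached piece. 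With the minimality fix and these verifications written out, your plan is a complete proof and coincides with the paper's.
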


\begin{proof}[Proof of Theorem~\ref{thm:YU42bis}]
We proceed by induction on the size of $G$. Assume that $G$ is not $3$-connected. Then there is a cut $X$ of size at most $2$. We may assume that $X$ is a minimal cut. We consider the connected components $C_1,\ldots,C_p$ of $G\setminus X$, and observe that all special vertices in $U$ belong to the same one, say $C_1$. If $|X|=1$, we consider $G'=G \setminus \{C_2 \cup \ldots \cup C_p\}$. If $|X|=2$, we consider $G'=G \setminus \{C_2 \cup \ldots \cup C_p\}+xy$, where $x$ and $y$ are the two vertices in $X$. If $xy\in E(G)$ then adding the edge $xy$ does not create a double edge. 

In both cases, $G'$ has fewer vertices than $G$, and is almost $4$-connected with respect to $U$. By Theorem~\ref{thm:YU42}, $G'$ has a $K_4$-subdivision rooted on $U$, or $G$ is an $N_1$-graph or an $N_2$-graph. Note that each of those three properties extend to all of $G$ (up to modifying the embedding, in the case of an $N_1$-graph with $|X|=1$ so as to maintain that the cycle is facial).
\end{proof}

\ifthenelse{\equal{\isThesis}{true}}
{\vfill
\pagebreak}
{}

We can now tackle the proof of Lemma~\ref{lem:k4c4p} (p.~\pageref{lem:k4c4p}).

\begin{proof}[Proof of Lemma~\ref{lem:k4c4p}]
First, let us define the operation that turns a subdivision into a chordless one.
Given a $K_4$- or $C_{4+}$-subdivision $S$ rooted on a $4$-family $U$, we say that we \emph{eliminate the chords} of $S$ if we apply the following operation exhaustively:
if a path $P=(v_1,\dots,v_k)$ of $S$ has a chord $v_iv_j$, we replace $P$ in $S$ with the path $P' = (v_1,\dots,v_i,v_j,\dots,v_k)$, except if $v_i,v_j\in U$ and the edge $v_iv_j$ already constitutes a path of $S$. Since each elimination decreases the number of edges in $S$, the process terminates, and the obtained subdivision $S'$ is well-defined and chordless. Since the vertices of $S'$ are a subset of the vertices of $S$ and the ends of the paths are preserved, $S'$ has the same type ($K_4$ or $C_{4+}$) as $S$.\\

If $G$ has a $K_4$-subdivision, eliminating its chords gives us the result. Using Theorem~\ref{thm:YU42bis} (p.~\pageref{thm:YU42bis}), it now suffices to show that any $N_1$-graph and any $N_2$-graph contain a chordless $C_{4+}^*$-subdivision to prove our lemma.\\

Let us first take care of the case where $H$ is an $N_2$-graph, defined as in the definition above.
Let $S$ be the union of the paths $P_{12}$, $Q_1$, $Q_2$, $Q_3$, $Q_4$, $P_{34}$, $Q_1'$, $Q_2'$, $Q_3'$, $Q_4'$, $S_1$, $S_2$, $S_3$, $S_4$. $S$ is a $C_{4+}$-subdivision rooted on $U$, so let $S'$ be a chordless $C_{4+}$-subdivision obtained by eliminating the chords of $S$. Let us prove that $S'$ is a $C_{4+}^*$-subdivision.

By definition, the $4$-cut $T_1$ is a subset of $V(S)$, and we show that $T_1\subseteq V(S')$. Let us assume for contradiction that there is a vertex $t\in T_1$ which belongs to a path $P$ of $S$ on which a chord elimination was performed: there are two vertices $v,v'$ on $P$, such that the edge $vv'\in E(H)$ is not in $P$ and $t$ is on the $(v,v')$-section $P'$ of $P$. But then the path $(P\setminus P')\cup \{vv'\}$ is a path between two $1$-linked special vertices in $H\setminus T_1$, which is a contradiction; so $T_1\subseteq V(S')$.

Assume for contradiction that two $0$-linked special vertices $u_i,u_j$ have a common remaining neighbor $v$ w.r.t. $S'$. If $v$ belongs to $V(S')$, then the edge $u_iv$ or $u_jv$ forms a chord of $S'$ since all paths of $S'$ are incident with $u_i$ or $u_j$. It is a contradiction, so $v$ does not belong to $V(S')$. However, since $u_i$ and $u_j$ are $0$-linked, they are separated by the $4$-cut $T_1$, and so $v$ is a vertex of $T_1$, hence in $S$. This is a contradiction, which gives us the property ``$0$-linked'' of $S'$.

To obtain property ``$1$-linked'', observe that no internal vertex of a solo $(u_i,u_j)$-path is a remaining neighbor of a $u_k\in U\setminus \{u_i,u_j\}$, since this would create a path between $0$-linked special vertices that would be disjoint from the $4$-cut $T_1$, a contradiction.

Let us now prove the property ``$2$-linked''.
By definition of the cut $T_1$, the common remaining neighbors of two $2$-linked special vertices $u_i,u_j$ belong to parallel paths of $S'$. If a common remaining neighbor $v$ of $u_i,u_j$ belongs to a $(u_i,u_j)$-path of $S'$, the two edges $u_iv, u_jv$ form chords in $S'$, a contradiction, so the common remaining neighbors of $u_i,u_j$ belong to parallel $(u_k,u_l)$-paths that are not incident with $u_i,u_j$.
If $u_1,u_3$ ($2$-linked) have two common remaining neighbors $v,v'$, they belong to different parallel $(u_2,u_4)$-paths by planarity (otherwise $\{u_1,u_4,v\}$ and $\{u_2,u_3,v'\}$ form a $K_{3,3}$-minor in $H$ if $v,v'$ belong to a path $P=(u_2,\dots,v,\dots,v',\dots,u_4)$). Then we claim that $\{u_1,v,v'\}$ is a $3$-cut of $H$ that separates $u_2$ from $u_3$. If not, there is a $(u_2,u_3)$-path $P_{23}$ in $H$ that is vertex-disjoint from $u_1,v,v'$, and by definition of $T_1$, $P_{23}$ contains a vertex $t\in T_1$ that is on a $(u_1,u_3)$-path of $S'$, and the $(u_2,t)$-section of $P_{23}$ does not contain $u_4$. Then $\{u_1,u_2,v,t,u_4=v'\}$ induce a $K_5$-minor in $H$ by contracting the $(u_4,v')$-path of $S'$ into a vertex. Hence $\{u_1,v,v'\}$ is a $3$-cut of $H$ that separates $u_2$ from $u_3$, which is a contradiction with the almost $4$-connectivity of $H$ w.r.t. $U$.
Property ``$2$-linked'' follows.\\

Now let us consider the case where $H$ is an $N_1$-graph with outer cycle $C$. We build a $C_{4+}$-subdivision $S$ rooted on $U$ as follows.
First, for each $u_i\notin C$, we find four internally-disjoint paths $p_1^i,p_2^i,p_3^i,p_4^i$ from $u_i$ to the four vertices of its associated $4$-cut $X_i$.
Such paths exist since there is no $3$-cut separating $u_i$ from $U\setminus \{u_i\}$ in $H$.
We assume $p_1^i$ and $p_2^i$ each have an end on the outer cycle $C$. We add $(E(C)\setminus \bigcup_{u_i \notin C} E(H_i)) \cup \bigcup_{u_i\notin C} (E(p_1^i) \cup E(p_2^i))$ to $S$, where $H_i$ is the component of $H\setminus X_i$ containing $u_i$.
To obtain the two remaining paths of $S$, we consider the graph $H'$ formed by removing $(V(C) \cup \bigcup_{u_i \notin C} V(H_i))\setminus U$ from $H$, and add back $p_3^i$ and $p_4^i$ to $H'$. We look at the outer face of $H'$. Let $P_{13}$ be the outer $(u_1,u_3)$-path and $P_{24}$ the outer $(u_2,u_4)$-path of $H'$. We claim that these paths are vertex-disjoint. 
To see it, observe that in $H$ there are at least $4$ internally-disjoint paths from $u_1$ to $u_3$. At least two of them are disjoint from $C$, hence belong to $H'$. Therefore, $H'$ cannot contain a $1$-cut separating $\{u_1,u_2\}$ from $\{u_3,u_4\}$.
Therefore, let us add $P_{13}$ and $P_{24}$ to $S$ to form our $C_{4+}$-subdivision $S$.

Let $S'$ be a chordless $C_{4+}$-subdivision obtained by eliminating the chords of $S$.
Let us now prove that $S'$ is a $C_{4+}^*$-subdivision.

We first check property ``$1$-linked''. Let $u_i,u_j$ be $1$-linked special vertices with a path $P_{ij}$ of $S'$, and $u_k\in U\setminus \{u_i,u_j\}$. If there is an internal vertex $v$ of $P_{ij}$ that is a remaining neighbor of $u_k$, then $\{u_k,v\}$ is a $2$-cut of $H$ if $u_k$ is on $C$, otherwise there is a vertex $x$ in the $4$-cut $X_k$ of $u_k$ such that $\{x,v\}$ is a $2$-cut of $H$, contradicting its $3$-connectivity.

To check properties ``$0$-linked'' and ``$2$-linked'', we 
%
show that there is no remaining neighbor in common between $u_1$ and $\{u_3,u_4\}$ (respectively $2$-linked and $0$-linked to $u_1$), because of the properties of almost $4$-connectivity of $H$. Each special vertex $u_i$ either belongs to $C$ or there is a $4$-cut $X_i = \{x_1,x_2,x_3,x_4\}$ separating $H$ into a component $H_i$ containing $u_i$ and a $H\setminus (X_i\cup H_i)$ containing $U\setminus \{u_i\}$. If $u_1,u_k$, $k\in\{3,4\}$, belong to $C$ and share a remaining neighbor $v$, then $\{u_1,u_k,v\}$ is a $3$-cut that separates two neighbors of $u_1$ (if $k=3$) or separates $u_2$ from $u_3$ (if $k=4$).
If $u_1$ belongs to $C$ and $u_k$ has a $4$-cut $X_k$, then their common remaining neighbor is the only vertex $x\in X_k$ that does not belong to $S'$. Then there is a vertex $x'\in X_k\cap C$, such that $\{u_1,x,x'\}$ is a $3$-cut that separates $u_2$ from $u_3$ (whether $k=3$ or $4$).
If both $u_1,u_k$ have $4$-cuts $X_1,X_k$, then their common remaining neighbor is again the only $x\in X_1\cap X_k$ that does not belong to $S'$, and there are $x_1'\in X_1\cap C$ and $x_k'\in X_k\cap C$ such that $\{x_1',x_k',x\}$ is a $3$-cut that separates $u_2$ from $u_3$.
In all cases, we obtain a contradiction with the almost $4$-connectivity of $H$.
Properties ``$0$-linked'' and ``$2$-linked'' follow, which completes the proof.
\end{proof}

\ifthenelse{\equal{\isThesis}{true}}
{\section{Patterns}}
{\subsection{Patterns}}



Although almost all the subdivisions that we consider throughout the paper are regular $K_4$-subdivision or $C_{4+}$-subdivisions, 
we occasionally consider a more convenient structure, which we call \emph{semi-$C_{4+}$-subdivision}, that consists in a $C_{4+}$-subdivision where two parallel paths with disjoint ends intersect on one vertex.

Let $W_4$ be the wheel graph on $5$ vertices $u_1,u_2,u_3,u_4,w$, i.e. the graph where $u_1,u_2,u_3,u_4$ form a cycle and $w$ is adjacent to the other four vertices.

\begin{defn}[Semi-$C_{4+}$-subdivision]
A \emph{semi-$C_{4+}$-subdivision rooted on a $4$-family $U$} in a graph $G$ is a $W_4$-subdivision rooted on $U\cup \{w\}$, where $w$ is a vertex of $G$.
\end{defn}

By abuse of notation and by analogy with the $C_{4+}$-subdivision, we arbitrarily pick two pairs of special vertices $(u_i,u_j),(u_k,u_l)$ and we view the union of the $(u_i,w)$-path and the $(u_j,w)$-path as a $(u_i,u_j)$-path $P_{ij}$, and the union of the $(u_k,w)$-path and the $(u_l,w)$-path as a $(u_k,u_l)$-path $P_{kl}$. We say that there is a \emph{contact} between $P_{ij}$ and $P_{kl}$.

Observe that a semi-$C_{4+}$-subdivision is always $2$-colorable, since we can simply swap the colors of two parallel paths in a $2$-coloring, in order to give different colors to the two paths in contact (see Figure~\ref{fig:semiCFour}).

\SemiCFour

The following definition regroups the different kinds of structures that we consider for our reduction rules.

\begin{defn}[Semi-subdivision]
A \emph{semi-subdivision} $S$ in a graph $G$ is a $K_4$-subdivision, a $C_{4+}$-subdivision or a semi-$C_{4+}$-subdivision rooted on a $4$-family $U$.
\end{defn}



Our overall goal is to find an invidual reduction rule for each of the four special vertices, and combine them into a general rule for the whole configuration. We define these rules by extending the formalism of 
{\chCI}.

\ifthenelse{\equal{\isThesis}{true}}
{}
{\vfill
\pagebreak}

A \emph{subdivision partial configuration} (or \textbf{pattern}) $\C_i$ is a configuration
defined over the neighborhood of one special vertex $u_j$ or two special vertices $u_j,u_l$, with three identified incident edges per special vertex, called \emph{subdivision edges}.
We denote it by $\C_i(u_j)$ if it involves one special vertex, and $\C_i(u_j,u_k)$ otherwise.

Observe that an elementary partial configuration can be turned into a subdivision partial configuration by adding two subdivision edges. 
The subdivision partial configuration \ncf{\cfCU} defined below is an example of partial configuration that involves two special vertices.

We say that a set of patterns $\mathcal{M} = \{\C_1,\dots,\C_k\}$, $k\in\{2,3,4\}$, is a \emph{mapping} of a $4$-family $U$, w.r.t. a semi-subdivision $S$ rooted on $U$, if in this semi-subdivision 
there is a bijection between the special vertices of $\C_1,\dots,\C_k$ and the special vertices of $U$; i.e. each special vertex $u$ of $\C_1,\dots,\C_k$ can be associated with a special vertex $u'\in U$, and the neighborhoods of $u$ and $u'$ are isomorphic. 
If $u'\in U$ is associated to the special vertex $u$ of a pattern $\C_i$, we say that $u'$ \emph{forms} a pattern $\C_i$ w.r.t. $S$.

Given a mapping $\mathcal{M} = \{\C_1,\dots,\C_k\}$ and $\C_i\in \mathcal{M}$, we denote by $V(\C_i)$ the set containing the special vertices associated with $\C_i$ in $\mathcal{M}$ and their remaining neighbors.
We say that a pattern $\C_i$ \emph{touches} another pattern $\C_j$ if 
$V(\C_i)\cap V(\C_j)\neq \emptyset$.
We say that $\C_i$ touches $S$ if at least one non-special vertex of $V(\C_i)$ belongs to a path of $S$.

A \emph{subdivision composite configuration} $(\mathcal{M},S)$ is the following configuration: the graph contains a $4$-family $U$ and a semi-subdivision $S$ rooted on $U$, while $\mathcal{M}$ is a mapping of $U$ w.r.t. $S$.

In the previous 
{\chapsec},
we defined elementary partial rules over elementary partial configurations. This definition can be directly extended to define \emph{subdivision partial rules} over subdivision partial configurations, i.e. as a rule $\mathcal{R}_i = (\C_i,f_i^r,f_i^c)$ associated with a pattern $\C_i$, a partial reduction function encoded by a set $\mathcal{O}_i\subseteq \{add,remove\}\times E(\C_i)$ and the partial recoloring function $f_i^c$.

Let $\mathcal{M} = \{\C_1,\dots,\C_k\}$, $k\in\{2,3,4\}$, be a mapping of a $4$-family $U = \{u_1,u_2,u_3,u_4\}$, w.r.t. a semi-subdivision $S$ rooted on $U$. For each $i\in\{1,\dots,k\}$, let $\mathcal{R}_i=(\C_i,f_i^r,f_i^c)$ be a subdivision partial rule associated with the pattern $\C_i$.
Let $c_S$ be a $2$-coloring of $S$.
The \emph{subdivision composite rule} $\mathcal{R}_c = (C_c, f_c^r, f_c^c)$, denoted by $(\{\mathcal{R}_1,\dots, \mathcal{R}_k\},S,c_S)$, is the reduction rule associated with the subdivision composite configuration $C_c = (\mathcal{M},S)$ and is defined as follows. The reduction function $f_c^r$ is defined by $f_c^r(G) = (f_1^r \circ \dots \circ f_k^r(G))\setminus (U \cup E(S))$, i.e. the successive application of the operations in $\mathcal{O}_i$ in reverse order and the removal of the special vertices $U$ and the edges of the semi-subdivision $S$, to form the reduced graph $G'$.

In order to provide a semantics of $f_c^c$, we define the \emph{intermediate graphs} $G_{\text{int}}^i = f_{i+1}^r \circ \dots \circ f_k^r(G)$, for $0\leq i \leq k$. We use these graphs to define a sequence of colorings $c_{\text{int}}^i$ that lead to a coloring $c$ of $G$. Let $pc$ be a coloring of $G'$. Let $c_{\text{int}}^0 = pc \cup c_S$ be a coloring of $G_{\text{int}}^0 = G'\cup (U\cup E(S))$. We define $c_{\text{int}}^i = f_i^c(G_{\text{int}}^i, c_{\text{int}}^{i-1})$ for $i\in\{1,\dots,k\}$. We finally define $f_c^c(G,pc,c_S) = c_{\text{int}}^k$ for any planar graph $G$, coloring $pc$ of $f_c^r(G)$ and good coloring $c_S$ of $S$.
In other words, the semi-subdivision $S$ is added to $G'$ and colored with $c_S$, then for each pattern $\C_i$ considered in ascending order, the reduction of $\C_i$ is undone and the edges in the neighborhood of $\C_i$ are colored according to the partial recoloring function $f_i^c$.
This definition is motivated by the fact that whenever $pc$ is a good coloring of the reduced graph $G'$, and the partial rules $\mathcal{R}_i$ are valid and do not interfere with each other, each intermediate coloring $c_{\text{int}}^i$ is a good coloring of the intermediate graph $G_{\text{int}}^i$, which allows to build step by step a good coloring of $G$.
The $2$-coloring $c_S$ of $S$ is specified only when necessary.


%
%
%
%
%

\renewcommand\MyScale{0.8}

\ifthenelse{\equal{\isThesis}{true}}
{\medskip}
{\vfill
\pagebreak}

In the figures, the two paths $P_1, P_2$ induced by the $2$-coloring of $S$ are represented in red and blue. The purple color is used to color the whole subdivision when its $2$-coloring is not specified.
An edge represented in black does not belong to the subdivision.
A red vertex (\tkSymbol{\node[rednode] (wn) at (0,0) {};}) (resp. blue \tkSymbol{\node[bluenode] (wn) at (0,0) {};}) represents a vertex that may be touched by a red (resp. blue) subdivision path. A purple vertex (\tkSymbol{\node[purplenode] (wn) at (0,0) {};}) may be touched by either a red or a blue subdivision path. When a path of the subdivision ends on a special vertex, it is represented by
\tkSymbol{\node[blacknode] (g1) at (0,0) {}; \mySubRedEdge{g1}{0}{}{};},
\tkSymbol{\node[blacknode] (g1) at (0,0) {}; \mySubBlueEdge{g1}{0}{}{};}
or
\tkSymbol{\node[blacknode] (g1) at (0,0) {}; \mySubEdge{g1}{0}{}{};},
respectively if it is the red path, the blue path, or if the color is not specified.

Let us now introduce the patterns we use in the rest of the proof. For each pattern, we describe the associated partial configuration, as well as the conditions on the colors of a $2$-coloring of the associated subdivision $S$. We then provide a definition of the partial reduction and recoloring functions. The patterns \ncf{\cfCV}, \ncf{\cfCvTwo}, \ncf{\cfCvThree} are taken from 
{\chCI}
and their definitions are omitted.


\ifthenelse{\equal{\isThesis}{true}}
{\vfill
\pagebreak}
{\ \\}

\textbf{List of the patterns:}\\

\allPatterns

\medskip

From now on, when we talk about patterns, we refer exclusively to patterns from this list.


Obviously the partial rules associated with the patterns of the considered mapping may conflict with each other. We now address the conditions of compatibility between patterns.

\begin{defn}[Compatible patterns]
Let $G$ be a planar graph with a semi-subdivision $S$ rooted on a a $4$-family $U$.

Let $\C_i, \C_j$ be two patterns on $U$ w.r.t. $S$. $\C_i, \C_j$ are \emph{compatible} if:
\begin{itemize}
\item $\C_i$ or $\C_j$ is a {\CV}, {\CVp} or {\CU}, and $\vert V(\C_i) \cap V(\C_j)\vert \leq 1$; or
\item $\C_i,\C_j$ are among {\CN}, {\CTOne}, {\CTTA}, {\CTTNA}, {\CDA}, {\CDB}, and $V(\C_i) \cap V(\C_j) = \emptyset$.
\end{itemize}

Let $\mathcal{M} = \{\C_1,\dots,\C_k\}$ be a mapping of $U$ w.r.t. a semi-subdivision $S$, and $2\leq k \leq 4$.\\
We say that $\mathcal{M}$ is a \emph{compatible mapping} w.r.t. $S$ if it satisfies the following conditions:
\begin{itemize}
\item The $\C_i$ patterns in $\mathcal{M}$ are pairwise compatible;
\item There exists a $2$-coloring $c_S$ of $S$ that fits the color requirements in the definition of each $\C_i\in \mathcal{M}$.
\end{itemize}
\end{defn}

We justify this notion of compatible patterns and compatible mapping with the following claim.

\begin{claim}
Let $G$ be a planar graph with a $(C_{II})$ configuration w.r.t. a $4$-family $U$.
Let $\mathcal{M} = \{\C_1,\dots,\C_k\}$ be a mapping of $U$ w.r.t. a semi-subdivision $S$, with $2\leq k \leq 4$, and let $\mathcal{R}_i$ be a subdivision partial rule associated with $\C_i$ for $i\in\{1,\dots,k\}$.

If $\mathcal{M}$ is a compatible mapping w.r.t. $S$, then 
the $2$-coloring $c_S$ of $S$ associated with $\mathcal{M}$ is such that the subdivision composite rule $(\{\mathcal{R}_1,\dots,\mathcal{R}_k\},S,c_S)$ associated with $(\mathcal{M},S)$ is valid.
\label{clm:compmapping}
\end{claim}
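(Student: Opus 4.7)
I would verify the three requirements in the definition of a valid rule: planarity of $G' = f_c^r(G)$, the upper bound on additional colors, and the fact that $f_c^c(G,pc,c_S)$ yields a (cycle-free) path coloring whenever $pc$ is one.

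Planarity of $G'$ is the easiest point. Every operation listed in some $\mathcal{O}_i$ either removes an edge, or adds an edge $v_iv_j$ between two vertices that were both adjacent (in $G$) to a special vertex $u\in U$; the added edge can be embedded in the face freed by removing $u$ and its incident edges. Since distinct patterns of a compatible mapping share at most one non-special vertex, the embeddings chosen around different special vertices do not collide, and planarity is preserved. Counting colors is equally direct: the pre-coloring $pc$ uses $\lfloor|V(G')|/2\rfloor$ colors, and $c_S$ introduces exactly two additional ones (the two colors of the $2$-coloring of $S$), while no partial recoloring $f_i^c$ creates a color beyond $pc\cup c_S$. Since $|V(G)|-|V(G')|=|U|\in\{2,3,4\}$, the extra budget $\lfloor(|V(G)|-|V(G')|)/2\rfloor$ is at least $1$ when $|U|=2,3$ and equals $2$ when $|U|=4$; in the cases with $|U|<4$ only one of $P_1,P_2$ actually receives edges, so the count matches.

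The main obstacle is showing that the final coloring $c = f_c^c(G,pc,c_S)$ is cycle-free. I would argue this colorwise. For a color coming from $pc$, the only modifications across all $\mathcal{R}_i$ are deviations and extensions through special vertices, and these are exactly the operations analyzed in Lemmas~\ref{lem:valid1} and~\ref{lem:valid2}: as long as no non-special vertex is involved in two conflicting deviations/extensions, the color still induces a path. The compatibility definition is tailored to guarantee exactly this: patterns of type {\CN}, {\CTOne}, {\CTTA}, {\CTTNA}, {\CDA}, {\CDB} are required to be vertex-disjoint, and a {\CV}, {\CVp} or {\CU} pattern is allowed to share at most one vertex $v$ with another pattern; but in each partial rule the shared $v$ only plays the role of a ``passive'' end (not an endpoint of the $pc$-path being deviated), so the two operations touching $v$ act on different $pc$-colors.

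It remains to treat the two colors of $c_S$. Each such color initially induces one of the paths $P_1,P_2$ in $S$, and the partial rules may extend it through edges incident to special vertices (e.g.\ in {\CTOne}, {\CTTA}, {\CTTNA}, {\CU}, {\CDA}, {\CDB}). I would check that, under the compatibility conditions, the union of extensions contributed by the individual $\mathcal{R}_i$'s is a vertex-disjoint collection of pending ``legs'' attached to the endpoints of $P_1$ and $P_2$, so that the result is still a path: the vertex-disjointness of the non-{\CV}/{\CVp}/{\CU} patterns guarantees that no two legs share an internal vertex, and for the {\CV}-type patterns the shared vertex $v$ is never an internal vertex of a new leg (it is either a remaining neighbor whose color is deviated through a single $u_j$, or the shared subdivision end of two adjacent special vertices in a {\CU}). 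The second compatibility clause, namely the existence of a $2$-coloring $c_S$ meeting every colour requirement of the $\C_i$ (things like ``$v_1$ does not touch the color ending on $u_1$'' in {\CTOnea} or the ordering requirement in {\CTTAb}), is exactly what makes these extensions consistent: whenever a pattern forbids some vertex to lie on a particular color, the chosen $c_S$ respects that, so extending through that vertex does not close a cycle with the existing $P_1$ or $P_2$. Putting these pieces together with the three requirements verified above gives the validity of $(\{\mathcal{R}_1,\dots,\mathcal{R}_k\},S,c_S)$.
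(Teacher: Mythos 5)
You never address the point on which the paper's proof actually turns: the well-definedness of the recoloring functions of the parity-based patterns. The partial rules for {\CN}, {\CTOne}, {\CTTA} and {\CTTNA} (i.e.\ the rules inherited from \ncf{\cfCvTwo}, \ncf{\cfCvThree} and their two-vertex analogues) are only defined because certain non-special vertices $v$ have odd degree in the reduced graph $G'$, so that some path of the pre-coloring ends on $v$ and can be extended. But the compatibility definition explicitly allows a {\CV}, {\CVp} or {\CU} pattern to share exactly such a vertex $v$ with one of these patterns, and the removal of $E(S)$ can also pass through $v$; either interaction could a priori flip the parity of $v$ in $G'$, in which case the path your ``pending legs'' argument wants to extend simply does not exist and the composite recoloring function $f_c^c$ is undefined. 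The paper's proof spends its core on verifying, case by case, that parity is in fact preserved: a path of $S$ through $v$ (not forming a {\CVp}) removes two edges at $v$; a {\CV} or {\CU} pattern removes one edge $vu'$ ($u'\in U$) and adds one edge $vv'$; a {\CVp} pattern $\{u',v,v'\}$ removes $vu'$ together with an edge $vw$ of the subdivision path containing $vv'$, while $vv'$ is kept. Your sketch substitutes for this a cycle-avoidance discussion (deviations acting on distinct $pc$-colors, legs being vertex-disjoint), which is the easier half; the missing parity check is a genuine gap, not a detail one can wave through with ``the compatibility definition is tailored to guarantee exactly this.''

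Separately, your color count confuses $k$ (the number of patterns, $2\leq k\leq 4$) with $|U|$. In a $(C_{II})$ configuration $U$ is a $4$-family, so $|U|=4$ always, $|V(G)|-|V(G')|=4$, and the budget $\lfloor(|V(G)|-|V(G')|)/2\rfloor$ is exactly $2$, matching the two colors of $c_S$. Your casework ``$|U|\in\{2,3\}$'' is vacuous, and the patch that ``only one of $P_1,P_2$ actually receives edges'' is false: $S$ is a semi-subdivision rooted on all four vertices of $U$ and is always decomposed into both paths $P_1,P_2$ by $c_S$. This slip happens not to break the count in the only case that occurs, but it should be repaired. Your remaining points — planarity via embedding each added edge in the face freed by a deleted special vertex, non-reuse of deviated edges because compatible patterns share at most one vertex, and the role of the color requirements met by $c_S$ — are consistent with the paper's argument.
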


\begin{proof}
Each pattern $\C_i \in \{${\CV}$,$ {\CVp}$,$ {\CU}$\}$ has a deviated edge, $v_1v_2$ in the previous definitions. Since $\C_i$ is compatible with all other patterns, it shares at most one vertex with each of them, thus the edge $v_1v_2$ cannot be used as a deviated edge by another pattern. Hence the resolution rules of the {\CV}, {\CVp}, {\CU} patterns can be applied independently.

A pattern $\C_i$ in $\{${\CN}$,$ {\CTOne}$,$ {\CTTA}$,$ {\CTTNA}$,$ {\CDA}$,$ {\CDB}$\}$ can only share at most one vertex with each {\CV}, {\CVp} or {\CU} pattern, as they do not prevent the resolution rule of $\C_i$ from being applied.

We emphasize that the parities involved in the resolution rules of {\CN}, {\CTOne}, {\CTTA}, {\CTTNA} are preserved no matter how many {\CV}, {\CVp} or {\CU} patterns touch them. Say we have a pattern $\C_i$ in $\{${\CN}$,$ {\CTOne}$,$ {\CTTA}$,$ {\CTTNA}$\}$, and a non-special vertex $v$ of $\C_i$. In the descriptions of the patterns and their resolution rules, we may specify the parity of $v$ in the graph $G$, then which edges we add or remove to obtain 
that $v$ has an odd degree in the reduced graph $G'$.
These definitions do not take into account the {\CV}, {\CVp}, {\CU} patterns or a path from $S$ that may touch $v$, but we argue that they do not interfere with the parity of $v$ in the reduced graph $G'$.

If a path from $S$ touches $v$ in $G$ and does not form a {\CVp} pattern, then the reduction from $G$ to $G'$ removes two edges incident with $v$, which preserves the parity of $v$. If a {\CV} or {\CU} pattern touches $v$ in $G$, then one edge $vu'$ ($u'\in U$) is removed and one edge $vv'$ ($v'\notin U$) is added, which preserves the parity of $v$. Finally if a {\CVp} pattern $\{u',v,v'\}$ touches $v$, then the edge $vu'$ is removed, as well as an edge $vw$ from the path of $S$ that contains the edge $vv'$. The edge $vv'$ is kept in the reduced graph. Thus, $v$ has lost two incident edges, and so its degree is preserved.

In conclusion, we may apply the resolution rules of compatible patterns in any order.

Since the definitions of patterns do not create cycles, do not use additional colors, and since the $2$-coloring $c_S$ of $S$ fits the color requirements of all patterns in $\mathcal{M}$, the subdivision composite rule $(\{\mathcal{R}_1,\dots,\mathcal{R}_k\},S,c_S)$ associated with $(\mathcal{M},S)$ is valid.
\end{proof}



Let us introduce the notion of \emph{settled} special vertex, to characterize the special vertices that are already compatible with the rest of the configuration and whose remaining neighbors do not need to be further altered.

\begin{defn}[Settled vertices]
Let $G$ be a planar graph with a $(C_{II})$ configuration w.r.t. a $4$-family $U$.
Let $S$ be a semi-subdivision rooted on $U$.

We say that a special vertex $u$ is \emph{lone-settled} w.r.t. $S$ if:
\begin{itemize}
\item $u$ forms a {\CV} or {\CVp} pattern and shares at most one remaining neighbor with each of the other special vertices, and the two remaining neighbors of $u$ are not the two remaining neighbors of a {\CU} pattern; or
\item $u$ forms a {\CN} pattern and its remaining neighbors are disjoint from $S$ and from the remaining neighbors of other special vertices.
\end{itemize}


A special vertex is \emph{settled} if it is lone-settled or forms a {\CTTNA} pattern $\{u,u',v,v'\}$ with another special vertex $u'$, such that $v,v'$ are disjoint from $S$ and from the remaining neighbors of other special vertices.
\end{defn}

Note that in this definition, the {\CV} pattern formed by a lone-settled special vertex can touch the subdivision $S$. By Claim~\ref{clm:compmapping} (p.~\pageref{clm:compmapping}), we deduce immediately that if all four vertices of $U$ are settled w.r.t. $S$, there exists a mapping $\mathcal{M}$ of $U$ compatible w.r.t. $S$.

\begin{figure}[ht]
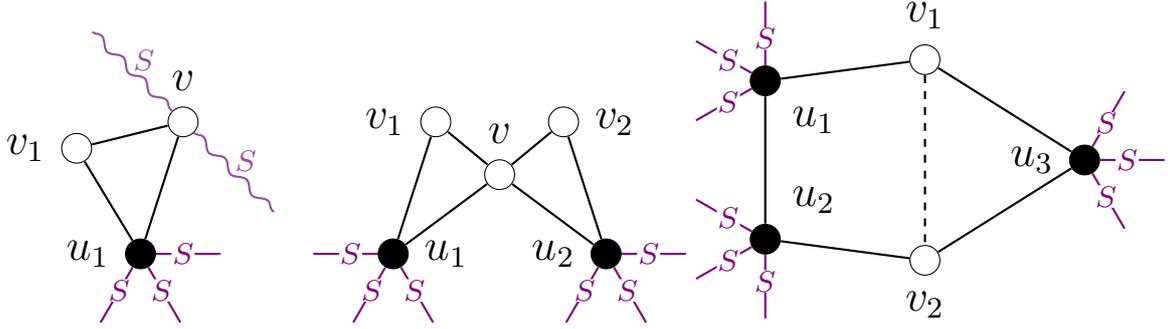

\renewcommand{\MyScale}{1.4}

\cfSettledA
\tkcf{}
\cfSettledB
\tkcf{}
\cfSettledC
\tkcf{}
\caption{Examples of \textbf{unsettled} vertices}
\label{fig:unsettled}
\end{figure}

Figure~\ref{fig:unsettled} provides a few examples of unsettled special vertices: on the left a {\CN} pattern touching the subdivision, in the middle two {\CN} patterns sharing a remaining neighbor, and on the right a {\CU} and a {\CV} pattern sharing both remaining neighbors. None of the depicted special vertices are settled.

\vfill
\pagebreak

\ifthenelse{\equal{\isThesis}{true}}
{\section{Redirection procedure}}
{\subsection{Redirection procedure}}
\label{subs:redir}



To further decrease the number of problematic cases to consider in the rest of the proof, we consider a set of local transformations that, when applied exhaustively to a $\mathcal{K}$-subdivision $S$ rooted on a $4$-family $U$, return another $\mathcal{K}$-subdivision $S'$ rooted on $U$, of the same type and which does not contain some inconvenient configurations. The special vertices of $U$ can be more easily mapped to patterns in $S'$ than in $S$.

This procedure does not preserve the chordlessness of a subdivision it is applied to, so let us consider the following weaker properties that are preserved by the procedure (as is proven in Claim~\ref{clm:redirAB}, p.~\pageref{clm:redirAB}).

Given a $\mathcal{K}$-subdivision $S$ rooted on $U$, an \emph{A-chord} of $S$ is a chord on a path of $S$ incident with a special vertex $u\in U$ that is unsettled or forms a {\CVp} pattern w.r.t. $S$ (see Figure~\ref{fig:chordA}); and a \emph{B-chord} is a chord between two remaining neighbors of $u\in U$ on a path of $S$ that is \textbf{not} incident with $u$ (see Figure~\ref{fig:chordB}). We say respectively that $u$ \emph{has} an A-chord, a B-chord.

A $\mathcal{K}$-subdivision satisfies \emph{property A} (resp. \emph{property B}) if it does not have an A-chord (resp. a B-chord). A chordless $\mathcal{K}$-subdivision obviously satisfies properties A and B.

\renewcommand{\MyScale}{1.3}

\begin{figure}[htb]
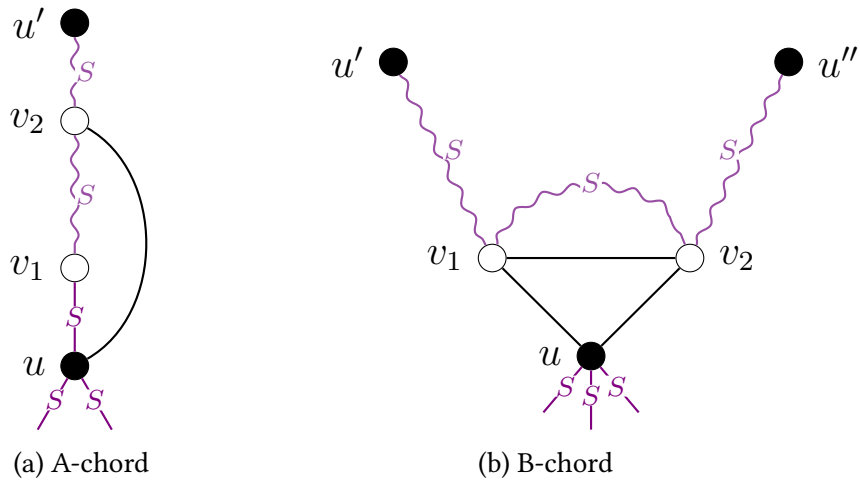

    \centering

    \begin{subfigure}[b]{6cm}
        \centering
        \Pictcf{\propA}
        \caption{\CfN}
        \label{fig:chordA}
    \end{subfigure}
    \begin{subfigure}[b]{6cm}
        \centering
        \Pictcf{\propB}
        \caption{\CfN}
        \label{fig:chordB}
    \end{subfigure}
    \caption{The configurations that are avoided by properties A and B}
    \label{fig:chords}
\end{figure}

Let us now define the procedure that helps take care of problematic cases in a $\mathcal{K}$-subdivision.

\begin{defn}[Redirection procedure]
Let $G$ be a planar graph with a $(C_{II})$ configuration w.r.t. a $4$-family $U$, and $S$ be a $\mathcal{K}$-subdivision rooted on $U$, such that $S$ satisfies properties A and B.
The \emph{redirection procedure} consists in applying as many times as possible the
\emph{redirection} operations
{\redirXOne}, {\redirXTwo}, {\redirXThree} and {\redirXFour}
to $S$.
\end{defn}

Note that these configurations are defined on $u_1,u_2$ but may be in contact with unspecified remaining neighbors of $u_3,u_4$, in which case we apply the redirections anyway. The contacts with paths from $S$ that would prevent us from applying the redirections are specified.

\textbf{Remark:} the {\CV} patterns on the drawings illustrating the redirections could turn out to be {\CTTNA} patterns, and are only featured as an indication.

\ifthenelse{\equal{\isThesis}{true}}
{\vfill
\pagebreak}
{}

\renewcommand\MyScale{1.4}

{\center
\RuleCDOnenec

Redirection {\redirXOne}, when $w_1=w_2$

\
}

\begin{itemize}

\item {\redirXOne}: The vertices $u_1,u_2$ are linked by a path $u_1\sim u_2$ of $S$ of the form $(u_1,w_1,Q,w_2,u_2)$, with $w_1=w_2$ if $l(Q)=0$. The vertices $u_1,u_2$ have exactly one common remaining neighbor $v$ and have another remaining neighbor $v_1,v_2$ respectively, both adjacent to $v$. The vertices $v_1$ and $w_1$ are non-adjacent.
No path from $S$ touches $v$, $v_1$ or $v_2$.

\textbf{Redirection protocol:} We replace the path $u_1\sim u_2$ in $S$ with the path $(u_1,v,u_2)$.

{\center
\RuleCDOnened

Redirection {\redirXTwo}, when $w_1=w_2$

\
}

\item {\redirXTwo}: The vertices $u_1,u_2$ are linked by a path $u_1\sim u_2$ of $S$ of the form $(u_1,w_1,Q,w_2,u_2)$, with $w_1=w_2$ if $l(Q)=0$. The vertices $u_1,u_2$ have exactly one common remaining neighbor $v$ and have another remaining neighbor $v_1,v_2$ respectively, both adjacent to $v$. The vertices $v_1$ and $w_1$ (resp. $v_2$ and $w_2$) are adjacent.
No path from $S$ touches $v$, $v_1$ or $v_2$.

\textbf{Redirection protocol:} We replace the path $u_1\sim u_2$ in $S$ with the path $(u_1,v_1,v,u_2)$. The vertices $v,w_1$ are the new remaining neighbors of $u_1$ and are not adjacent, otherwise $\{u_1,v,v_1,w_1\}$ would form a $K_4$, contradicting Claim~\ref{clm:inducedK4} (p.~\pageref{clm:inducedK4}).

\vfill
\pagebreak

{\center
\RuleCDOneneeBis

Redirection {\redirXThree}, when $w_1,v_1$ are not adjacent

\vspace{0.2cm}

\RuleCDOnenee

Redirection {\redirXThree} when $w_1,v$ are not adjacent

\
}

\item {\redirXThree}: The vertices $u_1,u_2$ are linked by a path $u_1\sim u_2$ of $S$,
they have exactly one remaining neighbor $v$ in common, and $u_1,u_2$ have another remaining neighbor $v_1,v_2$ respectively. $v_1,v_2$ are both adjacent to $v$.
There is another special vertex $u_3$ such that there is a path $u_1\sim u_3$ in $S$ of the form $P = (u_1,w_1,Q,v_2,Q',u_3)$, with $l(Q)\geq 0$ (so $w_1$ may be equal to $v_2$) and $l(Q')\geq 1$.
No path from $S$ touches $v$ nor $v_1$.

\textbf{Redirection protocol:} The vertex $w_1$ cannot be adjacent to both $v$ and $v_1$, otherwise $\{u_1,v,v_1,w_1\}$ would induce a $K_4$ in the graph, a contradiction by Claim~\ref{clm:inducedK4} (p.~\pageref{clm:inducedK4}). We distinguish between two cases:
\begin{itemize}
\item If $w_1,v_1$ are \textbf{not adjacent:} we replace the path $P$ in $S$ with the path $P' = (u_1,v,v_2,Q',u_3)$;
\item If $w_1,v_1$ are \textbf{adjacent:} then necessarily $w_1,v$ are not, and in this case we replace $P$ in $S$ with the path $P' = (u_1,v_1,v,v_2,Q',u_3)$.
\end{itemize}

In both cases, the two new remaining neighbors of $u_1$ are not adjacent.

%
%
%
%

\vfill
\pagebreak

{\center
\RuleCDTwowod

Redirection {\redirXFour}

\
}

\item {\redirXFour}: The vertices $u_1,u_2$ are linked by a path $u_1\sim u_2$ of the form $(u_1,w_1,Q,w_2,u_2)$, with $l(Q)\geq 0$ (with $w_1=w_2$ if $l(Q)=0$). The vertices $u_1,u_2$ have two adjacent remaining neighbors $v,v'$ in common. 
No path from $S$ touches $v$ nor $v'$.

\textbf{Redirection protocol:} 
There must be a non-edge $e$ among $vw_1, v'w_1$, otherwise $\{u_1,v,v',w_1\}$ form an induced $K_4$, 
which contradicts the fact that $G$ has a $(C_{II})$ configuration by Claim~\ref{clm:inducedK4} (p.~\pageref{clm:inducedK4}). Let us say that $e = vw_1$.
We replace the path $u_1\sim u_2$ in $S$ with the path $(u_1,v',u_2)$.

\end{itemize}

\ \\

In all cases, the special vertex $u_1$ is given a new set of remaining neighbors that are not adjacent. The procedure terminates, as each redirection requires two special vertices that have adjacent remaining neighbors to be applied, and increases the number of special vertices with non-adjacent remaining neighbors.

\medskip

We define the associated property: a subdivision satisfies \textbf{Property C} if no redirection can be applied.
We say that a subdivision is \textbf{strong} if it satisfies properties A, B and C.

\medskip

To justify the choice of this procedure and the notion of strong subdivision, we prove 
that it preserves properties A and B and the structure of $\mathcal{K}$-subdivision.

%
%
%

\begin{claim}
The redirection procedure preserves properties A and B.
\label{clm:redirAB}
\end{claim}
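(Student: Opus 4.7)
The plan is to verify properties A and B case by case for each of the four redirections \redirXOne, \redirXTwo, \redirXThree, \redirXFour. For each redirection, I would identify which edges of $S$ are removed and added, and which vertices change status (remaining neighbor vs. internal vertex of a path of $S$, settled vs.\ unsettled, and whether a \CVp{} pattern is formed). Since the redirections only modify a single path $u_1 \sim u_2$ or $u_1 \sim u_3$ of $S$, for every special vertex $u$ not involved in the redirection the set of paths of $S$ incident to $u$ and the remaining neighbors of $u$ are preserved (with at most one exception in \redirXThree{} where $u_3$'s remaining neighbor $v_2$ is absorbed into $S$), so any new chord to check must lie on the single modified path and must involve one of $u_1, u_2$ (or $u_3$ in \redirXThree).

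For Property A, an A-chord would be an edge from an unsettled or \CVp{}-pattern special vertex $u$ to a non-consecutive vertex of a path of $S$ incident to $u$. In \redirXOne{} the new path is $(u_1,v,u_2)$ of length $2$, so no chord is even geometrically possible. In \redirXTwo{} the new path is $(u_1,v_1,v,u_2)$, and the only candidate A-chords are $u_1 v$ and $v_1 u_2$; either one, together with the already present edges, would force $\{u_1, v, v_1, w_1\}$ or $\{u_2, v, v_1, v_2\}$ to induce a $K_4$ on a special vertex, contradicting Claim~\ref{clm:inducedK4}. The same kind of argument handles \redirXFour{} via the non-edge among $\{vw_1, v'w_1\}$ which was the very reason we chose $v'$ for the new path. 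For \redirXThree{} the new path threads through $v$ (and possibly $v_1$) to reach $v_2$, and any would-be A-chord from $u_1$ to a non-consecutive internal vertex would again produce a forbidden induced $K_4$ at $u_1$ or contradict the explicit non-adjacency hypothesis (e.g.\ $w_1 v$ in the first subcase, or $w_1 v_1$ in the second). Finally I would note that no previously present chord can become an A-chord: the only way this could happen is if some vertex newly becomes unsettled or \CVp, but after redirection the pair of new remaining neighbors of $u_1$ (and of $u_2$, $u_3$ when applicable) is non-adjacent by construction, and the old paths of $S$ through $u$ were already free of chords by Property A for $S$.

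For Property B, a new B-chord at $u$ would be an edge between two remaining neighbors of $u$ lying on a modified path of $S$ that is not incident to $u$. Again only the single modified path needs to be inspected. In \redirXOne, \redirXTwo, \redirXFour{} the modified path has only one internal vertex that is not a special vertex, so there are no two distinct internal vertices to chord across. In \redirXThree{} the new path contains $v$ (and perhaps $v_1$) as fresh internal vertices alongside the subpath $Q'$; a chord between two such vertices that are simultaneously remaining neighbors of some $u' \in \{u_2,u_3,u_4\}$ would again put $u'$ into an induced $K_4$ with $u_1$ and $v$, violating Claim~\ref{clm:inducedK4}, or would contradict the hypothesis that no path of $S$ touches $v$. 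Pre-existing chords on the unmodified paths remained non-B-chords because the remaining-neighbor sets of the unmodified special vertices were preserved, so Property B for $S$ carries over.

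The main obstacle is bookkeeping: both properties are universally quantified over all special vertices and all paths of $S$, so I must argue not only about edges that look locally like chords in the picture of the redirection, but also about whether previously innocent chords elsewhere in $S$ could be promoted to A- or B-chords by the status changes of $u_1, u_2, u_3$. I would dispatch this by observing that the redirection never changes the set of roots of $S$ nor the family of paths not incident to the redirected path, and that whenever the redirection alters the remaining-neighbor set of some $u$, the new set is explicitly non-adjacent, so the hypotheses needed for Claim~\ref{clm:inducedK4} always apply and the induced-$K_4$ obstruction consistently closes each case.
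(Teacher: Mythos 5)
There is a genuine gap, and it sits exactly where the real difficulty of this claim lies. In {\redirXTwo} the edge $u_1v$ exists by definition of the configuration ($v$ is a common remaining neighbor of $u_1,u_2$), and after the redirection it becomes an actual chord of the new path $(u_1,v_1,v,u_2)$; the same happens with $u_1v$ on the new path in the second subcase of {\redirXThree}. Your proposal claims these candidate A-chords ``would force an induced $K_4$'' and hence cannot exist, but that is backwards: the only induced-$K_4$ argument available (Claim~\ref{clm:inducedK4} applied to $\{u_1,v,v_1,w_1\}$) shows that $vw_1$ is a \emph{non-edge}, i.e.\ that the new remaining neighbors of $u_1$ are non-adjacent — it does not and cannot eliminate the chord $u_1v$, which is present regardless. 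The paper's proof accepts this chord and instead argues it is not an A-chord because the definition of A-chord only applies to vertices that are unsettled or form a {\CVp} pattern, and $u_1$ becomes lone-settled. Establishing that settledness is the bulk of the work: one must rule out that $u_3$ or $u_4$ has $\{w_1,v\}$ (or, in {\redirXThree}, pairs among $\{v,v_1\}$ and vertices of the modified path) as its remaining neighbors, which would put $u_1$ into a potential {\CTTNA} pattern. The paper does this with global planarity arguments — $K_{3,3}$-minor extractions through the subdivision paths (Figures~\ref{fig:KTTOne} and~\ref{fig:KTTTwo}, split according to whether $S$ is a $K_4$- or $C_{4+}$-subdivision) and, for {\redirXThree}, four dedicated observations (Observations~\ref{obs:red1}--\ref{obs:red4}) combining $K_{3,3}$-minors with the almost-$4$-connectivity of $G$. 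None of this reduces to Claim~\ref{clm:inducedK4}, and your closing bookkeeping remark — that the new remaining neighbors being non-adjacent means ``the hypotheses of Claim~\ref{clm:inducedK4} always apply'' — conflates forming a {\CV} pattern with being settled; a {\CV} vertex is unsettled precisely when another special vertex shares both its remaining neighbors, which is the case your argument never excludes.

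Two further points. For Property B in {\redirXThree}, an edge between two remaining neighbors of $u_4$ lying on the new path gives a triangle with $u_4$, not an induced $K_4$, so Claim~\ref{clm:inducedK4} yields nothing there either; the paper needs Observation~\ref{obs:red3}, proved via a $K_{3,3}$-minor in one subcase and a planar-region/almost-$4$-connectivity separation in the other. And a minor slip: on the length-$2$ paths produced by {\redirXOne} and {\redirXFour}, a chord \emph{is} geometrically possible, namely the edge $u_1u_2$ between the two non-consecutive endpoints; the paper dispatches it by noting that such an edge would already constitute a path of $S$. So while your case organization (one redirection at a time, checking only the modified path, verifying that unmodified vertices keep their status) matches the paper's skeleton, the tool you rely on throughout — local induced-$K_4$ contradictions — is the wrong one for every nontrivial case, and the proposal as written would not close {\redirXTwo} or {\redirXThree}.
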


\begin{proof}
Let $G$ be a planar graph with a $4$-family $U$ and a $\mathcal{K}$-subdivision $S$ rooted on $U$, such that $S$ satisfies properties A and B. Let $S'$ be the subdivision obtained by applying the redirection procedure to $S$. Let us prove that $S'$ satisfies properties A and B as well.

\begin{itemize}
\item {\redirXOne}, 
{\redirXFour}: Redirection configurations {\redirXOne} 
and {\redirXFour} feature two special vertices $u_1,u_2$ and only modify one $(u_1,u_2)$-path in the subdivision, by replacing it with another of length $2$. No path of length $2$ has a B-chord, and since an edge $u_1u_2\in E(G)$ already constitutes a path of $S$, the new path does not have an A-chord either. Since the paths of $S$ are internally disjoint, the vertices $w_1,w_2$ do not belong to $S'$ and thus cannot be part of an A-chord or a B-chord.

\item {\redirXTwo}: In redirection configuration {\redirXTwo}, the new $(u_1,u_2)$-path $P'$ has length $3$, hence does not contain B-chords. Since the remaining neighbors of $u_1,u_2$ w.r.t. $S'$ do not belong to $S'$ except one ($v$), the other paths of $S'$ do not contain B-chords either. The remaining neighbors of $u_2$ are disjoint from $P'$, hence $u_2$ does not have an A-chord, but $u_1$ does however. We claim that property A is still satisfied because $u_1$ is (lone-)settled w.r.t. $S'$. The remaining neighbors of $u_1$ are not adjacent, hence $u_1$ is unsettled only if it forms a {\CTTNA} pattern with $u_3$ or $u_4$ (this pattern would then touch $S'$), since $u_3,u_4$ do not form a {\CU} pattern as $S$ satisfies property A.

Let us first consider the case where $S$ is a $K_4$-subdivision, and assume $u_3$ has $w_1,v$ as its remaining neighbors w.r.t. $S'$ (it has the same remaining neighbors w.r.t. $S$, since only the remaining neighbors of $u_1,u_2$ were modified). Then $\{u_1,w_1,v\}$ and $\{v_1,u_2=u_4,u_3\}$ induce a $K_{3,3}$-minor of $G$ (by contracting the path $u_2\sim u_4$ to a vertex, see Figure~\ref{fig:KTTOne}), contradicting the planarity of $G$.

\renewcommand{\MyScale}{0.85}
\KTTOne

Now let us take a look at the case where $S$ is a $C_{4+}$-subdivision and assume that $u_3$ or $u_4$ has $w_1,v$ as its remaining neighbors w.r.t. $S'$ (thus w.r.t. $S$). If $u_1,u_2$ are $1$-linked, $u_3$ or $u_4$ has a remaining neighbor in the solo $(u_1,u_2)$-path $P$, contradicting the property ``$1$-linked'' of $S$. If $u_1,u_2$ are $2$-linked, then $\{u_1,w_1,v\}$ and $\{v_1,u_2,u_3\}$ induce a $K_{3,3}$-minor (see Figure~\ref{fig:KTTTwo}), 
again a contradiction. Thus, $u_1$ cannot form a {\CTTNA} pattern, so forms a {\CV} pattern and is lone-settled.

\KTTTwo


\item {\redirXThree}:
The remaining neighbors $v,v_2$ of $u_2$ w.r.t. $S'$ belong to the new $(u_1,u_3)$-path $P'$ of $S'$ and the edge $vv_2$ belongs to $P'$, so $u_2$ does not have an A-chord nor a B-chord.

The remaining neighbors of $u_3,u_4$ are not modified by the redirection. The paths of $S$ incident with $u_4$ are not modified, so $u_4$ does not have an A-chord, and a B-chord of $u_4$ could only belong to the new path $P'$ of $S'$.
An A-chord of $u_3$ could only belong to $P'$, as its other incident paths of $S$ were not modified, and $u_3$ does not have a B-chord, as its non-incident paths of $S$ were not modified.
We claim that none of $u_3$ and $u_4$ have an A-chord or B-chord on $P'$ in $S'$ 
if they did not in $S$.

We examine the case of $u_1$ at the end of this proof.
Let us make several observations that will help us prove our claims.

Whether $S$ is a $K_4$- or a $C_{4+}$-subdivision, it contains a $(u_1,u_2)$-path $P_{12}$, a $(u_2,u_4)$-path $P_{24}$, a $(u_3,u_4)$-path $P_{34}$, as well as the path $P = P_{13}$, split into a $(u_1,v_2)$-section $Q_{13}$ and a $(v_2,u_3)$-section $T_{13}$, all these paths having no special vertex as internal vertex.

\begin{obs}
The special vertex $u_3$ does not have $v_1$ as a remaining neighbor w.r.t. $S$.
\label{obs:red1}
\end{obs}
\begin{proof}
If it were the case, then $G$ would contain a $K_{3,3}$-minor induced by $\{u_1,u_3,v\}$ and $\{u_2,v_1,v_2\}$, obtained by contracting $P_{12}$, $P_{34}$, $Q_{13}$ and $T_{13}$ to one edge, and $P_{24}$ to one vertex (see Figure~\ref{fig:ObsTwoPict}). This would contradict the planarity of $G$.
\end{proof}

\ObsTwoPict

\begin{obs}
The special vertex $u_3$ does not have $v$ as a remaining neighbor w.r.t. $S$.
\label{obs:red2}
\end{obs}
\begin{proof}
Because the paths $P_{24}$ and $P_{34}$ are disjoint from $v,v_1,v_2$, the vertices $u_1,u_3$ would then belong to two different regions of the plane delimited by the three edges $u_2v$, $u_2v_2$, $vv_2$ (see Figure~\ref{fig:ObsThreePict}). This is a contradiction with the almost $4$-connectivity of $G$.
\end{proof}

\ObsThreePict

\begin{obs}
The special vertex $u_4$ does not have two remaining neighbors (w.r.t. $S$) $v_4$ in $T_{13}$, and $v_4' \in \{v,v_1\}$.
\label{obs:red3}
\end{obs}
\begin{proof}
If $v_4'=v_1$, contracting the $(v_2,v_4)$-section of $T_{13}$ gives us a $K_{3,3}$-minor of $G$ induced by $\{u_1,v,u_4\}$ and $\{u_2,v_1,v_2\}$, a contradiction (see Figure~\ref{fig:obsfourpictone}). If $v_4'=v$, then by planarity $u_1,u_4$ must belong to two different regions of the plane delimited by the three edges $u_2v$, $vv_2$ and $u_2v_2$ (see Figure~\ref{fig:obsfourpicttwo}), again a contradiction to the almost $4$-connectivity of $G$.
\end{proof}

\ObsFourPict


\begin{obs}
The special vertex $u_4$ does not have $w_1,v$ as its remaining neighbors w.r.t. $S$ if $w_1,v$ are non-adjacent.
\label{obs:red4}
\end{obs}
\begin{proof}
If it were the case, there would be a $K_{3,3}$-minor in $G$ induced by $\{u_1,u_4,v_2\}$ and $\{u_2,w_1,v\}$ (see Figure~\ref{fig:ObsFivePict}), a contradiction with the planarity of $G$ (note that by assumption $w_1,v$ are non-adjacent, hence $w_1\neq v_2$).
\end{proof}

\ObsFivePict

Since only the $(u_1,u_3)$-path $P$ changes into the path $P'$, and the remaining neighbors of $u_3,u_4$ are not changed by the redirection,
$u_3$ (resp. $u_4$) may only have an A-chord (resp. B-chord) on the new path $P'$.
This $(u_1,u_3)$-path in incident with $u_3$ so $u_3$ does not have a B-chord on it, and no A-chord either by Observations~\ref{obs:red1} and \ref{obs:red2}.
Note that if $u_3$ has an A-chord in $S$, then $u_3$ forms a {\CV} pattern w.r.t. $S$, and the same pattern w.r.t. $S'$.

$P'$ is not incident with $u_4$, so $u_4$ does not have an A-chord on it, and Observation~\ref{obs:red3} tells us that it does not have a B-chord either.

As for $u_1$, note that in the version with $w_1,v_1$ non-adjdacent, the new remaining neighbors of $u_1$ are disjoint from $S$, thus $u_1$ forms a {\CV} or {\CTTNA} pattern disjoint from $S$ (since no pair of special vertices form a {\CU} pattern by property A), hence is settled. In the version with $w_1,v$ non-adjacent, the edge $u_1v$ is an A-chord of $u_1$, but Observations~\ref{obs:red2} and~\ref{obs:red4} tell us that neither $u_3$ nor $u_4$ can form a {\CTTNA} pattern with $u_1$. Again, since no pair of special vertices form a {\CU} pattern by property A of $S$, $u_1$ is left lone-settled by the redirection, and property A is satisfied by $S'$.
\end{itemize}
\end{proof}

\begin{claim}
%

Let $G$ be a planar graph with a $(C_{II})$ configuration w.r.t. a $4$-family $U$. Then $G$ contains a strong $\mathcal{K}$-subdivision rooted on $U$.
\label{clm:redir3r}
\end{claim}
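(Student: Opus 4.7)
The plan is to start from the chordless $\mathcal{K}$-subdivision $S_0$ rooted on $U$ provided by Lemma~\ref{lem:k4c4p}, and then apply the redirection procedure exhaustively to obtain the desired strong subdivision. Since $S_0$ is chordless, it trivially satisfies properties A and B. We let $S$ be the $\mathcal{K}$-subdivision obtained by applying the redirection procedure to $S_0$; by definition of this procedure, $S$ satisfies property C. By Claim~\ref{clm:redirAB}, properties A and B are preserved throughout the procedure, so $S$ satisfies A and B as well.

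Two things remain to be verified. First, that the redirection procedure terminates. This is the argument already sketched right after the definition of the procedure: in each of the four redirections {\redirXOne}, {\redirXTwo}, {\redirXThree} and {\redirXFour}, the special vertex $u_1$ has two adjacent remaining neighbors before the redirection, and two non-adjacent remaining neighbors after it; the redirection does not create new pairs of special vertices with adjacent remaining neighbors, because it only introduces new remaining neighbors at $u_1$ (and at $u_2$ in {\redirXThree}), and these new neighbors are shown in the redirection protocols to be non-adjacent. Hence the quantity ``number of special vertices with non-adjacent remaining neighbors'' strictly increases at each step and is bounded by $4$, so the procedure terminates after at most a constant number of iterations.

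Second, that the result is still a $\mathcal{K}$-subdivision of the same type. Each redirection replaces one path of $S$ between two specific roots by another path between the same roots, so the abstract underlying multigraph ($K_4$ or $C_{4+}$) is unchanged; in particular, if $S_0$ is a $K_4$-subdivision then so is $S$. If $S_0$ is a $C_{4+}^*$-subdivision, then the pairings of $0$-, $1$- and $2$-linked roots are preserved. It remains to check that the three extra properties (``$0$-linked'', ``$1$-linked'', ``$2$-linked'') still hold. I expect this verification to be the main obstacle, and I would handle it by a direct case analysis on the four redirections, using the almost $4$-connectivity of $G$, planarity, and Claim~\ref{clm:inducedK4}; in fact the observations used in the proof of Claim~\ref{clm:redirAB} (notably Observations~\ref{obs:red1}--\ref{obs:red4}) already rule out the only configurations that could violate these properties, because they forbid precisely the kinds of shared remaining neighbors between $0$-linked or $2$-linked roots, or remaining neighbors placed on solo paths, that the $C_{4+}^*$ definition excludes.

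Combining these facts, $S$ is a $\mathcal{K}$-subdivision rooted on $U$ satisfying properties A, B and C, i.e.\ a strong $\mathcal{K}$-subdivision.
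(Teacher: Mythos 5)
Your skeleton matches the paper's proof exactly: invoke Lemma~\ref{lem:k4c4p} for a chordless $\mathcal{K}$-subdivision (hence properties A and B), run the redirection procedure (termination by the strictly increasing count of special vertices with non-adjacent remaining neighbors, property C by definition), and use Claim~\ref{clm:redirAB} for preservation of A and B. The $K_4$ case then closes immediately, as you say. But there is a genuine gap in the $C_{4+}^*$ case, and it is precisely where you wrote ``I expect this verification to be the main obstacle'': your proposed shortcut does not work. Observations~\ref{obs:red1}--\ref{obs:red4} live inside the proof of Claim~\ref{clm:redirAB} and serve only to rule out A-chords and B-chords of $u_3,u_4$ on the new path in the {\redirXThree} case; they say nothing about the ``$0$-linked'', ``$1$-linked'' and ``$2$-linked'' properties for arbitrary pairs of roots after an arbitrary redirection, and the configurations they forbid are not the ones that threaten those properties.

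The paper's actual verification rests on a structural fact you did not identify: in every redirection configuration, $u_1$ and $u_2$ share a remaining neighbor \emph{disjoint from} $S$, so by property ``$2$-linked'' of $S$ they cannot be $2$-linked and must be $1$-linked. Since only $u_1,u_2$ have their remaining neighbors altered, property ``$0$-linked'' is inherited for free. Property ``$1$-linked'' then needs a fresh case analysis: one checks that a violating remaining neighbor $v_k$ must already have been a remaining neighbor w.r.t.\ $S$ (removed subdivision edges $uv$ leave $v$ outside the new subdivision), which kills {\redirXOne} and {\redirXFour} (new path of length $2$, via ``$0$-linked''), kills {\redirXThree} by a linkage count (some root would be $1$-linked to two distinct roots), and kills {\redirXTwo} by a \emph{new} $K_{3,3}$-minor, $\{u_k,v,w_1\}$ versus $\{u_1,u_2,v_1\}$ --- a minor that is not among Observations~\ref{obs:red1}--\ref{obs:red4}. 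Property ``$2$-linked'' is purely combinatorial: a new common remaining neighbor of a $2$-linked pair would force a remaining neighbor of $u_j$ on a solo path of $S$, contradicting property ``$1$-linked'' of $S$, and the parallel path carrying the old common neighbor survives the only operation ({\redirXThree}) that could modify it. (Two smaller points: in {\redirXThree} it is $u_1$ alone, not $u_2$, whose remaining neighbor set changes --- your parenthetical has this backwards, though it does not damage the termination argument; and the preservation of linkage types follows from the fact that redirections preserve path ends, which you correctly noted.)
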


\begin{proof}
%

By Lemma~\ref{lem:k4c4p} (p.~\pageref{lem:k4c4p}), $G$ contains a chordless $\mathcal{K}$-subdivision $S$ rooted on $U$. 
In particular, $S$ satisfies properties A and B. 
Since by Claim~\ref{clm:redirAB} (p.~\pageref{clm:redirAB}) the redirection procedure preserves properties A and B, and since the type of subdivision ($K_4$ or $C_{4+}$) is preserved by each redirection operation, then the result follows if $S$ is a $K_4$-subdivision.

Now assume that $S$ is a $C_{4+}^*$-subdivision.
%
We prove that after application of any redirection operation to $S$, the obtained subdivision $S'$ is a $C_{4+}^*$-subdivision, and the result follows by induction.
By Claim~\ref{clm:redirAB} (p.~\pageref{clm:redirAB}), $S'$ satisfies properties $A$ and $B$. 
Observe that the redirection operations preserve the ends of the paths of $S$; in particular, vertices that are $k$-linked stay $k$-linked after application of an operation.
Observe that in each redirection configuration, the two special vertices $u_1,u_2$ involved have a common remaining neighbor that is disjoint from $S$. Therefore, by property ``$2$-linked'', the two special vertices $u_1,u_2$ involved in the configuration cannot be $2$-linked: they are necessarily $1$-linked.

Let us prove the three properties of $C_{4+}^*$ of $S'$ in order.

\begin{itemize}
\item \emph{Property ``$0$-linked'':} The only special vertices whose remaining neighbors are modified by a redirection operation are the special vertices $u_1$ and $u_2$ involved in the redirection configuration. Therefore, if $u_j,u_k$ are $0$-linked special vertices, then their remaining neighbors w.r.t. $S'$ are the same as w.r.t. $S$, and property ``$0$-linked'' of $S'$ is implied by the same property of $S$.

\item \emph{Property ``$1$-linked'':} Let $u_i,u_j$ be $1$-linked special vertices associated with a path $P_{ij}$ of $S$, and $u_k\in U\setminus\{u_i,u_j\}$. Assume for contradiction that $u_k$ has a remaining neighbor $v_k$ w.r.t. $S'$ that is an internal vertex of the $(u_i,u_j)$-path $P_{ij}'$ of $S'$.
If $v_k$ is not a remaining neighbor of $u_k$ w.r.t. $S$, then the operation applied to $S$ involves $u_k$, and the edge $u_kv_k$ belongs to $S$. However, we can check in all redirection operations that when an edge $uv$ incident with a special vertex $u$ is removed from the subdivision, then $v$ does not belong to the subdivision after the operation is applied. This is a contradiction with the definition of $v_k$, therefore $v_k$ is indeed a remaining neighbor of $u_k$ w.r.t. $S$.
Thus, $P_{ij}'\neq P_{ij}$.

Since $u_k$ is $0$-linked with one of $u_i,u_j$, the vertex $v_k$ cannot be a remaining neighbor of both w.r.t. $S$, by ``$0$-linked'' property of $S$. So the operation applied to $S$ cannot be {\redirXOne} 
or {\redirXFour}, as their new path has length $2$.

The operation cannot be {\redirXThree} either, since in this case $u_i,u_j$ must be the vertices $u_1,u_3$ in the definition of this configuration (so that $P_{ij}'$ is the new path); $u_i,u_j$ are $1$-linked, and the operation is applied on the vertices $u_1,u_2$, which are also $1$-linked as mentioned above. Thus, $u_i$ or $u_j$ is $1$-linked to two different special vertices, a contradiction.

If the operation is {\redirXTwo}, then $v_k$ cannot be the common remaining neighbor $v$ of $u_1,u_2$ in the definition, since $u_k$ is $0$-linked to one of them.
If $v_k$ is the $v_1$ of {\redirXTwo}, 
then $\{u_k,v,w_1\}$ and $\{u_1,u_2,v_1\}$ induce a $K_{3,3}$-minor in $G$ (by contracting the $(w_1,u_2)$-section of $u_1\sim u_2$ to an edge, and contracting a $(u_k,u_2)$-path of $S$ or a $(u_k,u_m)$-path and a $(u_m,u_2)$-path of $S$ to an edge, for $u_m\in U\setminus \{u_1,u_2,u_k\}$), a contradiction to the planarity of $G$.


\item \emph{Property ``$2$-linked'':} Let $u_i,u_j$ be two $2$-linked special vertices.
Since the operations are applied on $1$-linked special vertices, exactly one of these two special vertices, say $u_i$, is involved in the operation (as $u_1$ or $u_2$ in the definitions) applied to $S$.
Let $v$ be a common remaining neighbor of $u_i,u_j$ w.r.t. $S'$.
If $v$ was not a common remaining neighbor of $u_i,u_j$ w.r.t. $S$, then it was a remaining neighbor of $u_j$ and not $u_i$, as $u_j$ is not involved in the operation: the edge $u_iv$ belongs to $S$, and more precisely belongs to a solo $(u_i,u_k)$-path $P_{ik}$ of $S$ between the two $1$-linked special vertices $u_i$ and $u_k\in U\setminus \{u_i,u_j\}$.
Thus, $u_j$ has a remaining neighbor (w.r.t. $S$) that is an internal vertex of a solo path of $S$, 
a contradiction with the ``$1$-linked'' property of $S$.

Therefore, the potential common remaining neighbors $v,v'$ of $u_i,u_j$ w.r.t. $S'$ are also their common remaining neighbors w.r.t. $S$. 
So by property ``$2$-linked'' of $S$, $u_i,u_j$ have at most one remaining neighbor $v$, and it belongs to a parallel $(u_k,u_l)$-path $P_{kl}$ of $S$ that is not incident with $u_i,u_j$.
If the path $P_{kl}$ was not modified by the operation, the result follows. If $P_{kl}$ was modified into a path $P_{kl}'$ of $S'$, then the operation is necessarily {\redirXThree} (as in the others, 
only a solo path is modified). The special vertex $u_i$ is the $u_2$ in the definition, as it keeps the same special neighbor $v_2$ in $S$ and $S'$. This vertex belongs to both $P_{kl}$ and $P_{kl}'$, and the result follows.
\end{itemize}

This proves that $S'$ is a $C_{4+}^*$-subdivision, and the result follows by induction.
\end{proof}

\ifthenelse{\equal{\isThesis}{true}}
{\section{Sufficiency of the $(C_{II})$ rules}}
{\subsection{Sufficiency of the $(C_{II})$ rules}}

As with the $(C_I)$ configurations, we need to make sure the composite rules we define in this 
{\chapsec}
can indeed be applied, and yield a contradiction with the existence of their associated configuration in an MCE.
Let us first prove that the rules we will define throughout this 
{\chapsec}
allow us to find a good coloring of an MCE, thus a contradiction, similarly to Lemma~\ref{lem:safety_ci} (p.~\pageref{lem:safety_ci}).

\begin{lem}
An MCE with a $4$-family $U$ does not contain a subdivision composite configuration made up of a semi-subdivision $S$ rooted on $U$ and a 
mapping $\mathcal{M}$ of $U$ compatible w.r.t. $S$.
\label{lem:safety_cii}
\end{lem}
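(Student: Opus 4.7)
I would mirror the argument used to establish Lemma~\ref{lem:safety_ci}. Assume for contradiction that an MCE $G$ contains a subdivision composite configuration $(\mathcal{M},S)$ with $\mathcal{M}$ compatible w.r.t.\ $S$, and let $c_S$ be a $2$-coloring of $S$ compatible with all color requirements in $\mathcal{M}$. By Claim~\ref{clm:compmapping}, the composite rule $\mathcal{R}=(\{\mathcal{R}_1,\dots,\mathcal{R}_k\},S,c_S)$ is valid, so the reduced graph $G'=f^r(G)$ is planar with $|V(G)|-|V(G')|=4$, and the recoloring function uses at most $2$ extra colors on top of any coloring of $G'$. If every connected component of $G'$ is neither $K_3$ nor $K_5^-$ (or is a single vertex / edge), each admits a good coloring by minimality of $G$; gluing these yields a good coloring $pc$ of $G'$, and applying $f^c$ produces a good coloring of $G$, contradicting the MCE assumption.

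The main obstacle is the presence of $K_3$ or $K_5^-$ components in $G'$. As in the $(C_I)$ case, each such component $K$ is colored using a cycle $C'$ (of length $3$ for $K_3$, of length $5$ for $K_5^-$) plus, for $K_5^-$, one extra path $P'$. Summing contributions across components yields an initial coloring $c_0$ of $G$ into paths and cycles, with at most $\lfloor |V(G)|/2\rfloor$ colors: the partial rules only deviate or extend existing paths/cycles through the special vertices $U$, and since each special vertex is used by at most one pattern, the cycles arising from $c_0$ remain vertex-disjoint. I would then iteratively apply Lemma~\ref{L21} to pairs $(C, \widehat P)$ consisting of a cycle $C$ of $c_0$ and a path $\widehat P$ that meets it (such a path exists because $G$ is connected and the cycles are pairwise disjoint), replacing them with two paths covering $E(C)\cup E(\widehat P)$. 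Iterating this eliminates all cycles without increasing the number of colors, producing a good coloring of $G$ and the desired contradiction.

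The only delicate point is ruling out the exceptional graph configuration in Lemma~\ref{L21}. For a $K_3$ component colored by a $3$-cycle $C'$, at most two deviations through special vertices of $U$ lengthen $C'$ to a cycle $C$ with $|V(C)|\le 5$, and any path $\widehat P$ meeting $C$ satisfies $|V(C)\cap V(\widehat P)|\le 5$; if $C\cup \widehat P$ were the exceptional graph, then by Observation~\ref{obs1} $C$ would have length $5$, forcing both deviations to occur on the same $3$-cycle of some $K_3$ component, together with a very restrictive adjacency pattern around $U$ that can be ruled out using the fact that each vertex of $U$ has degree exactly $5$ and almost $4$-connectivity via Claim~\ref{clm:inducedK4}. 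For a $K_5^-$ component, the argument splits as in the $(C_I)$ case: either $C'$ has been deviated, in which case Observation~\ref{obs1} forces a length-$5$ cycle on at least four vertices outside $V(K)$, which is impossible; or $C'=C$ is undeviated, and then the companion path $P'$ (possibly extended through $U$) already shares at least two edges of $G[V(C)]$ with $C$, so $C\cup P'$ is not the exceptional graph by Observation~\ref{obs1}. In every case Lemma~\ref{L21} applies, the cycle count strictly decreases, and iterating yields a good path decomposition of $G$, contradicting the assumption that $G$ is an MCE.
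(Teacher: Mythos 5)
There is a genuine gap, and it sits precisely at the hardest point of the lemma. In your treatment of a $K_5^-$ component $K$ where neither the cycle $C'$ nor the companion path $P'$ has been deviated, you claim that $C\cup P'$ is not the exceptional graph by Observation~\ref{obs1} because $P'$ contains at least two edges of $G[V(C)]$. This is backwards: in that case $(C\cup P)[V(C)]$ is exactly $K$, a $K_5^-$, and $C\cup P$ \emph{is} the exceptional configuration (see the remark after Lemma~\ref{L21}: every decomposition of $K_5^-$ into a $5$-cycle and a $4$-path is exceptional). In the $(C_I)$ proof, the ``at least two edges of $G[V(C_0)]$ belong to $P_0$'' argument is used to show that a \emph{different} path $\widehat{P}\neq P_0$ cannot supply the four further edges needed to complete the exceptional graph with $C_0$; it cannot be applied to $P_0$ itself, which is what you do. The paper instead must \emph{construct} such an alternative path $Q$ touching $K$, and this is where the $(C_{II})$ hypotheses enter: the nine edges of $K$ split the plane into six triangular regions, almost $4$-connectivity forces all four special vertices into one region, so at most three vertices of $K$ are adjacent to special vertices; by Lemma~\ref{lem:ci} at most one vertex of $K$ has degree at most $4$ in $G$; hence some vertex of $K$ is touched by a path $Q\neq P$ of the coloring, and $C\cup Q$ is not exceptional by Observation~\ref{obs1} since all nine edges of $G[V(K)]$ are already used by $C\cup P$. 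This existence argument is the crux of the lemma and is entirely missing from your proposal. (Your dichotomy also silently merges the easy intermediate case, $C'$ undeviated but $P'$ deviated, where pairing $C$ with $P$ does work because $(C\cup P)[V(C)]\subsetneq K$, with the hard case where it fails.)

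Your $K_3$ branch is also not sound as written. The bound $\vert V(C)\vert \le 5$ from ``at most two deviations'' is an artifact of the $(C_I)$ setting with two special vertices; with a $4$-family, deviations through up to four special vertices can lengthen a $3$-cycle beyond five vertices, and your remaining claim that the exceptional configuration forces ``a very restrictive adjacency pattern'' ruled out by Claim~\ref{clm:inducedK4} is never substantiated. The paper avoids this entirely by proving that $G'$ contains no $K_3$ component at all: first, patterns {\CTOne}, {\CTTA}, {\CTTNA} require a vertex of the component to have odd degree in $G'$, impossible in $K_3$; second, since the vertices of such a component $K$ are incident in $G$ only with edges of $G[V(K)]$, pattern edges, and edges of the semi-subdivision $S$ (which has at most one contact vertex), at least two vertices of $K$ have degree at most $4$ in $G$ — a $2$-family, contradicting Lemma~\ref{lem:ci}. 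You would need to reproduce this non-existence argument, or genuinely carry out your adjacency analysis, for the $K_3$ branch to close.
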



\begin{proof}
Let $G$ be such an MCE, and assume it contains the composite configuration $X = (\{\C_1,\dots,\C_k\},S)$ where $\{\C_1,\dots,\C_k\}$ is a compatible mapping of $U$. The associated composite rule $\mathcal{R}_X = (X,f_X^r,f_X^c)$ is thus valid.
%
We build a good coloring $c$ of $G$ to show a contradiction.

Let us first build a coloring $pc$ of $G'$ using the right number of colors. Similarly to the proof of Lemma~\ref{lem:safety_ci} (p.~\pageref{lem:safety_ci}), let us color each $K_3$ component of $G'$ with a cycle of length $3$ and each $K_5^-$ component with a cycle of length $5$ and a path of length $4$. We color each other components with a good coloring. Thus, $pc$ uses the right number of colors ($\lfloor \frac{\vert V(G')\vert}{2} \rfloor$), with a mix of cycles and paths. Let $c_0 = f_X^c(G,pc)$ be the coloring of $G$ obtained from $pc$ by $\mathcal{R}_X$. Since $\mathcal{R}_X$ is valid, $c_0$ has the right number of colors ($\lfloor \frac{\vert V(G)\vert}{2} \rfloor$) again with some cycles instead of paths.
We build iteratively a good coloring $c$ of $G$, by starting from $c_0$ and using Lemma~\ref{L21} (p.~\pageref{L21}) to successively replace a pair of colors, inducing a path and a cycle, by another pair of colors inducing two paths.

Observe that the cycles induced by colors of $c_0$ are disjoint in $G$. Indeed, the cycles in $pc$ are disjoint because they belong to different connected components; the cycles of $pc$ may have been deviated into longer cycles in $c_0$, but since the internal vertices of the deviated sections are all special vertices, and since each special vertex is involved in at most one deviation, then no vertex of $G$ can belong to the intersecion of two cycles in $c_0$.

First let us prove that no $K_3$ connected component can appear in $G'$. Let $K$ be such a component, colored with a cycle in $pc$, and let $C$ be the cycle of $G$ induced by the same color in $c$, after some deviations. 
%
Observe that if one pattern involved in the deviations is {\CTOne}, {\CTTA} or {\CTTNA}, one vertex of $K$ is supposed to have odd degree in $G'$, which is impossible since $K$ is a $K_3$.
%
Since $K$ is a connected component of $G'$, the vertices of $C$ on $V(K)$ are only incident with edges of $G[V(K)]$, edges between special vertices and their remaining neighbors, and edges from the subdivision $S$. A semi-subdivision has at most one pair of intersecting paths, sharing exactly one vertex. Hence, there are at least two vertices in $V(K)$ which are incident with $2$ edges of $C$ and $0$ or $2$ edges of $S$, and so these two vertices have a degree of at most $4$ in $G$. This contradicts Lemma~\ref{lem:ci}, so no $K_3$ component can be created in $G'$.\\ 


Now let $K$ be a $K_5^-$ component of $G'$, and let $C',P'$ be the cycle and path coloring it in $G'$.
Let $C,P$ be the cycle and path induced in $c_0$ by the same colors as $C',P'$. $C$ has not been treated yet and is thus induced by the same color in $c$ as in $c_0$. If $P = P'$, then it is disjoint from the cycles treated in previous iterations of $c$, and thus has not been involved in a replacement of a pair of colors. Otherwise, $P$ results from deviations of $P'$ on special vertices, or an extension of $P'$ by at most two edges (one for each of its endpoints) to special vertices. If $u$ is a special vertex touching $P$, since $u$ belongs to exactly one pattern of the mapping of $X$, then $u$ does not touch a cycle treated in any previous iteration of $c$. Thus $P$ is again disjoint from the cycles treated in the previous iterations. In both cases, $P$ is induced by the same color in $c$ as in $c_0$. 

Observe that $V(C)\cap V(P) \subseteq V(K)$, so $\vert V(C)\cap V(P)\vert \leq 5$. We distinguish between three cases:
\begin{itemize}
\item \textbf{$C$ results from a deviation of $C'$:} then its length is different from $5$. By Observation~\ref{obs1} (p.~\pageref{obs1}), $C\cup P$ does not form the exceptional graph.
\item \textbf{$C'$ has not been deviated, but $P'$ has:} then $V(C) = V(K)$, and there is an edge of $K$ that does not belong to $(C\cup P)[V(C)]$. Thus $(C\cup P)[V(C)] \varsubsetneq (C'\cup P')[V(C')] = K$, and so $(C\cup P)[V(C)]$ does not form a $K_5^-$, and by Observation~\ref{obs1} (p.~\pageref{obs1}), $C\cup P$ does not form the exceptional graph.
\item \textbf{Neither $C'$ nor $P'$ have been deviated:} then $(C\cup P)[V(C)] = K$, which is a $K_5^-$, and so $E(K)\subseteq E(G)$.
The edges of $K$ split $G$ into $6$ regions bounded by triangles.
Since the graph is almost $4$-connected w.r.t. all the special vertices, they belong to the same region of the graph.
Hence at most $3$ vertices from $K$ are neighbors of special vertices. Since there is at most one vertex of degree at most $4$ in $G$, all vertices from $K$, except maybe one, have their degree changed between $G$ and $G'$. Only $3$ of them can belong in patterns, thus at least one of them is touched by a path $Q$ induced by a color of $c$ and different from $P$. Since $C\cup P$ forms the exceptional graph, by Observation~\ref{obs1} (p.~\pageref{obs1}) $C\cup Q$ does not, and since $C'$ has not been deviated, $\vert C\vert = 5$, and so $\vert V(C)\cap V(Q)\vert \leq 5$.
\end{itemize}

In all three cases, Lemma~\ref{L21} (p.~\pageref{L21}) gives us a decomposition of $C\cup P$ or $C\cup Q$ into two new paths $Q',Q''$. We replace $C,P$ or $C,Q$ in $c$, depending on the case, with $Q',Q''$.

The coloring $c$ contains the same number of colors as $c_0$ in all iterations, with one less cycle at each iteration.
When all $K_3$ and $K_5^-$ components have been treated, the resulting coloring $c$ is a good path decomposition of $G$, a contradiction.
\end{proof}


In order to show a contradiction, we now need to prove that an MCE containing a configuration $(C_{II})$ indeed contains a subdivision composite configuration made up of a semi-subdivision rooted on its $4$-family and a compatible mapping.

\begin{lem}\label{lem:confred}

Let $G$ be a planar graph with a $(C_{II})$ configuration w.r.t. a $4$-family $U$, that admits a strong $\mathcal{K}$-subdivision rooted on $U$. Then $G$ contains a subdivision composite configuration made up of a semi-subdivision $S$ rooted on $U$ and a compatible mapping w.r.t. $S$.
\end{lem}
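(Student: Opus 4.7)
The plan is to proceed by a case analysis on the structure of the special vertices with respect to the strong $\mathcal{K}$-subdivision $S$ rooted on $U$. Each special vertex $u\in U$ falls into one of three states with respect to $S$: it is settled, it causes a close problem (sharing a remaining neighbor with another special vertex), or it causes a distant problem (having a remaining neighbor that is an internal vertex of a path of $S$ not incident with it). By property A of $S$, no special vertex forms both a non-trivial pattern and has an A-chord, and by property C, no redirection can further eliminate close problems of a certain type. My approach is to treat the three cases ``distant-only with many problems'', ``distant-only with few problems'', and ``some close problem'' separately via the three specialized lemmas mentioned in the text (distant, semi-distant, close).

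First I would dispose of the trivial case: if all four special vertices are settled w.r.t. $S$, then by Claim~\ref{clm:compmapping}, the mapping given by the respective settled patterns (among \ncf{\cfCV}, \ncf{\cfCvTwo}, \ncf{\cfCvThree}, {\CVp}, {\CTTNA}, {\CN}) is compatible with $S$, and we take $S$ as our semi-subdivision. Otherwise, at least one special vertex is unsettled. If no close problem occurs but there are at least three distant problems, I invoke the \emph{distant lemma} (Lemma~\ref{lem:dist1}), whose configurations {\nameDOne}, {\nameDTwo}, {\nameDFour} show how to transform $S$ by the routing operation into another $\mathcal{K}$-subdivision $S'$ in which all unsettled special vertices become {\CV} patterns, yielding a compatible mapping. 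If no close problem occurs and at most two distant problems remain, the \emph{semi-distant lemma} (Lemma~\ref{lem:dist2}) applies: its six configurations {\nameJOne}--{\nameJSix} cover all subcases, transforming $S$ into a $K_4$-, $C_{4+}^*$- or semi-$C_{4+}$-subdivision together with a compatible mapping (combining {\CN}, {\CV}, {\CU}, {\CVp} and {\CTTNA} patterns) and a suitable $2$-coloring that inactivates the remaining distant problems. Finally, if at least one close problem is present, the \emph{close lemma} (Lemma~\ref{lem:close}) provides the configurations {\nameRuOne}--{\nameRuNine} that handle all possible interactions of close and distant problems through modifications of $S$ (redirection, exchange of paths, or transformation into a semi-subdivision) and by mapping the affected special vertices to combinations of {\CDA}, {\CDB}, {\CTTNA}, {\CU}, {\CV}, {\CVp} and {\CN} patterns.

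The main task is therefore to verify exhaustiveness: every planar graph with a $(C_{II})$ configuration and a strong $\mathcal{K}$-subdivision falls into one of the configurations {\nameDOne}--{\nameDFour}, {\nameJOne}--{\nameJSix}, {\nameRuOne}--{\nameRuNine}. I would organize the case split by the number $d$ of distant problems and the number $c$ of close problems. Planarity, together with the properties A, B, C of the strong subdivision and the almost $4$-connectivity w.r.t. $U$ (via Claim~\ref{clm:inducedK4} and the $K_{3,3}$-minor and $3$-cut arguments used in the proof of Claim~\ref{clm:redirAB}), sharply restricts the possible shared remaining neighbors and the positions of distant neighbors along paths of $S$. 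In each subcase, I verify which among the catalogued configurations applies, checking in particular that the needed $2$-colorings exist (Claim~\ref{clm:inact}) and that the induced patterns are pairwise compatible.

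The hard part will be the close lemma: close problems involve genuinely two-vertex patterns ({\CDA}, {\CDB}, {\CTTNA}, {\CU}, {\CVp}) whose compatibility constraints are subtle, and in several cases one must simultaneously redirect $S$ and pick a careful $2$-coloring to avoid that two such patterns overlap on internal vertices of the modified subdivision or touch the same path of $S$ with conflicting colors. The analysis is also delicate when both close and distant problems coexist, since a redirection that cures a close problem can move the endpoints of paths of $S$ and inadvertently create new distant problems elsewhere; this forces us to appeal repeatedly to planarity through $K_{3,3}$- and $K_5$-minor obstructions, and occasionally to treat isolated boundary cases (for instance {\nameJThree} and {\nameJFour}) where shared neighborhoods would otherwise violate compatibility.
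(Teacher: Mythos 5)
Your proposal is correct and follows essentially the same route as the paper: the paper's proof of Lemma~\ref{lem:confred} is exactly the assembly of the three case lemmas you cite, splitting on whether there are at least $3$ distant problems (Lemma~\ref{lem:dist1}), at most $2$ distant problems and no close problem (Lemma~\ref{lem:dist2}), or some close problem (Lemma~\ref{lem:close}), each catalogued configuration coming packaged with a semi-subdivision and a compatible mapping. The only cosmetic deviations are that your all-settled case is already subsumed by configuration \ncf{\JOne} (and matches the paper's remark after the definition of settled vertices via Claim~\ref{clm:compmapping}), and that the routing operation is used in \ncf{\DOne} and \ncf{\DTwo} but explicitly not needed in \ncf{\DThree} and \ncf{\DFour}.
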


Note that the subdivision is a semi-subdivision and so it may have missing edges or having two paths crossing, but it is $2$-colorable, and therefore sufficient to produce a good coloring of $G$ if $G$ is an MCE, and therefore show a contradiction.

The rest of this 
{\chapsec}
constitutes a proof of this lemma, before the conclusion in which we show that Lemma~\ref{lem:cii} (p.~\pageref{lem:cii}) ensues.

\ifthenelse{\equal{\isThesis}{true}}
{\section{Distant problems}}
{\subsection{Distant problems}}

In the following, we prove that the graph admits a composite configuration made up of a subdivision and a set of compatible patterns.
We will distinguish two types of ``problems'' that could occur and prevent us from applying directly a reduction 
$\{${\CN}$,${\CN}$,${\CN}$,${\CN}$\}$%
. First, a {\CN} pattern could cause a ``distant problem'' by touching a path of the subdivision, and the associated reduction rule could create a cycle in the coloring. Then, some special vertices from $U$ could cause a ``close problem'' by sharing some of their remaining neighbors and the {\CN} patterns would not be compatible. We first treat the cases with at least $3$ distant problems (Lemma~\ref{lem:dist1}, p.~\pageref{lem:dist1}), then the cases with at most $2$ distant problems and no close problems (Lemma~\ref{lem:dist2}, p.~\pageref{lem:dist2}) and 
finally the cases with at most $2$ distant problems and some close problems (Lemma~\ref{lem:close}, p.~\pageref{lem:close}).


\begin{defn}[Distant problem]
Let $G$ be a planar graph with a $4$-family $U$ and let $S$ be a strong $\mathcal{K}$-subdivision rooted on $U$. Let $u\in U$ be a special vertex and $P$ be a path of $S$ that is not incident with $u$. We say that $u$ causes a \emph{distant problem} on $P$ if the three conditions are satisfied:
\begin{itemize}
\item $u$ has two adjacent remaining neighbors $v,v'$ that are disjoint from $U$;
\item exactly one of its remaining neighbors belongs to $P$;
\item if some other special vertex $u'$ has one of $v,v'$ as a remaining neighbor, then 
$u'$ is settled.
\end{itemize}
\end{defn}

\renewcommand{\MyScale}{1.1}

\PbDist


Figure~\ref{fig:pbdist} provides an example of distant problem. Only the path $P$ from the definition is represented.

This definition is motivated by the fact that because of property B, an unsettled special vertex $u$ cannot have both its remaining neighbors belong to a path $P$ not incident with $u$.

Note that if three special vertices cause distant problems, the fourth one is lone-settled, because of the last condition in the definition of distant problem and by properties A and B.

Once we $2$-color the subdivision $S$ such that each vertex of $U$ is at the end of a color, a distant problem is \emph{active} if the color that ends on $u$ is the same as the color of the path $P$. A distant problem is otherwise \emph{inactive}. We can treat inactive distant problems as {\CN} patterns, as 
the coloring fits the color requirements of the pattern (i.e. the color requirements of the two patterns \ncf{\cfCvTwo} and \ncf{\cfCvThree}).
Since the distant problems are caused by special vertices $u$ which have their remaining neighbors $v,v'$ adjacent, we refer to $\{u,v,v'\}$ as the \emph{triangle of $u$}. 


We prove here two results that are convenient for the rest of the proof.

\begin{claim}
A $K_4$-subdivision can be $2$-colored so as to inactivate up to $2$ distant problems that are on different paths of $S$.
\label{clm:inact}
\end{claim}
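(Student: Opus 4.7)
The plan is to reduce the claim to a finite analysis of $2$-colorings of the abstract $K_4$ on $U$, and to use planarity to rule out the pairs of distant problems that cannot actually arise in the plane.

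First I would observe that a $2$-coloring of $S$ in which every $u_i \in U$ is at the end of exactly one color corresponds bijectively to a decomposition of $K_4$ on $U$ into two complementary Hamiltonian paths: since the two colors have total length $6$ and must both be paths on $\le 4$ vertices, each must have length $3$. The $12$ Hamiltonian paths of $K_4$ form $6$ complementary pairs, so there are exactly $6$ such decompositions. In each one, each $u_i \in U$ is an endpoint of exactly one color and an internal vertex of the other, and a distant problem $(u,P)$ is inactive precisely when the edge of $K_4$ corresponding to $P$ belongs to the color whose Hamiltonian path passes through $u$ as an internal vertex. A direct inspection of the table of $6$ decompositions shows that each of the $4 \times 3 = 12$ possible pairs $(u,e)$ is inactivated by exactly two of them; in particular, any single distant problem is always inactivatable.

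For two distant problems $(u_1,e_1)$ and $(u_2,e_2)$ on different edges, I would first note that a single special vertex causes at most one distant problem (its two remaining neighbors are forced by the triangle condition), so $u_1 \ne u_2$, and then split on whether $e_1$ and $e_2$ share a vertex of $K_4$. If $e_1$ and $e_2$ are disjoint, i.e.\ form one of the three perfect matchings of $K_4$, then $u_1$ is an endpoint of $e_2$ and $u_2$ is an endpoint of $e_1$; there are only four such combinations of roots, and in each the table directly exhibits a decomposition that inactivates both problems. If instead $e_1$ and $e_2$ share a vertex $u_k$, write $e_1 = u_k u_a$ and $e_2 = u_k u_b$, and let $u_c$ be the fourth root; then $u_1 \in \{u_b,u_c\}$ and $u_2 \in \{u_a,u_c\}$. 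The crucial sub-case is $u_1 = u_b$, $u_2 = u_a$, i.e.\ both distant vertices are the non-$u_k$ endpoint of the other edge: here planarity enters. The chords $u_b v_1$ and $u_a v_2$ must both be drawn inside the triangular face of $S$ bounded by the three subdivision paths $P_{ka}, P_{kb}, P_{ab}$, since this is the unique face simultaneously incident with $u_a, u_b$ and with the relevant pieces of $P_{ka}, P_{kb}$. The cyclic boundary of this face meets $u_k, v_1, u_a, u_b, v_2$ in that order, so the two chords connect positions $(4,2)$ and $(3,5)$ on a $5$-cycle, which interleave and must cross, contradicting the planarity of $S$. Hence at least one of $u_1, u_2$ equals $u_c$, leaving only two configurations to handle; in each one the two witnessing chords live in distinct faces and the table again produces a decomposition inactivating both problems.

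The main obstacle is the planarity argument in the shared-vertex case: correctly identifying the unique face that must contain both chords $u_1 v_1$ and $u_2 v_2$, and making the interleaving rigorous from the face structure of the $K_4$-subdivision. The remaining work is the routine but careful bookkeeping over the six decompositions needed to verify that every planarly admissible pair of distant problems on different paths is indeed jointly inactivatable.
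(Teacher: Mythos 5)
Your proposal is correct, and it takes a genuinely different --- and in one respect more complete --- route than the paper's. The paper's proof of Claim~\ref{clm:inact} is a bare case analysis: w.l.o.g.\ the first problem is caused by $u_1$ on $u_2\sim u_3$, and the second is taken to be caused by $u_3$ on $u_2\sim u_4$ (case A) or on $u_1\sim u_4$ (case B), with the $u_2$- and $u_4$-variants folded in by symmetry; explicit Hamiltonian-path decompositions are then exhibited for A and B, exactly as your table does for these two patterns (your ``shared vertex with $u_1=u_c$ or $u_2=u_c$'' case is the paper's case A, your disjoint-edges case is its case B). Planarity is never invoked in the paper's proof. What its enumeration silently skips is precisely your interleaved pattern --- e.g.\ $u_3$ causing a problem on $u_1\sim u_2$ while $u_1$ causes one on $u_2\sim u_3$ --- and your observation about it is sharp: inspecting the six Hamiltonian-path pair decompositions of $K_4$ shows that \emph{no} $2$-coloring inactivates both problems of that pattern, so the claim cannot be proved without excluding it, and your crossing argument (both chords forced into the face bounded by the three paths joining $u_k,u_a,u_b$, connecting positions $(4,2)$ and $(3,5)$ on its $5$-point boundary cycle, hence interleaving) correctly rules it out --- note the contradiction is with the planarity of $G$, not of $S$ alone. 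So your approach reproves the paper's two cases by systematic bookkeeping and additionally supplies a planarity argument that the paper's proof implicitly relies on but never states.

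The one step you must not leave as a parenthetical is the assertion that a single special vertex causes at most one distant problem, hence $u_1\ne u_2$. The ``triangle condition'' does not force this: $u$ could have remaining neighbor $v$ internal to the $b\sim c$ path and $v'$ internal to the $b\sim d$ path with $vv'\in E(G)$, and this configuration is planar-embeddable ($uv$ in the face bounded by $u\sim b$, $u\sim c$, $b\sim c$; $uv'$ in the analogous face toward $d$; and $vv'$ in the fourth face bounded by $b\sim c$, $b\sim d$, $c\sim d$), so your face-crossing argument does not dispose of it. Moreover the same parity obstruction bites: the Hamiltonian path in which $u$ is internal contains exactly one edge avoiding $u$, so it cannot contain both $bc$ and $bd$, i.e.\ this pattern too is never jointly inactivatable and must be excluded structurally rather than colored around. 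The paper relies on the same fact --- it is asserted, without a detailed argument, in the proof of Lemma~\ref{lem:dist1} (``By property A, a special vertex $u$ can only cause a distant problem on one of the three paths of $S$ that are not incident with it'') --- so this is a gap you share with the paper rather than one you introduce relative to it; but your stated reason for it is not a proof, and your write-up should either prove the exclusion or import it explicitly as a hypothesis on the pair of distant problems.
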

\begin{proof}
Let $S$ be a $K_4$-subdivision rooted on $\{u_1,u_2,u_3,u_4\}$. Let us assume w.l.o.g. that $u_1$ causes a distant problem on the path $u_2\sim u_3$. The following $2$-coloring of $S$ inactivates the distant problem of $u_1$: $\{(u_3\rightarrow u_2\rightarrow u_1\rightarrow u_4), (u_1\rightarrow u_3\rightarrow u_4\rightarrow u_2)\}$.

If $S$ has a second distant problem on a different path, then there are several possible cases. Either $u_3$ causes a distant problem on $u_2\sim u_4$ (case A) or on $u_1\sim u_4$ (case B). These cases are symmetric with the ones where $u_2$ causes a distant problem on $u_3\sim u_4$ and $u_1\sim u_4$ respectively. If instead the second distant problem is caused by $u_4$ on (w.l.o.g.) $u_1\sim u_3$, then this is equivalent to case A: just replace $(1,2,3,4)$ with $(3,4,2,1)$ to obtain case A. The coloring of the previous case inactivates the two distant problems of case A, and the coloring $\{(u_4\rightarrow u_1\rightarrow u_3\rightarrow u_2),(u_1\rightarrow u_2\rightarrow u_4\rightarrow u_3)\}$ of $S$ inactivates those of case B.
\end{proof}


\begin{claim}
Let $G$ be a planar graph with a $(C_{II})$ configuration w.r.t. a $4$-family $U$ and a strong $C_{4+}^*$-subdivision rooted on $U$, such that $G$ does not have a $K_4$-subdivision rooted on $U$.
If $u_i,u_j\in U$ are $1$-linked, then at most one of them has a remaining neighbor belonging to $S$.
\label{clm:noCH}
\end{claim}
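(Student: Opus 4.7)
We argue by contradiction: suppose $u_1$ and $u_2$ are two $1$-linked special vertices in $U$ that both have a remaining neighbor in $V(S)$, call them $v_1$ and $v_2$ respectively. We will exhibit a $K_4$-subdivision of $G$ rooted on $U$, contradicting the hypothesis. Up to relabeling, we may assume $(u_1, u_2)$ and $(u_3, u_4)$ are the two $1$-linked pairs, $(u_1, u_3)$ and $(u_2, u_4)$ are the two $2$-linked pairs, and $(u_1, u_4)$ and $(u_2, u_3)$ are the two $0$-linked pairs. To upgrade $S$ to a $K_4$-subdivision rooted on $U$ it then suffices to produce a $(u_1, u_4)$-path and a $(u_2, u_3)$-path that, together with a suitable selection of paths of $S$, form six internally disjoint paths.

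Using property ``$1$-linked'' of $S$, $v_1$ cannot be an internal vertex of the solo $(u_3, u_4)$-path; by definition of remaining neighbor, $v_1$ also cannot lie on an edge of $S$ incident with $u_1$. Hence the possible locations of $v_1$ in $V(S)$ are: (i) an internal vertex of the solo $(u_1, u_2)$-path, (ii) an internal vertex of a parallel $(u_1, u_3)$-path, (iii) an internal vertex of a parallel $(u_2, u_4)$-path, or (iv) $v_1 \in \{u_3, u_4\}$, and a symmetric list holds for $v_2$. The favorable configuration is (iii) or $v_1 = u_4$ for $v_1$, \emph{and} the analogous location on a parallel $(u_1, u_3)$-path or $v_2 = u_3$ for $v_2$. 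In this case we construct the $K_4$-subdivision explicitly: retain the two solo paths of $S$ for $(u_1, u_2)$ and $(u_3, u_4)$; use the parallel $(u_2, u_4)$-path distinct from $P_{24}$ (the parallel path through $v_1$) and the parallel $(u_1, u_3)$-path distinct from $Q_{13}$ (the parallel path through $v_2$) for $(u_2, u_4)$ and $(u_1, u_3)$ respectively; form the new $(u_1, u_4)$-path as the edge $u_1 v_1$ followed by the $(v_1, u_4)$-subpath of $P_{24}$ (collapsing to the single edge $u_1 u_4$ when $v_1 = u_4$); and analogously form the new $(u_2, u_3)$-path from $u_2 v_2$ and the $(v_2, u_3)$-subpath of $Q_{13}$. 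These six paths are internally disjoint by construction, yielding a $K_4$-subdivision and the desired contradiction.

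The remaining cases are those in which $v_1$ lies in a location of type (i) or (ii), or $v_1 = u_3$; and symmetrically for $v_2$. In each of these cases the corresponding edge $u_i v_i$ is a chord of a path of $S$ incident with $u_i$. By property A of the strong subdivision, this forces $u_i$ to be settled and not to form a \ncf{\cfCvp} pattern; combined with $v_i \in V(S)$ and the fact that the remaining neighbors of \ncf{\cfCN}- and \ncf{\cfCTTNA}-vertices are required to be disjoint from $S$, the vertex $u_i$ must form a \ncf{\cfCV} pattern whose two remaining neighbors are non-adjacent. In each such subcase, we re-route the path of $S$ containing $v_i$ through the chord $u_i v_i$: the shortened path replaces the original one in $S$, and the freed initial section, combined with the other remaining neighbor of $u_i$ and the location of $v_j$ ($j \ne i$), yields either a direct $K_4$-subdivision or a modified $C_{4+}^*$-subdivision that reduces to the favorable configuration above.

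The main obstacle is the combinatorial case analysis when both $v_1$ and $v_2$ lie on $u_i$-incident paths --- in particular when both lie on the solo $(u_1, u_2)$-path, or both on the same parallel $(u_1, u_3)$- or $(u_2, u_4)$-path. In such situations, planarity arguments analogous to those in the proof of Claim~\ref{clm:redirAB} (exhibiting forbidden $K_{3,3}$- or $K_5$-minors of $G$), together with Claim~\ref{clm:inducedK4} (which excludes an induced $K_4$ on a special vertex and three of its neighbors), rule out the more degenerate configurations and reduce every remaining case to an explicit construction of the $(u_1, u_4)$- and $(u_2, u_3)$-paths needed to complete a $K_4$-subdivision rooted on $U$, contradicting the hypothesis.
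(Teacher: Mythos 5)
Your ``favorable configuration'' is precisely the paper's entire proof: the paper observes that, by property ``$1$-linked'' and property A, a remaining neighbor of $u_1$ (resp.\ $u_2$) lying in $S$ must lie on a parallel $(u_2,u_4)$-path $P_2=(u_2,Q_2,v_1,Q_4,u_4)$ (resp.\ a parallel $(u_1,u_3)$-path $P_1=(u_1,Q_1,v_2,Q_3,u_3)$), and then performs exactly your swap: keep the four other paths of $S$ and replace $P_1,P_2$ by $(u_1,v_1,Q_4,u_4)$ and $(u_2,v_2,Q_3,u_3)$, yielding a $K_4$-subdivision rooted on $U$, a contradiction. Where you diverge is that you treat the locations (i), (ii) and $v_1=u_3$ as live cases requiring further work, and it is there that your proposal has a genuine gap: the last two paragraphs are a promissory note, not an argument. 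First, the rerouting step does not do what you claim: once you re-route the path of $S$ containing $v_i$ through the chord $u_iv_i$, the edge $u_iv_i$ belongs to the new subdivision, so $v_i$ ceases to be a remaining neighbor of $u_i$; the hypothesis of the favorable configuration (both $1$-linked vertices having a remaining neighbor in $S$) is therefore not restored, and ``reduces to the favorable configuration above'' does not follow. Second, nothing guarantees that the re-routed structure is again a \emph{strong} $C_{4+}^*$-subdivision: rerouting can create A- or B-chords or break the $0$-/$1$-/$2$-linked properties, which is exactly why the paper needs the lengthy verification of Claim~\ref{clm:redirAB} for its four hand-picked redirections. Without those invariants you cannot re-invoke property A or the linkage properties to iterate, and the claimed dichotomy (``direct $K_4$-subdivision or a modified $C_{4+}^*$-subdivision'') together with the appeal to unspecified $K_{3,3}$-/$K_5$-minor arguments is never substantiated.

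To your credit, you correctly noticed that property A is conditional (an A-chord is only forbidden at a vertex that is unsettled or forms a $\C_V'$ pattern), a subtlety that the paper's one-line dismissal of cases (i)--(ii) glosses over; your deduction that in those cases $u_i$ would have to be lone-settled forming a $\C_V$ pattern is sound. But having opened these cases, you must close them, either by showing directly that a settled $u_i$ with a chord onto its own incident path still permits the swap (or a different explicit construction of the $(u_1,u_4)$- and $(u_2,u_3)$-paths), or by a planarity contradiction spelled out case by case. As written, the proof is complete only in the configuration the paper actually handles, and incomplete everywhere you departed from it.
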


\begin{proof}
By property ``$1$-linked'' of $S$ and property A, the remaining neighbors of $u_i,u_j$ are on parallel paths of $S$. Let us assume for contradiction that the $1$-linked special vertices $u_1,u_2$ are such that $u_1$ has a remaining neighbor $v_1$ on a $(u_2,u_4)$-path $P_2$ of $S$ and $u_2$ has a remaining neighbor $v_2$ on a $(u_1,u_3)$-path $P_1$ of $S$: let $P_1 = (u_1,Q_1,v_2,Q_3,u_3)$ and $P_2 = (u_2,Q_2,v_1,Q_4,u_4)$.

Let $S'$ be the set of paths of $S$ different from $P_1,P_2$, and let $P_1' = (u_1,v_1,Q_4,u_4)$ and $P_2' = (u_2,v_2,Q_3,u_3)$, as depicted on Figure~\ref{fig:CH}. These $(u_1,u_4)$-path and $(u_2,u_3)$-path are internally disjoint from the paths of $S'$, hence $S'\cup \{P_1',P_2'\}$ forms a $K_4$-subdivision of $G$ rooted on $U$, a contradiction. Hence, two $1$-linked special vertices cannot both have a remaining neighbor in $S$.

\renewcommand\MyScale{1}
\begin{figure}[ht]
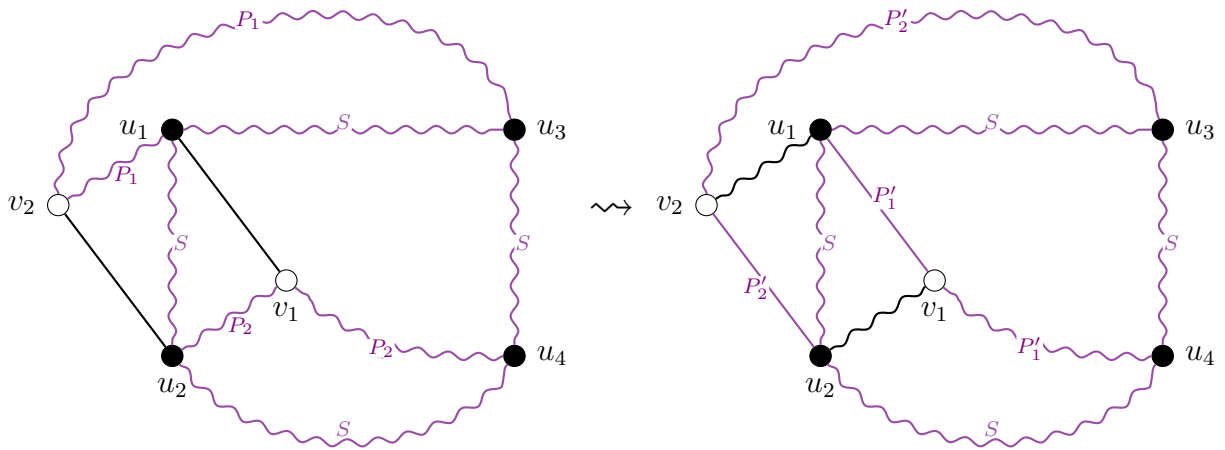

\center
\RuleCH
\caption{Display of the underlying $K_4$-subdivision of $G$ rooted on $U$}
\label{fig:CH}
\end{figure}

Finally, $S$ cannot have three distant problems or more, as two of them would be caused by a pair of $1$-linked special vertices.
\end{proof}


We define below the ``distant configurations'' that correspond to subdivisions with at least $3$ distant problems. We first introduce a \textit{routing operation} that helps us take care of these distant problems. For each distant configuration, we perform a redirection of the subdivision $S$ into a subdivision $S'$, to turn each special vertex
causing a distant problem into a settled one. When $S$ has a distant problem caused by $u_i$ and $S'$ has a new path of the form $(u_i,v_i,Q,u_j)$, the goal is to turn the distant problem on $u_i$ into a {\CV} pattern.
To do so, we use the following operation.

\begin{defn}[Routing operation]
Let $G$ be a planar graph with a $4$-family $U$ and a strong $\mathcal{K}$-subdivision $S$ rooted on $U$.
Let $u\in U$ have two adjacent remaining neighbors $v_1,v_2$ w.r.t. $S$, and let $w$ be a neighbor of $u$ that belongs to $S$. Assume that $u$ causes a distant problem, with $v_1$ touching a path of $S$. One of $v_1,v_2$, say $v'$, is not adjacent to $w$, otherwise $\{u,v_1,v_2,w\}$ would form an induced $K_4$ in the graph, which contradicts the fact that $G$ has a $(C_{II})$ configuration by Claim~\ref{clm:inducedK4} (p.~\pageref{clm:inducedK4}).

The paths of $S$ are redirected to create a new subdivision $S'$, containing a path $P' = (u,v_1,Q,u')$, and such that $w$ does not belong to $S'$. Applying the \emph{routing operation} on $u$ consists in replacing the edge $uv_1$ in $P'$ by the edges $uv_2,v_2v_1$ if $v'=v_1$, and leaving $P'$ as it is otherwise.
\label{def:routing}
\end{defn}

\renewcommand\MyScale{1.1}
\RuleRouting

\ifthenelse{\equal{\isThesis}{true}}
{\vfill
\pagebreak}
{}

This routing operation ensures that the two remaining neighbors of $u$ in $S'$ are $w,v'$ and are thus non-adjacent. 
The vertex $u$ now forms a {\CV} pattern w.r.t. $S'$ and 
we justify for each application of the routing operation that $u$ is left settled.
For all cases, we provide a subdivision $S$ and describe a mapping of compatible patterns that settles all vertices.\\


\ifthenelse{\equal{\isThesis}{true}}
{}
{\vfill
\pagebreak}

\textbf{List of the distant configurations:}

\textit{The \emph{distant configurations} are the configurations \ncf{\DOne}, \ncf{\DTwo}, 
\ncf{\DThree}, \ncf{\DFour} listed below.}


\textit{Each configuration describes a $4$-family $U$ and a strong $\mathcal{K}$-subdivision $S$, such that at least $3$ special vertices of $U$ cause a distant problem on $S$. For each configuration, we describe a new semi-subdivision $S'$. The routing operation is applied to $S'$ for all unsettled special vertices.}

\textit{We provide for each configuration a subdivision composite rule 
made up of {\CV} and {\CN} patterns.
We justify for each rule that the mapping is compatible w.r.t. $S'$.}

\ifthenelse{\equal{\isThesis}{true}}
{\vfill
\pagebreak}
{\ \\}

\AllDistantConf

The following lemma shows how a planar graph with $(C_{II})$ configuration can be treated with one of the distant configurations if the associated subdivision has at least three distant problems.

\begin{lem}[Distant lemma]
Let $G$ be a planar graph with a $(C_{II})$ configuration w.r.t. a $4$-family $U$, with a strong $\mathcal{K}$-subdivision $S$ rooted on $U$. If $G$ has at least $3$ distant problems w.r.t. $S$, then $G$ contains a configuration among $\{$\ncf{\DOne}, \ncf{\DTwo}, \ncf{\DThree}, \ncf{\DFour}$\}$.
\label{lem:dist1}
\end{lem}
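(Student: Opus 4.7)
The plan is to first force $S$ to be a $K_4$-subdivision by ruling out the $C_{4+}^*$-case, and then perform a careful case analysis on the structure of the distant problems inside a $K_4$-subdivision. For the first step, recall that the strong $\mathcal{K}$-subdivision $S$ is chosen to be a $K_4$-subdivision whenever one exists, so if $S$ is a $C_{4+}^*$-subdivision then $G$ contains no $K_4$-subdivision rooted on $U$. The final remark of Claim~\ref{clm:noCH} gives exactly what I need: such an $S$ cannot host three or more distant problems, since any two $1$-linked special vertices cannot both have remaining neighbors in $S$, yet every distant problem requires such a remaining neighbor, and every special vertex of a $C_{4+}^*$-subdivision is $1$-linked to two others. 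Under the hypothesis of at least three distant problems, this forces $S$ to be a $K_4$-subdivision.

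Next I would turn to the case analysis on the $K_4$-subdivision, leveraging two planarity obstructions to sift through the many possible DP-assignments. The first obstruction is a $K_{3,3}$-subdivision argument: if three DP-causing vertices $u_i, u_j, u_k$ place their distant-problem neighbors on the three paths of the triangle $u_iu_ju_k$ in a cyclic pattern (each $u_i$ pointing to the opposite side), then nine internally vertex-disjoint paths between $\{u_i, u_j, u_k\}$ and the DP-neighbors $\{v_i, v_j, v_k\}$ can be exhibited using direct edges and appropriate sections of the $K_4$-paths, contradicting planarity of $G$. The second obstruction is a face-crossing argument: if two DPs lie inside the same triangular face of the $K_4$-subdivision, each drawing a chord from a distinct corner of that face to the opposite side, the two chords must cross in any planar embedding.

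With these tools in hand, I would analyze the 3-DP case (with some $u_i$, say $u_4$, lone-settled) by grouping the $3^3=27$ DP-to-path assignments according to the multiset of DP-paths and the symmetries of $K_4$. Three realizable structures emerge up to relabeling: the cyclic assignment with all three DP-paths incident to $u_4$ gives $D_1$; the assignment with two DPs on a path incident to $u_4$ together with the third on the opposite path gives $D_3$; and the assignment with exactly one DP on the triangle $u_1u_2u_3$ and two on $u_4$-incident paths gives $D_2$ after a suitable permutation of the labels. All other assignments are ruled out by one of the two obstructions. For the 4-DP case, a small bipartite matching argument on the incidence graph between the four special vertices and the three non-incident paths available to each shows that, up to symmetry, the all-distinct-paths configuration reduces to $D_4$ exactly when the two unattacked paths form a vertex-disjoint matching on $U$; when the two unattacked paths share an endpoint, the face-crossing argument applies. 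Configurations with two DPs on a single path extend $D_3$ (with $u_4$ also placing a DP on $u_1 \sim u_2$, as $D_3$ explicitly permits), or else fail planarity.

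The main obstacle will be the combinatorial bookkeeping: one must systematically verify that, after quotienting by the symmetries of $K_4$, every realizable DP-assignment matches exactly one of $D_1, D_2, D_3, D_4$. The trickiest subcases are the ``near-$D_2$'' assignments with a cyclic orientation that looks different from the written definition, which require a non-obvious permutation of $U$ to identify with $D_2$; checking that these are not genuinely new configurations (and that each of the remaining unassigned patterns is indeed planarly infeasible rather than merely unlisted) is where the bulk of the work lies.
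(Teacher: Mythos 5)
Your proposal is correct and takes essentially the same route as the paper: it likewise disposes of the $C_{4+}^*$ case by invoking the closing remark of Claim~\ref{clm:noCH}, then classifies the distant-problem assignments on the $K_4$-subdivision via planarity obstructions, where the paper organizes the same bookkeeping you sketch by counting $i$-paths ($p_0+p_1+p_2=6$, $p_1+2p_2\in\{3,4\}$) and analyzing the shape of the unattacked paths (triangle, star, $3$-edge path, matching), reaching exactly your correspondence with $D_1$, $D_2$, $D_3$, $D_4$ and the same impossible cases. One minor slip worth correcting: in a $C_{4+}^*$-subdivision each special vertex is $1$-linked to exactly \emph{one} other vertex (the two $1$-linked pairs partition $U$), not two, but your conclusion survives since any three distant-problem vertices must then contain a full $1$-linked pair.
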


\begin{proof}
If $S$ is a $C_{4+}$-subdivision, we may assume that $G$ does not have a $K_4$-subdivision rooted on $U$. Then Claim~\ref{clm:noCH} (p.~\pageref{clm:noCH}) tells us that $S$ cannot have $3$ distant problems or more, which contradicts our hypothesis. Hence $S$ is a $K_4$-subdivision.

Let us call \emph{$i$-path} a path of $S$ that touches exactly $i$ triangles of special vertices. We consider the three quantities, for $i\in\{0,1,2\}$, $p_i := \vert\{i$-paths$\}\vert$.
By property A, a special vertex $u$ can only cause a distant problem on one of the three paths of $S$ that are not incident with it: we call these paths the \emph{potential paths} of $u$. For the same reasons, a path of $S$ can touch at most $2$ triangles of special vertices.

We have $p_0+p_1+p_2=6$ and $p_1+2\cdot p_2 =$ number of distant problems $= 3$ or $4$. Hence we need to consider five cases, depending on whether there are $3$ or $4$ distant problems and on the values the $p_i$ parameters.

\begin{itemize}
\item \textbf{$3$ distant problems, $p_1=3$, $p_2=0$}.
	We have $p_0 = 3$. If all three $0$-paths are incident with say $u_4$, then there cannot be three $1$-paths. Indeed, $u_3$ would have to cause a distant problem on $u_1\sim u_2$, then by planarity none of $u_1,u_2$ could cause a distant problem on $u_2\sim u_3, u_1\sim u_3$ respectively.

	Now let us assume that the three $0$-paths form a subdivision of a triangle rooted on $\{u_1,u_2,u_3\}$. Thus $u_4$ cannot cause any distant problem. The three $1$-paths are $u_1\sim u_4, u_2\sim u_4, u_3\sim u_4$. Let us say w.l.o.g. that $u_3$ causes a distant problem on $u_1\sim u_4$. Thus by planarity $u_2$ causes a distant problem on $u_3\sim u_4$, and $u_1$ causes a distant problem on $u_2\sim u_4$. This is the configuration \ncf{\DOne}.

	Let us finally assume that the three $0$-paths form a subdivision of a path on three edges, rooted on $U$. Let us say that the $0$-paths are $u_1\sim u_4, u_3\sim u_4, u_2\sim u_3$. The path $u_2\sim u_4$ touches the triangle of either $u_1$ or $u_3$.
	If it touches the triangle of $u_3$, then $u_1$ cannot cause any distant problem (as its two other potential paths are $0$-paths), thus in this case both $u_2$ and $u_4$ cause a distant problem.
	There is only one possibility for $u_2$: it causes a distant problem on $u_1\sim u_3$; then there is only one possibility for $u_4$, the path $u_1\sim u_2$. This is the configuration \ncf{\DTwo}. Now assume that instead, $u_2\sim u_4$ touches the triangle of $u_1$. By planarity and property A, only $u_3$ can touch the $1$-path $u_1\sim u_2$. Again by planarity, only $u_4$ can touch the $1$-path $u_1\sim u_3$. 
This case is equivalent to \ncf{\DTwo}: $(u_1,u_2,u_3,u_4)$ in \ncf{\DTwo} correspond to $(u_2,u_1,u_4,u_3)$ in this case, in this order.

\item \textbf{$3$ distant problems, $p_1=1$, $p_2=1$}. We have $p_0 = 4$: the four $0$-paths can only either form a subdivision of the ``paw'' graph (a triangle with an additional edge attached to one vertex) or a subdivision of the cycle on four vertices. We can easily see that the first case is impossible: let us say the non-$0$-paths are $u_1\sim u_2, u_1\sim u_3$; there must be a path that touches two triangles of $U$, say it is $u_1\sim u_2$, that necessarily touches the triangles of $u_3$ and $u_4$. Then by planarity, the path $u_1\sim u_3$ cannot touch the triangle of $u_2$ and thus cannot be a $1$-path. Hence the $0$-paths cannot form a paw.

Now let us assume that the $0$-paths are $u_1\sim u_3, u_2\sim u_3, u_1\sim u_4, u_2\sim u_4$. Let us assume w.l.o.g. that each of $u_1, u_2$ causes a distant problem on $u_3\sim u_4$, and $u_3$ causes a distant problem on $u_1\sim u_2$.
This case can be treated as configuration \ncf{\DThree}.

\item \textbf{$4$ distant problems, $p_1=4$, $p_2=0$}. We have $p_0 = 2$, so the two $0$-paths can either be incident on one vertex or disjoint. Let us consider the first case. Assume that $u_1\sim u_4, u_2\sim u_4$ are the $0$-paths; the other four paths are $1$-paths and must each touch one triangle of $U$, so all of $u_1,u_2,u_3,u_4$ cause a distant problem. The vertex $u_3$ causes a distant problem on $u_1\sim u_2$, as it is its only potential path. Then w.l.o.g. $u_4$ causes a distant problem on $u_2\sim u_3$, and $u_1$ can only cause a distant problem on $u_3\sim u_4$. Finally, by planarity the triangle of $u_2$ cannot touch the path $u_1\sim u_3$, its only potential path left. Hence, the $0$-paths cannot be incident.

Now let us assume that the two $0$-paths are $u_1\sim u_2, u_3\sim u_4$. Assume w.l.o.g. that $u_1$ causes a distant problem on $u_2\sim u_4$. Then $u_3$ causes a distant problem on $u_1\sim u_4$ as it is its last potential path. In the same way, $u_2$ causes a distant problem on $u_1\sim u_3$ and $u_4$ on $u_2\sim u_3$. This is the configuration \ncf{\DFour}.

\item \textbf{$4$ distant problems, $p_1=2$, $p_2=1$}. Let us assume that $u_1$ and $u_2$ cause distant problems on the same $2$-path $u_3\sim u_4$. By planarity, the triangle of $u_3$ can only reach the path $u_1\sim u_2$, and so does the triangle of $u_4$. Therefore, there cannot be two distinct $1$-paths. Hence, this case is impossible.

\item \textbf{$4$ distant problems, $p_1=0$, $p_2=2$}. If we assume that the path $u_3\sim u_4$ touches the triangles of $u_1$ and $u_2$, then necessarily the path $u_1\sim u_2$ touches the triangles of $u_3$ and $u_4$. This is again configuration \ncf{\DThree}.
\end{itemize}

This concludes the proof.
\end{proof}

\ifthenelse{\equal{\isThesis}{true}}
{\section{Semi-distant configurations}}
{\subsection{Semi-distant configurations}}

We can now focus on the cases where the subdivision has up to $2$ distant problems.
%
%
%
Let us define another type of problem that we have to deal with in order to finish the proof of Lemma~\ref{lem:confred} (p.~\pageref{lem:confred}).

\begin{defn}[Close problem]
Let $G$ be a planar graph with a $4$-family $U$ and let $S$ be a $\mathcal{K}$-subdivision rooted on $U$.
A special vertex $u \in U$ causes a \emph{close problem} if it is unsettled w.r.t. $S$ and shares at least one of its remaining neighbors with at least one other unsettled special vertex.
\end{defn}

Note that by definition there are either zero or at least two special vertices causing a close problem; there cannot be a single special vertex causing a close problem on its own.
Also, note that by definition, an unsettled special vertex that does not cause a distant nor a close problem forms a {\CN} pattern that is disjoint from $S$ and that touches only {\CV} patterns, 
hence its reduction rule can be applied safely.

Let us first deal with subdivisions that have at most $2$ distant problems and no close problem, with the following \emph{semi-distant configurations} and their associated subdivision composite rules. We will then deal with subdivisions with close problems in 
\ifthenelse{\equal{\isThesis}{true}}
{Section~\ref{subs:close}.\\}
{Subsection~\ref{subs:close}.\\}

\textbf{List of the semi-distant configurations:}

\textit{The \emph{semi-distant configurations} are the configurations \ncf{\JOne}, \ncf{\JTwo}, \ncf{\JThree}, \ncf{\JFour}, \ncf{\JFive}, \ncf{\JSix} listed below.}

\textit{Each configuration describes a $4$-family $U$ and a strong $\mathcal{K}$-subdivision $S$, such that at most $2$ special vertices of $U$ cause a distant problem on $S$, and none cause close problems. For each configuration, we describe a new semi-subdivision $S'$. The routing operation is not applied to $S'$ unless stated otherwise.}

\textit{We provide for each configuration a subdivision composite rule.
We justify for each rule that the mapping is compatible w.r.t. $S'$.}

\vfill
\pagebreak

\AllSemiDistantConf

The following lemma shows that we can treat any subdivision that has at most $2$ distant problems and no close problem with one of the semi-distant configurations.

\begin{lem}[Semi-distant lemma]
Let $G$ be a planar graph with a $(C_{II})$ configuration w.r.t. a $4$-family $U$, with a strong $\mathcal{K}$-subdivision $S$ rooted on $U$. If $G$ has at most $2$ distant problems and no close problem w.r.t. $S$, then $G$ contains a configuration among $\{$\ncf{\JOne}, \ncf{\JTwo}, \ncf{\JThree}, \ncf{\JFour}, \ncf{\JFive}, \ncf{\JSix}$\}$.
\label{lem:dist2}
\end{lem}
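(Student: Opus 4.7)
My plan is to proceed by a case analysis on the type of the strong $\mathcal{K}$-subdivision $S$ (a $K_4$-subdivision or a $C_{4+}^*$-subdivision), then on the number and placement of the distant problems. The absence of close problems will mean every unsettled special vertex either causes a distant problem or forms a {\CN} disjoint from the subdivision, and settled vertices can be handled directly; thus the job is to verify that the geometric arrangement of the distant problems falls into one of the six configurations.

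First I would treat the case where $S$ is a $K_4$-subdivision. If there are $0$ or $1$ distant problems, or if there are $2$ that do not share a path of $S$, we are in \ncf{\JOne}: the definition imposes only that distant problems avoid a common path and that the other special vertices be settled, and Claim~\ref{clm:inact} (p.~\pageref{clm:inact}) is precisely the tool used in \ncf{\JOne}, so nothing else is required. If instead the two distant problems lie on a common path of $S$, by property~A and the definition of distant problem this path is $u_3\sim u_4$, with the remaining neighbors $v_1,v_2$ of $u_1,u_2$ on it in some order. Here I would split further: when either $\ell(u_1\sim u_2)\geq 2$ with the last technical condition on the middle vertex $w$, or when neither $u_3$ nor $u_4$ has both $v_1',v_2'$ as remaining neighbors, the configuration fits \ncf{\JTwo}; the remaining sub-case with $\ell(u_1\sim u_2)=1$ and $u_4$ (say) adjacent to $v_1'$ and $v_2'$ gives \ncf{\JThree}; and the precise leftover sub-case where both $u_3,u_4$ take $v_2',w$ and $v_1',w$ as remaining neighbors on a length-$2$ path is exactly \ncf{\JFour}. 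The main verification here is that the forbidden overlap of two potential {\CTTNA} patterns inside $u_1\sim u_2$ (which is why \ncf{\JTwo}'s last condition exists) is exactly what \ncf{\JFour} handles.

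Next I would treat the case where $S$ is a $C_{4+}^*$-subdivision. Here Claim~\ref{clm:noCH} (p.~\pageref{clm:noCH}) is the workhorse: since we are in a $C_{4+}^*$ but no $K_4$-subdivision is available (otherwise we would have used the $K_4$ case above), no two $1$-linked special vertices can both own a remaining neighbor in $S$, so distant problems can only be caused by at most one member of each $1$-linked pair. Labelling so that $u_1,u_2$ are $1$-linked and $u_1,u_3$ are $2$-linked, it follows that any distant problems are caused by vertices from at most two $2$-linked pairs, and planarity plus property~A further restricts them to lie on the parallel $(u_2,u_4)$-paths. If both distant problems are on the same $(u_2,u_4)$-path $P_{24}$, we are in \ncf{\JSix}; otherwise (including the cases of $0$ or $1$ distant problem, or $2$ distant problems on the two different parallel $(u_2,u_4)$-paths), the structure fits \ncf{\JFive}, where the coloring swap on the parallel paths inactivates the distant problems exactly as prescribed.

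The main obstacle I foresee is the bookkeeping in the $K_4$ branch, specifically disentangling \ncf{\JTwo}, \ncf{\JThree}, \ncf{\JFour}: each carves out a narrow exceptional sub-case of the previous one, and I must check that \emph{every} placement of $v_1',v_2'$ (touching $S$ or not, adjacent to $u_3/u_4$ or not, adjacent to each other or not) is covered by exactly one of them, while also certifying that the extra hypotheses in \ncf{\JTwo} (last bullet) and in \ncf{\JThree} (that $u_3$ is settled) are forced. The $C_{4+}^*$ branch should be quicker since Claim~\ref{clm:noCH} localizes distant problems to one specific pair of parallel paths; there the subtlety is only to note that when both $u_3$ and $u_4$ are settled with remaining neighbors disjoint from $S$ (the hypothesis of \ncf{\JSix}), this is automatic from the absence of close problems and of further distant problems, via properties~A and~B of the strong subdivision.
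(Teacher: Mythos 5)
Your proposal is correct and takes essentially the same route as the paper's proof: the identical split into the $K_4$ and $C_{4+}^*$ cases, the same disentanglement of \ncf{\JTwo}, \ncf{\JThree} and \ncf{\JFour} when the two distant problems share the path $u_3\sim u_4$ (with \ncf{\JOne} absorbing the remaining $K_4$ cases), and Claim~\ref{clm:noCH} doing exactly the same work in the $C_{4+}^*$ branch to route same-path problems to \ncf{\JSix} and everything else to \ncf{\JFive}. One minor inaccuracy: distant problems caused by $u_2$ or $u_4$ lie on the parallel $(u_1,u_3)$-paths, not the $(u_2,u_4)$-paths, so two problems can sit on paths of different kinds (e.g.\ caused by the $0$-linked pair $u_1,u_4$); since your ``otherwise'' clause sends all different-path cases to \ncf{\JFive}, exactly as the paper does, this does not affect correctness.
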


\begin{proof}
Let us consider the case where $S$ is a $K_4$-subdivision. If it has at most one distant problem, or two distant problems on different paths of $S$, then this is configuration \ncf{\JOne}.
If $u_1,u_2\in U$ both cause distant problems on the same path $u_3\sim u_4$ of $S$, then we distinguish between $3$ cases. Let $v_1,v_2$ be the remaining neighbors of $u_1,u_2$ respectively that are on the path $u_3\sim u_4$, and let $v_1',v_2'$ be their other remaining neighbors. If $l(u_1\sim u_2) = 1$ and $u_3$ or $u_4$ has both $v_1',v_2'$ as remaining neighbors, then this is configuration \ncf{\JThree}. If $l(u_1\sim u_2) = 2$, with $w$ as the middle vertex, $w$ is adjacent to $u_3$ and $u_4$, and $v_1',v_2'$ are remaining neighbors of $u_3$ or $u_4$, then this is configuration \ncf{\JFour}. Otherwise, this is configuration \ncf{\JTwo}.

Now let us consider the case where $S$ is a $C_{4+}$-subdivision. 
By property ``$1$-linked'' and property A of $S$, the distant problems occur on parallel paths of $S$.
Thus, if there is at most one distant problem, this is configuration \ncf{\JFive}.
By Claim~\ref{clm:noCH} (p.~\pageref{clm:noCH}), two distant problems cannot be caused by $1$-linked special vertices.
Therefore, if there are two distant problems on different paths of $S$, then this is configuration \ncf{\JFive}. If there are two distant problems caused by (w.l.o.g.) $u_1,u_3$ on the same parallel $(u_2,u_4)$-path of $S$, by Claim~\ref{clm:noCH} (p.~\pageref{clm:noCH}) the remaining neighbors of $u_2,u_4$ are disjoint from $S$, and this is configuration \ncf{\JSix}.
This concludes the proof.
\end{proof}

\begin{figure}[ht]
		\tikzstyle{every node}=[thick,anchor=west,align=left]
		\begin{tikzpicture}
		  [grow via three points={one child at (0.5,-0.7) and
		  two children at (0.5,-0.7) and (0.5,-1.4)},
		  edge from parent path={(\tikzparentnode.south) |-(\tikzchildnode.west)}]
		  \node [] {$K_4$}
		  		child { node [] {$1$ distant problem: \fbox{\ncf{\JOne}}}}
		  		child { node [] {$2$ distant problems on different paths: \fbox{\ncf{\JOne}}}}
		  		child { node [] {$2$ distant problems on the same path:}
		  			child { node [] {General case: \fbox{\ncf{\JTwo}}}
		  			}
		  			child [missing] {}
		  			child { node [] {Specific case \#1\\(contact {\CU}$ + ${\CV}): \fbox{\ncf{\JThree}}}
		  			}
		  			child [missing] {}
		  			child { node [] {Specific case \#2\\(contact {\CTTNA}$ + ${\CTTNA}): \fbox{\ncf{\JFour}}}
		  			}
		  		}
		  ;

		  \node [] at (0,-6.0) {$C_{4+}$}
		  		child { node [] {$\leq 1$ distant problem: \fbox{\ncf{\JFive}}}}
				child { node [] {$2$ problems}
					child { node [] {On different paths: \fbox{\ncf{\JFive}}}}
					child { node [] {On the same path: \fbox{\ncf{\JSix}}}}
				}
		  ;
		\end{tikzpicture}
		\caption{Semi-distant lemma trees of cases}
\end{figure}
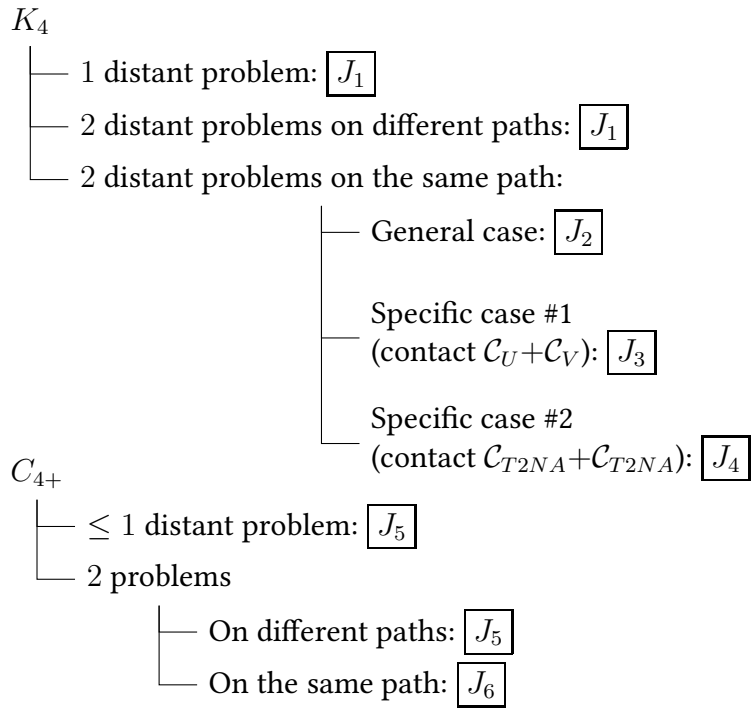

\vfill
\pagebreak

\ifthenelse{\equal{\isThesis}{true}}
{\section{Close configurations}}
{\subsection{Close configurations}}
\label{subs:close}

For simplicity, we define some macros that encapsulate several patterns and configurations from the redirection procedure.

\begin{itemize}
\item {\CDOne}: In this configuration, $u_1,u_2\in U$ are linked by a path $u_1\sim u_2$ in $S$. The vertices $u_1,u_2$ have a common remaining neighbor $v$ and have another remaining neighbor $v_1,v_2$ respectively, both adjacent to $v$. The vertices $v$ and $v_1$ are disjoint from $S$, and if $v_2$ is in $S$, it belongs to a path of $S$ incident with $u_1$ and not $u_2$.\\
First let us assume that $v_2$ is not in $S$. If the path $u_1\sim u_2$ has length $1$, then it is a {\CDA} pattern if $v_1,v_2$ are not adjacent, or a {\CDB} pattern if they are. If $u_1\sim u_2$ has length at least two, then this is forbidden by the redirection procedure, as this is a {\redirXOne} or {\redirXTwo} configuration depending on whether $v_1$ is adjacent to the neighbor $w_1$ of $u_1$ on $u_1\sim u_2$.\\
Now if there is a path $u_1\sim u'$ in $S$ that touches $v_2$, with $u'\neq u_2$, then it is a {\redirXThree} configuration, forbidden by property C.

\item {\CDTwo}: The vertices $u_1,u_2$ are linked by a path $u_1\sim u_2$ in $S$. The vertices $u_1,u_2$ have two remaining neighbors $v,v'$ in common.
No path of $S$ touches $v,v'$.\\
If $v,v'$ are not adjacent, then this is \ncf{\cfCTTNAa} or \ncf{\cfCTTNAb} depending on the parity of $v,v'$. Now assume $v,v'$ are adjacent, and let $l$ be the length of $u_1\sim u_2$. If $l=1$, then $\{u_1,u_2,v,v'\}$ form an induced $K_4$, contradicting the fact that $G$ has a $(C_{II})$ configuration by Claim~\ref{clm:inducedK4} (p.~\pageref{clm:inducedK4}). 
Then $l\geq 2$ and this is configuration {\redirXFour} from the redirection procedure, hence forbidden by property C.
%
\end{itemize}

To summarize, apart from forbidden configurations removed by the redirection procedure, a {\CDOne} macro is a {\CDA} or {\CDB} pattern.
A {\CDTwo} is a {\CTTNA} pattern and in this case $u_1,u_2$ are therefore settled if no unsettled special vertex shares remaining neighbors with them.
See Figure~\ref{fig:Macros}.

\begin{figure}[ht]
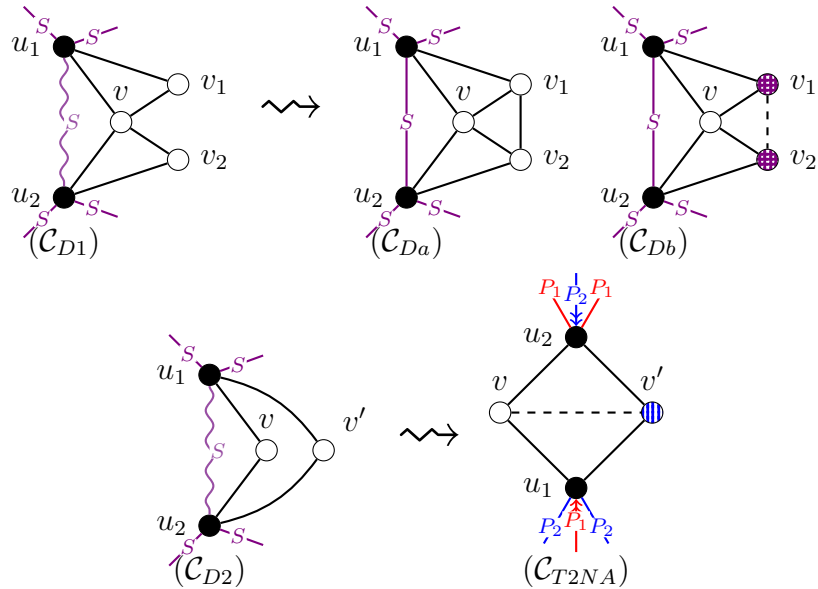


\renewcommand{\MyScale}{1}
\center{
\cfCDOnene
\tkcf{}
\flecheACentrer{1.5}
\cfCDOnenea
\tkcf{}
\cfCDOneneb
\tkcf{}

\cfCDTwowo
\tkcf{}
\flecheACentrer{1.5}
\cfCTTNA
\tkcf{}
}
\caption{Possible patterns for each macro}
\label{fig:Macros}

\end{figure}

Let us now introduce the remaining configurations, with which we treat all the cases of $\mathcal{K}$-subdivisions with close problems.

\vfill
\pagebreak

\textbf{List of the close configurations:}

\textit{The \emph{close configurations} are the configurations
\ncf{\RuOne}, \dots,
\ncf{\RuNine}
listed below.}

\textit{Each configuration describes a strong $\mathcal{K}$-subdivision $S$ rooted on a $4$-family $U = \{u_1,u_2,u_3,u_4\}$, such that at most two special vertices cause distant problems, and some special vertices cause close problems.}
\textit{We describe for each a subdivision composite rule made up of a semi-subdivision $S'$ (if not specified, $S'=S$) and a compatible mapping w.r.t. $S'$.}\\

\textbf{Remark:} When two remaining neighbors of a special vertex form a {\CN} pattern or a {\CVp} pattern, we denote it by {\CN} for simplicity. 
This does not change the case analysis.


\ \\

\AllCloseConf

Before entering the proof of the final lemma of this 
{\chapsec},
let us show a useful claim.

\begin{claim}
Let $G$ be a planar graph with a $(C_{II})$ configuration w.r.t. a $4$-family $U$, with a $\mathcal{K}$-subdivision $S$ rooted on $U$.

If $u\in U$ has no remaining neighbors in common with other special vertices, then either $u$ causes a distant problem in $S$ or $u$ is lone-settled.
\label{clm:isol}
\end{claim}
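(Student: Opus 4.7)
The plan is to do a short case analysis on the two remaining neighbors of $u$. Since in both the $K_4$- and $C_{4+}$-subdivisions every root has degree exactly $3$ in $S$, and since $d_G(u)=5$, the vertex $u$ has precisely two remaining neighbors, which I will call $v_1$ and $v_2$. I will split into three cases: (i) $v_1,v_2$ are non-adjacent, (ii) $v_1,v_2$ are adjacent and $v_1v_2\in E(S)$, (iii) $v_1,v_2$ are adjacent but $v_1v_2\notin E(S)$. Cases (i) and (ii) will be immediate: they give $u$ the local structure of a $\C_V$ or $\C_V^+$ pattern, and since by hypothesis no other special vertex has $v_1$ or $v_2$ as a remaining neighbor, both the ``at most one shared remaining neighbor'' condition and the ``$v_1,v_2$ are not the two remaining neighbors of a $\C_U$ pattern'' condition hold vacuously. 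Hence $u$ is lone-settled in these cases.

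The substantive case is (iii), where $u$ has the local structure of a $\C_N$ pattern. If in addition both $v_1$ and $v_2$ are disjoint from $S$, then $u$ is lone-settled as a $\C_N$ and we are done. Otherwise, I will first observe that $u$ is necessarily unsettled: it cannot be lone-settled as $\C_V$, $\C_V^+$ or $\C_N$ (for the structural reasons above and because at least one remaining neighbor meets $S$), and it cannot form a $\C_{T2NA}$ with another special vertex because that would require a shared remaining neighbor, contrary to our hypothesis. I will then verify the three requirements of a distant problem. Assume without loss of generality that $v_1\in V(S)$.

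I will use the strongness of $S$ to locate $v_1$ (and, if it lies on $S$, also $v_2$) on a path of $S$ not incident with $u$. Indeed, if $v_1$ were on a path $P$ incident with $u$, then since $uv_1\in E(G)$ and $uv_1\notin E(S)$, the edge $uv_1$ would be a chord of $P$ on non-consecutive vertices (it cannot be consecutive, because then $uv_1\in E(P)\subseteq E(S)$), giving an A-chord contradicting property~A for the unsettled vertex $u$. Then property~B forbids $v_1,v_2$ from both lying on the same non-incident path $P$ of $S$: the edge $v_1v_2\notin E(S)$ would otherwise be a B-chord of~$P$. Thus there is a path $P$ of $S$ not incident with $u$ containing exactly one of $v_1,v_2$, which is the second condition of a distant problem, and the third condition (about other special vertices having $v_1$ or $v_2$ as remaining neighbors) is vacuous by the hypothesis.

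For the first condition I must rule out $v_1,v_2\in U$. If some $v_i\in U$ were a remaining neighbor of $u$, then since in both subdivision types all pairs of roots are joined by a path of $S$ of length $\ge 1$, and since $uv_i\notin E(S)$, this path $u\sim v_i$ would have length $\ge 2$, making $uv_i$ a chord of it; this would be an A-chord at the unsettled vertex $u$, again contradicting property~A. Hence $v_1,v_2\notin U$, the first condition holds, and $u$ causes a distant problem on $P$. The main delicacy will be applying properties~A and~B correctly in case (iii); everything else is bookkeeping.
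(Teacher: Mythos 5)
Your overall architecture is the same as the paper's: a case split on whether the two remaining neighbors $v_1,v_2$ of $u$ are adjacent and on how they meet $S$, with the no-shared-remaining-neighbor hypothesis ruling out $\C_{T2NA}$ (and the $\C_U$ clause of lone-settledness), and properties A and B doing the geometric work. The paper runs the argument contrapositively (``assume no distant problem, then either none or both remaining neighbors lie on $S$, giving $\C_N$ or $\C_{V}'$''), whereas you argue the direct direction in the hard case and actually spell out the A-chord/B-chord steps that the paper compresses into ``by property B and planarity''. That part of your write-up is correct and, if anything, more careful than the printed proof: establishing first that $u$ is unsettled in case (iii) before invoking property A is exactly the right way to avoid the circularity in the A-chord definition, and your B-chord step correctly forces $v_1,v_2$ onto distinct non-incident paths, which (together with $v_1,v_2\notin U$) yields a path containing exactly one of them.

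There is, however, one step that fails as written: your claim that ``in both subdivision types all pairs of roots are joined by a path of $S$ of length $\ge 1$'' is false for a $C_{4+}$-subdivision, which has two $0$-linked pairs of roots with no path of $S$ between them. So your A-chord argument ruling out $v_i\in U$ has a hole precisely when $v_i$ is the $0$-linked partner of $u$ in a $C_{4+}^*$-subdivision: the edge $uv_i$ is then not a chord of any path of $S$, and properties A and B say nothing about it. (The paper's own proof silently ignores the possibility $v_i\in U$ altogether, so you are attempting something the paper skips, just with a wrong justification.) Fortunately the gap is local and has a one-line repair that needs no chord machinery at all: you are in case (iii), so $v_1v_2\in E(G)\setminus E(S)$; if $v_1\in U$, then $v_2$ is joined to the special vertex $v_1$ by an edge not in $S$, hence $v_2$ is a remaining neighbor of $v_1$ as well as of $u$, contradicting the hypothesis that $u$ shares no remaining neighbor with another special vertex (and symmetrically for $v_2\in U$). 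With that substitution your proof is complete and matches the paper's conclusion in every case.
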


\begin{proof}
Let us assume that $u$ does not cause a distant problem in $S$. Since it does not share remaining neighbors with other special vertices, it cannot form a {\CTTNA} pattern w.r.t. $S$. Hence, since no pair of special vertices forms a {\CU} pattern by property A, if the remaining neighbors of $u$ are non-adjacent, $u$ forms a {\CV} pattern and is lone-settled.

If its remaining neighbors are adjacent, since $u$ does not share remaining neighbors and does not cause a distant problem, either none or both of its remaining neighbors belong to $S$. Then either its remaining neighbors are disjoint from $S$ and $u$ forms a {\CN} pattern, or by property B and planarity $u$ has both remaining neighbors in $S$ and forms a {\CVp} pattern. In both cases, it is lone-settled.
\end{proof}

We can now show how all the remaining cases of $\mathcal{K}$-subdivisions with close problems can be taken care of with the previous close configurations.


\begin{lem}[Close lemma]
\label{lem:close}
Let $G$ be a planar graph with a $(C_{II})$ configuration w.r.t. a $4$-family $U$, with a $\mathcal{K}$-subdivision $S$ rooted on $U$, with at most $2$ distant problems and some close problems w.r.t. $S$. Then $G$ has a configuration among 
$\{$%
\ncf{\RuOne},
\ncf{\RuTwo},
\ncf{\RuThree},
\ncf{\RuFour},
\ncf{\RuFive},
\ncf{\RuSix},
\ncf{\RuSeven},
\ncf{\RuEight},
\ncf{\RuNine},
\ncf{\JFour}%
$\}$.
\end{lem}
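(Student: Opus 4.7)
The plan is to perform a case analysis based on the structure of the close problems in $S$, classifying which special vertices cause them and whether the shared remaining neighbors lie on $S$ or off $S$. By Claim~\ref{clm:isol} (p.~\pageref{clm:isol}), every special vertex of $U$ that is not involved in a close problem either causes a distant problem or is lone-settled; combined with the hypothesis of at most two distant problems, this means we can focus entirely on the subgraph of close problems. I would first split on the type of $\mathcal{K}$-subdivision: either $S$ is a $K_4$-subdivision, or $S$ is a $C_{4+}^*$-subdivision.

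In the $K_4$ case, I would further split according to the number $k\in\{2,3,4\}$ of special vertices involved in close problems, and within each subcase count the number of common remaining neighbors shared by the involved pairs. When $k=2$ (say $u_1,u_2$ share remaining neighbors), the close problem configuration is either a \nameRuOne\ or a \nameRuTwo\ depending on whether the shared neighbor lies off $S$ or on $S$; the additional vertex configurations are dealt with using Claim~\ref{clm:inact} to inactivate distant problems from $u_3,u_4$. The remaining $k=2$ situation, in which the shared neighbors lie on the path $u_3\sim u_4$, corresponds to \nameRuThree, and the very specific subcase isolating configuration \nameJFour\ (which is a semi-distant configuration) must be invoked when $u_1,u_2$ have adjacent shared neighbors and the length of $u_1\sim u_2$ is exactly~$2$ with the middle vertex adjacent to $u_3,u_4$. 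For $k=3$, the close problem forces either two of the three vertices to share a neighbor off $S$ with a triangle structure (giving \nameRuFour) or to share two neighbors in a $K_{3,3}$-like way ruled out by planarity except in the \nameRuFive\ configuration. For $k=4$, I would show that the only remaining possibilities are two pairs of close problems, giving configuration \nameRuSix, or a ``butterfly'' sharing pattern giving \nameRuSeven.

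In the $C_{4+}^*$ case, the analysis is tighter because of property ``$1$-linked'' and property ``$2$-linked''. Claim~\ref{clm:noCH} (p.~\pageref{clm:noCH}) forbids two $1$-linked special vertices from both having remaining neighbors in $S$; this, combined with property ``$0$-linked'' and the three redirection operations, drastically restricts which pairs can cause close problems. I would show that if the close problem involves $1$-linked vertices $u_1,u_2$, then no pair among $\{u_1,u_2\}$ and $\{u_3,u_4\}$ shares a remaining neighbor (otherwise the $K_4$ that would be present would contradict Claim~\ref{clm:inducedK4}), giving configuration \nameRuEight; and if the close problem involves $2$-linked vertices $u_2,u_4$ whose shared neighbor lies on a parallel $(u_1,u_3)$-path, then Claim~\ref{clm:noCH} forces the remaining neighbors of $u_1,u_3$ off $S$, giving configuration \nameRuNine.

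The main obstacle is the bookkeeping in the $K_4$ case with $k=2,3$, since the positions of shared remaining neighbors on (or off) the six paths of $S$, combined with which of $u_3,u_4$ may cause distant problems and where the endpoints of those problems land, multiply the subcases considerably. The key lemmas that keep the case analysis tractable are: (i) planarity and almost $4$-connectivity, which via $K_{3,3}$- and $K_5$-minor arguments eliminate most configurations where shared remaining neighbors co-exist with unfavorable subdivision paths; (ii) the strongness of $S$ (properties A, B and C), which in particular rules out the redirection configurations {\redirXOne}--{\redirXFour} and therefore the ``bad'' {\CDOne} and {\CDTwo} situations collapse into {\CDA}, {\CDB} or {\CTTNA} patterns; and (iii) Claim~\ref{clm:inducedK4} (p.~\pageref{clm:inducedK4}), which forbids induced $K_4$'s around a special vertex and thus eliminates pathological triangulated neighborhoods. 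After exhausting all these subcases and matching each to one of the listed configurations $R_1,\dots,R_9,J_4$, the proof is complete.
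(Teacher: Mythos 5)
Your overall architecture matches the paper's proof: split on whether $S$ is a $K_4$- or a $C_{4+}^*$-subdivision, in the $K_4$ case stratify by the number $k\in\{2,3,4\}$ of special vertices involved in close problems, and lean on Claims~\ref{clm:isol}, \ref{clm:noCH} and \ref{clm:inducedK4}, planarity minors, and properties A, B, C, exactly as the paper does. However, two steps as stated would fail. First, in the $K_4$ case with $k=4$ you claim the only outcomes are {\nameRuSix} and {\nameRuSeven}; this is false. When the shared remaining neighbors touch $S$ (for instance two independent close problems with $v_{12}$ on $u_3\sim u_4$ and hence, by planarity, $v_{34}$ on $u_1\sim u_2$; or one close problem involving all four vertices in which the single common neighbor $v_{12}$ of $u_1,u_2$ lies on $u_3\sim u_4$), the paper maps these subcases to {\nameRuThree}; your dichotomy leaves them matched to no listed configuration. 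Second, in the $C_{4+}^*$ case your justification for {\nameRuEight} is not valid: a remaining neighbor shared between one of $\{u_1,u_2\}$ and one of $\{u_3,u_4\}$ does not create an induced $K_4$, so Claim~\ref{clm:inducedK4} gives nothing there. In the paper, disjointness of the two pairs' remaining neighbors is simply one side of a case split (and in that branch one of $(u_1,u_2)$, $(u_3,u_4)$ forms a {\CDOne} or {\CDTwo} configuration, yielding {\nameRuEight}); in the complementary branch, property ``$0$-linked'' rules out sharing between $0$-linked vertices and property ``$2$-linked'' forces the unique shared neighbor of the $2$-linked pair onto a parallel $(u_1,u_3)$-path, whence Claim~\ref{clm:noCH} yields {\nameRuNine} --- which you do state correctly.

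A further omission: the definition of {\nameRuEight} tolerates at most one distant problem, so in the $C_{4+}^*$ case you must first show that two distant problems cannot coexist with a close problem. Claim~\ref{clm:noCH} disposes of the $1$-linked pairs, as you note, but the paper also needs the subcases of two distant problems caused by $0$-linked or by $2$-linked vertices: in each, properties ``$0$-linked'' and ``$2$-linked'' (with Claim~\ref{clm:noCH}) force the remaining two special vertices to be lone-settled, contradicting the presence of close problems. Relatedly, in the $K_4$ case with $k=3$ the eliminations are not purely $K_{3,3}$-planarity: when $v_1,v_2,v_3$ are pairwise distinct and all three vertices share $v_{123}$, or when $v_2'=v_3$, the paper uses property C to force the three paths among $u_1,u_2,u_3$ to have length $1$, and then $\{u_1,u_2,u_3\}$ is a $3$-cut separating two neighbors of $u_1$, contradicting almost $4$-connectivity; the case $v_{13}\in S$ is killed by a $K_5$-minor together with a {\redirXThree} configuration forbidden by property C. Your plan gestures at these tools in general terms, but these specific eliminations are where the case analysis actually closes, and they need to be carried out.
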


\begin{proof}
We start by solving \textbf{the $C_{4+}$ cases}: see Figure~\ref{C4tree} for the tree of cases.


If $S$ is a $C_{4+}^*$-subdivision, let $U = \{u_1,u_2,u_3,u_4\}$ be such that $u_1,u_2$ are $1$-linked, and $u_1,u_3$ are $2$-linked. 
We can assume that $G$ does not have a $K_4$-subdivision rooted on $U$.

If $S$ has distant problems, they are on parallel paths by property ``$1$-linked'' and property A.
By Claim~\ref{clm:noCH} (p.~\pageref{clm:noCH}), $S$ does not have two distant problems caused by $1$-linked special vertices.
If $S$ has two distant problems caused by two $0$-linked special vertices, then by property ``$0$-linked'', the other two $0$-linked special vertices do not share remaining neighbors, and are thus lone-settled by definition of close problem and Claim~\ref{clm:isol} (p.~\pageref{clm:isol}). So $S$ does not have close problems, which is a contradiction. 
%
If $S$ has two distant problem caused by $2$-linked special vertices, then by Claim~\ref{clm:noCH} (p.~\pageref{clm:noCH}), the other two special vertices have no remaining neighbor on $S$. By property ``$2$-linked'', this means that they do not share remaining neighbors, which means that they are (lone-)settled and there is no close problem, a contradiction.

So $S$ has at most one distant problem, caused by $u_3$ if so.
Let us first consider the case where remaining neighbors of $\{u_1,u_2\}$ are disjoint from the ones of $\{u_3,u_4\}$.
If $u_3$ causes a distant problem, then by Claim~\ref{clm:isol} (p.~\pageref{clm:isol}) $u_4$ is lone-settled, and thus $(u_1,u_2)$ must form a {\CDOne} or {\CDTwo} configuration to be unsettled. If $u_3$ does not cause a distant problem, at least one pair among $(u_1,u_2)$ and $(u_3,u_4)$ forms a {\CDOne} or {\CDTwo} configuration.
In all cases, this is configuration \ncf{\RuEight}.

Now, let us consider the case where $\{u_1,u_2\}$ share some remaining neighbors with $\{u_3,u_4\}$.
By property ``$0$-linked'', two $0$-linked special vertices cannot share remaining neighbors. If $u_2,u_4$ share remaining neighbors, by property ``$2$-linked'' they share exactly one and it belongs to a parallel $(u_1,u_3)$-path of $S$.
Therefore, by Claim~\ref{clm:noCH} (p.~\pageref{clm:noCH}), the remaining neighbors of $u_1,u_3$ are disjoint from $S$, thus disjoint, and this is configuration \ncf{\RuNine}.

\begin{figure}[htp]
		\tikzstyle{every node}=[thick,anchor=west,align=left]
		\begin{tikzpicture}
		  [grow via three points={one child at (0.5,-0.7) and
		  two children at (0.5,-0.7) and (0.5,-1.4)},
		  edge from parent path={(\tikzparentnode.south) |-(\tikzchildnode.west)}]
		  \node [] {$C_{4+}$}
		  		child { node [] {$2$ distant problems: impossible} 
				}
				child { node [] {$\leq 1$ distant problem}
					child { node [] {r.n. of $2$-linked disjoint: \Boxcf{\RuEight}}
					}
					child { node [] {Two $2$-linked share a r.n.: \Boxcf{\RuNine}}
					}
				}
			;
		\end{tikzpicture}
		\caption{Close lemma: tree of $C_{4+}$ cases}
		\label{C4tree}
\end{figure}
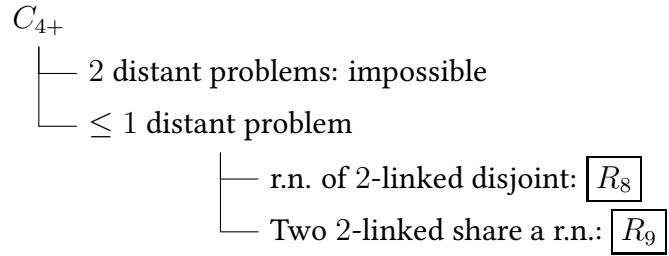

Let us now deal with \textbf{the $K_4$ cases}:
see Figure~\ref{K4tree} for the tree of cases.


We first examine the cases in which there are only two vertices involved in a close problem: we assume w.l.o.g. that these two vertices are $u_1,u_2$, they share a remaining neighbor $v$, and $u_3,u_4$ are either settled or cause distant problems. One may form a {\CN} pattern disjoint from $S$ and touching only patterns from settled vertices, but we treat it as settled. 

We first examine the case where $v\notin S$.
If $u_1,u_2$ share only one remaining neighbor, this is configuration \ncf{\RuOne}.
So now assume $u_1,u_2$ share another remaining neighbor $v'$.
If $v'\notin S$, then this is a {\CDTwo} configuration where $u_1,u_2$ form a {\CTTNA} pattern, as no other special vertex can cause a close problem by hypothesis, and by property A no path of $S$ can touch $v$ or $v'$. Thus $u_1,u_2$ are settled, a contradiction.
Otherwise, $v'\in S$ and this is configuration \ncf{\RuTwo}.

Now let us assume that $v\in S$. By property A, $v$ necessarily belongs to the path $u_3\sim u_4$. We can immediately see that if $u_1,u_2$ share another remaining neighbor $v'\notin S$, then this is a case that has already been treated, by swapping $v$ and $v'$.
Now $u_1,u_2$ may have another common remaining neighbor $v'\in S$, necessarily in the path $u_3\sim u_4$, or each of $u_1,u_2$ can have another remaining neighbor $v_1',v_2'$ respectively, both adjacent to $v$ to be unsettled. All these cases are treated as configuration \ncf{\RuThree}, except in one particular case: there is a vertex $w$ on $u_1\sim u_2$ that is a common neighbor of $u_1,u_2,u_3,u_4$, the special vertex $u_4$ is adjacent to $v_1'$, while $u_3$ is adjacent to $v_2'$, and in this case it is configuration \ncf{\JFour}.
This concludes the cases with two special vertices involved in a close problem.


Let us now examine the cases with $3$ vertices involved in a close problem, say $u_1,u_2,u_3$. We can assume w.l.o.g. that $u_1,u_3$ share a remaining neighbor $v_{13}$.
There is at most one distant problem, caused by $u_4$ if so; otherwise, $u_4$ is lone-settled by definition of distant problem.
The special vertices $u_1,u_2,u_3$ each have another remaining neighbor $v_1,v_2,v_3$ respectively, and $u_2$ has another remaining neighbor $v_2'$ (these vertices are not necessarily distinct). 

We first examine the case where $v_{13}\notin S$.

If $v_1=v_3$, we can denote this vertex by $v_{13}'$.
If $u_2$ does not have $v_{13}$ or $v_{13}'$ as a remaining neighbor, it is lone-settled by Claim~\ref{clm:isol} (p.~\pageref{clm:isol}), a contradiction with the hypothesis on $u_2$; so one of $v_{13},v_{13}'$ is also a remaining neighbor of $u_2$. By planarity, $v_{13},v_{13}'$ cannot both be remaining neighbors of $u_2$.
So let us say that $v_{13}$ is not a remaining neighbor of $u_2$, that the vertices $u_1,u_2,u_3$ have a common remaining neighbor $v_{123}$, and that $v_2 \neq v_{123}$.
Since $u_2$ is unsettled, $v_2,v_{123}$ are adjacent and this is configuration \ncf{\RuFour}.

Now let us take a look at the case where $v_1\neq v_3$. We distinguish between the case where $v_{13}$ is a shared remaining neighbor of $u_1,u_2,u_3$ or not.

\begin{itemize}
\item $v_{13}$ is also a remaining neighbor of $u_2$, and we call it $v_{123}$.
If w.l.o.g. $v_1=v_2$, then this case is equivalent to the previous one, by swapping $u_2$ and $u_3$; hence we assume that $v_1,v_2,v_3$ are pairwise distinct.
If one of $v_1,v_2,v_3$ is not adjacent to $v_{123}$, then its special vertex is lone-settled, a contradiction; so all three of $v_1,v_2,v_3$ are adjacent to $v$.
There are three {\CDOne} configurations 
on $(u_1,u_2),(u_1,u_3),(u_2,u_3)$, since by planarity and property A $v_1,v_2,v_3$ are disjoint from $S$. Each of these configurations forms a {\CDA} or {\CDB} pattern by property C, so the three paths $u_1\sim u_2, u_1\sim u_3, u_2\sim u_3$ have length $1$. Then $\{u_1,u_2,u_3\}$ is a $3$-cut in $G$ that separates two neighbors of $u_1$, a contradiction to the almost $4$-connectivity of $G$ w.r.t. the special vertices.

\item $v_{13}$ is not a remaining neighbor of $u_2$. Since by Claim~\ref{clm:isol} (p.~\pageref{clm:isol}) $u_2$ has a remaining neighbor in common with $u_1$ or $u_3$, we can assume w.l.o.g. that $v_1=v_2$ and we call this vertex $v_{12}$.

First, assume $v_2'=v_3$ and call it $v_{23}$.
Since $u_1,u_2,u_3$ are unsettled, the graph contains the edges $v_{13}v_{23}, v_{23}v_{12}, v_{12}v_{13}$.
Thus, the three {\CDOne} configurations on $(u_1,u_2)$, $(u_1,u_3)$ and $(u_2,u_3)$ are all {\CDA} patterns by property C. Thus the three paths between $u_1,u_2,u_3$ all have length $1$, which is again a contradiction to the almost $4$-connectivity of $G$ w.r.t. the special vertices.

Now assume $v_2'\neq v_3$. Due to $u_1,u_2,u_3$ being unsettled, the graph contains the edges $v_{13}v_{3}, v_{13}v_{12}, v_{12}v_2'$. By planarity, there is at most one edge among $\{v_{13}v_2', v_{12}v_3\}$, so we can assume w.l.o.g. that $v_{12}v_3$ is a non-edge and in this case we have configuration \ncf{\RuFive}.
\end{itemize}

We can now examine the case where $v_{13}\in S$.

By property A, $v_{13}$ necessarily belongs to $u_2\sim u_4$, thus $u_2$ cannot have it as a remaining neighbor by property A. The vertex $u_2$ must be involved in the close problem, so it is adjacent to $v_1$, $v_3$, or both.

First assume $u_2$ is adjacent to both $v_1,v_3$.
Since $u_1,u_2,u_3$ are unsettled, we need to have the edges $v_1v_3, v_1v_{13}, v_3v_{13}$, otherwise one of them is a settled {\CV} pattern.
However, in this case $\{v_{13},v_1,v_3,u_2,(u_1=u_3)\}$ form a $K_5$-minor, by contracting the path $u_1\sim u_3$ to a vertex, a contradiction with the planarity of $G$.

So now we assume w.l.o.g. that $u_2$ is adjacent to $v_3$ but not to $v_1$: we say that $v_3=v_2'$ and we call it $v_{23}$.
Since $u_2,u_3$ are unsettled, we need the edges $v_2v_{23}$ and $v_{13}v_{23}$.
But then this is a {\redirXThree} configuration, which contradicts property C of the subdivision.
This concludes the cases of $3$ special vertices involved in the close problem.


Let us finally examine the cases where all four special vertices are involved in one or two close problems. By definition, there are no distant problems in $S$.

For a special vertex to cause a close problem, it needs to share some of its remaining neighbors with another special vertex. Two cases may occur: either there are two independent close problems each involving two special vertices, or there is one close problem involving all four special vertices. We start with the first case: $u_1,u_2$ are involved in a close problem, as well as $u_3,u_4$, but the remaining neighbors of $\{u_1,u_2\}$ and those of $\{u_3,u_4\}$ are  disjoint. 
We examine all combinations of cases:
\begin{itemize}
\item $u_1,u_2$ share \textbf{exactly one} remaining neighbor $v_{12}$; $u_3,u_4$ share \textbf{exactly one} remaining neighbor $v_{34}$.
For all special vertices to be 
unsettled,
we assume that the other remaining neighbors $v_1,v_2$ of $u_1,u_2$
are adjacent to $v_{12}$, and the other remaining neighbors $v_3,v_4$ of $u_3,u_4$ are adjacent to $v_{34}$. If $v_{12},v_{34}\notin S$,
then none of $v_1,v_2,v_3,v_4$ can belong to $S$, otherwise by property A the graph contains a redirection configuration {\redirXThree}
, forbidden by property C. So this is configuration \ncf{\RuSix}.
By planarity, if one of $v_{12},v_{34}$ belongs to $S$, then the other one does too. If they both belong to $S$, this is configuration \ncf{\RuThree}. 

\item $u_1,u_2$ share \textbf{two} remaining neighbors $v_{12},v_{12}'$; $u_3,u_4$ share \textbf{exactly one} remaining neighbor $v_{34}$.
Since $u_3,u_4$ are unsettled, their other remaining neighbors $v_3,v_4$ are adjacent to $v_{34}$.
If none of $v_{12},v_{12}'$ belongs to $S$, then neither do $v_3,v_4,v_{34}$ by planarity. This is again configuration \ncf{\RuSix}. Now if say $v_{12}$ belongs to $S$, it belongs to $u_3\sim u_4$ by property A, and then by planarity $v_{34}$  belongs to the path $u_1\sim u_2$. Thus, by planarity, $v_{12}'$ must also belong to $S$. This is once more configuration \ncf{\RuThree}.

\item $u_1,u_2$ share \textbf{two} remaining neighbors $v_{12},v_{12}'$; $u_3,u_4$ share \textbf{two} remaining neighbors $v_{34},v_{34}'$. Using the same argument, either none of $v_{12},v_{12}',v_{34},v_{34}'$ belong to $S$, or they all do. In the former case this is again configuration \ncf{\RuSix}, in the latter this is configuration \ncf{\RuThree}.
\end{itemize}

This concludes the case with two independent close problems. Let us now assume that all four special vertices are involved in the same close problem. Let us decompose according to the case ($P_1$ or $P_2$) of the remaining neighbors of $u_1,u_2$.

\begin{itemize}
\item $u_1,u_2$ share \textbf{two} remaining neighbors $v_{12},v_{12}'$. We note that if at least one of $v_{12},v_{12}'$ belongs to $S$, then it is impossible by planarity and property A to have both $u_3,u_4$ involved. So $v_{12},v_{12}'\notin S$. Assume w.l.o.g. that $v_{12}$ is also a remaining neighbor of $u_3$, and call it $v_{123}$. If $u_3,u_4$ share a remaining neighbor $v_{34}$ (different from $v_{123}$ by planarity), then by property A if $v_{34}\in S$ it can only belong to the path $u_1\sim u_2$, but it is impossible in this case by planarity. Thus $v_{34}\notin S$, and by planarity $v_{34},v_{123}$ are not adjacent.
But then $u_3$ is a settled {\CV} pattern, a contradiction.
So $u_3,u_4$ do not share a remaining neighbor. For $u_4$ to be involved, it must have (by planarity) $v_{12}'$ as a remaining neighbor. This is configuration \ncf{\RuSeven}.

\item $u_1,u_2$ share \textbf{exactly one} remaining neighbor $v_{12}$, and each have another remaining neighbor $v_1,v_2$, both adjacent to $v_{12}$. We first assume that $v_{12}\notin S$. If $u_3$ has $v_1$ as a remaining neighbor, then by planarity $v_1,v_2,v_{12}$ must belong to the region of the graph delimited by the paths $u_1\sim u_2, u_2\sim u_3, u_1\sim u_3$, and none of $v_1,v_2,v_{12}$ can belong to $S$ (by property A and planarity). It is then impossible for $u_4$ to be involved with the close problem. So necessarily $u_3$ has $v_{12}$ as a remaining neighbor, we call it $v_{123}$. By the same argument it is impossible for $u_4$ to be involved without making $u_3$ a {\CV} pattern, thus settled, a contradiction. 
So finally assume that $v_{12}$ belongs to $S$: by property A it belongs to $u_3\sim u_4$. We can assume w.l.o.g. that $u_3$ has $v_1$ as a remaining neighbor (it cannot have $v_{12}$ by property A). By an argument used above, $u_3$ cannot share a remaining neighbor with $u_4$ without being a {\CV} pattern, so $u_4$ has $v_2$ as a remaining neighbor. This is 
again configuration \ncf{\RuThree}.
\end{itemize}

This concludes the proof.
\end{proof}

\begin{figure}[ht]
		\tikzstyle{every node}=[thick,anchor=west,align=left]
		\begin{tikzpicture}
		  [grow via three points={one child at (0.5,-0.7) and
		  two children at (0.5,-0.7) and (0.5,-1.4)},
		  edge from parent path={(\tikzparentnode.south) |-(\tikzchildnode.west)}]
		  \node [] {$K_4$}
			child { node [] {\\\\$2$ vertices involved in a close problem:\\ 
			$u_1, u_2$ share a r.n. $v$}
						child [missing] {}
				child { node [] {$v\notin S$}
					child { node [] {
						$u_1,u_2$ share only one r.n.: \Boxcf{\RuOne}}}
					child { node [] {$u_1, u_2$ share a r.n. $v'$}
								child { node [] {$v'\notin S$: impossible}
								}
								child { node [] {$v'\in S$: \Boxcf{\RuTwo}}
								}
						}
				}
				child [missing] {}
				child [missing] {}
				child [missing] {}
				child { node [] {$v\in S$}
				    child { node [] {$u_1, u_2$ share a r.n. $v'$; $v'\notin S$: see case $v\notin S$}
				    }
					child { node [] {$u_1, u_2$ share a r.n. $v'$; $v'\in S$: \Boxcf{\RuThree}}
					}
					child { node [] {$u_1$ r.n. $v_1$; $u_2$ r.n. $v_2$; $v_1'\neq v_2'$: \fbox{\Ncf{\RuThree}, \Ncf{\JFour}}}
					}
				}
			}
			child [missing] {}
			child [missing] {}
			child [missing] {}
			child [missing] {}
			child [missing] {}
			child [missing] {}
			child [missing] {}
			child [missing] {}
			child [missing] {}
			child { node [] {$3$ vertices involved in a close problem: $u_1, u_2, u_3$;\\
							 $u_1,u_3$ share a r.n. $v_{13}$}
								child { node [] {$v_{13}\notin S$}
									child { node [] {$v_1 = v_3$: \Boxcf{\RuFour}}
									}
									child { node [] {$v_1 \neq v_3$: \Boxcf{\RuFive}}
									}
								}
								child [missing] {}
								child [missing] {}
								child { node [] {$v_{13}\in S$: Impossible}}
			}
			child [missing] {}
			child [missing] {}
			child [missing] {}
			child { node [] {$4$ vertices involved\\ in close problems}
				child [missing] {}
				child { node [] {Two independent close problems\\ each involving two vertices:\\$u_1,u_2$ share one r.n. $v_{12}$;\\$u_3,u_4$ share one r.n. $v_{34}$}
				child [missing] {}
				child { node [] {$v_{12},v_{34}\notin S$: \Boxcf{\RuSix}}
						}
				child { node [] {$v_{12},v_{34}\in S$: \Boxcf{\RuThree}}
						}
				}
				child [missing] {}
				child [missing] {}
				child [missing] {}
				child { node [] {One close problem\\ involving all four vertices}
					child { node [] {$P_2$: $u_1,u_2$ share two r.n. $v_{12},v_{12}'$: \Boxcf{\RuSeven}}
					}
					child { node [] {$P_1$: $u_1,u_2$ share one r.n. $v_{12}$: \Boxcf{\RuThree}}
					}
				}
			};
		\end{tikzpicture}
		\caption{Close lemma: tree of $K_4$ cases}
		\label{K4tree}
\end{figure}
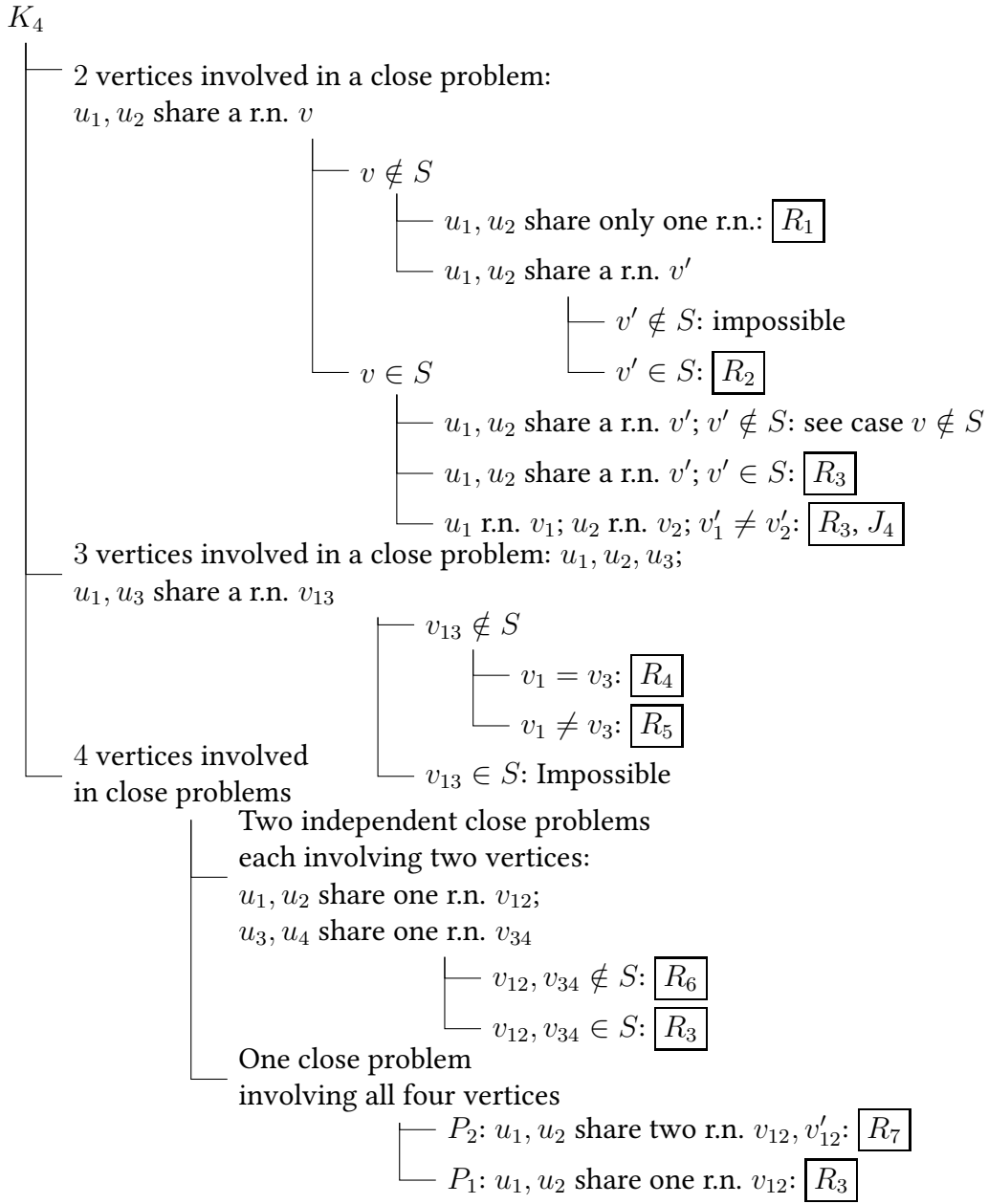
\clearpage

\pagebreak

Let us conclude this 
{\chapsec}
by proving Lemma~\ref{lem:confred} (p.~\pageref{lem:confred}) and using it to prove Lemma~\ref{lem:cii} (p.~\pageref{lem:cii}).

\begin{proof}[Proof of Lemma~\ref{lem:confred} (p.~\pageref{lem:confred})]
Let $G$ be a planar graph with a $(C_{II})$ configuration w.r.t. a $4$-family $U$, that admits a strong $\mathcal{K}$-subdivision rooted on $U$. We prove that $G$ contains a subdivision composite configuration made up of a semi-subdivision $S$ rooted on $U$ and a compatible mapping w.r.t. $S$.

By Lemma~\ref{lem:dist1} (\textit{Distant lemma}, p.~\pageref{lem:dist1}), if $G$ has at least $3$ distant problems w.r.t. its strong subdivision $S$, then it contains a configuration among $\{$\ncf{\DOne}, \ncf{\DTwo}, \ncf{\DThree}, \ncf{\DFour}$\}$.
Each of these configurations is defined 
along with a semi-subdivision and a compatible mapping.

So let us assume $G$ has at most $2$ distant problems and no close problem w.r.t. $S$. By Lemma~\ref{lem:dist2} (\textit{Semi-distant lemma}, p.~\pageref{lem:dist2}), $G$ admits a semi-distant configurations among $\{$\ncf{\JOne}, \ncf{\JTwo}, \ncf{\JThree}, \ncf{\JFour}, \ncf{\JFive}, \ncf{\JSix}$\}$, and 
we provided
a semi-subdivision and a compatible mapping for each of these configurations. If $G$ has at most $2$ distant problems and some close problems w.r.t. $S$, then by Lemma~\ref{lem:close} (\textit{Close lemma}, p.~\pageref{lem:close}), it contains a configuration among
$\{$%
\ncf{\RuOne},
\ncf{\RuTwo},
\ncf{\RuThree},
\ncf{\RuFour},
\ncf{\RuFive},
\ncf{\RuSix},
\ncf{\RuSeven},
\ncf{\RuEight},
\ncf{\RuNine},
\ncf{\JFour}%
$\}$, 
again associated with a semi-subdivision and a compatible mapping for each.
\end{proof}

\begin{proof}[Proof of Lemma~\ref{lem:cii} (p.~\pageref{lem:cii})]
Let $G$ be a minimum counterexample. By Lemma~\ref{lem:ci} (p.~\pageref{lem:ci}), it does not contain a configuration $(C_I)$.
%
%

Assume that $G$ contains a $(C_{II})$ configuration w.r.t. a $4$-family $U$.
By Claim~\ref{clm:redir3r} (p.~\pageref{clm:redir3r}), $G$ admits a strong $\mathcal{K}$-subdivision $S$ rooted on $U$.
By Lemma~\ref{lem:confred} (p.~\pageref{lem:confred}), $G$ contains a semi-subdivision $S'$ rooted on $U$ and a compatible mapping w.r.t. $S'$, which is a contradiction by Lemma~\ref{lem:safety_cii} (p.~\pageref{lem:safety_cii}).
\end{proof}

Lemma~\ref{lem:mce} (p.~\pageref{lem:mce}) ensues by combining Lemmas~\ref{lem:ci} (p.~\pageref{lem:ci}) and \ref{lem:cii} (p.~\pageref{lem:cii}).

\vfill
\pagebreak

\section{There is no minimal counterexample}\label{sec:nomce}


We complete the proof of Theorem~\ref{thm:gallai} with the following lemma:

\begin{lem}\label{lem:nomce}
Every connected planar graph on at least $3$ vertices contains a configuration $(C_I)$ or $(C_{II})$.
\end{lem}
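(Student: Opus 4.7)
I would argue by contradiction: suppose $G$ is a connected planar graph on $n \ge 3$ vertices that contains neither a $(C_I)$ nor a $(C_{II})$ configuration.

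The first step is a pigeonhole count via Euler's formula. Because $G$ avoids $(C_I)$, at most one vertex of $G$ has degree at most $4$. Writing $n_i$ for the number of vertices of degree $i$ and using the planar bound $\sum_v d(v) = 2|E(G)| \le 6n-12$, a direct calculation gives $n_5 \ge 12$ if every vertex has degree $\ge 5$, and $n_5 \ge d_{\min}+6 \ge 7$ if one vertex has degree $d_{\min}\in\{1,2,3,4\}$ (using $n\ge 3$ and connectedness). So $G$ possesses a large set $W$ of vertices of degree exactly~$5$.

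The second step is to extract from $W$ a quadruple forming a $(C_{II})$, i.e.\ a $4$-family $U\subseteq W$ such that no $3$-cut $A$ of $G$ either separates two vertices of $U$ or, if $U\cap A\ne\emptyset$, separates two neighbors of some $u\in U\cap A$. The natural plan is to descend along $3$-cuts of $G$: pick a $3$-cut that splits off the smaller side, recurse into it, and keep track of the at most $3$ new boundary vertices introduced at each step. Applying the same Euler count to each piece (treating the three boundary vertices as exceptions to the ``degree $\ge 5$'' bound) shows that each piece still contains many genuinely interior degree-$5$ vertices. Since the process strictly shrinks the graph, it terminates in a piece $H$ in which no further non-boundary $3$-cut exists; any four interior degree-$5$ vertices of $H$ then form the desired $(C_{II})$, contradicting the standing assumption.

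The hard part will be managing the second clause of almost $4$-connectivity, which forbids a $3$-cut from separating two neighbors of a special vertex lying on the cut itself. I would handle this by always choosing the four candidates strictly in the interior of the terminal piece of the decomposition, so that no chosen $u$ lies on any $3$-cut of $G$ at all; the inequality $n_5 \ge 7$ (resp.\ $\ge 12$) gives more than enough room for this avoidance as $3$-cuts absorb only three vertices apiece. A secondary technicality will be checking that descending into a piece does not accidentally alter the degrees of the retained degree-$5$ vertices, which amounts to verifying that all five of $u$'s neighbors lie on the same side of the cut we recurse through; this follows from the interior placement of $u$ in that recursion step.
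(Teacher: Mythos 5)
Your first step (the Euler count producing many degree-$5$ vertices) matches the paper's opening move, but the heart of the lemma --- producing a $4$-family with respect to which $G$ \emph{itself} is almost $4$-connected --- is where your sketch breaks down. Your terminal condition, ``no further non-boundary $3$-cut exists in the piece $H$'', does not exclude $3$-cuts of $G$ that mix boundary (or exterior) vertices with interior vertices of $H$; such a cut can perfectly well separate two of your chosen interior degree-$5$ vertices, violating the first clause of almost $4$-connectivity, and your recursion cannot absorb it: descending through such a mixed cut produces a piece attached to the rest of $G$ through up to three new cut vertices \emph{plus} surviving old boundary vertices, so the ``at most $3$ boundary vertices per piece'' bookkeeping on which your per-piece Euler count relies collapses. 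The paper avoids both problems with two devices absent from your plan: (i) the $2$-contraction formalism, in which the entire exterior is attached through exactly two damaged vertices joined by a virtual edge (realizable by a path through the exterior), so the boundary never grows past size two and properties proved in the contraction transfer back to $G$; and (ii) for $3$-cuts it does not recurse at all, but fixes once and for all a $3$-cut $X$ minimizing the size of the component $I(X)$ avoiding the damaged vertices, and then runs a genuine case analysis (Claim~\ref{cl:struct3}, using planarity via a $K_{3,3}$-minor to show every $3$-cut of a $3$-connected planar graph has exactly two sides) proving that \emph{no} $3$-cut of $H$ separates any two vertices of $X\cup I(X)$. Since the four chosen vertices and all their neighbors lie in $X\cup I(X)$, this yields both clauses of almost $4$-connectivity at once; that case analysis is the actual content of the lemma, and your proposal replaces it with the unproved assertion that ``any four interior degree-$5$ vertices of $H$ form the desired $(C_{II})$''.

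Two quantitative claims in your sketch are also false as stated. First, ``$3$-cuts absorb only three vertices apiece'' does not let you pick four vertices avoiding all $3$-cuts of $G$: a planar graph can have linearly many $3$-cuts (a long chain of nested cuts, say), so $n_5\ge 7$ or $12$ gives no room whatsoever; the paper never attempts such avoidance --- it shows the cuts cannot separate the relevant set, which in particular handles the second clause even when a chosen vertex lies \emph{on} a cut. Second, the per-piece Euler count with three boundary exceptions can be vacuous: three boundary vertices of degree $2$ already contribute $3\times(2-6)=-12$, absorbing the entire planar deficit $\sum_v\bigl(d(v)-6\bigr)\le -12$, so not even one interior degree-$5$ vertex is forced without the extra structure the paper extracts from the minimality of $I(X)$. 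Moreover, recursing ``into the smaller side'' regardless of where the unique low-degree vertex sits may carry that vertex into your terminal piece, whereas the paper's choice of $I(X)$ explicitly keeps the damaged vertices --- including the possible low-degree one --- outside the component in which it counts.
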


Lemmas~\ref{lem:mce} (p.~\pageref{lem:mce}) and~\ref{lem:nomce} together guarantee that every connected planar graph other than $K_3$ and $K_5^-$ admits a good path decomposition -- it is trivial to verify for connected planar graphs on at most $2$ vertices. To prove Lemma~\ref{lem:nomce}, we first need some definitions and structural observations on planar graphs.



Given a planar graph $G$, a \emph{$1$-contraction} of $G$ is an induced subgraph $H$ of $G$ on at least $2$ vertices together with a vertex $u_1 \in V(H)$ such that all vertices have the same degree in $G$ and $H$, except possibly for $u_1$. A graph $H$ on at least $3$ vertices is a \emph{$2$-contraction} of $G$ if there exists an edge $u_1u_2 \in E(H)$ such that $H \setminus u_1u_2$ is a subgraph of $G$ and every vertex $v \not\in \{u_1,u_2\}$ satisfies $d_H(v)=d_G(v)$. Additionally, there exists a $(u_1,u_2)$-path in $G$ with all internal vertices in $V(G)\setminus V(H)$.

The \emph{damaged vertices} of a $p$-contraction ($p \in \{1,2\}$) are the vertices $\{u_1\}$, $\{u_1,u_2\}$ above. Note that any induced subgraph of $G$ (on at least $2$ vertices) can be turned into a $1$-contraction by selecting an arbitrary vertex as $u_1$. Note that a $1$-contraction with damaged vertex $u_1$ can be turned into a $2$-contraction by selecting an arbitrary neighbour of $u_1$ in $H$, unless the vertex $u_1$ has no neighbour in $H$. Note also that any $p$-contraction ($p \in \{1,2\}$) of $G$ is a minor of $G$ hence is planar.

A $2$-contraction $H'$ of $G$ is \emph{smaller} than a $2$-contraction $H$ of $G$ with damaged vertices $\{u_1,u_2\}$ if $V(H')\subsetneq V(H)$ and each of $u_1$ and $u_2$ either does not belong to $V(H')$ or is a damaged vertex of $H'$. We may simply refer to a \emph{smaller $2$-contraction than $H$} if the damaged vertices of $H$ are clear from context. A $2$-contraction $H$ of $G$ is \emph{minimal} if $G$ admits no smaller $2$-contraction.

\begin{claim}\label{cl:struct1}
Let $G$ be a connected planar graph. Any minimal $2$-contraction of $G$ either contains a non-damaged vertex of degree at most $4$ or is $3$-connected.
\end{claim}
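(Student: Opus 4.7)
The plan is to argue by contradiction: suppose $H$ is a minimal $2$-contraction of $G$ with damaged vertices $\{u_1,u_2\}$, that every non-damaged vertex of $H$ has degree at least $5$, and that $H$ is not $3$-connected; I will construct a strictly smaller $2$-contraction of $G$. The backbone of the argument is a closure observation read off the definition: the equalities $d_H(v) = d_G(v)$ for $v \in V(H)\setminus\{u_1,u_2\}$ together with $H\setminus u_1u_2 \subseteq G$ force $N_G(v) = N_H(v)$ for every such $v$. Consequently, for any $S \subseteq V(H)$ and any component $C$ of $H\setminus S$ with $V(C)\cap\{u_1,u_2\} = \emptyset$, every vertex of $V(C)$ has all of its $G$-neighbours inside $V(C)\cup S$, so $G[V(C)\cup S]$ preserves the $G$-degree of each such vertex.

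Next I would let $S$ be a minimum vertex cut of $H$ (with $|S|\in\{0,1,2\}$, the value $0$ signifying that $H$ is disconnected) and list the components $C_1,\dots,C_k$ of $H\setminus S$, noting that $u_1,u_2$ either both belong to $S$ or both belong to a single component, because $u_1u_2 \in E(H)$. A short case analysis on where $\{u_1,u_2\}$ sits relative to $S$ handles most situations: if $\{u_1,u_2\}=S$ (hence $|S|=2$), I take $H' := H[V(C_1)\cup S]$ with the same damaged set; if exactly one of $u_1,u_2$, say $u_1$, lies in $S$, I take $H' := H[V(C^\star)\cup\{u_1\}]$ with damaged set $\{u_1,u_2\}$ when $|S|=1$ (where $C^\star$ contains $u_2$), or $H' := H[V(C)\cup S]$ for $C\neq C^\star$ with damaged set $S$ when $|S|=2$; if $\{u_1,u_2\}\cap S=\emptyset$ with $|S|=2$ and $u_1,u_2$ sharing a component $C^\star$, I take $H' := H[V(C)\cup S]$ for $C\neq C^\star$ with damaged set $S$. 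Adding the edge between the two damaged vertices if it is not already present, I would verify in each case the four axioms of a $2$-contraction (damaged edge in $H'$; $H'\setminus(\text{damaged edge})\subseteq G$, because this leaves only edges of $H$ distinct from $u_1u_2$; degrees preserved on non-damaged vertices by the closure observation; and an external path between the damaged vertices routed through a component not contributing to $V(H')$, where any traversal of $u_1u_2$ is replaced by the original external $(u_1,u_2)$-path certified for $H$). In each case $V(H')\subsetneq V(H)$, and each of $u_1,u_2$ either leaves $V(H')$ or remains damaged in $H'$, so $H'$ is smaller than $H$.

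The main obstacle is the residual case $|S|=1$ with $a \notin \{u_1,u_2\}$: here $u_1,u_2$ lie together in one component $C^\star$, so neither can be reused as a damaged vertex of a smaller $2$-contraction, and both damaged vertices of $H'$ must be invented. I would resolve it by choosing any component $C\neq C^\star$ and any $H$-neighbour $b$ of $a$ in $V(C)$ (such a $b$ exists because $H$ is connected and $C$ is a component of $H\setminus a$, so $a$ must send at least one edge into $V(C)$), and setting $H' := G[V(C)\cup\{a\}]$ with damaged set $\{a,b\}$. The closure observation guarantees $d_{H'}(v) = d_G(v)$ for every $v\in V(C)\setminus\{b\}$; the edge $ab$ lies in $E(G)$ because $a\notin\{u_1,u_2\}$; and the required external $(a,b)$-path in $G$ is the edge $ab$ itself, which vacuously satisfies the condition that all internal vertices lie outside $V(H')$ because a length-one path has no internal vertex. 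This vacuity is the essential new idea, and the same trick settles the $|S|=0$ case (pick an arbitrary edge inside a component disjoint from $\{u_1,u_2\}$). Combining all cases produces a $2$-contraction of $G$ strictly smaller than $H$, contradicting the minimality of $H$ and completing the proof.
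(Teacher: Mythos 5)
Your overall route is the same as the paper's: argue by contradiction from minimality, assume all non-damaged degrees are at least $5$, take a vertex cut $S$ of size at most $2$, carve off one side of the cut as the smaller $2$-contraction, re-add the damaged edge if absent, and certify the external path either by reusing (or rerouting through the discarded side of the cut, replacing a traversal of $u_1u_2$ by the original external path) or by the vacuous length-one path when the new damaged pair is an actual edge of $G$. That last trick, which you call the essential new idea, is also what the paper uses implicitly in its cut-vertex step, where $G[C\cup\{x\}]$ with damaged pair $\{x,y\}$, $y\in N_H(x)\cap C$, is declared a $2$-contraction; your closure observation $N_G(v)=N_H(v)$ for non-damaged $v$ is correct and is what the paper relies on tacitly throughout.

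There is, however, one branch of your case analysis that fails. In the case $|S|=1$ with $u_1\in S$ you keep the side of the cut containing the \emph{other} damaged vertex, setting $H':=H[V(C^\star)\cup\{u_1\}]$ where $C^\star\ni u_2$. Nothing in the hypotheses prevents $C^\star=\{u_2\}$: damaged vertices carry no degree constraint, so $u_2$ may have $u_1$ as its only $H$-neighbour. In that corner $V(H')=\{u_1,u_2\}$, and $H'$ violates the at-least-three-vertices clause in the definition of a $2$-contraction --- an axiom missing from the list of four you verify --- so no contradiction with minimality is obtained there. (The hole is exactly this pendant case: if $|C^\star|\geq 2$, any non-damaged vertex of $C^\star$ has all of its at least $5$ neighbours inside $C^\star\cup\{u_1\}$, forcing $|C^\star|\geq 5$, so the intermediate sizes cannot occur.) The repair is the move you already use in your residual case, and it is what the paper does for every cut vertex $x$, damaged or not: discard $u_2$'s side, take any other component $C$ of $H\setminus\{u_1\}$ (it has at least $5$ vertices, all its vertices being non-damaged of degree at least $5$), pick $b\in N_H(u_1)\cap V(C)$, and set $H'=G[V(C)\cup\{u_1\}]$ with damaged pair $\{u_1,b\}$; here $u_1b\neq u_1u_2$ so $u_1b\in E(G)$, the external path is the edge itself, and $H'$ is smaller than $H$ since $u_1$ is damaged in $H'$ and $u_2\notin V(H')$. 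As a minor point, your parenthetical that $u_1,u_2$ ``either both belong to $S$ or both belong to a single component'' is misstated (one may lie in $S$ and the other in a component); since your case analysis explicitly covers the mixed situation, this is only a wording slip, not a gap.
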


\begin{proof}
Assume for a contradiction that $H$ with damaged vertices $u_1,u_2$ is a counterexample to the statement: $H$ is minimal, not 3-connected and every vertex in $V(H)\setminus \{u_1,u_2\}$ has degree at least $5$ in $H$ (hence in $G$). By definition of a  $2$-contraction, $V(H)\setminus \{u_1,u_2\}$ is non-empty. We note that it suffices to exhibit a $2$-contraction $H'$ of $G$ smaller than $H$.

Since $H$ is minimal, it is connected. Note that $H$ is in fact $2$-connected. Otherwise, let $x$ be a cut-vertex in $H$. There is a connected component $C$ of $H \setminus \{x\}$ containing none of $\{u_1,u_2\}$. The graph $G[C \cup \{x\}]$ is a $1$-contraction of $G$ with damaged vertex $x$. Note that $C$ contains at least $5$ vertices, as $C$ is non-empty and every vertex in $C$ has degree at least $5$ in $G$. We select an arbitrary neighbour $y$ of $x$ in $C$, and note that $G[C \cup \{x\}]$ is a $2$-contraction of $G$ with damaged vertices $x,y$ and $u_1,u_2 \not\in C$. This yields a smaller $2$-contraction than $H$, a contradiction.

Therefore, $H$ is 2-connected but not 3-connected. Let $x_1,x_2$ be a vertex cut of $H$. Since $u_1u_2 \in E(H)$, $u_1$ and $u_2$ do not belong to different connected components of $H \setminus \{x_1,x_2\}$. 
Let $C$ be a connected component $H \setminus \{x_1,x_2\}$ that contains no $u_i$. Since $H$ is $2$-connected, there is a path between $x_1$ and $x_2$ whose internal vertices belong to $V(H) \setminus C$. We obtain a $2$-contraction of $G$ with damaged vertices $\{x_1,x_2\}$ where each $u_i$ either does not belong to it or is a damaged vertex, hence a contradiction.
\end{proof}

\begin{claim}\label{cl:struct3}
Let $H$ be a minimal $2$-contraction of a planar graph $G$. Either $H$ contains a non-damaged vertex of degree at most $4$, or there are four non-damaged vertices $\{v_1,v_2,v_3,v_4\}$ of degree $5$ with respect to which $H$ is almost $4$-connected.
\end{claim}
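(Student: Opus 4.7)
The plan is to combine Claim~\ref{cl:struct1} with Euler's formula and a careful analysis of the $3$-cuts of $H$. First I would apply Claim~\ref{cl:struct1}: either $H$ contains a non-damaged vertex of degree at most $4$, in which case the first alternative of the statement holds immediately, or $H$ is $3$-connected. So assume from now on that $H$ is $3$-connected; then every non-damaged vertex has degree at least $5$, and we must exhibit four non-damaged vertices of degree exactly $5$ forming a $4$-family with respect to which $H$ is almost $4$-connected.

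Next I would use Euler's formula. Writing $n=|V(H)|$, since $H$ is simple planar we have $\sum_v d_H(v)\le 6n-12$. The two damaged vertices contribute at most $2(n-1)$, and each non-damaged vertex of degree at least $6$ contributes at least one extra unit above $5$. A direct count then shows that the set $V_5$ of non-damaged vertices of degree exactly $5$ has size at least $n-O(1)$; in particular $|V_5|\ge 4$ for all but a bounded number of small configurations, which can be checked by hand (they are tightly constrained since removing the two damaged vertices leaves a small $3$-connected planar graph of minimum degree $\ge 5$).

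The heart of the argument is to extract four vertices of $V_5$ that avoid every bad $3$-cut. Here I would exploit the minimality of $H$: if $\{a_1,a_2,a_3\}$ is a $3$-cut of $H$ and $C$ is a connected component of $H\setminus\{a_1,a_2,a_3\}$ that contains none of the damaged vertices, then $G[C\cup\{a_1,a_2,a_3\}]$ (together with an appropriate edge or path between two of the $a_i$'s, which exists because $H$ is $3$-connected, using a path of $H$ through $V(H)\setminus C$ if necessary, or a path of $G$ through $V(G)\setminus V(H)$) yields a $2$-contraction of $G$ smaller than $H$, contradicting minimality. Consequently, for every $3$-cut $A$ of $H$, each component of $H\setminus A$ either contains a damaged vertex of $H$ or is forced to be trivial, and the damaged vertices are squeezed into one component in a very restricted way. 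Using planarity, the $3$-cuts of $H$ thus organize themselves into a tree-like decomposition with a distinguished "damaged side", and the vast majority of the degree-$5$ vertices live in the $4$-connected region on the undamaged side.

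I would then pick any four vertices of $V_5$ lying inside such a $4$-connected region, sufficiently far from the damaged vertices and from the boundary $3$-cuts. By construction no $3$-cut of $H$ separates two of them nor crosses the neighborhood of any one of them, so $H$ is almost $4$-connected with respect to this $4$-family and the second alternative holds. The main obstacle is precisely the structural step: showing that minimality of $H$ together with $3$-connectivity forces every $3$-cut to have the "damaged" side as one of its sides, and that a $4$-connected region free of damaged vertices and containing at least four degree-$5$ vertices always exists. Once this structural dichotomy is in place, the selection of the four vertices is essentially immediate from the abundance guaranteed by Euler's formula.
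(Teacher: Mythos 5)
There is a genuine gap at what you yourself call the heart of the argument. You claim that if $\{a_1,a_2,a_3\}$ is a $3$-cut of $H$ and $C$ a component of $H\setminus\{a_1,a_2,a_3\}$ containing no damaged vertex, then $G[C\cup\{a_1,a_2,a_3\}]$ gives a $2$-contraction smaller than $H$, so minimality kills all such cuts. This fails because a $2$-contraction has only \emph{two} damaged vertices: after cutting along a $3$-cut, at most two of the $a_i$ can be designated damaged, and the third would be a non-damaged vertex whose degree in the candidate $H'$ is strictly smaller than its degree in $G$ (it generally has neighbours on the other side of the cut), violating the requirement $d_{H'}(v)=d_G(v)$ for all non-damaged $v$. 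This is exactly why Claim~\ref{cl:struct1} only yields $3$-connectivity, and why the notion of ``almost $4$-connected with respect to a $4$-family'' exists at all: $3$-cuts cannot be removed by minimality, only avoided relative to carefully chosen vertices. Your claimed consequence --- that every damaged-free side of a $3$-cut is trivial, giving a tree-like decomposition with a $4$-connected region --- is in fact contradicted by the actual proof: the paper chooses, among all $3$-cuts $X$, one minimizing the damaged-free interior $I(X)$, and this minimal interior $C$ is far from trivial; it is precisely where Euler's formula (applied locally to $C$, using that each cut vertex has at least two neighbours in $C$, so $\sum_{x\in C}(d(x)-6)+6\le -12$) produces the four non-damaged degree-$5$ vertices. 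The conclusion is then not that any region is $4$-connected, but the weaker statement that no $3$-cut of $H$ separates two vertices of $X\cup C$, proved by an exchange argument: any separating $3$-cut $Y$ must contain a vertex of $C$ and a vertex of $E(X)$ (the latter by minimality of $|I(X)|$), and a two-case analysis on $|X\cap I(Y)|$ produces either a $2$-cut (contradicting $3$-connectivity) or a $3$-cut with smaller interior (contradicting the choice of $X$); planarity enters to show every $3$-cut has exactly two components via $K_{3,3}$-minor-freeness.

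Two smaller inaccuracies: your Euler estimate $|V_5|\ge n-O(1)$ is false (planar graphs of minimum degree $5$ can have almost all vertices of degree $6$); the correct count gives a constant lower bound, which suffices in the $4$-connected case without any hand-checking of small configurations, since $\sum_{x}(d(x)-6)\le -12$ and the two damaged vertices (of degree at least $4$ there) absorb at most $4$ units of the deficit. More importantly, a global supply of degree-$5$ vertices is not enough: they must be located inside the minimal interior $C$, since the almost-$4$-connectivity conclusion is only established for vertices of $X\cup C$. Your final selection step (``pick four degree-$5$ vertices far from the damaged vertices and the boundary cuts'') presupposes the flawed structural dichotomy and has no support once that step is removed.
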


\begin{proof}
Assume that $H$ does not contain any non-damaged vertex of degree $4$ or less, and let $u_1,u_2$ be the damaged vertices of $H$.

We first assume that $H$ is $4$-connected. Then it suffices to argue that there are four vertices of degree $5$ in $V(H)\setminus\{u_1,u_2\}$. Since $H$ is 4-connected, we have $d_H(u_1), d_H(u_2) \geq 4$. By Euler's formula, we have $\sum_{x \in V(H)}(d(x)-6)\leq -12$, hence $\sum_{x \in V(H)\setminus\{u_1,u_2\}}(d(x)-6) \leq -8$. Since every non-damaged vertex of $H$ has degree at least $5$, the conclusion follows.

Therefore, we may assume the graph $H$ is not $4$-connected. Consider an embedding of $H$ where $u_1$ and $u_2$ lie on the outer-face. Since $u_1u_2 \in E(H)$ and $H$ is planar since it is a $2$-contraction of $G$, such an embedding exists. For any vertex cut $X$ of $H$ that has size $3$, let $p$ be the number of connected components in $H \setminus X$. Consider the minor of $H$ obtained by contracting each connected component into a single vertex. Since $H$ is $3$-connected (by Claim~\ref{cl:struct1}), every resulting vertex is adjacent to all three vertices in $X$, which yields a $K_{3,p}$-minor where $p$ is the number of connected components. Since $H$ is planar, there is no $K_{3,3}$-minor, hence $p=2$.
At most one of the two connected components of $H \setminus X$ contains damaged vertices. If one of them contains damaged vertices, we let $I(X)$ be the connected component of $H \setminus X$ that does not contain damaged vertices and $E(X)$ be the connected component of $H \setminus X$ that contains a damaged vertex. If neither component contains damaged vertices, then $u_1, u_2 \in X$. Since $H$ is $3$-connected and $u_1,u_2$ belong to the outer-face, there is exactly one connected component that contains no vertex of the outer-face; We let $I(X)$ be that component. Let $E(X)$ be the other component. Observe that $E(X)$ contains at least one vertex of the outer-face.

Among all vertex cuts of $H$ that have size $3$, we select a vertex cut $\{x_1,x_2,x_3\}$ which minimizes $|I(\{x_1,x_2,x_3\})|$. We first argue that $C=I(\{x_1,x_2,x_3\})$ contains at least four vertices of degree $5$. Indeed, since $\{x_1,x_2,x_3\}$ is minimal, every $x_i$ has at least $2$ neighbors in $C$.
Hence by Euler's formula on the graph $C$, we have $\sum_{x \in C}(d(x)-6) + 3\times2 \leq -12$. Since every vertex in $C$ has degree at least $5$ in $H$, there are four non-damaged vertices $\{v_1,v_2,v_3,v_4\}$ in $C$ that have degree $5$ in $H$.

It remains to argue that $H$ is almost $4$-connected with respect to them.
We show that no $3$-cut of $H$ separates two vertices of $X\cup C$. Observe that this proves the two properties of almost $4$-connectivity.
Assume for a contradiction that there is a vertex cut $Y=\{y_1,y_2,y_3\}$ such that two vertices $z_1,z_2\in X\cup C$ are in different connected components of $H \setminus Y$. Without loss of generality, consider $z_1 \in I(Y)$ and $z_2 \in E(Y)$.

Note that $H[X\cup C]$ is connected and $z_1,z_2 \in X\cup C$. However, $z_1$ and $z_2$ are in different connected components of $H\setminus Y$. Since $z_1,z_2$ cannot be separated by a vertex cut contained in $X\cup E(X)$, at least one of $\{y_1,y_2,y_3\}$, say $y_1$, belongs to $C$.
Since $\{x_1,x_2,x_3\}$ minimizes $|I(\{x_1,x_2,x_3\})|$ over all vertex cuts of $H$ of size $3$, at least one of $\{y_1,y_2,y_3\}$, say $y_3$, belongs to $E( \{x_1,x_2,x_3\})$.

We consider two cases depending on the cardinal of $\{x_1,x_2,x_3\}\cap I(Y)$.

\begin{itemize}
    \item Assume that $I(Y)$ contains exactly one vertex in $\{x_1,x_2,x_3\}$, say $x_1$.

    If $y_2\in E(X)$, then we claim that $\{x_1,y_1\}$ is a $2$-cut that separates $z_1$ from $z_2$, a contradiction with the $3$-connectivity of $H$. To prove it, we note that in $Y \cup I(Y)$, there is a path between $z_1$ and each of $\{y_1,y_2,y_3\}$ whose internal vertices belong to $I(Y)$. However, since $\{x_1,x_2,x_3\}$ is a vertex cut of $H$ and neither $x_2$ nor $x_3$ belongs to $Y \cup I(Y)$, every path between $z_1$ and $y_2$ whose internal vertices belong to $I(Y)$ involves the vertex $x_1$. Therefore, $\{x_1,y_1\}$ separates $z_1$ and $y_2$, hence the conclusion.

	So $y_2\in X\cup C$, and we now claim that $\{x_1,y_1,y_2\}$ is a vertex cut of $H$, by the same argument: each path between $z_1$ and $y_3$ with internal vertices in $I(Y)$ involves $x_1$. Observe that it contradicts the choice of $\{x_1,x_2,x_3\}$.
    \item Assume from now on that $I(Y)$ contains both $x_2$ and $x_3$, while $x_1 \in E(Y)$.
    As above, we argue that $\{y_1,x_2,x_3\}$ is a vertex cut of $H$, which contradicts the choice of $\{x_1,x_2,x_3\}$. For completeness, we include the adapted proof. We note that in $\{x_1,x_2,x_3\} \cup I(\{x_1,x_2,x_3\})$, there is a path between $z_1$ and each of $\{x_1,x_2,x_3\}$ whose internal vertices belong to $I(\{x_1,x_2,x_3\})$. However, since $Y$ is a vertex cut of $H$ and neither $y_2$ nor $y_3$ belongs to $\{x_1,x_2,x_3\} \cup I(\{x_1,x_2,x_3\})$, every path between $z_1$ and $x_1$ whose internal vertices belong to $I(\{x_1,x_2,x_3\})$ involves the vertex $y_1$. Therefore, $\{y_1,x_2,x_3\}$ separates $z_1$ and $x_1$, hence the conclusion.
\end{itemize}%
\end{proof}

We can now use Claim~\ref{cl:struct3} to obtain Lemma~\ref{lem:nomce} (p.~\pageref{lem:nomce}).

\begin{proof}[Proof of Lemma~\ref{lem:nomce}]
Let $G$ be a non-empty connected planar graph which contains no $(C_I)$ configuration. 
Let $u_1$ be the only vertex with degree at most $4$ in $G$ if any, and an arbitrary vertex otherwise. Note that since $G$ is connected and contains at least $3$ vertices, the graph $G$ with damaged vertex $u_1$ is a $1$-contraction of $G$. Furthermore, the vertex $u_1$ has at least one neighbour $u_2$, and the graph $G$ with damaged vertices $u_1$ and $u_2$ is a $2$-contraction of $G$. Let $H$ be a minimal $2$-contraction of $G$ that is either precisely $G$ with damaged vertices $u_1,u_2$ or smaller than it.

Every non-damaged vertex in $H$ has the same degree in $H$ and in $G$, and $u_1$ is not a non-damaged vertex in $H$. Therefore, Claim~\ref{cl:struct3} applied to $H$ yields that 
$H$ is almost $4$-connected w.r.t. a $4$-family $\{v_1,v_2,v_3,v_4\}$. Hence $G$ is almost $4$-connected w.r.t. $\{v_1,v_2,v_3,v_4\}$, as desired.
\end{proof}


\printbibliography

\end{document}